\newtheorem{lemma}{Lemma}[section]
\newtheorem{theorem}[lemma]{Theorem}
\newtheorem*{theorem*}{Theorem}
\newtheorem{corollary}[lemma]{Corollary}
\newtheorem{proposition}[lemma]{Proposition}
\newtheorem*{proposition*}{Proposition}
\newtheorem*{MCP}{Multiplicative correspondence principle}
\newtheorem{problem}{Problem}
\newtheorem{fact}{Fact}
\theoremstyle{definition}
\newtheorem*{convention}{Convention}
\newtheorem*{notation}{Notation}
\newtheorem*{definition}{Definition}
\newtheorem*{example}{Example}
\newtheorem*{remark}{Remark}
\newcommand{\vide}{\rule{0mm}{5mm}}
\newcommand{\doublevide}{\rule{0mm}{7mm}}
\newcommand{\C}{{\mathbb C}}
\newcommand{\E}{{\mathbb E}}
\newcommand{\N}{{\mathbb N}}
\newcommand{\Q}{{\mathbb Q}}
\newcommand{\R}{{\mathbb R}}
\newcommand{\T}{{\mathbb T}}
\newcommand{\Z}{{\mathbb Z}}
\newcommand{\CA}{{\mathcal A}}
\newcommand{\CB}{{\mathcal B}}
\newcommand{\CC}{{\mathcal C}}
\newcommand{\CF}{{\mathcal F}}
\newcommand{\CH}{{\mathcal H}}
\newcommand{\CM}{{\mathcal M}}
\newcommand{\CP}{{\mathcal P}}
\newcommand{\CX}{{\mathcal X}}
\newcommand{\tN}{{\widetilde N}}
\newcommand{\ba}{{\mathbf{a}}}
\newcommand{\bc}{{\mathbf{c}}}
\newcommand{\bg}{{\mathbf{g}}}
\newcommand{\bk}{{\mathbf{k}}}
\newcommand{\bt}{{\mathbf{t}}}
\newcommand{\bu}{{\mathbf{u}}}
\newcommand{\bx}{{\mathbf{x}}}
\newcommand{\bell}{{\boldsymbol{\ell}}}
\newcommand{\balpha}{{\boldsymbol{\alpha}}}
\newcommand{\bzero}{{\boldsymbol{0}}}
\newcommand{\ov}{\overline}
\newcommand{\ve}{\varepsilon}
\newcommand{\wt}{\widetilde}
\newcommand{\e}{\mathrm{e}}
\newcommand{\ZN}{\Z_{\widetilde N}}
\newcommand{\norm}[1]{\lVert #1 \rVert}
\newcommand{\lip}{{\text{\rm Lip}}}
\newcommand{\inv}{^{-1}}
\DeclareMathOperator{\spec}{Spec}
\DeclareMathOperator{\id}{id}
\newcommand{\dmult}{d_\textrm{mult}}
\newcommand{\one}{\mathbf{1}}
\begin{document}

\title[Uniformity of
multiplicative functions and partition regularity]{Uniformity of
multiplicative functions and partition regularity of some quadratic
equations}

\author{Nikos Frantzikinakis}
\address[Nikos Frantzikinakis]{University of Crete, Department of mathematics, Voutes University Campus, Heraklion 71003, Greece} \email{frantzikinakis@gmail.com}
\author{Bernard Host}
\address[Bernard Host]{
Universit\'e Paris-Est Marne-la-Vall\'ee, Laboratoire d'analyse et
de math\'ematiques appliqu\'ees, UMR CNRS 8050, 5 Bd Descartes,
77454 Marne la Vall\'ee Cedex, France }
\email{bernard.host@univ-mlv.fr}

\begin{abstract}
Since the theorems of Schur and van der Waerden, numerous partition
regularity results have been  proved for linear equations, but
progress has been scarce for non-linear ones, the hardest case being
equations in three variables.  We prove partition regularity for certain equations
involving
 quadratic forms in three variables, showing for example that
  the equations $16x^2+9y^2=n^2$ and $x^2+y^2-xy=n^2$ are partition regular, where
  $n$ is allowed to vary freely in $\N$.
For each such problem we establish a  density analogue that can be
formulated in ergodic terms as a recurrence property for  actions by
dilations on a probability space. Our key tool for establishing such
recurrence properties is a decomposition result for multiplicative
functions which is of independent interest. Roughly speaking, it
states that the arbitrary multiplicative function of modulus $1$ can be decomposed
into two terms, one that is approximately periodic and another that
has
small Gowers uniformity norm of degree three.
\end{abstract}

\thanks{The  first author was partially supported by
 Marie Curie IRG  248008.}

\subjclass[2010]{Primary: 11B30; Secondary: 05D10, 11N37,  37A45}

\keywords{Partition regularity, multiplicative functions, Gowers
uniformity, recurrence.}


\maketitle

\section{Introduction and main results}
\subsection{Partition regularity results for quadratic forms}
An important question in Ramsey theory is to determine which
algebraic equations, or systems of equations, are partition regular
over the natural numbers. In this article,  we restrict our attention to
polynomials in three variables, in which case  partition regularity of
 the equation  $p(x,y,z)=0$  amounts to  saying that,
 for any partition of  $\N$ into finitely many
cells, some cell contains \emph{distinct} $x,y,z$ that satisfy the
equation.

 The case where the polynomial $p$ is linear
was completely solved by  Rado~\cite{R33}: For $a,b,c\in
\N$  the equation $ax+by=cz$ is partition regular if and only if
either $a$, $b$, or $a+b$ is equal to $c$.
The situation is much less clear   for second or higher degree
equations and only scattered results are known.
A notorious old question of Erd\"{o}s and Graham~\cite{EG80} is
whether the equation $x^2+y^2=z^2$ is partition regular. As Graham
remarks in \cite{G08} ``There is actually very little data (in
either direction) to know which way to guess''. More generally, one
may ask for which $a,b,c\in \N$ is the equation
\begin{equation}\label{E:StrongPartitionRegular}
ax^2+by^2=cz^2
\end{equation}
 partition regular. A necessary condition is that at least  one of
$a$, $b$, and $a+b$ equals $c$, but currently there are no $a,b,c\in
\N$ for which partition regularity of 
\eqref{E:StrongPartitionRegular} is known.

In this article, we  study the  partition regularity of equation
\eqref{E:StrongPartitionRegular}, and other quadratic equations,
under the relaxed condition that the  variable $z$ is allowed to
vary freely in $\N$.
\begin{definition}
The equation $p(x,y,n)=0$ is \emph{partition
regular in} $\N$ if for any partition of $\N$
into finitely many cells, for some $n\in \N$,
one of the cells  contains \emph{distinct} $x,y$ that satisfy the equation.
\end{definition}
A classical result of Furstenberg-S\'ark\"ozy  \cite{Fu77, Sa78}  is
that the equation $x-y=n^2$ is partition regular.
 Other examples of translation invariant equations are provided by the polynomial van der Waerden Theorem
 of Bergelson and Leibman~\cite{BL96},
 but not much is known in the non-translation invariant case.
A result of Khalfalah and Szemer\'edi \cite{KS06}  is that the
equation $x+y=n^2$ is partition regular.
Again, the
situation is much less clear when one considers non-linear
polynomials in $x$ and $y$, as is the case for the equation
 $a x^2+by^2=n^2$ where $a,b\in \N$.
It is one of the main goals of this article to produce the first
positive results in this direction. For example, we   show that the
equations
$$
16x^2+9y^2=n^2    \quad \text{ and } \quad x^2+y^2- xy=n^2
$$
are partition regular (note that $16x^2+9y^2=z^2$ is not partition
regular).
In fact we prove a  more general result for  homogeneous quadratic
forms in three variables.
\begin{theorem}[The three squares theorem]\label{th:partition-regular1}
Let $p$ be the quadratic form
\begin{equation}
\label{E:Q2} p(x,y,z)=ax^2+by^2+cz^2+dxy+exz+fyz
\end{equation}
where  $a, b, c$ are non-zero  and $d,e,f$ are arbitrary integers.
Suppose that all three forms $p(x,0,z)$, $p(0,y,z)$, $p(x,x,z)$ have
non-zero square discriminants. Then the equation $ p(x,y,n)=0 $ is
partition regular.
\end{theorem}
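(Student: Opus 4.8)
The plan is to deduce Theorem~\ref{th:partition-regular1} from a recurrence statement for measure-preserving actions of the multiplicative semigroup, which in turn is proved using the announced decomposition of a modulus-one multiplicative function into an approximately periodic part and a part with small Gowers norm of degree three. The homogeneity of $p$ is what makes the problem multiplicative: if $(x,y,n)$ solves $p(x,y,n)=0$ then so does $(tx,ty,tn)$ for every $t\in\N$, so the solution set is a union of dilates of primitive solutions. Via a multiplicative correspondence principle --- a finite colouring of $\N$ has a cell $A$ of positive upper multiplicative density, and the dilation action of $(\N,\cdot)$ on the orbit closure of the colouring produces a probability space $(X,\mu)$ carrying a measure-preserving $(\N,\cdot)$-action $(T_n)$ together with a set, still written $A$, with $\mu(A)>0$ --- it suffices to show that for a suitable increasing sequence of finite families $\CX_N$ of solutions $(x,y,n)$ with $x\neq y$ one has $\liminf_N |\CX_N|^{-1}\sum_{(x,y,n)\in\CX_N}\mu(T_x^{-1}A\cap T_y^{-1}A)>0$. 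Passing to the associated $(\Q_{>0},\cdot)$-action and using the spectral theorem, $\mu(T_x^{-1}A\cap T_y^{-1}A)=\langle\one_A,T_{y/x}\one_A\rangle=\int f(y)\,\overline{f(x)}\,d\sigma(f)$, the integral running over the completely multiplicative functions $f$ of modulus one, where $\sigma$ is a positive measure that charges the trivial character with mass at least $\mu(A)^2>0$ (since the invariant functions include the constants).

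\emph{Parametrization of the solutions.} Solvability of $p(x,y,n)=0$ in $n$ is equivalent to $\Delta(x,y)$ being a square, with $\Delta(x,y)=(ex+fy)^2-4c(ax^2+by^2+dxy)$; the three hypotheses say exactly that $\Delta(x,0)$, $\Delta(0,y)$ and $\Delta(x,x)$, which equal $x^2$, $y^2$ and $x^2$ times the respective discriminants, are squares, i.e.\ that the conic $C=\{p(x,y,z)=0\}$ meets each of the lines $y=0$, $x=0$, $x=y$ in two rational points. Hence $C$ is a rational curve; projecting from one of these points gives a parametrization $\P^1\to C$, $(m:n)\mapsto(Q_x:Q_y:Q_z)$ by integral binary quadratic forms in which $Q_x$, $Q_y$ and $Q_x-Q_y$ each split into rational linear factors --- this is where all three hypotheses enter. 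Taking $\CX_N$ to consist of the solutions coming from integer parameters $(m,n)$ in a fixed open real sector intersected with $[N]^2$, with the rational zero lines of $Q_x-Q_y$ removed, yields genuine solutions with $x\neq y$ and $x,y,n>0$ and with $x=c_x\,\ell_1(m,n)\ell_2(m,n)$, $y=c_y\,\ell_3(m,n)\ell_4(m,n)$ for fixed nonzero integers $c_x,c_y$ and integral linear forms $\ell_i$ in general position (the splitting of $Q_x-Q_y$ makes the removed locus negligible in all the localizations used below).

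\emph{The main estimate.} Complete multiplicativity of $f$ turns the averaged integral into $\int f(c_y)\overline{f(c_x)}\,\big(\tfrac{1}{|\CX_N|}\sum_{m,n}f(\ell_3)f(\ell_4)\overline{f(\ell_1)}\,\overline{f(\ell_2)}\big)\,d\sigma(f)$, the inner sum being over the admissible parameters. For each $f$ I would apply the decomposition $f=f_{\mathrm s}+f_{\mathrm u}$ with $f_{\mathrm s}$ approximately periodic and $\norm{f_{\mathrm u}}_{U^3}$ small, and expand multilinearly: every term carrying a factor $f_{\mathrm u}$ is $O(\norm{f_{\mathrm u}}_{U^3})$ by the generalized von Neumann inequality for this (bounded-complexity) configuration of four linear forms in two variables, the sector weight being absorbed by one further Cauchy--Schwarz, and is therefore negligible. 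The remaining, purely structured term becomes, after the Chinese Remainder Theorem, a combination of local solution densities of $p=0$ modulo the period of $f_{\mathrm s}$ and over $\R$; these are positive and bounded away from $0$ because $C$, having a rational point, has smooth points modulo every prime power and over $\R$, while the discriminant hypotheses supply rational (hence, modulo $q$, non-degenerate) points on the three relevant sub-loci, in particular on $x=y$, so that locus stays negligible in every localization. For $f$ close to the trivial character this structured term is close to $1$, so that part of the integral alone is at least $\mu(A)^2$.

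\emph{Main obstacle.} Two points are delicate. First, the decomposition $f=f_{\mathrm s}+f_{\mathrm u}$ with a genuinely small $U^3$-error for an \emph{arbitrary} modulus-one multiplicative $f$: this is the paper's central technical result, and is what makes the uniform contribution negligible --- note that $\sigma$ is supported on all such $f$, not merely roots-of-unity-valued ones, so this generality is unavoidable. Second --- and this is the obstacle within the present deduction --- one must rule out that structured pieces twisted by a nontrivial Dirichlet character cancel the $\mu(A)^2$ coming from the trivial character: their contribution is governed by incomplete character sums in the linear forms $\ell_i$, which are small for large conductors by a Weil-type bound (using that $\Delta$ is not a perfect square, as otherwise $p(x,y,n)=0$ is trivially partition regular), while for small conductors one must further restrict the residue classes of $(m,n)$ by a $W$-trick so that these sums too are negligible. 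Granting this, the integral is $\ge\mu(A)^2-o(1)>0$, which establishes the recurrence statement and hence the theorem.
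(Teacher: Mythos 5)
Your architecture tracks the paper closely: multiplicative correspondence principle, spectral decomposition via $\Q^+$-actions, parametrization of the conic into linear forms (with all three discriminant hypotheses identified correctly, including the role of the third one in equalizing the $m$-coefficients), and the generalized von Neumann inequality to kill the $U^3$-uniform component. But there is a genuine gap in your treatment of the structured part, and it is precisely the place where you flag a ``delicate'' point. You propose to control the contribution of non-trivial structured pieces via Chinese Remainder, local solution densities, Weil-type bounds on incomplete character sums, and a $W$-trick for small conductors. This fails for a basic reason: the structured component $\chi_{N,s}=\chi_N*\psi$ of a general $\chi\in\CM$ is \emph{not} a Dirichlet character. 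It is a convolution with a Fejér-type kernel, an approximately periodic function that varies continuously with $\chi$ on the compact group $\CM\cong\T^\N$; the spectral measure $\nu$ is supported on all of $\CM$, and for a generic $\chi$ (say $\chi(p)=e^{2\pi i\theta_p}$ with independent irrational $\theta_p$) the structured piece bears no relation to an arithmetic character --- there is no conductor, no complete sum, no Weil bound, and no way to split $\nu$ into ``Dirichlet-like'' and ``generic'' parts. Consequently ``for large conductors by a Weil-type bound'' and ``for small conductors a $W$-trick'' is not an argument that can be run.

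The paper avoids this entirely by never attempting to show the non-trivial structured contributions are small. Instead it uses a ``hidden non-negativity'' property (Lemma~\ref{L:HiddenPositivity}): since $\nu$ is the spectral measure of the non-negative function $\one_A$ and the structured-plus-error piece is a convolution with a \emph{non-negative} kernel, the quantity $\int_\CM(\chi_N*\psi)(n_1)(\chi_N*\psi)(n_2)(\overline\chi_N*\psi)(n_3)(\overline\chi_N*\psi)(n_4)\,d\nu(\chi)$ is non-negative for \emph{all} $n_1,\dots,n_4$. This licenses restricting $n$ to a sub-progression $n=Qk$, $k\leq\eta N$, where the approximate $Q$-periodicity of $\chi_{N,s}$ makes $\chi_{N,s}(m+\ell_iQk)\approx\chi_{N,s}(m)$ for all $i$ simultaneously (this is where the equal $m$-coefficients are used), so the integrand becomes approximately $|\chi_{N,s}(m)|^4\geq 0$ --- and the positivity already present from $\nu(\{\one\})\geq\delta^2$ survives the restriction because the terms dropped are non-negative. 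This also explains why the paper's lower bound is of order $\delta^4/(QR)$ rather than the $\mu(A)^2-o(1)$ you claim: the latter is too strong, and is itself a warning sign. A secondary issue is that you use a two-term split $f=f_s+f_u$, whereas the paper needs a three-term decomposition with an extra error piece whose smallness holds only on average over $\chi$, and the order of operations (eliminate $f_u$, restrict $n$ \emph{before} eliminating the error, then handle $f_s$) is forced by exactly the non-negativity considerations you are missing.
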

The last hypothesis means that
the three integers
$$
\Delta_1:=e^2-4ac,  \quad \Delta_2:=f^2-4bc,  \quad \Delta_3:=(e+f)^2-4c(a+b+d)
$$
are non-zero squares.
As a special case, 
 we get the following result:
\begin{corollary}\label{Corol1}
Let  $a,b,$ and $a+b$ be non-zero squares. Then the equation $
ax^2+by^2=n^2 $ is partition regular. More generally, if   $a,b,$
and $a+b+c$ are  non-zero squares, then the equation $
ax^2+by^2+cxy=n^2 $ is partition regular.
\end{corollary}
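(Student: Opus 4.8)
\emph{Proof proposal.} The plan is to obtain Corollary~\ref{Corol1} as an immediate consequence of Theorem~\ref{th:partition-regular1}, by specializing the quadratic form $p$ in \eqref{E:Q2} so that the coefficient of $z^2$ equals $-1$, and then computing the three discriminants $\Delta_1,\Delta_2,\Delta_3$.

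For the first assertion, apply Theorem~\ref{th:partition-regular1} to $p(x,y,z)=ax^2+by^2-z^2$; in the notation of \eqref{E:Q2} this means the coefficient of $z^2$ is $c=-1$ and $d=e=f=0$. One then computes $\Delta_1=e^2-4ac=4a$, $\Delta_2=f^2-4bc=4b$, and $\Delta_3=(e+f)^2-4c(a+b+d)=4(a+b)$. Since a positive multiple of $4$ is a non-zero square exactly when the corresponding integer ($a$, $b$, or $a+b$) is a non-zero square, the hypothesis of Theorem~\ref{th:partition-regular1} that $\Delta_1,\Delta_2,\Delta_3$ be non-zero squares becomes exactly the assumption that $a$, $b$, $a+b$ are non-zero squares. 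The remaining sign conditions $a,b>0$ and $c=-1<0$ hold automatically, so the theorem yields partition regularity of $ax^2+by^2=n^2$.

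For the second assertion, apply Theorem~\ref{th:partition-regular1} with the coefficient of $z^2$ equal to $-1$, the coefficient of $xy$ (the parameter $d$ in \eqref{E:Q2}) equal to the $c$ of the corollary, and $e=f=0$; that is, $p(x,y,z)=ax^2+by^2-z^2+cxy$. Then $\Delta_1=4a$, $\Delta_2=4b$, and $\Delta_3=(e+f)^2-4\cdot(-1)\cdot(a+b+c)=4(a+b+c)$, so the hypothesis of the theorem reduces precisely to $a$, $b$, $a+b+c$ being non-zero squares, as claimed. (In particular this forces $a+b+c>0$, consistently with the sign requirements, while $d=c$ is allowed to be an arbitrary integer.)

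There is essentially no obstacle here: the only points to check are the elementary discriminant computations above and the observation that the positivity/negativity requirements on the coefficients are met automatically since non-zero squares are positive. As a sanity check, the examples stated in the introduction come out correctly: $(a,b)=(16,9)$ gives $a+b=25=5^2$, and $(a,b,c)=(1,1,-1)$ gives $a+b+c=1=1^2$.
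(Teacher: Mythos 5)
Your proof is correct and follows exactly the route the paper intends: the paper itself derives Corollary~\ref{Corol1} as ``a special case'' of Theorem~\ref{th:partition-regular1}, and your discriminant computations (and the observation that $4a$ is a non-zero square iff $a$ is, and that non-zero squares are automatically positive) supply precisely the details the paper leaves implicit. Your handling of the notational clash — the corollary's $c$ being the coefficient $d$ of $xy$ in \eqref{E:Q2}, with the coefficient of $z^2$ set to $-1$ — is also the right reading.
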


A partition $\mathscr C_1,\ldots, \mathscr C_r$ of $\N$ induces
another partition $\wt{\mathscr C}_1,\ldots, \wt{\mathscr C}_r$ by
the following rule: $x\in \wt{\mathscr C}_i$ if and only if $x^2\in
\mathscr C_i$.  Applying Theorem~\ref{Corol1} for the  induced
partition we deduce non-trivial results even for linear equations:
\begin{corollary}
 Let  $a, b,$ and $a+b$ be non-zero squares.
Then the equation
$a x+by=n^2$ is partition regular.
\end{corollary}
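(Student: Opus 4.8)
The plan is to deduce this as an immediate consequence of Corollary~\ref{Corol1} via the substitution $x\mapsto x^2$, exploiting the ``induced partition'' device described just before the statement. Given an arbitrary partition $\mathscr C_1,\ldots,\mathscr C_r$ of $\N$ into finitely many cells, I would set
$\wt{\mathscr C}_i:=\{x\in\N:\ x^2\in\mathscr C_i\}$ for $i=1,\ldots,r$. Since every $x\in\N$ has $x^2$ lying in exactly one cell $\mathscr C_i$, the sets $\wt{\mathscr C}_1,\ldots,\wt{\mathscr C}_r$ form a partition of $\N$ into finitely many cells, so Corollary~\ref{Corol1} applies to it.

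Next I would invoke Corollary~\ref{Corol1} for the equation $ax^2+by^2=n^2$; this is legitimate precisely because $a$, $b$, and $a+b$ are non-zero squares, which is exactly the hypothesis we are given. The corollary yields an index $i\in\{1,\ldots,r\}$, a number $n\in\N$, and \emph{distinct} $x,y\in\wt{\mathscr C}_i$ satisfying $ax^2+by^2=n^2$.

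Finally I would put $X:=x^2$ and $Y:=y^2$. By the definition of $\wt{\mathscr C}_i$ we have $X,Y\in\mathscr C_i$; since $x\neq y$ are positive integers we have $X\neq Y$; and $aX+bY=n^2$. Hence the single cell $\mathscr C_i$ contains distinct $X,Y$ solving $aX+bY=n^2$, which is exactly the assertion that $ax+by=n^2$ is partition regular. I do not expect any genuine obstacle here: the whole argument is a formal reduction to Corollary~\ref{Corol1}, and the only points needing a line of verification are that the $\wt{\mathscr C}_i$ really do partition $\N$ and that distinctness of $x,y$ transfers to distinctness of $x^2,y^2$ — both of which are immediate from positivity.
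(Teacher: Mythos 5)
Your argument is correct and is exactly the paper's own proof: apply Corollary~\ref{Corol1} to the induced partition $\wt{\mathscr C}_i=\{x:x^2\in\mathscr C_i\}$ and pull the solution back via $X=x^2$, $Y=y^2$. You have merely spelled out the verification of the two small points (that the $\wt{\mathscr C}_i$ partition $\N$ and that distinctness transfers) which the paper leaves implicit.
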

Although combinatorial  tools,  Fourier analysis tools, and the circle method have been used
successfully to prove partition regularity of equations that enjoy
some linearity features (also for non-linear equations with at least
four variables), we have not found such tools adequate for the fully
non-linear setup we are interested in.
    Instead, we  found greater utility to the  recently developed toolbox of  higher order Fourier analysis
     that relies on inverse theorems for the Gowers uniformity norms and various quantitative equidistribution results
     on nilmanifolds.
    We give a summary of our  proof strategy   in the next subsections.

\subsection{Parametric reformulation}
In order to prove Theorem~\ref{th:partition-regular1} we exploit some
special features of  the solution sets of the equations involved given in parametric form.
In particular,  we have the following result that is proved in
Appendix~\ref{SS:AppNumberTheory}:
\begin{proposition} \label{prop:linearfactors}
Let the quadratic form $p$ satisfy the hypothesis of
Theorem~\ref{th:partition-regular1}. Then there exist
$\ell_0,\ell_1$ positive and $\ell_2,\ell_3$ non-negative integers
with $\ell_2\neq \ell_3$, such that for every $k,m,n\in \N$, the
integers $x=k\ell_0 m(m+\ell_1n)$ and $y=
k\ell_0(m+\ell_2n)(m+\ell_3n)$ satisfy the equation $p(x,y,z)=0$ for
some $z\in \N$.
\end{proposition}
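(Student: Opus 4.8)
The plan is to reduce the statement to a purely algebraic identity and then extract the claimed parametrization from the square-discriminant hypotheses. First I would write out, for fixed $k,n$, the equation $p(x,y,z)=0$ with $x = k\ell_0 m(m+\ell_1 n)$ and $y = k\ell_0(m+\ell_2 n)(m+\ell_3 n)$ substituted in, and regard it as a quadratic in $z$ with coefficients that are polynomials in $m$ (and $n,k$). Since $c<0$, solving for $z$ amounts to showing that the discriminant of this quadratic in $z$ is a perfect square of an integer (for a suitable choice of $\ell_0,\ell_1,\ell_2,\ell_3$ and after possibly absorbing factors into $k$), so that $z$ comes out to be an integer. Concretely, from $p(x,y,z) = cz^2 + (ex+fy)z + (ax^2+by^2+dxy) = 0$ we need $(ex+fy)^2 - 4c(ax^2+by^2+dxy)$ to be a square; expanding, this is a binary quadratic form $Q(x,y) := \Delta_1 x^2 + \Delta_3' xy + \Delta_2 y^2$ in $x,y$ whose ``diagonal'' values along $y=0$, $x=0$, $x=y$ are governed precisely by $\Delta_1 = e^2-4ac$, $\Delta_2 = f^2-4bc$, $\Delta_3 = (e+f)^2 - 4c(a+b+d)$.

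The key idea is that $\Delta_1,\Delta_2,\Delta_3$ being nonzero squares forces $Q(x,y)$ to factor nicely over $\Q$. Indeed, a binary quadratic form $Q$ in which the discriminant is such that $Q(1,0)$, $Q(0,1)$, $Q(1,1)$ are all squares should — after checking the relevant condition on its own discriminant — be a product of two rational linear forms, $Q(x,y) = \lambda(x - \alpha y)(x - \beta y)$ with $\alpha,\beta,\lambda \in \Q$, or possibly $Q$ is itself (a constant times) a square. I would set $\alpha = \ell_2/\ell_0'$-type ratios: write the two linear factors as $L_1(x,y) = \mu_1 x + \nu_1 y$, $L_2(x,y) = \mu_2 x + \nu_2 y$. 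Then the substitution we want is exactly the one that makes both $x$ and $y$, as functions of $m$, turn $Q(x,y)$ into a perfect square in $m$: choosing $x = k\ell_0 \cdot m(m+\ell_1 n)$ and $y = k\ell_0 \cdot (m+\ell_2 n)(m+\ell_3 n)$ so that $L_1(x,y)$ and $L_2(x,y)$ each become $k\ell_0$ times a product of two linear-in-$m$ factors that recombine into a square — the parameters $\ell_1,\ell_2,\ell_3$ are read off from the slopes $\alpha,\beta$ of the factorization, and $\ell_0$ is chosen to clear denominators so everything is an integer; the free factor $k$ then scales $x,y$ by $k$, which scales $Q(x,y)$ by $k^2$, preserving squareness. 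Finally $z = (-(ex+fy) \pm \sqrt{Q(x,y)})/(2c)$; one more application of clearing denominators into $\ell_0$ (and choosing the sign) makes $z$ a positive integer for all $k,m,n\in\N$, using $c<0$, $a,b>0$ to control signs and size. The condition $\ell_2 \neq \ell_3$ (and $\ell_0,\ell_1 > 0$) will come out of the fact that the two slopes $\alpha,\beta$ are distinct, which is where the \emph{nonzero} part of ``nonzero square'' is used — if $Q$ were degenerate the factors would coincide.

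The main obstacle I anticipate is the bookkeeping that turns ``$\Delta_1,\Delta_2,\Delta_3$ are squares'' into an explicit rational factorization of $Q(x,y)$ with controlled denominators, and then into \emph{integer} values of $\ell_0,\ell_1,\ell_2,\ell_3$ and of $z$ simultaneously for all $k,m,n$. In particular one must verify that the discriminant of the \emph{binary} form $Q$ (a polynomial expression in $a,b,c,d,e,f$) is automatically a square given that the three diagonal values are — this is a small piece of elementary number theory, presumably a computation showing the relevant discriminant equals something like $(\Delta_3 - \Delta_1 - \Delta_2)^2 - 4\Delta_1\Delta_2$ up to sign, and arguing it is a square (or that $Q$ splits) in each case. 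I would handle the sign and positivity of $z$ last, noting that we may enlarge $\ell_0$ and restrict to the branch of the $\pm$ that is positive; since this is the content deferred to Appendix~\ref{SS:AppNumberTheory}, I would organize the proof as: (1) derive $Q$ and the ``discriminant is a square'' fact; (2) factor $Q = L_1 L_2$ over $\Q$; (3) define $\ell_0,\ell_1,\ell_2,\ell_3$ from the factorization and check integrality and $\ell_2\neq\ell_3$; (4) verify $x,y$ as given make $Q(x,y)$ a perfect square in $m$, solve for $z$, and clear denominators.
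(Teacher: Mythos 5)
There is a genuine gap that breaks the plan at step (2). You write $Q(x,y) := (ex+fy)^2 - 4c(ax^2+by^2+dxy) = \Delta_1 x^2 + (\Delta_3-\Delta_1-\Delta_2)xy + \Delta_2 y^2$ and then bet the argument on $Q$ factoring over $\Q$ into two rational linear forms (or being a scalar times a square), with the remaining difficulty allegedly ``bookkeeping.'' But $Q$ does \emph{not} factor over $\Q$ in general — not even over $\R$. Take the paper's own example $p(x,y,z) = x^2 + y^2 - xy - z^2$ (so $a=b=1$, $c=-1$, $d=-1$, $e=f=0$): then $\Delta_1 = \Delta_2 = \Delta_3 = 4$ are nonzero squares, but $Q(x,y) = 4(x^2 - xy + y^2)$ has discriminant $(\Delta_3-\Delta_1-\Delta_2)^2 - 4\Delta_1\Delta_2 = 16 - 64 = -48 < 0$, so $Q$ is positive definite and irreducible. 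The hypotheses on $\Delta_1,\Delta_2,\Delta_3$ control the values $Q(1,0), Q(0,1), Q(1,1)$, not the discriminant of $Q$ itself, and these three constraints are far from forcing $Q$ to split. What the statement actually needs is only that the degree-$4$ form $Q(x(m,n),y(m,n))$ in $(m,n)$ be a perfect square; that can happen (and does, here $Q(x,y) = 4(m^2+mn+n^2)^2$ when $x=m(m+2n)$, $y=(m-n)(m+n)$) with $Q$ itself irreducible, via the degree-$4$ form being the square of an irreducible quadratic. Your plan gives no route to that case.

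The paper sidesteps this entirely: it never tries to factor $Q$. Instead it produces one explicit rational point $(x_0,y_0,z_0)$ on the conic $p=0$ and then applies the chord (secant-line) parametrization through that point. This yields solutions whose $x$- and $y$-coordinates are binary quadratic forms $Q_1(m,n)$, $Q_2(m,n)$ with discriminants $z_0^2\Delta_2$ and $z_0^2\Delta_1$, each a square by hypothesis, hence each factoring into rational linear forms; and the hypothesis that $\Delta_3$ is a square is used to pick $(x_0,y_0,z_0)$ so that the leading coefficients of $Q_1,Q_2$ agree, which is what lets the four linear factors be normalized to $m$, $m+\ell_1 n$, $m+\ell_2 n$, $m+\ell_3 n$ with a common $\ell_0$. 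So the three square-discriminant hypotheses are consumed in a different, coordinate-by-coordinate way; the ``discriminant of $Q$'' quantity you anticipated never enters. If you want to salvage your framing, you would need to replace step (2) by an argument producing a rational parametrization $(x(m,n),y(m,n))$ on which $Q$ is a square without assuming $Q$ splits — which, once unwound, is essentially the chord construction.

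Minor additional issues to keep in mind if you pursue a repair: (i) positivity and integrality of $z$ from $z = (-(ex+fy)\pm\sqrt{Q})/(2c)$ needs the sign of $-(ex+fy)\mp\sqrt{Q}$ controlled and $2c \mid$ numerator, which your plan defers to ``enlarge $\ell_0$'' but cannot be fixed by $\ell_0$ alone since $\ell_0$ multiplies $x,y$ linearly while $z$ picks up $\ell_0$ as well — this part does work, but it should be checked, not assumed; (ii) the condition $\ell_2\neq\ell_3$ in the paper comes from $\Delta_1\neq 0$ making $Q_2$ nondegenerate, not from distinctness of roots of $Q$.
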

For example,  the equation  $16x^2+9y^2=z^2$ is satisfied by the
integers $x=km(m+3n)$, $y=k(m+n)(m-3n)$, $z=k(5m^2+9n^2+6mn)$
(replacing $m$ with  $m+3n$ leads to coefficients  of the announced
form),
 and  the equation  $x^2+y^2-
xy=z^2$ is satisfied by the integers $x=km(m+ 2n)$, $y=
k(m-n)(m+n)$, $z=k( m^2+n^2+mn)$.

The  key properties of the patterns involved in Proposition~\ref{prop:linearfactors} are: $(a)$
they are  dilation invariant, which follows from homogeneity, $(b)$
they ``factor linearly''
which follows from our assumption that the discriminants $\Delta_1,\Delta_2$ are
squares,
 and $(c)$ the coefficient of $m$ in all forms can be taken to be $1$ which follows from our assumption that the discriminant $\Delta_3$ is a square.

Using Proposition~\ref{prop:linearfactors}, we see that
Theorem~\ref{th:partition-regular1}  is a consequence of the
following result:
\begin{theorem}[Parametric reformulation]
\label{th:partition-regular2}
Let $\ell_0,\ell_1$ be positive and $\ell_2,\ell_3$ non-negative integers
with $\ell_2\neq \ell_3$.
Then for every partition of $\N$ into
finitely many cells, there exist $k,m,n\in\N$ such that the integers
$k\ell_0m(m+\ell_1n)$ and $k\ell_0(m+\ell_2n)(m+\ell_3n)$ are
distinct and belong to  the same cell.
\end{theorem}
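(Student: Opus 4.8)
\textbf{Proof proposal for Theorem~\ref{th:partition-regular2}.}

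The plan is to reduce the statement to a density statement about multiplicative functions, following the usual ``colour $\mapsto$ density'' strategy adapted to a multiplicative setting. Suppose, for contradiction, that some finite partition $\mathscr C_1,\dots,\mathscr C_r$ of $\N$ is a counterexample. The first step is to encode the colouring by a completely multiplicative function: since the patterns $k\ell_0 m(m+\ell_1 n)$ and $k\ell_0(m+\ell_2 n)(m+\ell_3 n)$ are dilation invariant, the natural move is to pass to a colouring of $\N$ that is itself ``multiplicative'', i.e.\ determined by the residues of the exponents in the prime factorisation. Concretely, one introduces a completely multiplicative function $f\colon\N\to\{1,\zeta,\dots,\zeta^{r-1}\}$ (with $\zeta=e^{2\pi i/r}$) whose level sets refine the given partition on a sufficiently structured subset; the point of insisting on \emph{complete} multiplicativity is that the common dilation factor $k\ell_0$ then contributes a harmless global constant, so that $f(k\ell_0 m(m+\ell_1 n))=f(k\ell_0)f(m)f(m+\ell_1 n)$ and similarly for the second pattern. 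The combinatorial failure of partition regularity then translates into the assertion that, for a positive proportion of the relevant $(m,n)$, the quantity $f\bigl(m(m+\ell_1 n)\bigr)$ and $f\bigl((m+\ell_2 n)(m+\ell_3 n)\bigr)$ are ``separated'' in a way that contradicts an averaging identity.

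The heart of the argument is an averaging/positivity identity. One considers an average of the form
\[
\E_{m,n}\ \overline{f\bigl(m(m+\ell_1 n)\bigr)}\, f\bigl((m+\ell_2 n)(m+\ell_3 n)\bigr)
\]
over a suitable box, and the goal is to show it is bounded away from $0$ uniformly; a lower bound here forces $m(m+\ell_1 n)$ and $(m+\ell_2 n)(m+\ell_3 n)$ to lie in the same cell for many $(m,n)$, which (after checking the two values can be made distinct, using $\ell_2\neq\ell_3$ and choosing $k$) yields the desired monochromatic pattern and the contradiction. To analyse this average one invokes the decomposition result advertised in the abstract: $f = f_{\mathrm{s}} + f_{\mathrm{u}}$ where $f_{\mathrm{s}}$ is approximately periodic (an ``almost periodic'' / structured part, essentially coming from Dirichlet-character-like behaviour) and $f_{\mathrm{u}}$ has small $U^3$ Gowers norm. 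The four linear forms $m,\ m+\ell_1 n,\ m+\ell_2 n,\ m+\ell_3 n$ have complexity at most $3$, so by a generalized-von-Neumann estimate the contribution of any term involving $f_{\mathrm{u}}$ to the above average is negligible. One is left with the structured part, where the average becomes a genuinely computable main term: using almost periodicity one reduces to finitely many arithmetic progressions and shows the main term is positive because on each progression $f_{\mathrm{s}}$ is essentially constant, and the factorisation $m(m+\ell_1 n)$ vs.\ $(m+\ell_2 n)(m+\ell_3 n)$ together with homogeneity makes the two products congruent. In ergodic language (as the abstract suggests) this positivity is exactly a recurrence statement for the $\N$-action by dilations on the probability space carrying $f$.

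The main obstacle, and the technical core of the paper, is the decomposition theorem and the verification that the structured part actually produces a \emph{positive} main term rather than merely a controllable one. Getting $f_{\mathrm{s}}$ to be simultaneously approximately periodic \emph{and} a good enough approximation to $f$ (with the error genuinely small in $U^3$, uniformly over all multiplicative $f$ of modulus $1$) is delicate and is where the inverse theorem for the $U^3$ norm and equidistribution on nilmanifolds enter; this is presumably the content of the later sections. A secondary but real difficulty is the bookkeeping needed to pass from a colouring of $\N$ to a completely multiplicative $f$ without losing the combinatorial information — one must arrange that a failure of the partition property survives the passage to $f$ — and then the endgame of extracting \emph{distinct} $x,y$ in the same cell, where one uses the free parameter $k$ (to move into a dyadic-type range where the density statement applies) and the hypothesis $\ell_2\neq\ell_3$ (to guarantee $k\ell_0 m(m+\ell_1 n)\neq k\ell_0(m+\ell_2 n)(m+\ell_3 n)$ for appropriate choices). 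I expect the generalized von Neumann step and the complexity count for the four forms to be routine once the decomposition is in hand, so I would organise the proof as: (1) reduce to a completely multiplicative $f$; (2) set up the four-form average and reduce partition regularity to its positivity; (3) quote the decomposition, kill the uniform part via generalized von Neumann, and (4) evaluate the structured main term and conclude.
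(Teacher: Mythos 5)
Your proposal has a genuine gap at step (1), the reduction to a single completely multiplicative $f$ encoding the colouring. Completely multiplicative functions $f\colon\N\to\{1,\zeta,\dots,\zeta^{r-1}\}$ are determined by their values on primes, and their level sets are a very rigid class of subsets of $\N$; there is no way to choose such an $f$ whose level sets refine (even approximately, even on a ``structured subset'') an arbitrary finite partition of $\N$. In particular nothing in the contradiction hypothesis gives you a multiplicative structure to exploit, so the passage from ``counterexample colouring'' to ``multiplicative $f$'' does not go through, and the rest of the argument, which depends entirely on $f$ being multiplicative so that $f(k\ell_0 m(m+\ell_1 n))$ factors, never gets off the ground. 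The paper avoids this by not encoding the colouring in a single multiplicative function at all: it first passes to a cell $E$ of positive \emph{multiplicative} density (Theorem~\ref{th:density-regular}), then applies a multiplicative correspondence principle to produce a measure preserving action of $(\N,\times)$ on a probability space and a set $A$ with $\mu(A)=\dmult(E)$, and only then invokes the spectral theorem for the induced unitary $\Q^+$-action to write the recurrence quantity as $\int_\CM \chi(m)\chi(m+\ell_1 n)\overline\chi(m+\ell_2 n)\overline\chi(m+\ell_3 n)\,d\nu(\chi)$ for a \emph{positive spectral measure} $\nu$ on the compact group $\CM$ of all unit-modulus multiplicative functions. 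The object you integrate against is a measure on $\CM$, not a single $\chi\in\CM$, and this is not cosmetic: the eventual positivity of the main term is not obtained by showing a single $f$ is approximately constant on progressions, but from the ``hidden non-negativity'' (Lemma~\ref{L:HiddenPositivity}), which uses in an essential way that $\nu$ is the spectral measure of the non-negative function $\one_A$ and that the structured component is a convolution with a non-negative kernel.

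A secondary inaccuracy: a positive lower bound on $\E\,\overline{f(x)}f(y)$ with $f$ taking $r$-th root-of-unity values does not by itself force $f(x)=f(y)$ for a positive proportion of $(m,n)$; one needs to average over powers $f^j$ or, as the paper does, to go back through the correspondence principle where the quantity being bounded from below is literally $\mu(T^{-1}_{x}A\cap T^{-1}_{y}A)$. Your steps (3) and (4) are in the right spirit (Lemma~\ref{L:UnifromityEstimates2} is precisely the generalized von Neumann estimate you expect, and the paper does restrict $n$ to a subprogression to make the structured part nearly constant across the four forms), but they are built on a foundation that is not there. The role of the free parameter $k$ is also different from what you describe: it is absorbed by the dilation-invariance of multiplicative density at the outset, not used to shift into a dyadic range at the end.
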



\subsection{From partition regularity to multiplicative functions}\label{SS:link}

Much like the translation invariant case, where partition regularity
results can be deduced from corresponding density statements with
respect to a translation invariant density, we deduce
Theorem~\ref{th:partition-regular2}
  from the density regularity result of Theorem~\ref{th:density-regular} that
involves a dilation invariant density (a notion defined in
Section~\ref{SS:dildens}).

In Section~\ref{SS:actdil} we use a multiplicative version of the
correspondence principle of Furstenberg to recast
Theorem~\ref{th:density-regular}  as a recurrence property for
measure preserving actions of the multiplicative semigroup $\N$ on a
probability space (Theorem~\ref{th:recurecen1}).

In Section~\ref{SS:redpos} we   use a corollary of the spectral
theorem for unitary operators  (see identity \eqref{E:spectralo}) to
transform the recurrence result into   a
 positivity property  for an integral of averages of products of multiplicative functions
 (Theorem~\ref{th:ergo2b}).
 It is then this positivity property that we seek to prove,
and the heavy-lifting is done by a decomposition result for
multiplicative functions which is the main number theoretic result
of this article (Theorem~\ref{th:strong-average-intro}). Assuming
this result (we prove it in Sections~\ref{S:U^2}-\ref{S:U^3}), the proof of the positivity property of
Theorem~\ref{th:ergo2b} is completed in Section~\ref{SS:assuming}.
The reader will also find there
 a detailed sketch
of our proof strategy for this step. We discuss the decomposition
result next.

\subsection{Multiplicative functions and Gowers uniformity}
Our  proof of the positivity property mentioned
above necessitates that we decompose an arbitrary multiplicative
function into two components, one that we can easily control,  and
another that behaves randomly enough to have a negligible
contribution. For our purposes, 
randomness  
is  measured by the Gowers uniformity norms. Before proceeding to
the precise statement of the decomposition result we start with some
informal discussion regarding the uniformity norms and the
uniformity properties (or lack thereof) of multiplicative functions.

\subsubsection*{Gowers uniformity}
We recall the definition of the   $U^2$ and  $U^3$-Gowers uniformity
norm from \cite{G01}. Here and later, for a function $f$  defined on
a finite set $A$ we write
$$
\E_{x\in A}f(x):=\frac 1{|A|}\sum_{x\in A}f(x).
$$
\begin{definition}[Gowers uniformity norms]
Given   $ N\in\N$ and $f\colon \Z_N\to \C$, we define the
$U^2(\Z_N)$-\emph{Gowers norm}  of $f$ as follows
$$
\norm{f}_{U^{2}(\Z_N)}^4=\E_{h\in \Z_N}|\E_{n\in \Z_N}f(n+h)\cdot
\overline{f}(n)|^2\ ;
$$
and the  $U^3(\Z_N)$-\emph{Gowers norm} of $f$  as follows
$$
\norm{f}_{U^{3}(\Z_N)}^8=\E_{h_1,h_2\in \Z_N}|\E_{n\in
\Z_N}f(n+h_1+h_2)\cdot \overline{f}(n+h_1)\cdot
\overline{f}(n+h_2)\cdot f(n)|^2.
$$
\end{definition}
 In an informal way, having a small $U^2$-norm is interpreted as a
property of $U^2$-uniformity, and having a small $U^3$-norm as a
stronger property of $U^3$-uniformity. Since $\norm
f_{U^3(\Z_N)}\geq\norm f_{U^2(\Z_N)}$ for every function $f$ on
$\Z_N$, $U^3$-uniformity implies $U^2$-uniformity.

We recall that the \emph{Fourier transform} of a function $f$ on
$\Z_N$ is defined by
$$
\widehat f(\xi):=\E_{n\in\Z_N}f(n)\,\e(-n\xi/N)\ \ \text{ for }\
\xi\in\Z_N,
$$
where, as is standard, $\e(x):=\exp(2\pi ix)$.  A direct computation
gives the following identity that links the $U^2$-norm of a function
$f$ on $\Z_N$ with its Fourier coefficients:
\begin{equation}\label{E:U2formula}
\norm{f}_{U^{2}(\Z_N)}^4=\sum_{\xi\in\Z_N}|\widehat f(\xi)|^4.
\end{equation}
Using this and Parseval's identity, we see that the
$U^2$-uniformity of a function can be interpreted as the function
having small Fourier coefficients, that is, small correlation with
linear phases. We would like to stress though,
  that this property does not guarantee $U^3$-uniformity; a
function bounded by $1$  may have small  Fourier coefficients, but large
$U^3$-norm. In fact, eliminating all possible obstructions to
$U^3$-uniformity necessitates the study of correlations with all
 quadratic phases $\e(n^2\alpha+n\beta)$ and the larger class of $2$-step nilsequences
 of bounded complexity (see Theorem~\ref{th:inverse}).


\subsubsection*{Multiplicative functions} In this article, we slightly abuse
 standard terminology\footnote{The functions we call multiplicative
are often called \emph{completely multiplicative} as opposed to
functions like the M\"obius  that are  called multiplicative.}
and define multiplicative functions as follows:
\begin{definition}[Multiplicative functions] A
\emph{multiplicative function} is a function  $\chi\colon \N\to \C$
that satisfies  $\chi(mn)=\chi(m)\chi(n)$ for all $m,n\in\N$. We
denote by $\CM$ the family of multiplicative functions of modulus
$1$.
\end{definition}
Note that elements of $\CM$ are   determined by their values on the
primes.

It so happens that several multiplicative functions do not share the
strong uniformity properties of the M\"obius function established
in~\cite{GT12b}. The next example illustrates some simple but very
important obstructions to uniformity.
\begin{example}[Obstructions to uniformity]
Let $\chi\in\CM$ be defined by $\chi(2)=-1$ and $\chi(p)=1$ for
every prime $p\neq 2$. Equivalently,  $\chi(2^m(2k+1))=(-1)^m$ for
all $k,m\geq 0$. Then $\E_{1\leq n\leq N}\chi(n)= 1/3+o(1)$ and this
non-zero mean already gives an obstruction to $U^2$-uniformity. But
this is not the only obstruction. Indeed,  we have $\E_{1\leq n\leq
N}(-1)^n\chi(n)=-2/3+o(1)$,
  and this  implies that  $\chi-1/3$ does not
 have small $U^2$-norm.
\end{example}
Examples similar to the previous one show that normalized
multiplicative functions can have significant correlation with
  periodic phases and thus this is an
  obstruction to
$U^2$-uniformity that we should  take   into account.
However, it  is a non-trivial fact that plays a central role in this
article, that correlation with periodic phases are, in a sense to
be made precise later, the only obstructions
 not only to $U^2$-uniformity but also to $U^3$-uniformity of multiplicative functions.
 For $U^2$-uniformity this is already
indicated by an old result of Daboussi and Delange~\cite{DD74, DD82} which states that
if $\alpha$ is irrational, then $ \sup_{\chi\in \CM}|\E_{1\leq n\leq N}\chi(n) \ \! \e(n\alpha)|\to 0$ as $N\to \infty$. The proof of
this result was  later simplified and extended by K\'atai \cite{K86}
(for good quantitative versions of these results see \cite{BSZ12,
MV77}), and it is a simple  orthogonality criterion  obtained in
this article of K\'atai (see Lemma~\ref{lem:katai}) that is going to
be a key number theoretic input for this article.

\subsection{$U^3$-Decomposition of multiplicative functions}
\label{subsec:decomposition} We proceed now to the formal statement of
our main  decomposition result.
We are given
positive integers $\ell_1,\ell_2, \ell_3$ and these are considered
fixed through this article.
From this point on we let $\ell$ be the number
$$
\ell:=\ell_1+\ell_2+\ell_3.
$$
 Also, given $N\in \N$ we let
$$
[N]:=\{1,\ldots, N\}.
$$
It is often easier to work on a cyclic group rather  than  an
interval of integers, as this makes Fourier analysis tools more
readily available. In order to avoid roundabout issues, we introduce
the following notation. Given $N\in \N$,  we denote by $\wt{N}$ the
smallest prime that is greater than $10\ell N$. By Bertrand's
postulate we have $\wt N \leq 20\ell N$. For every multiplicative
function $\chi\in \CM$ and every $N\in\N$, we denote by
 $\chi_N$ the function on $\Z_\tN$, or on $[\tN]$, defined by
\begin{equation}
\label{eq:def-chiN}
\chi_N(n)=\begin{cases} \chi(n)& \text{if }n\in[N];\\
0&\text{otherwise.}
\end{cases}
\end{equation}
The domain of $\chi_N$ will be each time clear from the context.
Working with the truncated function $\chi\cdot \one_{[N]}$, rather
than the function $\chi$, is a technical maneuver and the reader
will not lose  much by ignoring the cutoff. We should stress  that
from this point on, Gowers norms are going to be defined and Fourier
analysis is going to happen on the group $\Z_\tN$ and not on the
group $\Z_N$.

We can now state the main decomposition result that will be used
below in the proof of the theorems of partition regularity.
 Its essence is that the restriction of an arbitrary
multiplicative function $\chi\in \CM$ on a finite interval $[N]$ can be decomposed into
three pieces, one that is approximately periodic, one that has small
$L^1$-norm, and one that has extremely small $U^3$-norm. In
addition, the structured component enjoys  some very important
features, for example, it is a convolution product of $\chi_N$
with a positive kernel that is independent of $\chi$  and its approximate period
is bounded by a constant that does not depend on $\chi$ or on $N$.

\begin{definition}
By a \emph{kernel} on $\ZN$ we mean a non-negative function with
average $1$.
\end{definition}

\begin{theorem}[Strong decomposition on average for the $U^3$-norm]
\label{th:strong-average-intro} For every positive finite measure
$\nu$ on the compact group $\CM$ of multiplicative functions having
modulus $1$,
 every function $F\colon\N \times\N \times
\R^+\to\R^+$,  every $\ve >0$,  and
every
sufficiently large  $N\in \N$, depending only on  $F$ and $\ve$,
 there exist positive integers
  $Q$ and  $R$ that are  bounded by a constant which depends only on $F$ and
  $\ve$,
  such that, for  every $\chi\in\CM$, the function $\chi_N$ admits the
 decomposition
$$
\chi_N(n)=\chi_{N,s}(n)+\chi_{N,u}(n)+\chi_{N,e}(n) \quad \text{ for
every }\  n\in\ZN,
$$
where  $\chi_{N,s}$, $\chi_{N,u}$, and
   $\chi_{N,e}$ satisfy the following properties:
\begin{enumerate}
\item
\label{it:decomU31}
\vide $\chi_{N,s}=\chi_N*\psi_{N,1}$ and  $\chi_{N,s}+\chi_{N,e}=\chi_N*\psi_{N,2}$,
where  $\psi_{N,1}$ and $\psi_{N,2}$ are kernels on $\Z_\tN$ that do not depend on $\chi$, and
  the convolution product is  defined in $\ZN$;

\item\label{it:decompU32}
\vide $\displaystyle |\chi_{N,s}(n+Q)-\chi_{N,s}(n)|\leq \frac R\tN$
for every $n\in\ZN$,  where  $n+Q$ is taken $\!\!\! \mod \tN$;
\item
\label{it:decomU33}
\vide $\displaystyle \norm{\chi_{N,u}}_{U^3(\ZN)}\leq\frac 1{F(Q, R,\ve)}$;
\item
\label{it:decomU34}
\vide $ \E_{n\in \ZN} \int_\CM |\chi_{N,e}(n)|\,d\nu(\chi)\leq\ve$.
\end{enumerate}
\end{theorem}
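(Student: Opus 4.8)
The plan is to produce the decomposition from three ingredients: a soft functional-analytic ``structure versus randomness'' splitting relative to the $U^3(\ZN)$-norm, of the type coming from the Hahn--Banach / energy-increment arguments behind the structure theorems of higher order Fourier analysis; the inverse theorem for the $U^3$-norm, Theorem~\ref{th:inverse}, which turns any failure of $U^3$-uniformity into a correlation with a $2$-step nilsequence of bounded complexity; and the number-theoretic heart of the matter, namely that a multiplicative function can correlate only with the \emph{periodic} obstructions among these nilsequences, with period bounded in terms of the complexity --- this is where K\'atai's orthogonality criterion (Lemma~\ref{lem:katai}) and quantitative equidistribution on nilmanifolds come in. It is convenient to run the same scheme first with $U^2$ in place of $U^3$ (where Daboussi's theorem, in the quantitative form underlying Lemma~\ref{lem:katai}, says that the only Fourier obstructions of a multiplicative function live on major arcs of bounded denominator): this abelian warm-up already yields the coarse kernel $\psi_{N,1}$ and the bounded period $Q$, and it shows how to realise a periodic obstruction of period $Q$ as $\chi_N$ convolved with a fixed Fej\'er-type kernel at scale $\sim\tN/Q$, nonnegative and of average $1$, for which $\chi_N*\psi_{N,1}$ automatically satisfies the Lipschitz estimate~(ii) with $R$ bounded in terms of $Q$.

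\emph{Step 1 (the energy increment).} Fix $\nu$, $F$, $\ve$, and postpone to the end the choice of the auxiliary parameters (a smallness $\delta$, a complexity bound, the scales of the two kernels) as functions of $F$ and $\ve$; the mutual dependency between these parameters and the threshold $1/F(Q,R,\ve)$ is resolved by a routine fixed-point argument once it is observed that $Q$ and the number of steps below are bounded in terms of the preceding data. Starting from $\chi_N$, run the following loop: while the current residual $g$ satisfies $\norm{g}_{U^3(\ZN)}>1/F(Q,R,\ve)$, use Theorem~\ref{th:inverse} to find a $2$-step nilsequence $\phi$ of bounded complexity with $|\E_{n\in\ZN}g(n)\overline{\phi(n)}|$ bounded below, and subtract from $g$ a suitable multiple of the \emph{periodic part} of $\phi$ (defined next); otherwise stop. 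By compactness of $\CM$ and of the finitely many nilmanifolds involved, one may use the same data for all $\chi\in\CM$.

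\emph{Step 2 (only periodic pieces are peeled).} Given a $2$-step nilsequence $\phi$ of complexity at most $C$, the quantitative factorisation / equidistribution theory on nilmanifolds (as in Green--Tao's treatment of the M\"obius function, \cite{GT12b}) lets us write $\phi=p+\phi'$, up to an $L^\infty$-error we control, where $p$ is approximately periodic with period bounded in terms of $C$ and $\phi'$ arises from an orbit that equidistributes on a subnilmanifold. Two orthogonality facts then hold: $\phi'$ is (nearly) orthogonal to every periodic function of bounded period, by equidistribution; and $\phi'$ is (nearly) orthogonal to $\chi_N$ itself, uniformly over $\chi\in\CM$ --- indeed $\phi'$ satisfies the hypothesis of K\'atai's criterion, since its dilates by distinct primes decorrelate (the associated product orbit still equidistributes), so Lemma~\ref{lem:katai} gives $\E_{n\in\ZN}\chi_N(n)\overline{\phi'(n)}\to 0$ as $N\to\infty$. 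As the pieces peeled so far form a bounded combination of periodic functions of bounded period, these two facts force the residual's correlation with $\phi'$ to be negligible; hence the residual correlates, above half the threshold, with the periodic function $p$, and subtracting a multiple of $p$ decreases $\norm{g}_{L^2(\ZN)}^2$ by a definite amount. Since $\norm{\chi_N}_{L^2(\ZN)}^2=N/\tN$ is bounded, the loop terminates after boundedly many steps, producing $\chi_N=\pi_\chi+\chi_{N,u}+\rho_\chi$ with $\pi_\chi$ a bounded combination of periodic functions of bounded period (hence itself approximately $Q$-periodic for some bounded $Q$), $\norm{\chi_{N,u}}_{U^3(\ZN)}\le 1/F(Q,R,\ve)$, and $\norm{\rho_\chi}_{L^2(\ZN)}$ as small as we wish.

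\emph{Step 3 (convolution form, assembly, and the main obstacle).} It remains to replace $\pi_\chi$ by $\chi_N*\psi_{N,1}$ for a fixed kernel $\psi_{N,1}$. As in the $U^2$ warm-up, a periodic function of bounded period that correlates with $\chi_N$ is, up to an error small in $L^1$ on $\nu$-average, recovered by convolving $\chi_N$ against a suitable fixed Fej\'er-type kernel; taking a finer kernel $\psi_{N,2}$ refining $\psi_{N,1}$ whose pass-band contains every frequency not already discarded into $\chi_{N,u}$, one sets $\chi_{N,s}:=\chi_N*\psi_{N,1}$, $\chi_{N,e}:=\chi_N*(\psi_{N,2}-\psi_{N,1})$ and $\chi_{N,u}:=\chi_N-\chi_N*\psi_{N,2}$. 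Then (i) and (ii) are built in, (iii) holds because $\chi_N-\chi_N*\psi_{N,2}$ carries only the $U^3$-uniform remainder of Step~2, and (iv) collects the two $L^1$-on-$\nu$-average errors: the approximation errors from replacing $\pi_\chi$ and $\rho_\chi$ by a convolution, and the medium-frequency band $\chi_N*(\psi_{N,2}-\psi_{N,1})$, whose smallness on $\nu$-average is again a Hal\'asz / Montgomery--Vaughan-type mean value estimate fed by Lemma~\ref{lem:katai}; on the $\nu$-small set of $\chi$ where these fail one uses the trivial bound. The main obstacle is Step~2: one needs the nilmanifold equidistribution input in genuinely quantitative form, with the dependence of the rational period and of the equidistribution rate on the complexity made explicit, so that K\'atai's criterion can be applied \emph{uniformly} over all $\chi\in\CM$ and over the entire bounded-complexity family of nilsequences, yielding a period $Q$ and a quantity $R$ depending only on $F$ and $\ve$; a secondary difficulty is the bookkeeping that forces the structured and error parts into the convolution form of (i) while keeping the kernels nonnegative, $\chi$-independent, of bounded scale, and compatible with the truncation to $[N]$ and the transfer to $\ZN$.
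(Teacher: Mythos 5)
Your proposal runs the whole argument in one pass --- iterate the $U^3$-inverse theorem via a Hahn--Banach/energy-increment loop, peel off periodic obstructions, and then try to force the result into the required convolution form. This is \emph{not} the paper's route, and the paper explicitly disclaims it: the introduction to Section~\ref{S:U^3} emphasizes that the proof of Theorem~\ref{th:strong-average-intro} ``avoids the use of finitary ergodic theory and the use of the Hahn-Banach theorem.'' What the paper actually does is prove a \emph{weak} version first (Theorem~\ref{th:Decomposition-weakU3-intro}), in which for any target $U^3$-tolerance there is a single, $\chi$-independent Fej\'er-type kernel $\phi_{N,\theta}$ (built in Section~\ref{subsec:kernels}) so that $\chi_{N,s}:=\chi_N*\phi_{N,\theta}$ is approximately periodic and $\chi_N-\chi_{N,s}$ is $U^3$-small. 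The strong version is then derived by running this weak theorem with a nested chain of tolerances $\theta_1>\theta_2>\cdots>\theta_{J+1}$ and exploiting the \emph{monotonicity} of the kernels' Fourier coefficients, Property~\eqref{eq:phi-increases}: the Pythagorean identity on $\ZN$ gives a telescoping bound $\sum_{j}\int_\CM\norm{\chi_N*(\phi_{N,\theta_{j+1}}-\phi_{N,\theta_j})}_{L^2}^2\,d\nu\le 2$, and pigeonhole over $j$ picks one gap where the middle band is small in $L^2$ on $\nu$-average. Setting $\psi_{N,1}:=\phi_{N,\theta_{j_0}}$ and $\psi_{N,2}:=\phi_{N,\theta_{j_0+1}}$ then yields all four properties at once.

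There are concrete gaps in your scheme as written. First, Property~\eqref{it:decomU31} requires $\psi_{N,1},\psi_{N,2}$ to be nonnegative kernels that are \emph{the same for every} $\chi\in\CM$; your loop produces a $\chi$-dependent pile $\pi_\chi$ of periodic pieces, and the assertion that ``by compactness of $\CM$ one may use the same data for all $\chi$'' is not an argument --- the whole design of the paper (explicit universal kernels, monotonicity in $\theta$) exists precisely to make this true. Second, your Step~3 attributes the smallness of $\E_n\int|\chi_{N,e}(n)|\,d\nu$ to a Hal\'asz / Montgomery--Vaughan mean-value estimate; this is the wrong mechanism. There is no number-theoretic input at this stage in the paper: the bound is a soft $L^2$ pigeonhole over the choice of scale $j_0$, and it is genuinely only an average-in-$\nu$ statement (the authors note in Remark~3 that they do not know whether a $\chi$-uniform bound holds). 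Third, the circularity between $Q,R$ and the stopping threshold $1/F(Q,R,\ve)$ is not ``routine''; the paper resolves it by the specific induction in which the $(Q_j,R_j)$ produced at stage $j$ feed the tolerance used at stage $j+1$, and the final $Q,R$ are the ones indexed by the pigeonholed $j_0$. Your high-level description of Step~2 (inverse theorem $+$ factorization $+$ K\'atai) does match the proof of the \emph{weak} Theorem~\ref{th:Decomposition-weakU3-intro}, but you have conflated that layer with the present statement, and the ingredient you are missing --- the nested-kernel/pigeonhole bridge from weak to strong --- is exactly what this proof is.
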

\subsubsection*{Remarks}
(1) For arbitrary bounded sequences, decomposition results with
similar flavor have been proved in \cite{G10, GW11, GT10,
  Sz12, T06}, but working in this generality necessitates the
use of structured components that  do  not  satisfy the strong
rigidity condition of  Property \eqref{it:weakU3-2}.
 An additional important feature of our result
  is  that the structured component  is defined by a
convolution product with a kernel that is independent of $\chi$. All
these properties
play an important role in the derivation of the combinatorial result in
Section~\ref{S:Recurrence}.

Extensions of Theorem~\ref{th:strong-average-intro}
that give information about the $U^k$-norm of the component $\chi_{N,u}$ for $k\geq 4$
appear to be significantly more complicated, and as they are not needed for the purposes
of this article will not be established here.

(2)
We only plan to use this theorem  for the function $F(x,y,z)=c
x^2y^2/z^4$ where $c$ is a constant that depends on $\ell$ only.
Restricting the statement to this function  does not simplify
our proof though.

(3)
 The bound~\eqref{it:decomU34} is not  uniform in $\chi$ as in part~\eqref{it:weakU3_3}
of Theorem~\ref{th:Decomposition-weakU3-intro} below. We do not know if
this weakening on the bound is needed or is an artifact of our
proof.

(4)
It is a consequence of Property~\eqref{it:decomU31} that for fixed $F,N,  \ve, \nu$, the
maps $\chi\mapsto \chi_{s}, \chi\mapsto\chi_{u}, \chi\mapsto
\chi_{e}$ are continuous, and
 $\norm{\chi_s}_{L^\infty(\ZN)}\leq 1$,
 $\norm{\chi_u}_{L^\infty(\ZN)}\leq 2$,  $\norm{\chi_e}_{L^\infty(\ZN)}\leq 2$.


\medskip
Most of the work in the proof of
Theorem~\ref{th:strong-average-intro} goes into verifying the
decomposition result of Theorem~\ref{th:Decomposition-weakU3-intro}
that gives weaker bounds on the uniform component of the
decomposition. Two ideas that play a prominent role in its proof,
roughly speaking, are:

(a)   A multiplicative function that has $U^2$-norm bounded away
from zero correlates with a linear  phase that has frequency close
to a rational with small denominator.

(b)  A multiplicative function that has $U^3$-norm bounded away
from zero necessarily has $U^2$-norm bounded away from zero.

The proof of (a) uses classical  Fourier analysis tools and  is
given in Section~\ref{S:U^2}. The key number theoretic input is the
orthogonality criterion of K\'atai stated in Lemma~\ref{lem:katai}.

  The proof of (b) is much harder and is done in
several steps using higher order Fourier analysis machinery. In
Section~\ref{S:CorrelationNil} we combine K\'atai's criterion with a
quantitative equidistribution result on nilmanifolds  of Green and
Tao (Theorem~\ref{th:Leibman}) to
 study the correlation of multiplicative functions with
 nilsequences. These results  are then combined
 in Section~\ref{S:U^3}
with modifications of  the $U^3$-inverse theorem  (Theorem~\ref{th:inverse}) and
the  factorization result
  (Theorem~\ref{th:factorization}) of Green and Tao, to
conclude the proof of the weak decomposition result of
Theorem~\ref{th:Decomposition-weakU3-intro}. We defer the reader to
Section~\ref{subsec:prelim} for a more detailed sketch of our proof
strategy.

Upon proving the weak decomposition result of
Theorem~\ref{th:Decomposition-weakU3-intro}, the proof of
Theorem~\ref{th:strong-average-intro}  consists of a Fourier
analysis energy increment argument, and  avoids the use of finitary
ergodic theory  and  the use of the Hahn-Banach theorem, tools that
are typically used  for other decomposition results (see \cite{G10,
GW11,GW11b, GT10, T06}).

\subsection{Further directions}
Theorem~\ref{th:partition-regular1} establishes that the equation
\begin{equation}\label{eq:basic1}
ax^2+by^2=cn^2
\end{equation}
 is partition regular
 provided that all three integers
  $ac, bc$, $(a+b)c$,  are non-zero squares.
Two interesting
cases, not covered by the previous result,  are the following:
\begin{problem}\label{prob1}
Are the equations $x^2+y^2=n^2$ and $x^2+y^2=2n^2$ partition
regular?\footnote{Note that the equation $x^2+y^2=3n^2$ does not have solutions in $\N$.
Furthermore, the equation $x^2+y^2=5n^2$  has solutions in $\N$ but it is not
partition regular. Indeed, if we partition the integers in $6$ cells
according to whether their first non-zero digit in the $7$-adic
expansion is $1,2,\ldots, 6$, it turns out  that for every
$n\in \N$ the equation has no solution on any single partition
cell.}
\end{problem}
 Let us explain why we cannot yet handle these equations using the methods
 of this article. The equation
$x^2+y^2=2z^2$ has the following solutions: $x=k(m^2-n^2+2mn)$,
$y=k(m^2-n^2-2mn)$, $z=k(m^2+n^2)$, where $k,m,n\in \Z$. The values
of $x$ and $y$ do not factor in linear terms, which leads to the
major obstacle of not being able to establish uniformity estimates
analogous to the ones stated in Lemma~\ref{L:UnifromityEstimates2}.
The equation $x^2+y^2=z^2$ has the following solutions:
$x=k(m^2-n^2)$, $y=2kmn$, $z=k(m^2+n^2)$. In this case, it is
possible to  establish the needed uniformity estimates but we are
not able to carry out the argument of Section~\ref{SS:assuming}
 in order to prove the relevant positivity property (see footnote~\ref{foot3} below for more details).

 A set $E\subset\N$ has \emph{positive
\emph{(additive)} upper density} if $\limsup_{N\to\infty}|E\cap [N]|/N>0$.
It turns out that the equations of Corollary~\ref{Corol1}
 have non-trivial solutions on
every
 infinite arithmetic progression, making the following statement  plausible:
\begin{problem}
Does every set $E\subset \N$ with positive density contain distinct
$x,y\in \N$ that satisfy  the equation $16x^2+9y^2=n^2$ for some $n\in\N$?
\end{problem}
We say that the equation $p(x,y,z)=0$, $p\in \Z[x,y,z]$,  has
\emph{no local obstructions} if for every infinite arithmetic progression $P$, there exist distinct $x,y,z\in P$ that satisfy the equation. For example, the equations $x^2+y^2=2z^2$ and $16x^2+9y^2=25z^2$
 have no local obstructions.
\begin{problem}
Let $p\in \Z[x,y,z]$ be
 a homogeneous quadratic form and suppose that the equation $p(x,y,z)=0$ has no local obstructions. Is it true that
every subset  of $\N$ of positive  density contains distinct $x,y,z$ that satisfy the equation?
\end{problem}
As we mentioned before,  there are no values of $a,b,c\in \N$  for
which  the equation
\begin{equation}\label{eq:basic2}
ax^2+by^2=cz^2
\end{equation}
is known to be partition regular and  the condition ``at least one of $a,b,$ and  $a+b$
 equals  $c$'' is necessary for partition regularity.
\begin{problem}
Are there  $a,b,c\in \N$ for which  equation \eqref{eq:basic2} is
partition regular? 
\end{problem}
Notable examples are the equations $x^2+y^2=z^2$ and $x^2+y^2=2z^2$.
In \cite{GR12} it is conjectured that the second equation is
partition regular.

\subsection{Notation and conventions}
  We denote by $\N$ the set of positive integers.

 \smallskip

\noindent For $N\in \N$ we denote by $[N]$ the set $\{1,\ldots,
N\}$.

\smallskip

\noindent For a function $f$  defined on a finite set $A$ we write $
\E_{x\in A}f(x)=\frac 1{|A|}\sum_{x\in A}f(x). $

\smallskip

\noindent With $\CM$ we denote the set of multiplicative functions
$\chi\colon \N\to \C$ with modulus $1$.

\smallskip

\noindent Throughout, we assume that we are given
$\ell_1,\ell_2,\ell_3\in \N$ and we set $\ell=\ell_1+\ell_2+\ell_3$.

\smallskip
\noindent A  kernel on $\Z_N$  is a non-negative function on $\Z_N$
with average $1$.

\smallskip

\noindent For  $N\in \N$ we let $\tN$ be the smallest prime that is
larger than $10\ell N$ (then $\tN\leq 20 \ell N$).

\smallskip

\noindent Given $\chi\in \CM$ and $N\in \N$ we let $\chi_N\colon
[\tN]\to \C$ be defined by $\chi_N=\chi \cdot \one_{[N]} $. The domain
of $\chi_N$ is sometimes  thought to be $\Z_\tN$.

\smallskip  \noindent For technical reasons, throughout the
article all Fourier analysis happens on $\Z_\tN$   and all
uniformity norms are defined on $\Z_\tN$.

\smallskip
\noindent If $x$ is a real, $\e(x)$ denotes the number $\exp(2\pi i
x)$, $\norm x$ denotes the distance between $x$ and the nearest
integer, $\lfloor x\rfloor$ the largest integer smaller or equal than $x$,
and $\lceil x\rceil$ the smallest integer greater or equal than $x$.

\smallskip

\noindent Given $s\in \N$ we write $\bk=(k_1,\dots,k_s)$ for a point
of $\Z^s$ and $\norm\bk=|k_1|+\dots+|k_s|$. For
$\bu=(u_1,\dots,u_s)\in\T^s$, we write
$\bk\cdot\bu=k_1u_1+\dots+k_su_s$.

\smallskip

\noindent Let $f$ be a  function on a metric space $X$ with distance
$d$. We define
$$
\norm f_{\lip(X)}=\sup_{x\in X}|f(x)|+\sup_{\substack{x,y\in X\\
x\neq y}}\frac{|f(x)-f(y)|}{d(x,y)}.
$$

\smallskip

\noindent There is a proliferation of constants in this article and
our general principles are as follows: The constants
$\ell_1,\ell_2,\ell_3,$ are considered as fixed throughout the
article, and quantities depending only on these numbers are
considered as universal constants. The letters   $\ell_0$, $\ell$,
and  $c, c_1, c_2,...$,   are reserved for  constants of this type
independently of whether they represent small or large quantities.
Quantities that depend on one or more variables are denoted by Roman
capital letters $C, D, K,...$ if they represent large quantities,
and by low case Greek letters $\gamma, \delta, \ve,\dots$ if they
represent small quantities. It will be very clear from the context
when we deviate from these rules.

\subsection{Acknowledgements.} We would like to thank Wenbo Sun for pointing out a mistake  in
the proof of Proposition~\ref{lem:equid-square} in  an earlier  version of this article.

\section{Proof of partition regularity assuming the decomposition result.}\label{S:Recurrence}
The goal of this  section is to prove the combinatorial Theorem~\ref{th:partition-regular2} assuming
the decomposition result of Theorem~\ref{th:strong-average-intro}
(this is proved in Sections~\ref{S:U^2}-\ref{S:U^3}).
We begin by giving three successive reformulations of Theorem~\ref{th:partition-regular2}.

\subsection{Reduction to a density regularity result}\label{SS:dildens}
We first recast Theorem~\ref{th:partition-regular2} as a density regularity
statement for dilation invariant densities on the integers.
\begin{definition}[Multiplicative F\o lner sequence]
  The sequence $(\Phi_N)_{N\in\N}$ of finite subsets of $\N$  is a
\emph{multiplicative F\o lner sequence} if  for every $n\in\N$
$$
\lim_{N\to+\infty} \frac{|n\Phi_N \triangle  \Phi_N|}{|\Phi_N|}=0
$$
where
$n\Phi_N:=\{nx\colon x\in\Phi_N\}$.
\end{definition}
The sequence $(\Phi_N)_{N\in\N}$ defined by $
\Phi_N:=\{p_1^{k_1}\cdots p_N^{k_N}\colon 0\leq k_1,\ldots, k_N\leq
N\}$,
 where $p_1,p_2,\ldots$ is the sequence of primes, serves as a
 typical example.
To a given  multiplicative F\o lner sequence we
associate a notion of multiplicative density as follows:
\begin{definition}[Multiplicative density]\label{def:multdensity}
The \emph{multiplicative density} $\dmult(E)$ of a subset $E$ of
$\N$ (relatively to the multiplicative  F\o lner sequence
$(\Phi_N)$) is defined as
$$ \dmult(E):= \limsup_{N\to+\infty}
\frac{|E\cap\Phi_N|}{|\Phi_N|} .
$$
\end{definition}
We remark that the multiplicative density and the additive
density are non-comparable measures of largeness.
For
instance,   the set of odd numbers has zero multiplicative density with respect to any multiplicative  F\o lner sequence,
as has any set that omits all multiples of some positive
integer. On the other hand, it is not hard to construct sets with
multiplicative density $1$ that have additive density $0$ (see for
instance \cite{Be05}).

An important property of the multiplicative density, and the reason
we work with this notion of largeness,  is that for every $E\subset
\N$ and $n\in\N$, we have
$$
\dmult(nE)=\dmult(E)=\dmult(n\inv E), \ \ \text{ where }\ \  n\inv
E:=\{x\in\N\colon nx\in E\}.
$$

Since a  multiplicative density is clearly sub-additive,
any finite partition of $\N$ has at least one cell with positive
multiplicative density. Hence,  Theorem~\ref{th:partition-regular2} follows
from the following stronger result:
\begin{theorem}[Density regularity]
\label{th:density-regular}
 Let $\ell_0,
\ell_1,\ell_2,\ell_3$ be as in Proposition~\ref{prop:linearfactors}.
Let  $E\subset\N$ be a set with  positive multiplicative density.
Then there exist $k,m,n\in\N$ such that the integers $k\ell_0 m(m+\ell_1n)$ and
$k\ell_0 (m+\ell_2n)(m+\ell_3n)$ are distinct and belong to $E$.
\end{theorem}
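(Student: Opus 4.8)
The plan is to run the reduction scheme announced in Sections~\ref{SS:actdil}--\ref{SS:assuming}: pass from the density statement to a multiplicative ergodic recurrence statement, then to a positivity property for integral averages of products of multiplicative functions, and finally prove that positivity property using the $U^3$-decomposition of Theorem~\ref{th:strong-average-intro}. For the first step I would feed $E$ into the multiplicative version of Furstenberg's correspondence principle, producing a probability space $(X,\mathcal X,\mu)$, a measure preserving action $(T_n)_{n\in\N}$ of the multiplicative semigroup $(\N,\times)$, and a set $A$ with $\mu(A)=\dmult(E)>0$ such that $\dmult(n_1\inv E\cap n_2\inv E)\ge \mu(T_{n_1}\inv A\cap T_{n_2}\inv A)$ for all $n_1,n_2\in\N$. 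Taking $n_1=\ell_0 m(m+\ell_1n)$ and $n_2=\ell_0(m+\ell_2n)(m+\ell_3n)$, it then suffices to find $m,n\in\N$ for which these two integers are distinct and $\mu(T_{n_1}\inv A\cap T_{n_2}\inv A)>0$, since then $n_1\inv E\cap n_2\inv E$ is non-empty and any $k$ in it gives the desired configuration. To produce such $m,n$ I would establish the quantitative recurrence statement (this is Theorem~\ref{th:recurecen1})
$$\liminf_{N\to\infty}\ \E_{m,n\in[N]}\ \mu\big(T_{\ell_0m(m+\ell_1n)}\inv A\cap T_{\ell_0(m+\ell_2n)(m+\ell_3n)}\inv A\big)>0;$$
because $m(m+\ell_1n)$ and $(m+\ell_2n)(m+\ell_3n)$ are distinct polynomials, the pairs where the two integers coincide lie on a line through the origin, hence form a density-zero subset of $[N]^2$, so a positive lower bound on the average forces a good pair to exist.

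For the second step, note that the Koopman operators $U_nf:=f\circ T_n$ form a commuting family of isometries of $L^2(\mu)$, and since $(\N,\times)$ sits inside the group $(\Q^+,\times)$ whose dual is exactly the compact group $\CM$, a Herglotz--Bochner type corollary of the spectral theorem (the identity~\eqref{E:spectralo}) yields a positive finite measure $\sigma$ on $\CM$ with $\mu(T_a\inv A\cap T_b\inv A)=\int_\CM \chi(a)\,\overline{\chi(b)}\,d\sigma(\chi)$ for all $a,b\in\N$, where $\sigma(\CM)=\mu(A)$ and, by the multiplicative mean ergodic theorem, $\sigma(\{\one\})$ equals the squared $L^2$-norm of the projection of $\one_A$ onto the $(T_n)$-invariant functions; in particular $\sigma(\{\one\})\ge\mu(A)^2>0$. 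Since every $\chi\in\CM$ is completely multiplicative the factors $\chi(\ell_0)$ cancel, and after Fubini the recurrence statement reduces to the positivity property (this is Theorem~\ref{th:ergo2b})
$$\liminf_{N\to\infty}\int_\CM\Big(\E_{m,n\in[N]}\chi(m)\,\chi(m+\ell_1n)\,\overline{\chi(m+\ell_2n)}\,\overline{\chi(m+\ell_3n)}\Big)\,d\sigma(\chi)>0.$$

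For the third step I would pass to the truncations $\chi_N$, which costs only $o(1)$ since the forms $m+\ell_in$ stay inside $[\tN]$ when $m,n\in[N]$, and then apply Theorem~\ref{th:strong-average-intro} with $\nu=\sigma$, with the function $F(x,y,z)=cx^2y^2/z^4$ for a constant $c=c(\ell)$ to be chosen, and with $\ve$ small, obtaining $\chi_N=\chi_{N,s}+\chi_{N,u}+\chi_{N,e}$ with $Q,R$ bounded in terms of $F$ and $\ve$. Expanding the four-fold product gives $3^4$ terms. Any term carrying a factor $\chi_{N,u}$ is negligible by the uniformity estimates of Lemma~\ref{L:UnifromityEstimates2}: after merging repeated forms among $m,m+\ell_1n,m+\ell_2n,m+\ell_3n$ one is left with a non-degenerate system of linear forms in two variables of complexity at most $2$, so such an average is $O(\norm{\chi_{N,u}}_{U^3(\ZN)})=O(1/F(Q,R,\ve))$, which the choice of $F$ makes tiny. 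Any term carrying a factor $\chi_{N,e}$ but no $\chi_{N,u}$ is, after bounding the remaining factors by $2$ (Remark~(4)) and a linear change of variables, at most $O(\ell)\,\E_{n\in\ZN}\int_\CM|\chi_{N,e}(n)|\,d\sigma(\chi)\le O(\ell)\,\ve$. The only term left is the all-structured one,
$$\int_\CM\Big(\E_{m,n\in[N]}\chi_{N,s}(m)\,\chi_{N,s}(m+\ell_1n)\,\overline{\chi_{N,s}(m+\ell_2n)}\,\overline{\chi_{N,s}(m+\ell_3n)}\Big)\,d\sigma(\chi),$$
and this is the main term: since $\chi_{N,s}=\chi_N*\psi_{N,1}$ is $(Q,R)$-almost periodic with $Q$ bounded, the inner average depends, up to $o(1)$, only on the values of $\chi_{N,s}$ on residues modulo $Q$; on that finite model the four linear factors recombine into the two quadratic forms $m(m+\ell_1n)$ and $(m+\ell_2n)(m+\ell_3n)$ modulo $Q$, and, using that $\chi_{N,s}$ is obtained from $\chi_N$ by convolution with one fixed positive kernel, the term takes the shape $\int_\CM\chi(a)\overline{\chi(b)}\,d\sigma(\chi)$ averaged over pairs $(a,b)$ coming from the finite ``mod $Q$'' system — hence $\ge0$ — plus the genuinely positive contribution $c_\ell\,\sigma(\{\one\})\ge c_\ell\,\mu(A)^2$ of the trivial multiplicative function, where $c_\ell>0$ is the density in $[N]^2$ of the pairs $(m,n)$ with all four forms in $[N]$. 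Fixing $c$ and $\ve$ so that the uniform and error terms stay below $\tfrac12 c_\ell\mu(A)^2$ then closes the argument.

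The hard part will be the lower bound for the all-structured term, and this is the only place where the special shape of the equation is used: it requires both that all four linear forms be of the type $m+\ell_i n$ with $\ell_i\ge0$ — so that the recombination into the quadratic products modulo $Q$ is available — and the strong structural features guaranteed by Theorem~\ref{th:strong-average-intro}, namely that $\chi_{N,s}$ is a convolution of $\chi_N$ with a single positive kernel independent of $\chi$ and has a uniformly bounded approximate period. This is exactly the step that breaks down for $x^2+y^2=z^2$, whose parametrization has one form equal to $n$ rather than $m+\ell_1 n$. A secondary but real nuisance is the quantitative bookkeeping: $F$ and $\ve$, hence $Q$ and $R$, must be pinned down before $N$ so that all $3^4$ error terms are controlled simultaneously against the positive main term.
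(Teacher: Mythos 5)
Your reduction chain --- multiplicative correspondence principle, spectral theorem identity~\eqref{E:spectralo}, positivity property for integrals over $\CM$, the $U^3$-decomposition of Theorem~\ref{th:strong-average-intro} --- is the paper's chain, and your identification of the key structural features ($\chi_{N,s}$ a convolution of $\chi_N$ with one positive kernel, uniformly bounded approximate period, the forms being $m+\ell_i n$) is on target. The gap is in the final positivity argument, and it is exactly the pitfall the paper warns against in a footnote in Section~\ref{SS:assuming}: you eliminate the error term $\chi_{N,e}$ (bounding all $\chi_e$-bearing terms in the $3^4$ expansion by $O(\ell)\ve$) \emph{before} restricting $n$ to a sub-progression, and then claim that the remaining all-structured term is bounded below by $c_\ell\,\sigma(\{\one\})$ with $c_\ell>0$ depending only on $\ell$. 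That lower bound is not available. Lemma~\ref{L:HiddenPositivity} gives that the all-structured term is $\ge 0$, but you cannot peel off the $\chi=\one$ contribution and hope the remainder is still non-negative, because for $\chi\neq\one$ the inner average $\E_{m,n}\chi_s(m)\chi_s(m+\ell_1 n)\overline{\chi_s}(m+\ell_2 n)\overline{\chi_s}(m+\ell_3 n)$ need not be non-negative. The only way to extract a definite positive contribution is to restrict to $n=Qk$, $k\le\eta N$ with $\eta=\ve/(QR)$, where approximate periodicity turns the integrand into $|\chi_s(m)|^4+O(\ve/Q)$, which is non-negative for \emph{every} $\chi$ and hence permits dropping to $\nu(\{\one\})$. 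But the restriction costs the factor $\eta$: the genuine lower bound for the all-structured term is of size $\eta\,\delta^2\asymp\frac{\ve\delta^2}{QR}$, which is \emph{smaller} than your $\chi_e$-error $O(\ell)\ve$ by the factor $\delta^2/(QR)$. Since $Q$ and $R$ depend on $\ve$ and $F$ and can be large, the main term does not dominate and the argument fails to close.

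The fix, carried out in Section~\ref{SS:assuming}, is to reverse the order: apply Lemma~\ref{L:HiddenPositivity} to $\chi_{s,e}=\chi_s+\chi_e$ (this is why Property~\eqref{it:decomU31} exhibits $\chi_{N,s}+\chi_{N,e}$ itself as a convolution of $\chi_N$ with a positive kernel $\psi_{N,2}$), so that one may restrict $n$ to the sub-progression \emph{while $\chi_e$ is still present}, obtaining $A_1(N)\ge\frac{\eta}{40\ell}A_2(N)$. Only then is $\chi_e$ removed from the normalized sub-progression average $A_2$, incurring an error $4\ve$ that is measured relative to $A_2$ rather than to $A(N)$ and therefore gets scaled by the same factor $\eta$ as the main term; both the positive contribution $\delta^2/(20^4\ell^4)$ and the error $7\ve$ then sit at the same scale, and choosing $\ve=c_3\delta^2$ small makes the difference positive. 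Your handling of the $\chi_u$-terms via Lemma~\ref{L:UnifromityEstimates2} is otherwise as in the paper, though note that the lemma gives a bound $O(\norm{\chi_u}_{U^3}^{1/2})$ rather than $O(\norm{\chi_u}_{U^3})$ --- this is why $F$ carries a fourth power of $z$ and not a square --- and the degenerate cases $\ell_1=\ell_2$, $\ell_1=\ell_3$, $\ell_2\ell_3=0$ should be split off and handled by Poincar\'e recurrence, as the paper does before Theorem~\ref{th:Recurrence}.
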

In fact, we show that for a set of $(m,n)\in\N^2$ of positive
(additive) density the asserted property holds for a set of $k\in
\N$ of positive multiplicative density.
\subsection{Reduction to a recurrence results for actions by dilations}\label{SS:actdil}
 Our next goal is
to reformulate the density statement of Theorem~\ref{th:density-regular}
as a recurrence statement in ergodic theory.

\begin{definition}
An \emph{action by dilations} on a probability space $(X,\CB,\mu)$
is a family  $(T_n)_{n\in\N}$ of invertible measure preserving
transformations of $(X,\CB,\mu)$ that satisfy
$$ T_1:=\id \ \ \text{ and
}\text{for every }\ m,n\in\N,\quad T_m\circ T_n=T_{mn}. $$
\end{definition}
We remark  that an action by dilations on a probability space $(X,\CB,\mu)$
can be extended to a measure preserving action $(T_r)_{r\in\Q^+}$ of the multiplicative group $\Q^+$
by defining
$$
T_{a/b}:=T_aT_b\inv\ \ \text{ for all }\ \  a,b\in\N.
$$

We will use a multiplicative version of the (additive)
correspondence principle of Furstenberg \cite{Fu81}.
Its proof
can be found in
\cite{Be05}.
\begin{MCP}
Let $E$ be a subset of $\N$.
Then there exist an action by dilations  $(T_n)_{n\in\N}$ on a
probability space $(X,\CB, \mu)$, and a set $A\in \CB$ with
$\mu(A)=\dmult(E)$, such that  for every $k\in\N$ and for all
$n_1,n_2,\dots , n_k\in\N$, we have
$$
\dmult\bigl( n_1^{-1}E\cap  n_2^{-1}E\cap \dots\cap
n_k^{-1}E\bigr)\geq \mu(T_{n_1}^{-1}A\cap T_{n_2}^{-1}A\cap\dots\cap
T_{n_k}^{-1}A).
$$
\end{MCP}
Using this correspondence principle we can recast Theorem~\ref{th:density-regular}  as a recurrence statement in ergodic theory regarding actions by dilations.
\begin{theorem}[Recurrence]
\label{th:recurecen1}
 Let $
\ell_1,\ell_2,\ell_3$ be   as in Proposition~\ref{prop:linearfactors}.
  Let $(T_n)_{n\in\N}$ be an action by dilations on a probability
space $(X,\CB,\mu)$. Then for every $A\in \CB$ with $\mu(A)>0$,
there exist $m,n\in\N$, such that the integers $m(m+\ell_1n)$ and
$(m+\ell_2n)(m+\ell_3n)$  are distinct,  and
$$
\mu\bigl(T_{m(m+\ell_1n)}\inv A\cap
T_{(m+\ell_2n)(m+\ell_3n)}\inv A\bigr)>0.
$$
\end{theorem}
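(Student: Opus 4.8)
The plan is to deduce Theorem~\ref{th:recurecen1} from the multiplicative correspondence principle applied to the density statement of Theorem~\ref{th:density-regular}. Concretely, I would start from an action by dilations $(T_n)_{n\in\N}$ on $(X,\CB,\mu)$ and a set $A\in\CB$ with $\mu(A)>0$, and I would need to manufacture a subset $E\subset\N$ of positive multiplicative density whose combinatorial structure is governed by $A$. This is exactly the reverse direction of the correspondence principle: I would build $E$ from the orbit of a generic point, say $E:=\{n\in\N : T_n x\in A\}$ for a suitably chosen $x$, so that $\dmult(E)$ is controlled below by $\mu(A)$ via a pointwise (multiplicative) ergodic averaging argument along the multiplicative F\o lner sequence $(\Phi_N)$. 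Alternatively — and this is cleaner given what the excerpt already grants — I would simply observe that the multiplicative correspondence principle as stated produces, for \emph{any} $E$, an action and a set $A'$ with $\mu(A')=\dmult(E)$; combined with the fact (asserted right after Theorem~\ref{th:density-regular}) that the conclusion of Theorem~\ref{th:density-regular} holds for a set of $(m,n)$ of positive additive density and a set of $k$ of positive multiplicative density, one transfers the lower bound $\dmult(n_1^{-1}E\cap n_2^{-1}E)$ into the measure-theoretic inequality $\mu(T_{n_1}\inv A'\cap T_{n_2}\inv A')$.

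The key steps, in order, would be: (i) fix $(T_n)$ and $A$ with $\mu(A)>0$; (ii) construct $E\subset\N$ with $\dmult(E)\ge\mu(A)$ and with the property that $\dmult(n_1^{-1}E\cap n_2^{-1}E)\le\mu(T_{n_1}\inv A\cap T_{n_2}\inv A)$ for all $n_1,n_2\in\N$ — this is the ``dual'' correspondence, realized by letting $E$ be the return-time set of a point $x$ whose orbit equidistributes with respect to $\mu$ (such $x$ exists for a.e. $x$ by the pointwise ergodic theorem for the amenable semigroup action, using the F\o lner sequence $(\Phi_N)$); (iii) apply Theorem~\ref{th:density-regular} to $E$ to obtain $k,m,n\in\N$ with $k\ell_0 m(m+\ell_1 n)$ and $k\ell_0(m+\ell_2 n)(m+\ell_3 n)$ distinct and both in $E$; (iv) unwind the definition of $E$: $k\ell_0 m(m+\ell_1 n)\in E$ means $T_{k\ell_0 m(m+\ell_1 n)}x\in A$, and similarly for the other term, so, writing $a=m(m+\ell_1 n)$ and $b=(m+\ell_2 n)(m+\ell_3 n)$ and using the homomorphism property $T_{k\ell_0 a}=T_{k\ell_0}T_a$, the point $T_{k\ell_0}x$ lies in $T_a\inv A\cap T_b\inv A$; (v) since the conclusion of Theorem~\ref{th:density-regular} holds for a set of $k$ of positive multiplicative density, integrating (or averaging over that set of $k$) forces $\mu(T_a\inv A\cap T_b\inv A)>0$, because otherwise the set of $x$ for which $T_{k\ell_0}x\in T_a\inv A\cap T_b\inv A$ would be null for the single pair $(a,b)$, contradicting positivity of the density of good $k$; (vi) finally note $a\ne b$ because the integers $k\ell_0 a$ and $k\ell_0 b$ were distinct, giving the distinctness of $m(m+\ell_1 n)$ and $(m+\ell_2 n)(m+\ell_3 n)$ asserted in the theorem.

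I expect the main obstacle to be step (ii), the passage from the ergodic side back to a set of integers of positive multiplicative density in a way that the two-point intersections match up correctly. One has to be careful that the multiplicative F\o lner sequence $(\Phi_N)$ is genuinely F\o lner for the semigroup action, so that a pointwise ergodic theorem is available and the return-time set $E$ has the right density and intersection properties; and one must check that the ``dilation'' structure ($T_{mn}=T_mT_n$, $T_1=\id$) interacts correctly with the $k\ell_0$ prefactor, i.e. that pulling out $T_{k\ell_0}$ is legitimate and that the freedom in $k$ (positive multiplicative density) is what converts a conclusion ``for some $x$'' into a conclusion ``for a positive-measure set of points'', hence a positive-measure intersection. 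A slicker alternative that sidesteps the reverse correspondence entirely is to run the argument purely abstractly: apply the MCP in the form stated to an \emph{arbitrary} $E$ with $\dmult(E)>0$, note that Theorem~\ref{th:density-regular} with its refinement gives, for a positive-multiplicative-density set of $k$, that $k\ell_0 m(m+\ell_1 n), k\ell_0(m+\ell_2 n)(m+\ell_3 n)\in E$, hence $\dmult\bigl((m(m+\ell_1 n))\inv E\cap ((m+\ell_2 n)(m+\ell_3 n))\inv E\bigr)>0$, and then invoke the MCP inequality with $n_1=m(m+\ell_1 n)$, $n_2=(m+\ell_2 n)(m+\ell_3 n)$ to get $\mu(T_{n_1}\inv A\cap T_{n_2}\inv A)>0$ — but this proves the statement only for the \emph{particular} $(X,\mu,T,A)$ arising from the correspondence, so to get it for \emph{every} action by dilations one does, after all, need the reverse direction; hence I would commit to the reverse-correspondence route above as the actual proof.
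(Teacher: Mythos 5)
Your plan runs in the wrong direction and, within this paper's logical architecture, is circular. In the paper, Theorem~\ref{th:density-regular} is \emph{deduced from} Theorem~\ref{th:recurecen1} via the multiplicative correspondence principle (the proof immediately following the MCP box is precisely ``Proof of Theorem~\ref{th:density-regular} assuming Theorem~\ref{th:recurecen1}''). There is no independent proof of Theorem~\ref{th:density-regular} available to feed into your proposed reverse-correspondence argument; if you start from Theorem~\ref{th:density-regular}, you have silently assumed the theorem you are trying to prove. You do flag the issue that the MCP as stated only produces one particular system $(X,\mu,T,A')$, and you correctly conclude that proving the statement for \emph{every} action by dilations would require a genuine ``reverse'' correspondence via a pointwise ergodic theorem for the $\Q^+$-action along the multiplicative F\o lner sequence; but even granting the (nontrivial, unstated) details of that reverse step, the input Theorem~\ref{th:density-regular} is not available to you at this point in the development.

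What the paper actually does is simpler and goes forward, not backward: Theorem~\ref{th:recurecen1} is deduced from Theorem~\ref{th:Recurrence} (Recurrence on the average), which asserts
\[
\liminf_{N\to\infty}\,\E_{(m,n)\in\Theta_N}\;\mu\bigl(T_{m(m+\ell_1n)}\inv A\cap T_{(m+\ell_2n)(m+\ell_3n)}\inv A\bigr)>0
\]
for distinct $\ell_1,\ell_2,\ell_3$ (the degenerate cases $\ell_1=\ell_2$, $\ell_1=\ell_3$, $\ell_2\ell_3=0$ having been dispatched separately by a Poincar\'e recurrence argument applied to $T_2$). The deduction is a counting observation: $|\Theta_N|\ge c_1N^2$ for large $N$, whereas the set of pairs $(m,n)\in\Theta_N$ with $m(m+\ell_1n)=(m+\ell_2n)(m+\ell_3n)$ has cardinality at most $c_2N$, so the positive average cannot be concentrated on the degenerate pairs; hence there exists $(m,n)$ with $m(m+\ell_1n)\ne(m+\ell_2n)(m+\ell_3n)$ and $\mu(T_{m(m+\ell_1n)}\inv A\cap T_{(m+\ell_2n)(m+\ell_3n)}\inv A)>0$. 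Theorem~\ref{th:Recurrence} in turn is the output of the spectral reformulation (Theorems~\ref{th:ergo2}, \ref{th:ergo2b}) and the $U^3$-decomposition machinery (Theorem~\ref{th:strong-average-intro}), and the direction of implication is
$\text{\ref{th:Recurrence}}\Rightarrow\text{\ref{th:recurecen1}}\Rightarrow\text{\ref{th:density-regular}}\Rightarrow\text{\ref{th:partition-regular2}}$, not the reverse. Your proof would need to be replaced by this short counting argument starting from Theorem~\ref{th:Recurrence}.
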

\begin{proof}[Proof of Theorem~\ref{th:density-regular} assuming Theorem~\ref{th:recurecen1}]
Let  $E\subset  \N$  have positive multiplicative density.
Let the probability space $(X,\CB, \mu)$, the action
$(T_n)_{n\in\N}$, and the set $A\in \CB$ be associated to $E$ by the previous correspondence
principle. Let $\ell_0,\ell_1,\ell_2,\ell_3$ be as in  Proposition~\ref{prop:linearfactors}. In order to show  that there exist  integers $k,m, n\in\N$
satisfying the conclusions of Theorem~\ref{th:density-regular}, it
suffices to show that there exist   $m, n\in\N$ so that
 the integers $\ell_0m(m+\ell_1n)$ and
$\ell_0(m+\ell_2n)(m+\ell_3n)$ are distinct, and satisfy
$$
\mu\bigl(T_{\ell_0 m(m+\ell_1n)}\inv A\cap T_{\ell_0 (m+\ell_2n)(m+\ell_3n)}\inv
A\bigr)>0.
$$
Since $\mu$ is $T_{\ell_0}$-invariant, the left hand side equals
$
\mu\bigl(T_{m(m+\ell_1n)}\inv A\cap T_{(m+\ell_2n)(m+\ell_3n)}\inv A\bigr),
$
and the existence of $m,n\in \N$ satisfying the asserted properties
follows from Theorem~\ref{th:recurecen1}.
\end{proof}
 The degenerate case  where $\ell_1=\ell_2$ (and similarly if $\ell_1=\ell_3$ or $\ell_2\ell_3=0$)
is rather trivial. Indeed,
  we are then reduced to
establishing positivity for $\mu\bigl(T_{m}\inv A\cap
T_{m+\ell_3n}\inv A\bigr)$. Letting $m=\ell_3, n=2^{n'}-1$, we
further reduce matters to showing that $ \mu\bigl( A\cap T_2^{-n'}
A\bigr)>0 $ for some $n'\in \N$, and this follows from the Poincar\'e
recurrence theorem applied to  $T_2$. Therefore,   in the rest of
this article, we can and \emph{we will assume that $\ell_1,\ell_2,
\ell_3$ are distinct positive integers}. In this case, our goal is to  show that
the set of pairs $(m,n)$ satisfying the conclusion of
Theorem~\ref{th:recurecen1}  has positive (additive) density in
$\N^2$:
\begin{theorem}[Recurrence on the average]
 \label{th:Recurrence}
 Let $\ell_1,\ell_2,\ell_3\in \N$ be  distinct.
  Let $(T_n)_{n\in\N}$ be an action by dilations on a probability
space $(X,\CB,\mu)$. Then for every $A\in \CB$ with $\mu(A)>0$ we have
$$\liminf_{N\to\infty}\,
\E_{(m,n)\in\Theta_N}\;
\mu\bigl(T_{m(m+\ell_1n)}\inv A\cap
T_{(m+\ell_2n)(m+\ell_3n)}\inv A\bigr)>0
$$
where $\Theta_N=\{(m,n)\in[N]\times[N]\colon 1\leq m+\ell_in\leq N\text{ for }i=1,2,3
\}$.
\end{theorem}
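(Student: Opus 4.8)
The plan is to convert the ergodic average into an integral, over the compact group $\CM$, of a correlation average of an arbitrary multiplicative function, and then feed this into the decomposition Theorem~\ref{th:strong-average-intro}. For the conversion, extend the action by dilations to a measure preserving action $(T_r)_{r\in\Q^+}$ of the \emph{abelian} group $\Q^+$ and apply the spectral theorem to the positive-definite function $r\mapsto\langle\one_A,\one_A\circ T_r\rangle$ on $\Q^+$. Identifying the dual of $\Q^+$ with $\CM$ (a character of $\Q^+$ restricts on $\N$ to a completely multiplicative function of modulus $1$, and every such function extends uniquely), one obtains a positive finite measure $\sigma$ on $\CM$ with $\sigma(\CM)=\mu(A)$ and
\[
\mu\bigl(T_a\inv A\cap T_b\inv A\bigr)=\int_\CM\chi(a)\,\overline{\chi(b)}\,d\sigma(\chi)\qquad(a,b\in\N);
\]
taking $a=m(m+\ell_1n)$, $b=(m+\ell_2n)(m+\ell_3n)$ and using complete multiplicativity, the average in Theorem~\ref{th:Recurrence} becomes $\int_\CM\E_{(m,n)\in\Theta_N}\chi(m)\chi(m+\ell_1n)\overline{\chi(m+\ell_2n)}\;\overline{\chi(m+\ell_3n)}\,d\sigma(\chi)$, and it suffices to bound this below, for all large $N$, by a positive constant depending only on $\mu(A)$ and $\ell$.

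Since $\tN>10\ell N$, the product $\chi_N(m)\chi_N(m+\ell_1n)\overline{\chi_N(m+\ell_2n)}\;\overline{\chi_N(m+\ell_3n)}$ vanishes unless all four forms lie in $[N]$, and for $(m,n)$ in the box $[N]\times[N]$ there is no reduction modulo $\tN$; a short count shows that the contribution to the $\ZN\times\ZN$ average coming from the line $n=0$ and from the $O_\ell(N)$ pairs produced by wraparound is $O_\ell(1/N)$ after dividing by $|\Theta_N|\asymp_\ell N^2$. Hence the above average equals $\frac{\tN^2}{|\Theta_N|}\int_\CM\E_{(m,n)\in\ZN\times\ZN}\chi_N(m)\chi_N(m+\ell_1n)\overline{\chi_N(m+\ell_2n)}\;\overline{\chi_N(m+\ell_3n)}\,d\sigma(\chi)+O_\ell(1/N)$, with $\tN^2/|\Theta_N|\asymp_\ell 1$. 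Now apply Theorem~\ref{th:strong-average-intro} with $\nu=\sigma$, with $\ve>0$ small, and with $F$ a function of $(Q,R,\ve)$ depending only on $\ell$ and $\mu(A)$, to be chosen at the end; this produces bounded $Q,R$ and $\chi_N=\chi_{N,s}+\chi_{N,u}+\chi_{N,e}$. Expanding the four-fold product gives $81$ terms, which I group into: the single all-$\chi_{N,s}$ term (the \emph{main term}); terms with at least one $\chi_{N,u}$ factor and no $\chi_{N,e}$ factor; and terms with at least one $\chi_{N,e}$ factor. For the last group, since fixing the value of any one of the four (pairwise independent) forms leaves $O_\ell(N)$ pairs in $\Theta_N$ while the remaining factors are bounded by $2$, each such term is $O_\ell(1)\cdot\E_{n\in\ZN}\int_\CM|\chi_{N,e}(n)|\,d\sigma$, hence $O_\ell(\ve)$ by \eqref{it:decomU34}. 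For the middle group, the forms $m,m+\ell_1n,m+\ell_2n,m+\ell_3n$ are pairwise linearly independent, so any two of them span $\Q^2$; the system thus has Cauchy--Schwarz complexity exactly $2$, and a generalized von Neumann estimate (three applications of the Cauchy--Schwarz inequality, using boundedness of the other factors) bounds each such term by $C_\ell\norm{\chi_{N,u}}_{U^3(\ZN)}\le C_\ell/F(Q,R,\ve)$, and hence by $C(Q,R,\ell)/F(Q,R,\ve)$ after integrating $d\sigma$ and using \eqref{it:decomU33} together with $\sigma(\CM)=\mu(A)$.

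The heart of the matter is the main term. Writing $\chi_{N,s}=\chi_N*\psi_{N,1}$ with $\psi_{N,1}$ a $\chi$-independent non-negative kernel (property \eqref{it:decomU31}) and expanding the four convolutions, the main term becomes $\tfrac{\tN^2}{|\Theta_N|}$ times an average over $(m,n)$ and over $h_1,\dots,h_4\in\ZN$ of the non-negative weight $\psi_{N,1}(m-h_1)\psi_{N,1}(m+\ell_1n-h_2)\psi_{N,1}(m+\ell_2n-h_3)\psi_{N,1}(m+\ell_3n-h_4)$ multiplied by $\int_\CM\chi_N(h_1)\chi_N(h_2)\overline{\chi_N(h_3)}\,\overline{\chi_N(h_4)}\,d\sigma=\mu(T_{h_1h_2}\inv A\cap T_{h_3h_4}\inv A)$, which vanishes unless $h_1,\dots,h_4\in[N]$ and is otherwise $\ge 0$. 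Thus the main term is automatically non-negative, and the real work is the strict lower bound $\ge c(Q,R,\ell)\mu(A)$ with $c(Q,R,\ell)>0$. For this I would use that $\psi_{N,1}$ is a low-frequency kernel, with Fourier mass concentrated near the multiples of $\tN/Q$, so that $\chi_{N,s}$ is genuinely $Q$-periodic up to the error in \eqref{it:decompU32} and $\Theta_N$ equidistributes modulo $Q$; that, on the portion of $\CM$ carrying $\sigma$-mass, $\chi_{N,s}$ resembles a Dirichlet character modulo $Q$ (this is how it is built from the $U^2$-obstructions of $\chi$); and, crucially, that all four forms have leading coefficient $1$, so that they are simultaneously congruent to $m$ modulo $Q$ whenever $Q\mid n$ — it is precisely this feature that the hypothesis ``$\Delta_3$ is a square'' secures through Proposition~\ref{prop:linearfactors}, and it fails for the parametrisation attached to $x^2+y^2=z^2$. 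Isolating the resulting ``principal'' contribution then gives the sought positive lower bound.

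To conclude, choose $F$ large enough that $C(Q,R,\ell)/F(Q,R,\ve)\le\tfrac12 c(Q,R,\ell)\mu(A)$ for all $Q,R$, and then $\ve$ small enough — using that the bounded range of $Q,R$ keeps $c(Q,R,\ell)$ bounded below — that the $\chi_{N,e}$-error is $\le\tfrac14 c(Q,R,\ell)\mu(A)$; together with the $O_\ell(1/N)$ term this gives $\liminf_N(\text{target})\ge\tfrac14 c(Q,R,\ell)\mu(A)>0$. The step I expect to be the main obstacle is the strictly positive lower bound for the structured main term: non-negativity is free from the positivity of $\psi_{N,1}$ and of $\sigma$, but extracting a quantitative positive constant forces genuine use of both the multiplicative structure of $\chi$ and the leading-coefficient-$1$ shape of the forms, which is exactly the ingredient unavailable for equations such as $x^2+y^2=z^2$.
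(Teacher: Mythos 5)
Your reduction to the spectral/integral form over $\CM$ and your plan to feed it into Theorem~\ref{th:strong-average-intro} matches the paper's strategy (this is precisely Theorems~\ref{th:ergo2} and~\ref{th:ergo2b}, followed by Section~\ref{SS:assuming}). Your treatment of the $\chi_{N,u}$ terms and the observation that the all-$\chi_{N,s}$ term is non-negative by hidden positivity are both sound. But the ordering of the remaining steps is wrong in a way that is fatal.

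You bound every term containing a $\chi_{N,e}$ factor by $O_\ell(\ve)$ \emph{at the level of the full $\Theta_N$-average}, before any restriction to a sub-progression, and only afterwards try to extract a strictly positive lower bound from the all-$\chi_{N,s}$ term. The only way to exploit the approximate $Q$-periodicity \eqref{it:decompU32} is to restrict $n$ to multiples of $Q$ of size at most $\eta N$ with $\eta\sim\ve/(QR)$ (so that $|\chi_{N,s}(m+\ell_iQk)-\chi_{N,s}(m)|\leq\ell kR/\tN\lesssim\ve/Q$); this restriction captures only a $\sim\eta/Q=\ve/(Q^2R)$ fraction of $\Theta_N$, so the main-term lower bound you can actually obtain is $\sim\ve\,\delta^2/(Q^2R)$, not a $c(Q,R,\ell)\mu(A)$ that is independent of $\ve$. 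Since the $\chi_{N,e}$ error is $O(\ve)$, it swamps this. Your attempted escape --- ``the bounded range of $Q,R$ keeps $c(Q,R,\ell)$ bounded below'' --- is circular: $Q$ and $R$ are bounded only in terms of $F$ and $\ve$, and shrinking $\ve$ can push those bounds up, so decreasing $\ve$ shrinks the main term at least as fast as the error.

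The paper resolves exactly this by doing the sub-progression restriction \emph{before} discarding $\chi_{N,e}$. The key is Lemma~\ref{L:HiddenPositivity}: because $\chi_{N,s}+\chi_{N,e}=\chi_N*\psi_{N,2}$ is a convolution of $\chi_N$ with a \emph{non-negative} kernel, each summand
$$\int_\CM (\chi_N*\psi_{N,2})(m)(\chi_N*\psi_{N,2})(m+\ell_1n)\overline{(\chi_N*\psi_{N,2})}(m+\ell_2n)\overline{(\chi_N*\psi_{N,2})}(m+\ell_3n)\,d\nu$$
is non-negative, so one may drop all $(m,n)$ with $n\notin\{Qk:1\le k\le\eta N\}$ without losing the inequality (this is the passage from $A_1(N)$ to $A_2(N)$). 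Only then is the $\chi_{N,e}$ term removed, and now the bound $|A_2-A_3|\leq 4\ve$ is an error \emph{relative to the already-restricted average}, while $A_3\geq\delta^2/(20^4\ell^4)-3\ve/Q$ has a main term with no $Q,R$ dependence. Hence $\ve$ can be chosen as a constant times $\delta^2$, independently of $Q$ and $R$, and the circularity disappears. This is precisely the content of the paper's warning that one ``cannot postpone this restriction on the range of $n$ until after the term $\chi_e$ is eliminated.'' Without this reordering and without invoking Lemma~\ref{L:HiddenPositivity} to make the dropped terms non-negative, your argument does not close.

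A smaller issue: your von Neumann bound on the $\chi_{N,u}$ terms ignores the cutoff $\one_{[N]}(n)$ sitting inside the $\ZN\times\ZN$ average; handling it costs a square root, as in Lemma~\ref{L:UnifromityEstimates2}, which gives $\min_j\norm{a_j}_{U^3(\ZN)}^{1/2}$ rather than $\norm{a_j}_{U^3(\ZN)}$. This affects only the choice of $F$ and is not a structural problem.
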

\begin{remark}
\label{rem:three} In fact, we  prove more: the $\liminf$ is greater
or equal than  a positive constant that
 depends only on the measure of the set $A$.
\end{remark}
\begin{proof}[Proof of Theorem~\ref{th:recurecen1} assuming Theorem~\ref{th:Recurrence}]
It suffices to notice  that for $N$ sufficiently large we have $|\Theta_N|\geq c_1N^2$ and  the cardinality of the set of pairs $(m,n)\in\Theta_N$ that
satisfy $m(m+\ell_1n)=(m+\ell_2n)(m+\ell_3n)$ is bounded by $c_2N$ for some
constants $c_1$ and $c_2$ that depend only on $\ell_1,\ell_2, \ell_3$.
\end{proof}

\subsection{Reduction to a positivity property for multiplicative functions}\label{SS:redpos}
Next, we  show that Theorem~\ref{th:Recurrence} is equivalent to a
  positivity property for multiplicative functions.

Recall that the set $\CM$ consists of all multiplicative functions of modulus $1$. When endowed with the topology of
pointwise convergence, $\CM$ is a compact (metrizable) Abelian
group. If  $\{p_1,p_2,\dots\}$ denotes the set of primes,  then a
multiplicative function $\chi$ is determined by its
values on the primes. The map $\chi\mapsto (\chi(p_n))_{n\in \N}$ is
an isomorphism between the groups $\CM$ and $\T^\N$. The space of
multiplicative functions $\CM$ is the dual group of the
multiplicative group $\Q^+$, the duality being given by
$$
\chi(m/n)=\chi(m)\overline\chi(n) \ \ \text{ for every }\ \ \chi\in\CM
\ \text{ and every }\ m,n\in\N.
$$

Recall that an  action $(T_n)_{n\in\N}$ by dilations on a probability
space $(X,\CB,\mu)$ extends to a measure preserving action of the
multiplicative group $\Q^+$ on the same space. Since $\CM$ is the
dual group of this countable Abelian group, by the spectral theorem
for unitary operators, for every function $f\in L^2(\mu)$ there
exists a positive finite
  measure $\nu$ on the compact Abelian group $\CM$,  called the \emph{spectral measure of $f$}, such that, for all $m,n\in\N$,
\begin{equation}\label{E:spectralo}
\int_X T_mf\cdot T_n\overline f\,d\mu
=\int T_{m/n}f\cdot \overline f\,d\mu=\int_{\CM}\chi(m/n)\,d\nu(\chi)
=\int_\CM\chi(m)\, \overline\chi(n)\,d\nu(\chi).
\end{equation}

Let $A\in\CB$. Letting $f=\one_A$ in~\eqref{E:spectralo} and using
the multiplicativity of elements of $\CM$, we get for all
$m,n\in\N$ that
\begin{align*}\label{eq:spectral}
  \mu\big(T_{m(m+\ell_1n)}\inv A\cap
T_{(m+\ell_2n)(m+\ell_3n)}\inv A\big)&=\int_{\CM}
\chi\big(m(m+\ell_1n)\big)\,\overline\chi\big(m+\ell_2n)(m+\ell_3n)\big)\,d\nu(\chi)\\
&=\int_{\CM}
\chi(m)\chi(m+\ell_1n)\,\overline\chi(m+\ell_2n)\,\overline\chi(m+\ell_3n)d\nu(\chi).
\end{align*}
 From this identity we deduce   that Theorem~\ref{th:Recurrence} is equivalent to the following result:

\begin{theorem}[Spectral reformulation of recurrence result I]
\label{th:ergo2}
 Let $\ell_1,\ell_2,\ell_3\in \N$ be distinct and
$(T_n)_{n\in\N}$ be an action by dilations on a probability space
$(X,\CB,\mu)$. Then for every $A\in
\CB$ with $\mu(A)>0$, writing $\nu$ for the spectral measure of
${\bf 1}_A$, we have
\begin{equation}
\label{eq:average-integral1}
\liminf_{N\to\infty}  \int_\CM\E_{(m,n)\in\Theta_N}
\chi(m)\chi(m+\ell_1n)\,\overline\chi(m+\ell_2n)\,\overline\chi(m+\ell_3n)\,d\nu(\chi)>0
\end{equation}
where $\Theta_N$ is as  in Theorem~\ref{th:Recurrence}.
\end{theorem}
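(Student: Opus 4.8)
The statement is a reformulation of Theorem~\ref{th:Recurrence} rather than a genuinely new assertion, and the plan is to prove it by feeding the spectral identity \eqref{E:spectralo} the two composite integers $m(m+\ell_1n)$ and $(m+\ell_2n)(m+\ell_3n)$ and then using multiplicativity of the elements of $\CM$ to split the resulting phases. Essentially all of the computation has already been displayed in the paragraph preceding the statement; the only remaining point is to pass from that pointwise-in-$(m,n)$ identity to the averaged, $\liminf$ form \eqref{eq:average-integral1}.

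Concretely, I would fix $A\in\CB$ with $\mu(A)>0$, set $f=\one_A\in L^2(\mu)$, and let $\nu$ be its spectral measure on the compact group $\CM$. Then $\nu$ is a positive finite measure with total mass $\nu(\CM)=\int_\CM\chi(1)\,d\nu(\chi)=\norm{f}_{L^2(\mu)}^2=\mu(A)>0$. Applying \eqref{E:spectralo} with the integers $m(m+\ell_1n)$ and $(m+\ell_2n)(m+\ell_3n)$ in place of $m$ and $n$, and using that for $f=\one_A$ the left-hand side of \eqref{E:spectralo} is the measure of the corresponding intersection, gives
\[
\mu\bigl(T_{m(m+\ell_1n)}\inv A\cap T_{(m+\ell_2n)(m+\ell_3n)}\inv A\bigr)=\int_\CM\chi\bigl(m(m+\ell_1n)\bigr)\,\overline\chi\bigl((m+\ell_2n)(m+\ell_3n)\bigr)\,d\nu(\chi).
\]
Factoring each phase by $\chi(ab)=\chi(a)\chi(b)$ for $\chi\in\CM$ turns the integrand into $\chi(m)\chi(m+\ell_1n)\,\overline\chi(m+\ell_2n)\,\overline\chi(m+\ell_3n)$, i.e. exactly the identity recorded just before the theorem.

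It then remains to average over the finite set $\Theta_N$ and let $N\to\infty$. Since $\Theta_N$ is finite, $\E_{(m,n)\in\Theta_N}$ is a finite linear combination and commutes with integration against $\nu$; hence for every $N$ the average over $\Theta_N$ of the left-hand side above equals $\int_\CM\E_{(m,n)\in\Theta_N}\chi(m)\chi(m+\ell_1n)\,\overline\chi(m+\ell_2n)\,\overline\chi(m+\ell_3n)\,d\nu(\chi)$. Taking $\liminf_{N\to\infty}$ of both sides shows that \eqref{eq:average-integral1} holds if and only if the conclusion of Theorem~\ref{th:Recurrence} holds; in particular the two are equivalent, and by Remark~\ref{rem:three} a positive lower bound depending only on $\mu(A)$ in one of them transfers to the other. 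Thus, once \eqref{eq:average-integral1} is established, Theorem~\ref{th:Recurrence}, and hence (via the reductions of this section) Theorem~\ref{th:partition-regular2}, follows.

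I do not expect any real obstacle in this step: it is purely a change of viewpoint, and the only checks needed — finiteness of $\nu$, continuity and boundedness (by $1$) of $\chi\mapsto\chi(m)\chi(m+\ell_1n)\overline\chi(m+\ell_2n)\overline\chi(m+\ell_3n)$ on $\CM$, and the interchange of a finite average with the integral — are routine. The genuine difficulty lies entirely in the later task of proving the positivity \eqref{eq:average-integral1}. The payoff of this reformulation is that the quantity inside the integral is now an honest multiplicative-function average of the shape $\E_{(m,n)\in\Theta_N}\chi(m)\chi(m+\ell_1n)\overline\chi(m+\ell_2n)\overline\chi(m+\ell_3n)$, which can be attacked uniformly over $\chi\in\CM$ by higher-order Fourier analysis: one splits each $\chi_N$ via Theorem~\ref{th:strong-average-intro} into an approximately periodic piece (producing the main, strictly positive term), an $L^1$-small piece, and a $U^3$-uniform piece (whose contribution to the multilinear average over $\Theta_N$ tends to $0$), and then integrates the resulting estimates against $\nu$. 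That program, after a further reduction to the cleaner positivity statement of Theorem~\ref{th:ergo2b}, is what occupies the subsequent sections.
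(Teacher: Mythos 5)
Your proposal is correct and matches the paper's own treatment: Theorem~\ref{th:ergo2} is established as a reformulation of Theorem~\ref{th:Recurrence} by applying the spectral identity \eqref{E:spectralo} to the composite integers, using multiplicativity to split the phases, and exchanging the finite average over $\Theta_N$ with the integral against $\nu$ — exactly the computation the paper displays immediately before the theorem. You also correctly identify that the genuine content (positivity) is deferred to the further reformulation Theorem~\ref{th:ergo2b} and the decomposition machinery, which is how the paper organizes the argument.
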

\begin{remark}
An alternate (and arguably more natural) way to try to prove
Theorem~\ref{th:recurecen1} is to replace the additive averages
in Theorem~\ref{th:ergo2} with multiplicative ones. Upon doing this,
one is required to analyze averages of the form
$$
 \E_{m,n\in
\Phi_N}
\chi\big(m (m+\ell_1n)\big)\,\overline\chi\big((m+\ell_2n)(m+\ell_3n)\big)
$$
where $(\Phi_N)_{N\in\N}$ is a multiplicative F\o lner sequence in $\N$ and $\chi \in \CM$. Unfortunately, we were not able to
prove anything useful for these multiplicative averages,  although one suspects that a positivity property similar to the one in  \eqref{eq:average-integral1} may hold.
\end{remark}
Next, for technical reasons we   reformulate Theorem~\ref{th:ergo2} as a positivity property
involving averages over $\Z_\tN$. This is going to be the final form
of the recurrence statement that we aim to study.  Recall that $\wt
N$ was defined in Section~\ref{subsec:decomposition}
 and the functions
$\chi_N$ on $\ZN$ were defined by~\eqref{eq:def-chiN} in the same
section.

\begin{theorem}[Spectral reformulation of recurrence result II]
\label{th:ergo2b}
Let $(T_n)_{n\in\N}$ be an
action by dilations on a probability space $(X,\CB,\mu)$. Then for
every $A\in \CB$ with $\mu(A)>0$, writing $\nu$ for the spectral
measure of ${\bf 1}_A$, we have
\begin{equation}
\label{eq:average-integral1b}
\liminf_{N\to\infty}  \int_\CM\E_{m,n\in\ZN}
\one_{[N]}(n)\,
\chi_N(m)\chi_N(m+\ell_1n)\,\overline\chi_N(m+\ell_2n)\,\overline\chi_N(m+\ell_3n)\,d\nu(\chi)>0,
\end{equation}
where in the above average  the expressions $m+\ell_in$ can be considered as elements of  $\Z$ or $\ZN$ without affecting the value of the average.
\end{theorem}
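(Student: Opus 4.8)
The plan is to deduce Theorem~\ref{th:ergo2b} from Theorem~\ref{th:ergo2} by a routine but careful passage from the averaging set $\Theta_N$ over $\Z^2$ to a full average over $\Z_{\tN}\times\Z_{\tN}$ with the cutoff $\one_{[N]}(n)$. The essential observation is that $\tN$ was chosen to be a prime larger than $10\ell N$ with $\ell=\ell_1+\ell_2+\ell_3$, so that for $m,n\in[N]$ all four arguments $m,\,m+\ell_1 n,\,m+\ell_2 n,\,m+\ell_3 n$ lie in the interval $[1,\tN-1]\subset[\tN]$; hence reducing them $\!\!\mod\tN$ does not wrap around, and $\chi_N$ evaluated on the $\Z_{\tN}$-reduction of $m+\ell_i n$ coincides with $\chi(m+\ell_i n)$ exactly on the set where all these quantities are genuinely in $[N]$, and is $0$ otherwise. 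This is precisely the content of the parenthetical remark in the statement.

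First I would fix $N$ and rewrite the inner $\Z_{\tN}$-average in \eqref{eq:average-integral1b}. Summing over $m,n\in\Z_{\tN}$ with the factor $\one_{[N]}(n)$ restricts $n$ to $[N]$; then $\chi_N(m)$ forces $m\in[N]$, $\chi_N(m+\ell_1 n)$ forces $m+\ell_1 n\in[N]$, and the two conjugated factors force $m+\ell_2 n,\,m+\ell_3 n\in[N]$. So the summand vanishes unless $(m,n)$ lies in the set $\{(m,n):m,n\in[N],\ 1\le m+\ell_i n\le N\ \text{for }i=1,2,3\}$, which is exactly $\Theta_N$ after identifying $m$ with its representative in $\{1,\dots,\tN-1\}$ (and here one uses $\ell_i\ge 1$ together with $m+\ell_i n\le\ell N<\tN$ so there is no modular ambiguity — this justifies the clause that $m+\ell_i n$ "can be considered as elements of $\Z$ or $\Z_{\tN}$"). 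On $\Theta_N$ the summand equals $\chi(m)\chi(m+\ell_1 n)\overline\chi(m+\ell_2 n)\overline\chi(m+\ell_3 n)$. Therefore
$$
\E_{m,n\in\Z_{\tN}}\one_{[N]}(n)\,\chi_N(m)\chi_N(m+\ell_1 n)\overline\chi_N(m+\ell_2 n)\overline\chi_N(m+\ell_3 n)
=\frac{|\Theta_N|}{\tN^{\,2}}\,\E_{(m,n)\in\Theta_N}\chi(m)\chi(m+\ell_1 n)\overline\chi(m+\ell_2 n)\overline\chi(m+\ell_3 n).
$$
Integrating this identity against $\nu$ over $\CM$ (legitimate since $\nu$ is a finite measure and the integrand is bounded by $1$ in modulus uniformly in $\chi$ and $N$), we get that the integral appearing in \eqref{eq:average-integral1b} equals $\frac{|\Theta_N|}{\tN^{2}}$ times the integral appearing in \eqref{eq:average-integral1}.

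It remains to control the scaling factor. Since $\tN\le 20\ell N$ we have $\tN^{2}\le 400\ell^2 N^2$, and as noted in the proof of Theorem~\ref{th:recurecen1} we have $|\Theta_N|\ge c_1 N^2$ for $N$ large, with $c_1>0$ depending only on $\ell_1,\ell_2,\ell_3$; hence $|\Theta_N|/\tN^{2}\ge c_1/(400\ell^2)=:c>0$ for all large $N$. Taking $\liminf_{N\to\infty}$ and using that the quantity in \eqref{eq:average-integral1} is, by Theorem~\ref{th:ergo2}, bounded below by a positive constant for all large $N$ (and in any case the $\liminf$ of a product of a positive-bounded-below sequence with a nonnegative sequence whose $\liminf$ is positive is positive), we conclude that the $\liminf$ in \eqref{eq:average-integral1b} is strictly positive. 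Conversely, the same identity shows that positivity of \eqref{eq:average-integral1b} implies positivity of \eqref{eq:average-integral1}, so the two statements are genuinely equivalent, as asserted. I do not expect any real obstacle here: the only points requiring care are the non-wraparound bookkeeping modulo $\tN$ (handled by the choice $\tN>10\ell N$) and the uniform lower bound on $|\Theta_N|/\tN^2$ (handled by Bertrand's postulate and the explicit description of $\Theta_N$), both of which are already implicit in the setup.
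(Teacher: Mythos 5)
Your proof is correct and follows essentially the same route as the paper: rewrite the $\Theta_N$-average as $\tfrac{\tN^2}{|\Theta_N|}$ times the $\Z_{\tN}\times\Z_{\tN}$-average with the cutoff, note the non-wraparound guaranteed by $\tN>10\ell N$, and use the two-sided bounds $cN^2\le|\Theta_N|\le N^2$ and $\tN\le 20\ell N$ to conclude the equivalence with Theorem~\ref{th:ergo2}.
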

We check that Theorems~\ref{th:ergo2} and~\ref{th:ergo2b} are equivalent.
Using the definition of the set $\Theta_N$ given in Theorem~\ref{th:Recurrence}, we can rewrite the averages that appear in the statement of Theorem~\ref{th:ergo2} as follows
\begin{multline*}
\E_{(m,n)\in\Theta_N}
\chi(m)\chi(m+\ell_1n)\,\overline\chi(m+\ell_2n)\,\overline\chi(m+\ell_3n)=\\
\frac{\wt N^2}{|\Theta_N|}\;\E_{m,n\in[\wt N]} \one_{[N]}(n)\,
\chi_N(m)\chi_N(m+\ell_1n)\,\overline\chi_N(m+\ell_2n)\,\overline\chi_N(m+\ell_3n).
\end{multline*}
 The value of the last  expression remains unchanged when we replace each term $m+\ell_in$ by $m+\ell_in\bmod \wt N$.
Using this identity and that $cN^2\leq |\Theta_N|\leq N^2$ for some positive constant $c$ that depends only on $\ell$, we get the asserted equivalence.

\subsection{Some estimates involving Gowers norms}
\label{subsec:gowers}
Next we establish
two elementary estimates that will be  used in the sequel.
The first one will be used in Section~\ref{S:U^3}.
\begin{lemma}
\label{lem:U2-intervals}  Let $N$ be prime. For every  function $a\colon \Z_N\to \C$
 and for every arithmetic progression $P$ contained in the
interval $[N]$, we have
$$
 \big|\E_{n\in [N]}\one_P(n)\cdot a(n) \big|\leq
 c_1\norm{a}_{U^2(\Z_N)}
$$
for some universal constant $c_1$.
\end{lemma}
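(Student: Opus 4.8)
The plan is to express the indicator of an arithmetic progression in Fourier terms and bound the correlation directly using the Fourier-analytic formula \eqref{E:U2formula} for the $U^2$-norm. First I would reduce to the case where $P$ is an interval: any arithmetic progression $P=\{a,a+r,\dots,a+(L-1)r\}\subset[N]$ with $r$ coprime to the prime $N$ can be handled by the change of variable $n\mapsto a+rn$ on $\Z_N$, which is a bijection of $\Z_N$ that preserves the $U^2$-norm (the $U^2(\Z_N)$-norm is invariant under invertible affine changes of the variable), and which turns $\one_P$ into $\one_{[0,L-1]}$ precomposed with that map. So it suffices to prove $|\E_{n\in[N]}\one_I(n)a(n)|\le c_1\norm{a}_{U^2(\Z_N)}$ for $I$ an interval in $[N]$ (one must be slightly careful that after the affine change of variable the ``interval'' $I$ wraps around $\Z_N$, i.e. it is an interval in the cyclic sense; but the argument below only uses that $\one_I$ has a Fourier expansion with controlled $\ell^{4/3}$ mass, which is true for cyclic intervals too).

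Next I would write $\E_{n\in[N]}\one_I(n)a(n)=\sum_{\xi\in\Z_N}\widehat{\one_I}(\xi)\,\overline{\widehat{a}(-\xi)}$ — more precisely, by Parseval/Plancherel on $\Z_N$, $\E_{n}\one_I(n)\overline{g(n)}=\sum_\xi\widehat{\one_I}(\xi)\overline{\widehat g(\xi)}$, and apply this with a suitable conjugate of $a$. By Hölder with exponents $4$ and $4/3$,
$$
\Bigl|\sum_{\xi\in\Z_N}\widehat{\one_I}(\xi)\,\overline{\widehat a(\xi)}\Bigr|
\le\Bigl(\sum_{\xi}|\widehat a(\xi)|^4\Bigr)^{1/4}\Bigl(\sum_\xi|\widehat{\one_I}(\xi)|^{4/3}\Bigr)^{3/4}
=\norm{a}_{U^2(\Z_N)}\Bigl(\sum_\xi|\widehat{\one_I}(\xi)|^{4/3}\Bigr)^{3/4},
$$
using \eqref{E:U2formula}. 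So the whole lemma reduces to the uniform bound
$$
\sup_{N\ \mathrm{prime}}\ \sup_{I\subset\Z_N\ \mathrm{interval}}\ \sum_{\xi\in\Z_N}|\widehat{\one_I}(\xi)|^{4/3}\le c_1^{4/3}<\infty .
$$

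For that final estimate I would use the explicit geometric-series formula: if $|I|=L$ then $\widehat{\one_I}(\xi)=\frac1N\sum_{n}\e(-n\xi/N)$ over the $L$ consecutive $n$, whose modulus is $\frac1N\cdot\frac{|\sin(\pi L\xi/N)|}{|\sin(\pi\xi/N)|}$, hence $|\widehat{\one_I}(\xi)|\le\min\bigl(\tfrac LN,\tfrac1{2N\norm{\xi/N}}\bigr)$. Summing $|\widehat{\one_I}(\xi)|^{4/3}$: the terms with $\norm{\xi/N}\le L/N$ contribute at most $O(L)\cdot (L/N)^{4/3}=O((L/N)^{4/3}L)\le O(1)$ since $L\le N$; the terms with $\norm{\xi/N}>L/N$ contribute at most $\sum_{1\le j\le N/2}(2j)^{-4/3}=O(1)$ since the exponent $4/3>1$ makes the series convergent with a bound independent of $N$. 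Adding these gives a universal constant, completing the proof. The only mildly delicate point — and the step I would be most careful about — is bookkeeping the affine change of variables on $\Z_N$ (using primality of $N$ so that the common difference is invertible mod $N$) and checking that the wrapped ``interval'' still enjoys the same Fourier decay bound; everything else is a routine Hausdorff–Young-type computation.
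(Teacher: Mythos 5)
Your proof is correct and follows essentially the same route as the paper's: reduce to an interval via an affine change of variables on $\Z_N$ (using primality of $N$), then combine Parseval, Hölder with exponents $4$ and $4/3$, the identity \eqref{E:U2formula}, and the uniform $\ell^{4/3}$ bound on $\widehat{\one_I}$. The only cosmetic difference is that you spell out the $\ell^{4/3}$ summation in detail and worry (unnecessarily, since the image of $P$ is the genuine interval $\{0,\dots,L-1\}$) about cyclic wraparound, whereas the paper states the $\ell^{4/3}$ bound as a direct computation.
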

\begin{proof}
Since $N$ is prime, the $U^2$-norm of a function on $\Z_N$ is
invariant under any change of variables of the form $x\mapsto ax+b$,
where $a,b\in\N$  and $a\neq 0 \! \! \! \mod{N}$. By a change of variables of this
type, we are reduced to the case that $P$ is an interval
$\{0,\dots,m\}$ with $0\leq m<N$, considered as a subset of
$\Z_N$. A direct computation then shows that
$$
 |\widehat{\one_P}(\xi)|\leq
\frac{2}{N||\xi/N||}= \frac{2}{\min\{\xi,N-\xi\}}  \quad \text{for
}\ \xi=1,\ldots,N-1,
$$
and as a consequence
$$
\norm{\widehat{\one_P}(\xi)}_{l^{4/3}([N])}
\leq c_1
$$
for some universal constant $c_1$. Using this estimate, Parseval's
identity,   H\"older's inequality, and identity \eqref{E:U2formula},
we deduce that
$$
\big|\E_{n\in [N]}\one_P(n)\cdot a(n) \big|=\big|\sum_{\xi\in [N]}\widehat \one_P(\xi)\cdot \widehat a(\xi) \big|\leq
c_1\cdot\Bigl(\sum_{\xi\in[N]} |\widehat a(\xi)|^4\Bigr)^{1/4}=
c_1\,\norm a_{U^2(\Z_N)}.\qed
$$
\renewcommand{\qed}{}
\end{proof}
The next estimate is key for the proof of Theorem~\ref{th:ergo2b}.
It is the reason we seek for  a $U^3$-decomposition result in this
article.
\begin{lemma}[$U^3$-uniformity estimates]\label{L:UnifromityEstimates2}
Let  $a_i$, $i=0,1,2,3$, be functions on $\ZN$ with
$\norm{a_i}_{L^\infty(\ZN)}\leq 1$ and $\ell_1,\ell_2,\ell_3\in \N$ be distinct. Then
there exists a constant $c_2$, depending only on $\ell$, such that
 $$
\big|\E_{m,n\in\ZN}\one_{[N]}(n)\cdot a_0(m)
a_1(m+\ell_1n) a_2(m+\ell_2n)a_3(m+\ell_3n)\big| \leq c_2
\min_{0\leq j\leq 3}(\norm{a_j}_{U^3(\ZN)})^{1/2}+\frac{2}{\tN}.
$$
\end{lemma}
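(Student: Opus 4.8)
The plan is to prove the estimate by a standard sequence of van der Corput / Cauchy--Schwarz manipulations (generalized von Neumann) adapted to the linear form $(m, m+\ell_1 n, m+\ell_2 n, m+\ell_3 n)$ over $\Z_{\tN}$, exploiting that $\tN$ is prime so that invertible linear changes of variables preserve Gowers norms and that the $\ell_i$ are distinct and (by the running convention) nonzero mod $\tN$. First I would dispose of the cutoff $\one_{[N]}(n)$: since $\tN$ is prime, Lemma~\ref{lem:U2-intervals} lets me bound the difference between the average with $\one_{[N]}(n)$ and a ``clean'' average in which $n$ ranges over all of $\Z_{\tN}$ by a constant times a $U^2$-norm, hence by a constant times a $U^3$-norm; actually it is cleaner to absorb $\one_{[N]}(n)$ by writing the average as a convex combination over shifts of arithmetic progressions, or simply to note $|\one_{[N]}(n)|\le 1$ and that introducing the indicator only costs a bounded factor once one Cauchy--Schwarzes in $m$ first. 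Either way the reduction is routine and the main content is the weight-free estimate.

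Next comes the core van der Corput argument. By symmetry it suffices to produce the bound with $\norm{a_3}_{U^3}$ on the right; the cases $j=0,1,2$ follow by the same argument after an invertible affine change of variables in $(m,n)$ over $\Z_{\tN}$ that moves the chosen node to the ``$a_3$'' position (here is where primality of $\tN$ and distinctness of the $\ell_i$ are used, to guarantee the change of variables is invertible and the other coefficients remain nonzero). Substituting $m = x - \ell_3 n$ rewrites the form so that $a_3$ appears as $a_3(x)$ with argument independent of $n$; then Cauchy--Schwarz in $x$ eliminates $a_3$ at the cost of squaring, producing an average over $n, n'$ of a product of four functions built from $a_0,a_1,a_2$ evaluated at $x$ plus linear forms in $n,n'$. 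One repeats: a second Cauchy--Schwarz (in the variable dual to $a_2$, say) and a third, each time doubling the number of terms and introducing a new difference parameter, until one is left with an expression that is manifestly a ($2\ell$-scaled) average of a $U^3$-type octary product of $a_3$ — equivalently, after unwinding, bounded by $\norm{a_3}_{U^3(\Z_{\tN})}$ raised to the appropriate power. Tracking the exponents through three Cauchy--Schwarz steps (each dividing the target power by $2$ and the whole inequality being raised to the $1/8$ at the end) is exactly what yields the exponent $1/2$ in the statement, and all constants that accrue depend only on $\ell$ (through the change-of-variables normalization and the $1/\tN$ densities of the various sub-progressions).

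The main obstacle — really the only non-bookkeeping point — is organizing the three Cauchy--Schwarz steps so that (i) the variable one averages over at each step genuinely ranges over a full residue system mod $\tN$ (so no boundary losses creep in, which is why the clean weight-free average is the right object and why primality of $\tN$ matters), and (ii) the final octary product is recognized, after the correct affine substitution, as the defining average for $\norm{a_3}_{U^3(\Z_{\tN})}^8$ rather than some degenerate or lower-degree expression; this is where one must check that the relevant $2\times 2$ (and then $2\times 3$) coefficient matrices arising from $\ell_1,\ell_2,\ell_3$ are nonsingular over $\Z_{\tN}$, which holds precisely because the $\ell_i$ are distinct and $\tN > 10\ell N$ is prime so no small linear combination vanishes. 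Once that linear-algebra check is in place, the estimate follows by assembling the three steps and taking the overall $1/8$-th power, giving the claimed bound with $c_2 = c_2(\ell)$.
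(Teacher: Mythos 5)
Your proposed reduction of the cutoff $\one_{[N]}(n)$ does not work, and this is precisely where the real content of the lemma lies. You suggest either (a) bounding the difference between the average with $\one_{[N]}(n)$ and the ``clean'' average over all of $\Z_{\tN}$ via Lemma~\ref{lem:U2-intervals}, or (b) noting $|\one_{[N]}(n)|\le 1$ and Cauchy--Schwarzing in $m$ first. Neither works: for (a), those two averages are not close at all (the support $[N]$ is only a fraction $\approx 1/(20\ell)$ of $\Z_{\tN}$, so dropping the indicator changes the quantity completely, not by a small error); and Lemma~\ref{lem:U2-intervals} is a one-variable statement about $|\E_{n\in[N]}\one_P(n)a(n)|$, which does not apply to the bilinear average in $(m,n)$. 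For (b), after Cauchy--Schwarz in $m$ the indicator reappears as $\one_{[N]}(n)\one_{[N]}(n')$, which still obstructs the subsequent change of variables — one cannot simply treat it as a bounded weight and proceed with the standard Gowers--Cauchy--Schwarz chain, because that chain relies on $n$ (and the difference parameters) ranging freely over a full residue system.

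Relatedly, your explanation of the exponent $1/2$ is wrong, and this is diagnostic of the gap. The standard three-step Cauchy--Schwarz (generalized von Neumann) argument for the weight-free average
\[
\E_{m,n\in\ZN} a_0(m)\,a_1(m+\ell_1 n)\,a_2(m+\ell_2 n)\,a_3(m+\ell_3 n)
\]
gives the bound $\min_j\norm{a_j}_{U^3(\ZN)}$, with exponent $1$, not $1/2$; raising to the $8$th power and unwinding does not cost you a square root. The exponent $1/2$ in the lemma is a genuine \emph{loss} caused by the cutoff, and the mechanism that produces it is missing from your argument. What the paper does: replace $\one_{[N]}(n)$ by a trapezoid function $\phi$ (introducing an error of size $O(r/\tN)$ for a free parameter $r$), expand $\phi$ in Fourier series using $\sum_\xi|\widehat\phi(\xi)|\le c\tN/r$, absorb the phase $\e(n\xi/\tN)$ into $a_0$ and $a_1$ (a modulation that leaves $U^3$-norms unchanged), apply the weight-free generalized von Neumann bound $U:=\min_j\norm{a_j}_{U^3}$ to each Fourier mode, and finally optimize $\frac{2r}{\tN}+\frac{c\tN}{r}U$ over $r$, which yields $\sim U^{1/2}$. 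Without this smoothing-and-optimization step your plan neither handles the indicator correctly nor explains where the $1/2$ comes from.
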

\begin{proof} We first  reduce matters to estimating a similar average that does not contain
 the term $\one_{[N]}(n)$.
Let $r$ be an integer that will be specified later and satisfies
$0<r< N/2$. We define the ``trapezoid function'' $\phi$ on $\ZN$ so
that $\phi(0)=0$, $\phi$ increases linearly from $0$ to $1$ on the
interval $[0,r]$, $\phi(n)=1$ for $r\leq n\leq N-r$, $\phi$
decreases linearly from
 $1$ to $0$ on $[N-r,N]$, and $\phi(n)=0$ for $N<n<\wt N$.

The absolute value of the difference between the average in the statement and
$$
\E_{m,n\in\ZN}\phi(n)\cdot a_0(m)\cdot a_1(m+\ell_1n)\cdot
a_2(m+\ell_2n)\cdot a_3(m+\ell_3n)
$$
is bounded by $2r/\wt N$.

Moreover, it is classical that
$$
\sum_{\xi\in\ZN}|\widehat\phi(\xi)|\leq \frac{2N}{r}\leq \frac{\wt N}r
$$
  and thus
\begin{multline*} \Bigl|\E_{m,n\in\ZN}\phi(n)\cdot
a_0(m)\cdot a_1(m+\ell_1n)\cdot
a_2(m+\ell_2n)\cdot a_3(m+\ell_3n)\Bigr|\leq \\
 \frac{\wt N}r\,\max_{\xi\in\ZN}
\Bigl|\E_{m,n\in\ZN}\e(n\xi/\tN)\cdot a_0(m)\cdot
a_1(m+\ell_1n)\cdot a_2(m+\ell_2n)\cdot a_3(m+\ell_3n)\Bigr|.
\end{multline*}

Furthermore, notice that upon replacing $a_0(n)$ with
$a_0(n)\e(\ell_1^*n\xi/\tN)$ and $a_1(n)$ with
$a_1(n)\e(-\ell_1^*n\xi/\tN)$, where $\ell_1^*\ell_1=1\!\!\!
\mod{\tN}$, the $U_3$-norm of all sequences remains unchanged, and
the term $\e(n\xi/\tN)$ disappears. We are thus left with estimating
the average
$$\E_{m,n\in\ZN} a_0(m)\cdot a_1(m+\ell_1n)\cdot
a_2(m+\ell_2n)\cdot a_3(m+\ell_3n),
$$
which   is   known  (see for example \cite[Theorem 3.1]{T06}) to be
bounded by
$$ U:=\min_{0\leq j\leq
3}\norm{a_j}_{U^3(\ZN)}.
$$

Combining the preceding estimates, we get that  the average in the
statement is bounded by
$$
\frac {2r}{\wt N} +
\frac{2\wt N}r U.
$$
Assuming that $U\neq 0$ and choosing   $r= \lfloor \sqrt{U}\tN/(8\ell)\rfloor+1$ (then $r\leq
\tN/(8\ell)\leq N/2$)
gives  the announced bound.
\end{proof}

\subsection{A positivity property}
We  derive now a positivity property that will be used in the
proof of Theorem~\ref{th:ergo2b} in the next subsection. Here we make
essential use of the fact that the spectral measure  $\nu$ is
associated to a non-negative function on $X$, and also that the
function that defines the convolution product is non-negative.
\begin{lemma}[Hidden non-negativity]\label{L:HiddenPositivity}
Let the action by dilations  $(T_n)_{n\in\N}$ on the probability
space $(X,\CB,\mu)$, the subset $A$ of $X$, and the spectral measure
$\nu$ on $\CM$ be as in Theorem~\ref{th:ergo2b}. Let $\psi$ be a
non-negative function defined on $\ZN$. Then
 $$
\int_\CM (\chi_N*\psi)(n_1)\cdot (\chi_N*\psi)(n_2)\cdot
 (\overline\chi_N*\psi)(n_3)\cdot (\overline\chi_N*\psi)(n_4) \ d \nu(\chi)\geq 0
$$
for every $n_1,n_2,n_3,n_4\in\Z_\tN $.
\end{lemma}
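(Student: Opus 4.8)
The plan is to expand the convolution products into honest finite sums, push the integral over $\CM$ inside, and then recognize — via multiplicativity and identity \eqref{E:spectralo} — that each resulting coefficient is manifestly non-negative.

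First I would write, for each $i$,
$$
(\chi_N*\psi)(n_i)=\E_{a\in\ZN}\chi_N(a)\,\psi(n_i-a)=\frac1{\tN}\sum_{a\in[N]}\chi(a)\,\psi(n_i-a),
$$
and likewise $(\overline\chi_N*\psi)(n_i)=\frac1{\tN}\sum_{a\in[N]}\overline\chi(a)\,\psi(n_i-a)$, where the arguments $n_i-a$ are read in $\ZN$. Multiplying the four factors and expanding, the integrand becomes
$$
\frac1{\tN^4}\sum_{a_1,a_2,a_3,a_4\in[N]}\chi(a_1)\chi(a_2)\,\overline\chi(a_3)\,\overline\chi(a_4)\;\psi(n_1-a_1)\,\psi(n_2-a_2)\,\psi(n_3-a_3)\,\psi(n_4-a_4).
$$
Since the $a_i$ are positive integers, complete multiplicativity of $\chi$ gives $\chi(a_1)\chi(a_2)=\chi(a_1a_2)$ and $\overline\chi(a_3)\overline\chi(a_4)=\overline{\chi(a_3a_4)}$. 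The sum is finite and $\nu$ is a finite measure, so I may interchange the summation with the integral over $\CM$ and arrive at
$$
\frac1{\tN^4}\sum_{a_1,a_2,a_3,a_4\in[N]}\Bigl(\int_\CM\chi(a_1a_2)\,\overline\chi(a_3a_4)\,d\nu(\chi)\Bigr)\,\psi(n_1-a_1)\,\psi(n_2-a_2)\,\psi(n_3-a_3)\,\psi(n_4-a_4).
$$

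Next I would invoke identity \eqref{E:spectralo} with $f=\one_A$: for all $p,q\in\N$,
$$
\int_\CM\chi(p)\,\overline\chi(q)\,d\nu(\chi)=\int_X T_p\one_A\cdot T_q\one_A\,d\mu=\mu\bigl(T_p\inv A\cap T_q\inv A\bigr)\ge 0.
$$
Applying this with $p=a_1a_2$ and $q=a_3a_4$ shows that every coefficient in the displayed sum is non-negative. Since $\psi\ge 0$ on $\ZN$, each term is a product of non-negative reals, and the asserted inequality follows.

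There is essentially no obstacle here beyond bookkeeping: one must keep the indices $a_i$ inside $[N]\subset\N$ before applying multiplicativity (as $\chi$ is defined only on $\N$), and remember that the convolution is the one on $\ZN$, so the translates $n_i-a_i$ are taken modulo $\tN$ — neither point affects the sign. Conceptually, this is exactly where the two sources of ``hidden'' non-negativity combine: the positivity of the kernel $\psi$ and the fact that $\nu$ is the spectral measure of the \emph{non-negative} function $\one_A$, which forces the correlation coefficients $\int_\CM\chi(p)\overline\chi(q)\,d\nu$ to be non-negative.
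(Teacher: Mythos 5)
Your proof is correct and follows essentially the same route as the paper: expand each convolution into a finite sum over $[N]$ (so multiplicativity of $\chi$ applies), interchange with the integral over $\CM$, use \eqref{E:spectralo} to rewrite $\int_\CM\chi(a_1a_2)\overline\chi(a_3a_4)\,d\nu$ as $\int_X T_{a_1a_2}\one_A\cdot T_{a_3a_4}\one_A\,d\mu\ge0$, and conclude from $\psi\ge0$. The only cosmetic difference is that the paper packages the nonnegative weights $\psi(n_i-a_i)\cdot\one_{[N]}(a_i)$ abstractly as coefficients $a_{n_i}(k_i)$.
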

\begin{proof}
The  convolution product $\chi_N*\psi$ is defined on the
group $\ZN$ by the formula
$$
(\chi_N*\psi)(n)=\E_{k\in\ZN}\psi(n-k)\cdot\chi_N(k).
$$
It follows that for every $n\in[\wt N]$ there exists a sequence
$(a_n(k))_{k\in\ZN}$ of non-negative numbers that are  independent of $\chi$,
such that for every $\chi\in\CM$ we have
$$
(\chi_N*\psi)(n) =\sum_{k\in\ZN}a_n(k)\, \chi(k).
$$
The left hand side of the expression in the statement is thus equal to
\begin{multline*}
\sum_{k_1,k_2,k_3,k_4\in\ZN}\prod_{i=1}^4 a_{n_i}(k_i) \int_\CM
\chi(k_1)\cdot \chi(k_2)\cdot
 \overline\chi(k_3)\cdot \overline\chi(k_4) \, d \nu(\chi)=\\
 \sum_{k_1,k_2,k_3,k_4\in \ZN}\prod_{i=1}^4 a_{n_i}(k_i)
\int_\CM \chi(k_1k_2)\cdot
 \overline\chi(k_3k_4) \, d \nu(\chi)=\\
 \sum_{k_1,k_2,k_3,k_4\in \ZN}\prod_{i=1}^4 a_{n_i}(k_i)
 \int_X T_{k_1k_2}\one_A \cdot T_{k_3k_4}\one_A\,d\mu
\end{multline*}
where the last equality follows from equation \eqref{E:spectralo}.
This expression is non-negative since the function $\one_A$ is
non-negative, completing the proof.
\end{proof}

\subsection{Proof of Theorem~\ref{th:ergo2b}
assuming Theorem~\ref{th:strong-average-intro}}\label{SS:assuming}
  We start with a brief sketch of our
 proof strategy. Roughly speaking, Theorem~\ref{th:strong-average-intro}
enables us to decompose  the restriction of an arbitrary
multiplicative function on a finite interval  into three terms, a
close to  periodic term, a ``very uniform'' term, and an error term.
In the course of the proof of Theorem~\ref{th:ergo2b} we study these
three terms separately. The order of the different steps is
important as well as the precise properties of the decomposition.
 First, we show that the uniform term has a negligible contribution
in evaluating the averages in \eqref{eq:average-integral1b}. To do
this we use  the  uniformity estimates established in
Lemma~\ref{L:UnifromityEstimates2}. It is for this part of the proof
that it is very important to work with patterns that factor into
products of linear forms in two variables, otherwise we have no
 way of controlling the corresponding averages
 by Gowers uniformity norms. At this
point, the error term is shown to have negligible contribution, and
thus can be ignored.
   Lastly, the
structured term $\chi_s$  is dealt by restricting the variable $n$
to a suitable sub-progression where each function $\chi_s$ gives
approximately the same value to all four linear
forms;\footnote{\label{foot3}This  coincidence of values is very
important, not having it is a key technical obstruction that stops
us from handling equations like $x^2+y^2=n^2$. Restricting the range
of both variables $m$ and $n$ does not seem to help either, as this
creates problems with  handling the error term in the
decomposition.}
 it  then becomes possible to  establish the asserted positivity.
 In fact, the restriction to a
sub-progression step is  rather delicate, as it has to take place
before the component $\chi_e$ is eliminated (this explains also why
we do not restrict both variables $m$ and $n$ to a sub-progression), and in addition one
has to guarantee that the terms left out are non-negative, a
property that follows from
Lemma~\ref{L:HiddenPositivity}.\footnote{In a sense, our approach
follows the general principles of the circle method. Each
multiplicative function  is decomposed into two components, with
Fourier transform supported on major arc and minor arc frequences.
The contribution of the ``major arc component'' is further analyzed
to deduce the asserted positivity. The ``minor arc component'' is
shown to have negligible contribution, and this step is the hardest,
it is done using higher order Fourier analysis tools in the course
of proving Theorem~\ref{th:strong-average-intro}.}

 We  now enter the main body of the proof.
Recall that $\ell_1,\ell_2, \ell_3\in \N$ are fixed and distinct and that $\ell=\ell_1+\ell_2+\ell_3$. We stress also  that
in  this proof the quantities $m+\ell_in$ are computed in $\ZN$,
that is, modulo $\wt N$.

Let the action by dilations  $(T_n)_{n\in\N}$ on the probability space $(X,\CB,\mu)$,
 the set $A\in \CB$ with $\mu(A)>0$, and the spectral measure $\nu$ of $f=\one_A$,  be as in
Theorem~\ref{th:ergo2b}.
 We let
\begin{gather*}
\delta:=\mu(A)=\int f \ d\mu\ ;\\
\ve:=c_3\delta^2\ \ \text{ and }\ \
 F(x,y,z)\:=c_4^2\,
\frac{x^2y^2}{z^4},
\end{gather*}
where $c_3$ and $c_4$ are positive constants that will be specified
later, what is important is that  they depend only on $\ell$. Our goal is
for all large values of $N$ (how large will depend only on $\delta$) to bound from below the average
$$
 A(N):= \int\E_{m,n\in\ZN}\one_{[N]}(n)\cdot  \chi_N(m)\cdot \chi_N(m+\ell_1n) \cdot
\overline\chi_N(m+\ell_2n) \cdot \overline\chi_N(m+\ell_3n)\ d\nu(\chi).
$$

We start by applying the decomposition result of
Theorem~\ref{th:strong-average-intro}, taking as input the spectral
measure $\nu$, the number $\ve$, and the function $F$ defined
above. Let
$$
 Q:=Q(F,N, \ve, \nu)=Q(N, \delta, \nu),\quad  R:=R(F,N, \ve,  \nu)=R(N,\delta,\nu)
$$ be
the numbers provided by Theorem~\ref{th:strong-average-intro}. We
recall  that $Q$ and $R$ are bounded by a constant that depends only
on $\delta$. From this point on we assume that $N$ is sufficiently
large, depending only on $\delta$,  so that the conclusions of
Theorem~\ref{th:strong-average-intro} hold. To ease the notation a
bit, we omit the subscript $N$ when we use the functions
$\chi_{N,s},\chi_{N,u},\chi_{N,e}$  provided by
 Theorem~\ref{th:strong-average-intro},
and for
 $\chi \in \CM$, we write
$$
\chi_N(n)=\chi_s(n)+\chi_u(n)+\chi_\e(n), \quad n\in \ZN,
$$
for the decomposition that satisfies Properties~\eqref{it:decomU31}--\eqref{it:decomU34} of Theorem~\ref{th:strong-average-intro}.

Next, we  use the uniformity estimates of
Lemma~\ref{L:UnifromityEstimates2} in order to eliminate the uniform
component $\chi_u$ from the average $A(N)$. We let
$$
\chi_{s,e}=\chi_s+\chi_e
$$ and
 $$
A_1(N):=\int_\CM  \E_{m,n\in\ZN} \one_{[N]}(n)\cdot\chi_{s,e}(m)\cdot
\chi_{s,e}(m+\ell_1n) \cdot \overline\chi_{s,e}(m+\ell_2n) \cdot
\overline\chi_{s,e}(m+\ell_3n)\ d\nu(\chi). $$
 Using
Lemma~\ref{L:UnifromityEstimates2}, Property~\eqref{it:decomU31} of
Theorem~\ref{th:strong-average-intro}, and the estimates  $|\chi_N(n)|\leq 1$, $|\chi_{s,e}(n)|\leq 1$ for every $n\in\ZN$,
we get that
\begin{equation}\label{E:a1}
|A(N)-A_1(N)|\leq \frac{4\, c_2}{F(Q,R,\ve)^{\frac{1}{2}}}+\frac{8}{\tN}
\end{equation}
where $c_2$ is the constant provided by
Lemma~\ref{L:UnifromityEstimates2} and depends only on $\ell$.

 Next, we try to eliminate the error term $\chi_e$. But before doing this, it is
  important to first restrict the range of $n$ to a suitable sub-progression; the utility of
  this  maneuver will be clear on our next step when we estimate the contribution of the leftover term
  $\chi_s$. We stress that we cannot postpone this restriction on the range of $n$ until after
  the term $\chi_e$ is eliminated, if we did this  the contribution of the term $\chi_e$ would
   swamp the positive lower bound we get from the term $\chi_s$.
 We let
 \begin{equation}
\label{eq:def-epsilon}
\eta:=\frac{\ve}{QR}.
\end{equation}
 By Property~\eqref{it:decomU31} of Theorem~\ref{th:strong-average-intro},
Lemma~\ref{L:HiddenPositivity}  applies to $\chi_{s,e}$. Note that
the integers $Qk$, $1\leq k\leq \eta N$, are distinct elements of
the interval $[N]$.  It follows that
\begin{multline*}
\sum_{m,n\in\ZN}\int_\CM
\one_{[N]}(n)\cdot\chi_{s,e}(m)\cdot\chi_{s,e}(m+\ell_1n)\cdot
\overline{\chi}_{s,e}(m+\ell_2n)\cdot\overline{\chi}_{s,e}(m+\ell_3n)\,d\nu(\chi)\geq
\\
\sum_{m\in\ZN}\sum_{k=1}^{\lfloor \eta N\rfloor}
\int_\CM
\chi_{s,e}(m)\cdot\chi_{s,e}(m+\ell_1Qk)\cdot
\overline{\chi}_{s,e}(m+\ell_2Qk)\cdot\overline{\chi}_{s,e}(m+\ell_3Qk)\,d\nu(\chi).
\end{multline*}
Therefore, we have
\begin{equation}
\label{eq:A2} A_1(N)\geq \frac{\lfloor\eta N\rfloor}{\tN}A_2(N)\geq
\frac\eta{40\,\ell}A_2(N) = \ve\, \frac{1}{40\,\ell QR}\,A_2(N)
\end{equation}
where
\begin{multline*}
A_2(N):= \\
\int_\CM  \E_{m\in\ZN}\,\E_{k\in [\lfloor \eta N\rfloor]}\,
\chi_{s,e}(m)\cdot
\chi_{s,e}(m+\ell_1kQ) \cdot \overline\chi_{s,e}(m+\ell_2kQ) \cdot
\overline\chi_{s,e}(m+\ell_3kQ)\ d\nu(\chi).
\end{multline*}
We let
\begin{equation}
\label{eq:defA3}
A_3(N):= \int_\CM\E_{m\in\ZN}\E_{k\in[\lfloor \eta N\rfloor]}
\chi_{s}(m)\cdot\chi_s(m+\ell_1Qk)\cdot
\overline{\chi}_{s}(m+\ell_2Qk)\cdot\overline{\chi}_{s}(m+\ell_3Qk)\,d\nu(\chi).
\end{equation}
Since for every $n\in\ZN$ we have $|\chi_s(n)|\leq 1$, and since
$|\chi_{s,e}(n)|=|\chi_s(n)+\chi_e(n)|\leq 1$ by
Property~\eqref{it:decomU31} of
 Theorem~\ref{th:strong-average-intro}, we deduce that
\begin{equation}
\label{eq:A3}
|A_2(N)-A_3(N)|\leq 4\,\int_\CM\E_{m\in\ZN} |\chi_e(m)|\,d\nu(\chi)<4\ve
\end{equation}
where the last estimate follows by Part~\eqref{it:decomU34} of
Theorem~\ref{th:strong-average-intro}.

Next, we study the term $A_3(N)$.  We utilize Property~\eqref{it:decompU32} of
Theorem~\ref{th:strong-average-intro}, namely
$$
|\chi_s(n+Q)-\chi_s(n)|\leq \frac{R}{\tN} \quad \text{ for  every } \ n\in\ZN.
$$
We get for $m\in \ZN$, $1\leq k\leq\eta N$,  and for $i=1,2,3$, that
$$
|\chi_s(m+\ell_i Qk)-\chi_s(m)|\leq\ \ell_ik\,\frac {R}\tN\leq \ell
\eta N\,\frac {R}\tN \leq \frac \ve Q
$$
where the last estimate follows from \eqref{eq:def-epsilon} and the estimate $\tN\geq \ell N$.
Using this estimate in conjunction with the definition~\eqref{eq:defA3} of $A_3(N)$, we get
$$
A_3(N)\geq \int_\CM\E_{m\in\ZN}|\chi_{s}(m)|^4\,d\nu(\chi)
-\frac{3\ve}Q.
$$
  We
 denote by $\one$  the multiplicative function that is identically
 equal  to $1$.
 We claim that
$\nu(\{\one\})\geq\delta^2$. Indeed,  if $(\Phi_N)_{N\in \N}$ is a
multiplicative F\o lner sequence in $\N$ we have
$$
\nu(\{{\bf 1}\})=\lim_{N\to\infty} \int_\CM
\Big|\frac{1}{|\Phi_N|}\sum_{n\in \Phi_N} \chi(n)\Big|^2 \
d\nu(\chi)= \lim_{N\to\infty} \int
\Big|\frac{1}{|\Phi_N|}\sum_{n\in \Phi_N} T_nf \Big|^2\ d\mu,
$$  and this is greater or equal than
$$ \lim_{N\to\infty}
\Big|\int \frac{1}{|\Phi_N|}\sum_{n\in \Phi_N} T_nf \ d\mu \Big|^2=
\Big|\int f \ d\mu\Big|^2=\delta^2,$$ proving our claim. Using this
we deduce that
$$
\int_\CM\E_{m\in\ZN}|\chi_{s}(m)|^4\,d\nu(\chi)\geq
\nu(\{\one\}) \cdot\E_{m\in\ZN} |\one_s(m)|^4
\geq \delta^2\, \bigl|\E_{m\in\ZN} \one_s(m)|^4.
$$
Since $\one_s=\one_N*\psi$ for some kernel $\psi$  on $\ZN$ we have
$$\E_{m\in\ZN}\one_s(m)=
\E_{m\in\ZN}\E_{k\in\ZN}\one_N(k)\psi(m-k)=
\E_{k\in\ZN}\one_N(k) =
\frac N{\tN}\geq \frac 1{20\,\ell}.
$$
Combining the above we get
\begin{equation}
\label{eq:A3b} A_3(N)\geq  \frac
{\delta^2}{20^4\,\ell^4}-\frac{3\ve }Q.
\end{equation}

Putting~\eqref{E:a1},  \eqref{eq:A2}, \eqref{eq:A3}, and
\eqref{eq:A3b} together,   we get
$$A(N)
\geq\ve\, \frac{1}{40\,\ell QR}\,\Bigl(\frac
{\delta^2}{20^4\,\ell^4}-7\ve \Bigr)
-\frac{4c_2}{F(Q,R,\ve)^{\frac{1}{2}}}-\frac{8}{\tN}.
$$
Recall that $\ve=c_3\delta^2$, for some positive constant $c_3$ that we
left unspecified until now.  We choose $c_3<1$, depending only on
$\ell$, so that
$$
\frac 1{40\,\ell} \Bigl(\frac
{\delta^2}{20^4\,\ell^4}-7\ve\Bigr) \geq c_5 \delta^2
$$
for some positive constant $c_5$ that depends only on
$\ell$. Then we have
$$
A(N)\geq\delta^2\,\frac{c_5 \ve }{QR}-\frac{4c_2}{F(Q,R,\ve)^{\frac{1}{2}}}.
$$
Recall that $$ F(Q,R,\ve)= c_4^2\frac{Q^2R^2}{\ve^4}
$$
where $c_4$ was not determined until this point. We choose
$$
c_4:=\frac{8c_2 c_3}{c_5 }
$$
and upon recalling that $\ve=c_3 \delta^2$    we get
$$
A(N)\geq\delta^2\,\frac{c_5\ve }{QR}-c_2\, \frac{4\ve^2}{c_4 QR}=\frac {c_5\delta^2\ve}{2\,QR}=
\frac {c_3 c_5 \delta^4}{2\,QR}>0.
$$
Recall that $Q$ and $R$ are bounded by a constant that depends only
on $\delta$. Hence, $A(N)$ is greater than a positive constant that
depends only on $\delta$, and in particular is independent of $N$,
provided that $N$ is sufficiently large, depending only on $\delta$,
as indicated above. This completes the proof of
Theorem~\ref{th:ergo2b}. \qed


\section{Fourier analysis of multiplicative functions}\label{S:U^2}
In this section  we study the Fourier coefficients of multiplicative
functions. Our   goal is to establish  a decomposition
$$
\chi_N=\chi_{N,s}+\chi_{N,u}
$$
 similar to the one given in Theorem~\ref{th:Decomposition-weakU3-intro},
 but with the $U^2$-norm in place of the $U^3$-norm. We will then
 use this result in Section~\ref{S:U^3} as our starting point in the
 proof of the decomposition result for the $U^3$-norm.

 \begin{convention}In this section the functions $\chi_N$ are defined on $\ZN$.
 In particular, all  convolution products are defined   on $\ZN$ and
 the Fourier coefficients of $\chi_N$ are given by
$$
\widehat{\chi_N}(\xi):=\E_{n\in\ZN}\chi_N(n)\, \e(-n \xi/\wt N)
\quad \text{ for } \ \xi\in\ZN.
$$
\end{convention}

\begin{theorem}[Weak uniform decomposition for the $U^2$-norm]
\label{th:Decomposition-weakU2-intro} For every $\theta>0$ there
exist  positive integers $Q:=Q(\theta)$ and  $R:=R(\theta)$, and for
every sufficiently large $N$,  depending only on
 $\theta$,  there exists a kernel $\phi_{N,\theta}$ on $\ZN$
with the following properties:

For   every $\chi\in\CM$, writing
$$
\chi_{N,s}=\chi_N*\phi_{N,\theta}\ \ \text{ and }\ \
\chi_{N,u}=\chi_N-\chi_{N,s},
$$
we have
\begin{enumerate}
\item
\label{it:decomU22} \vide $\displaystyle
|\chi_{N,s}(n+Q)-\chi_{N,s}(n)|\leq \frac{R}{\tN}$ for every $n\in\ZN$,
where  $n+Q$ is taken \!\!\!$\mod\tN$;
\item
\label{it:decomU23}
 $\vide\displaystyle\norm{\chi_{N,u}}_{U^2(\ZN)}\leq\theta$.
\end{enumerate}
Moreover, for all $\theta$ and $\theta'$, and every $N$ such that
$\phi_{N,\theta}$ and $\phi_{N,\theta'}$ are defined, we have
\begin{equation}
\label{eq:phi-increases} \text{if }0<\theta'\leq\theta,\ \ \text{
then }\ \  \widehat{\phi_{N,\theta'}}(\xi)\geq
\widehat{\phi_{N,\theta}}(\xi)\geq  0\  \ \text{ for every }\ \
\xi\in\ZN.
\end{equation}
\end{theorem}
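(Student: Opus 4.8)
The idea is a standard $L^2$/Fourier energy-increment argument, organized so that the resulting structured component is a convolution with a kernel and the kernels are monotone in the Fourier coefficients as $\theta$ decreases. First I would isolate the ``large Fourier coefficients'' of $\chi_N$. Since K\'atai's orthogonality criterion (Lemma~\ref{lem:katai}) forces any large Fourier coefficient of a multiplicative function to sit near a rational with small denominator, there is a bound, \emph{uniform over $\chi\in\CM$}, on how many frequencies $\xi\in\ZN$ can satisfy $|\widehat{\chi_N}(\xi)|\geq\theta/2$, and moreover each such $\xi$ lies within $O(1/\tN)$ (after clearing denominators) of some $a/q$ with $q\leq Q_0(\theta)$. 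This is exactly point (a) from the introduction, and it is the one genuinely number-theoretic input; everything else is soft harmonic analysis. The reader should expect this localization step to be the main obstacle, since it is where K\'atai's lemma has to be turned into a quantitative statement with constants depending only on $\theta$.

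**Construction of the kernel.** Having the finite set of ``major arc'' denominators $q\leq Q_0(\theta)$, I would set $Q:=\mathrm{lcm}\{1,\dots,Q_0(\theta)\}$, so that every relevant frequency $a/q$ has $qQ$-periodic behaviour, i.e. $\|Qa/q\|$ is small. The structured part should be obtained by convolving $\chi_N$ with a fixed kernel $\phi_{N,\theta}$ whose Fourier transform $\widehat{\phi_{N,\theta}}(\xi)$ is a smooth bump equal to $1$ on a $O(R/\tN)$-neighbourhood (in $\Z_{\tN}$) of the frequencies $\{a\tN/q : q\mid Q\}$ and $0$ outside a slightly larger neighbourhood, with $R=R(\theta)$ chosen large enough for the cutoff. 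Concretely one can take a (normalized, periodized) Fej\'er-type kernel supported on an arithmetic-progression-like set so that $\phi_{N,\theta}\geq 0$, $\E\phi_{N,\theta}=1$, and $0\leq\widehat{\phi_{N,\theta}}\leq1$; this makes it a kernel by definition. Crucially $\phi_{N,\theta}$ depends only on $\theta$ and $N$, never on $\chi$, because the set of admissible major-arc frequencies depends only on $\theta$ via $Q_0(\theta)$.

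**Verifying the two properties.** For (ii): $\widehat{\chi_{N,u}}(\xi)=(1-\widehat{\phi_{N,\theta}}(\xi))\widehat{\chi_N}(\xi)$, which vanishes on the support of $\widehat{\phi_{N,\theta}}$ and elsewhere is bounded in modulus by $|\widehat{\chi_N}(\xi)|$. Frequencies with $|\widehat{\chi_N}(\xi)|<\theta/2$ contribute at most $(\theta/2)^2\sum_\xi|\widehat{\chi_N}(\xi)|^2\leq(\theta/2)^2$ to $\|\chi_{N,u}\|_{U^2(\ZN)}^4=\sum_\xi|\widehat{\chi_{N,u}}(\xi)|^4\leq(\theta/2)^2\sum_\xi|\widehat{\chi_N}(\xi)|^2$ by \eqref{E:U2formula} and Parseval; the finitely many large ones are killed by the cutoff once $R$ is chosen to make the bump cover their $O(1/\tN)$-neighbourhoods. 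Adjusting constants gives $\|\chi_{N,u}\|_{U^2(\ZN)}\leq\theta$. For (i): since $\widehat{\chi_{N,s}}$ is supported on frequencies $\xi$ within $O(R/\tN)$ of some $a\tN/q$ with $q\mid Q$, we have $\|\xi Q/\tN\|\leq CR/\tN$ for such $\xi$, hence
$$
|\chi_{N,s}(n+Q)-\chi_{N,s}(n)|\leq\sum_\xi|\widehat{\chi_{N,s}}(\xi)|\,|\e(\xi Q/\tN)-1|\leq 2\pi\sum_\xi|\widehat{\chi_{N,s}}(\xi)|\,\|\xi Q/\tN\|,
$$
and since $\sum_\xi|\widehat{\chi_{N,s}}(\xi)|$ is $O(1)$ (the support has $O(1)$ frequencies and each coefficient is $\leq1$) this is $\leq R'/\tN$ for a suitable $R'=R'(\theta)$; renaming $R$ absorbs the constant. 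Finally, monotonicity \eqref{eq:phi-increases} is built in: choosing the bumps so that the admissible frequency set for $\theta'\leq\theta$ contains that for $\theta$ (it does, since $Q_0$ is increasing as $\theta$ decreases) and so that the bump heights are nested, we get $\widehat{\phi_{N,\theta'}}\geq\widehat{\phi_{N,\theta}}\geq0$ pointwise. One should double-check that $\tN$ being prime is used where needed (it guarantees the change-of-variables invariance of Fourier supports and that $a/q\mapsto a\tN/q$ makes sense as an element of $\Z_{\tN}$), and that $N$ large depending only on $\theta$ suffices for all the neighbourhoods to be disjoint.
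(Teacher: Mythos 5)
Your proposal is correct and follows essentially the same route as the paper: K\'atai's criterion (Corollary~\ref{cor:katai}) localizes the large Fourier coefficients of $\chi_N$ to a $\chi$-independent set of major-arc frequencies, the kernel $\phi_{N,\theta}$ is a dilated Fej\'er kernel supported there, the $U^2$-bound comes from \eqref{E:U2formula} together with Parseval, the almost-periodicity comes from $\sum_\xi|\widehat{\phi_{N,\theta}}(\xi)|\,\|Q\xi/\tN\|$, and monotonicity is designed in by making $Q(\theta)\mid Q(\theta')$ for $\theta'\le\theta$. One small point of care: the paper's Fej\'er kernel has a triangular $\widehat{\phi_{N,\theta}}$ that equals $1$ only at $\xi=0$, so the large coefficients of $\chi_N$ are not literally killed but merely damped to size $\le\theta^4$ (a trapezoidal plateau as you first describe would not automatically give a non-negative $\phi_{N,\theta}$); your final "adjusting constants'' remark absorbs this, so the plan goes through.
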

The monotonicity property~\eqref{eq:phi-increases} plays a central
role in the derivation of Theorem~\ref{th:strong-average-intro} from
Theorem~\ref{th:Decomposition-weakU3-intro} in
Section~\ref{subsec:proof_strong}. This  is  one of the reasons why
we construct the kernels $\phi_{N,\theta}$  explicitly in
Section~\ref{subsec:kernels}.

The
 values of  $Q$ and $R$ given by  Theorem~\ref{th:Decomposition-weakU2-intro}
  will be  used later in Section~\ref{S:U^3}, and  they do not coincide
  (in fact, they are much smaller) with the values of $Q$ and $R$ in
   Theorems~\ref{th:Decomposition-weakU3-intro} and  \ref{th:strong-average-intro}.

\subsection{K\'atai's orthogonality criterion}
\label{subsec:katai} We start with the key number theoretic input
that we need in this section and which will also be used later in Section~\ref{S:CorrelationNil}.
\begin{lemma}[Orthogonality criterion~\cite{K86}]
\label{lem:katai} For every $\ve>0$ 
there exists
$\delta:=\delta(\ve)>0$ and $K:=K(\ve)$ such that the
following holds: If $N\geq K$  and $f\colon [N]\to \C$ is a function with
$|f|\leq 1$, and
$$
\max_{\substack{p,p'\text{ \rm primes}\\1<p<p'<K}}\bigl|\E_{n\in[\lfloor
N/p'\rfloor]} f(pn)\overline{f}(p'n)\bigr|< \delta,
$$
 then
$$
\sup_{\chi\in\CM}\bigl|\E_{n\in[N]}\chi(n)f(n)\bigr|<\ve.
$$
\end{lemma}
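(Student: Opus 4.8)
The plan is to prove the orthogonality criterion of Lemma~\ref{lem:katai} following K\'atai's original argument~\cite{K86}, which is a Tur\'an--Kubilius type second-moment computation combined with the multiplicativity of $\chi$. The starting point is the elementary identity that for any function $f$ with $|f|\le 1$ and any finite set $\mathcal P$ of primes in $(K_0,K)$,
\begin{equation*}
\Bigl(\sum_{p\in\mathcal P}\tfrac1p\Bigr)\sum_{n\le N}\chi(n)f(n)
=\sum_{p\in\mathcal P}\sum_{\substack{m\le N\\ p\mid m}}\chi(m)f(m)\cdot\tfrac1{\,?\,}+\text{(error)},
\end{equation*}
more precisely one compares $\sum_{n\le N}\chi(n)f(n)$ weighted by $\sum_{p\le K}\tfrac1p\,\one_{p\mid n}$ against the same sum. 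Writing $m=pn'$ when $p\mid m$ and using $\chi(pn')=\chi(p)\chi(n')$, one is led to estimate $\sum_{p\in\mathcal P}\overline{\chi(p)}\sum_{n'\le N/p}\chi(pn')f(pn')=\sum_{p\in\mathcal P}\sum_{n'\le N/p}\chi(n')f(pn')$. So the first step is to set up this weighted comparison carefully and reduce, via Cauchy--Schwarz in the $p$ (or $n$) variable, to bounding a double sum over pairs of primes.

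The second, and main, step is the second-moment estimate. After Cauchy--Schwarz one must bound
\begin{equation*}
\sum_{n\le N}\Bigl|\sum_{\substack{p\in\mathcal P\\ p\mid n}}\bigl(\text{something}\bigr)\Bigr|^2
\end{equation*}
and expanding the square produces diagonal terms (contributing $\asymp N\sum_{p}\tfrac1p$, which is the ``main term'' one divides out by) and off-diagonal terms indexed by pairs $p\ne p'$. For the off-diagonal terms, the condition $p\mid n$ and $p'\mid n$ forces $pp'\mid n$, and writing $n=pp'k$ one gets sums of the shape $\sum_{k\le N/(pp')}f(pk)\overline{f(p'k)}$ — precisely the quantities appearing in the hypothesis. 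Here one has to be slightly careful about the ranges: the hypothesis is stated with $\E_{n\in[\lfloor N/p'\rfloor]}f(pn)\overline f(p'n)$, so I would either reconcile the ranges by a routine splitting/telescoping argument, or (cleaner) note that controlling $\E_{n\le N/p'}f(pn)\overline{f(p'n)}$ for all $p<p'$ also controls the truncated sums $\sum_{k\le M}f(pk)\overline{f(p'k)}$ for every $M\le N/p'$ up to an extra factor (summation by parts, or just bounding the tail trivially by its length). The upshot is: choosing $\delta$ small forces each off-diagonal contribution to be small relative to $N/(pp')$, and summing over $p,p'\in\mathcal P$ gives an off-diagonal total of size $o\bigl((\sum_p\tfrac1p)^2 N\bigr)$ plus a $\delta$-controlled piece.

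The third step is bookkeeping to extract the quantitative statement. One fixes $\mathcal P$ to be the primes in $(K_0,K)$; by Mertens, $S:=\sum_{p\in\mathcal P}\tfrac1p\to\infty$ as $K\to\infty$ (with $K_0$ fixed), so by taking $K=K(K_0,\ve)$ large enough we can make $S$ as large as we like. The diagonal main term divided by $S$ contributes $O(1/S)$ to $\bigl|\E_{n\le N}\chi(n)f(n)\bigr|$, the off-diagonal $\delta$-term contributes $O(\sqrt\delta)$ (the square root coming from Cauchy--Schwarz), the off-diagonal ``trivial tail'' term contributes $O(1/S)$ again after division, and there are $O(K^2/N)$-type boundary errors from the floor functions and the ranges, negligible once $N\ge K$ is large. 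Thus $\bigl|\E_{n\le N}\chi(n)f(n)\bigr|\le C(1/S+\sqrt\delta)$ uniformly in $\chi\in\CM$; choosing first $K$ (hence $S$) large so that $C/S<\ve/2$, then $\delta=\delta(K_0,\ve)$ small so that $C\sqrt\delta<\ve/2$, and finally enlarging $K$ if necessary so that the $N\ge K$ boundary errors are $<\ve/2$ as well (here one may need $K$ to depend on $\ve$, which is allowed), gives the claim.

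The main obstacle is the off-diagonal analysis: making sure the bilinear-in-primes sum that emerges from expanding the square genuinely matches the hypothesized averages $\E_{n}f(pn)\overline{f(p'n)}$, including getting the summation ranges and the two-sided versus one-sided ordering of $p,p'$ to line up, and tracking that the number of pairs $(p,p')$ times the saved factor $\delta$ still beats the $S^2$ normalization. Everything else — the Tur\'an--Kubilius-style variance bound for the diagonal, the Mertens estimate for $S$, and the final choice of parameters — is standard. I would also remark that this is exactly the argument of \cite{K86} (a simplification of Daboussi's method \cite{D74}), so one could alternatively cite it; but since the precise quantitative form with the $(K_0,K)$ localization is what is needed downstream, it is worth writing out.
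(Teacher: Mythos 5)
Your overall architecture (Tur\'an--Kubilius weighted comparison, Cauchy--Schwarz, expand the square, Mertens, then choose parameters) is the right strategy and is indeed what lies behind K\'atai's criterion. The paper, however, does not re-derive it: it simply quotes K\'atai's inequality (3.8),
$$\frac{|S(N)|^2\CA_\CP^2}{N^2}\leq c+ c'\CA_\CP+ c''\sum_{p\neq p'\in\CP}\frac 1N\Bigl|\sum_{n\leq\min(N/p,N/p')}f(pn)\overline{f}(p'n)\Bigr|,$$
with $S(N)=\sum_{n\le N}\chi(n)f(n)$ and $\CA_\CP=\sum_{p\in\CP}1/p$, and then takes $K$ large enough that $c\CA_\CP^{-2}+c'\CA_\CP^{-1}\le\ve^2/2$ and $\delta:=\ve^2\CA_\CP^2/(2c''|\CP|^2)$. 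So the paper's proof is citation plus a choice of parameters; you are attempting to reprove the cited inequality from scratch.

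Your re-derivation has a genuine error in the key step. Writing $T(N)=\sum_{p\in\CP}\chi(p)\sum_{n\le N/p}\chi(n)f(pn)=\sum_{n}\chi(n)\sum_{p\in\CP,\,pn\le N}\chi(p)f(pn)$, the relevant Cauchy--Schwarz is over the cofactor variable $n$; expanding $\bigl|\sum_{p}\chi(p)f(pn)\bigr|^2$ produces off-diagonal terms $\sum_{n\le\min(N/p,N/p')}f(pn)\overline{f(p'n)}$. There is no constraint ``$p\mid n$ and $p'\mid n$'' here and no range $N/(pp')$; you have conflated this with the separate Tur\'an--Kubilius variance computation $\sum_{m\le N}(\omega_\CP(m)-\CA_\CP)^2$, in which $pp'\mid m$ and the range $N/(pp')$ do occur but the products $f(pn)\overline{f(p'n)}$ never appear. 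With the correct range $\min(N/p,N/p')=N/p'$ (for $p<p'$) the hypothesis of the lemma applies verbatim and no ``range reconciliation'' is needed --- which is fortunate, because the reconciliation you propose would not work: from $\bigl|\sum_{n\le N/p'}f(pn)\overline{f(p'n)}\bigr|<\delta N/p'$ one cannot deduce anything useful about $\sum_{n\le M}$ when $M=N/(pp')$ is smaller by a factor of order $p$, and the trivial tail bound $N/p'-M$ is of the same order as the whole sum and swamps the target.
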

The dependence of $\delta$ and $K$ on $\ve$ can be made
explicit (for good bounds see \cite{BSZ12}) but we do not need such extra information
here.

Lemma~\ref{lem:katai} is not strictly speaking contained in the
paper~\cite{K86} that shows only  asymptotic results. Moreover,
K\'atai considers only the case of a function of modulus $1$,
written as $\e(t(n))$, but the estimates are valid without any
change for functions of modulus at most $1$. For
completeness we give the derivation.

\begin{proof}
 Let  $\ve>0$ be given.
In \cite{K86}, the  letter $f$ is used to denote a multiplicative
function that we denote here by  $\chi$. Let $K$  be a
positive integer that will be made explicit below and will depend
only on $\ve$. We write $\CP$ for  the set of primes $p$ with
$p<K$ and let
$$
\CA_\CP:=\sum_{p\in\CP}\frac 1p.
$$
Let $N\in \N$,  $\chi\in\CM$,  and $f:[N]\to \C$ be a function with $|f|\leq 1$. We let
$$
S(N):=\sum_{n\in[N]} \chi(n)f(n).
$$
After correcting some typos in \cite{K86},\footnote{The sign $+$ and
the universal constants  are missing on the right hand side.}
 inequality~(3.8) of~\cite{K86} reads as follows
\begin{equation}
\label{eq:katai1} \frac{|S(N)|^2\CA_\CP^2}{N^2}\leq c+ c'\cdot
\CA_\CP+ c''\sum_{p\neq p'\in\CP}\frac 1N \Big|\sum_{n\leq
\min(N/p,N/p')}f(pn)\,\overline{f}(p'n)\Big|
\end{equation}
for some positive universal constants $c,c',c''$. We choose $K$ sufficiently large  so that
$c\CA_\CP^{-2}+c'\CA_\CP^{-1}\leq \ve^2/2$, and then choose
$\delta:= \ve^2\CA_\CP^2/(2c'' |\CP|^2)$. Thus defined, note that
$K$ and $\delta$ depend on $\ve$ only.
 Assuming that the function $f$
satisfies the hypothesis of the lemma, and inserting the previous bounds in
\eqref{eq:katai1} we get the desired estimate.
 \end{proof}

\subsection{An application to Fourier coefficients of multiplicative functions}
Next, we use the orthogonality criterion of K\'atai to prove that
the Fourier coefficients of the restriction of a multiplicative
function on an interval $[N]$ are small unless the frequency is
close to a rational with small denominator. Furthermore, the
implicit constants do not depend on the multiplicative function or
the integer $N$.

\begin{corollary}[$U^2$ non-uniformity]
\label{cor:katai} For every $\theta>0$ there exist positive integers
$Q:=Q(\theta)$  and $V:=V(\theta)$ such that, for every sufficiently
large $N$, depending only on $\theta$, for every $\chi\in\CM$, and
every $\xi\in\ZN$, we have the following implication
\begin{equation}
\label{eq:Fourier_chi} \text{if }\
|\widehat{\chi_N}(\xi)|\geq\theta,\quad  \text{ then }\quad
\Bigl\Vert\frac{Q\xi}{\wt N}\Bigr\Vert\leq \frac{QV}{\wt N}.
\end{equation}
\end{corollary}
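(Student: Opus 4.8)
The plan is to argue by contradiction against K\'atai's orthogonality criterion (Lemma~\ref{lem:katai}). Fix $\theta>0$. Suppose $|\widehat{\chi_N}(\xi)|\geq\theta$, i.e. $|\E_{n\in\ZN}\chi_N(n)\,\e(-n\xi/\wt N)|\geq\theta$; since $\chi_N$ is supported on $[N]$ and $\wt N\leq 20\ell N$, this says that the function $f(n):=\e(-n\xi/\wt N)$ on $[N]$ satisfies $|\E_{n\in[N]}\chi(n)f(n)|\geq c\,\theta$ for a constant $c$ depending only on $\ell$. By the contrapositive of Lemma~\ref{lem:katai} (applied with $\ve:=c\theta/2$, say, and some fixed $K_0$, giving $K=K(K_0,\ve)$ and $\delta=\delta(K_0,\ve)$), there must exist primes $p,p'$ with $K_0<p<p'<K$ such that
$$
\Bigl|\E_{n\in[\lfloor N/p'\rfloor]} f(pn)\,\overline{f}(p'n)\Bigr|\geq\delta.
$$
But $f(pn)\overline{f}(p'n)=\e\bigl(-n(p-p')\xi/\wt N\bigr)$ is itself a linear phase, and the average of $\e(\beta n)$ over $n\in[M]$ has absolute value at most $\min(1,1/(M\norm\beta))$ by the standard geometric-series estimate. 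Hence
$$
\Bigl\Vert\frac{(p-p')\xi}{\wt N}\Bigr\Vert\leq\frac{1}{\delta\,\lfloor N/p'\rfloor}\leq\frac{2p'}{\delta N}\leq\frac{2K}{\delta N},
$$
for $N$ large. So we have produced a nonzero integer $q:=p-p'$ with $|q|<K$ and $\norm{q\xi/\wt N}$ small.

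From here I would pass from this single multiple $q$ to a fixed modulus $Q$. Take $Q$ to be (a divisor of) the least common multiple of all integers of absolute value less than $K$; then $Q$ depends only on $\theta$ (through $K$), and $Q=q\cdot(Q/q)$ with $Q/q$ an integer, so $\norm{Q\xi/\wt N}\leq (Q/|q|)\,\norm{q\xi/\wt N}\leq Q\cdot\frac{2K}{\delta N}$. Finally, since $\wt N\leq 20\ell N$ we can rewrite the right side as $QV/\wt N$ with $V:=40\ell K/\delta$, a constant depending only on $\theta$. This gives exactly the implication~\eqref{eq:Fourier_chi}, with $Q=Q(\theta)$ and $V=V(\theta)$ as required, and the bound is uniform in $\chi$ and in $N$ (for $N$ large depending only on $\theta$), since $K_0,\ve,\delta,K$ were chosen depending only on $\theta$ and $\ell$.

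The only genuinely delicate point is the bookkeeping of quantifiers: one must choose $\ve$ in terms of $\theta$ and $\ell$ first, then $K_0$ (any fixed value, e.g.\ $K_0=1$, works), which yields $K$ and $\delta$ via Lemma~\ref{lem:katai}, and only then is $N$ required to be large (larger than the $K$ of the lemma, and large enough that $\lfloor N/p'\rfloor\geq N/(2K)$). Everything downstream — the definition of $Q$ as an lcm, the definition of $V$ — is then forced and depends only on $\theta$. There is no serious analytic obstacle here; the corollary is essentially a clean repackaging of K\'atai's criterion specialized to linear phases, where the ``correlation with $f(pn)\overline{f}(p'n)$'' hypothesis becomes an explicit Diophantine statement about $\xi/\wt N$.
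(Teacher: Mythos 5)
Your proposal is correct and follows essentially the same route as the paper: apply K\'atai's orthogonality criterion (Lemma~\ref{lem:katai}) to the linear phase $f(n)=\e(-n\xi/\wt N)$, bound the resulting bilinear correlation $\E_n f(pn)\overline f(p'n)$ by the geometric-series estimate to get a Diophantine bound on $(p'-p)\xi/\wt N$, and amplify this to a fixed modulus $Q$ (the paper uses $Q=K!$, and runs the logic in the contrapositive direction: assume $\norm{Q\xi/\wt N}>QV/\wt N$, show all pairwise correlations are below $\delta$, conclude $|\widehat{\chi_N}(\xi)|<\theta$), with $V=40\ell K/\delta$ exactly as you obtain. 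One small slip: you need $Q$ to be a \emph{multiple}, not ``(a divisor of),'' the lcm of the integers below $K$, so that $q\mid Q$ as your next line requires; and the passage from $\widehat{\chi_N}(\xi)$ to $\E_{n\in[N]}\chi(n)f(n)$ actually picks up the factor $\wt N/N\geq 1$, so no loss constant $c<1$ appears there.
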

\begin{proof}
Let  $\delta:=\delta(\theta)$ and  $K:=K(\theta)$ be defined  by
Lemma~\ref{lem:katai} and let $Q=K!$. Suppose that $N>K$. Let  $p$
and $p'$ be primes with $p<p'\leq K$ and  let $\xi\in\ZN$. Since $Q$
is a multiple of $p'-p$ we have
$$
\norm{Q\xi/\wt N} \leq \frac{Q}{p'-p} \norm{(p'-p)\xi/ \tN} \leq
Q\,\norm{(p'-p)\xi/\wt N}.
$$
Since $\wt{N}<20\,\ell N$, we deduce
$$
|\E_{n\in[\lfloor N/p'\rfloor]}\,\e(p'n\xi/ \wt{N}) \e(-pn\xi
\wt{N})| \leq \frac {2p'} {N\norm {(p'-p)\xi/\wt{N}}} \leq\frac
{2KQ}{N\norm{Q\xi/\wt N}} \leq\frac{40\,\ell KQ}{\wt N\norm{Q\xi/\wt
N}}.
$$
Let $V=40\,\ell K/\delta$. If  $\norm{Q\xi/\wt{N}} > QV/ \wt{N}$,
then  the rightmost  term of the last inequality   is smaller than
$\delta$, and thus,
 by Lemma~\ref{lem:katai}  we have
$$
|\widehat{\chi_N}(\xi)|=
\bigl|\E_{n\in[\wt{N}]}\,\chi_N(n)\e(-n\xi/\wt{N})\bigr| =
\frac{N}{\wt{N}}\bigl|\E_{n\in
[N]}\chi(n)\e(-n\xi/\wt{N})\bigr|<\theta
$$
contradicting \eqref{eq:Fourier_chi}. Hence, $\norm{ Q\xi /\wt
N}\leq QV/\wt N$, completing the proof.
\end{proof}

\subsection{Some kernels}
\label{subsec:kernels}
  Next, we make some explicit choices for the constants $Q$ and $V$
   of Corollary~\ref{cor:katai}. This will enable us to compare the Fourier transforms of the
   kernels $\phi_{N,\theta}$ defined below for different values of $\theta$ and to establish the monotonicity property~\eqref{eq:phi-increases}.

For every $\theta>0$ we define
\begin{gather}
\notag \CA(N,\theta):=\Bigl\{\xi\in\ZN\colon \exists\chi\in\CM
\ \text{ such that } \ |\widehat{\chi_N}(\xi)|\geq\theta^2\Bigr\}\ ;\\
\notag W(N,q,\theta):= \max_{\xi\in \CA(N,\theta)} \wt{N}\Bigl\Vert q\,\frac{\xi}{\wt{N}}\Bigr\Vert\ ;\\
\label{def:Qtheta} Q(\theta):=\min_{k\in\N}\Bigl\{k!\colon\ \limsup_{N\to+\infty} W(N,k!,\theta)<+\infty\Bigr\}\ ;\\
\label{def:D(theta)}V(\theta):=1+\Bigl\lceil\frac
1{Q(\theta)}\,\limsup_{N\to+\infty}  W(N,Q(\theta),\theta)
\Bigr\rceil.
\end{gather}
It follows from  Corollary~\ref{cor:katai} that the set of integers
used in the definition of $Q(\theta)$ is non-empty, hence
$Q(\theta)$ is well defined. The essence of the preceding
definitions is that for every real number $V'>V(\theta)-1$, the
implication~\eqref{eq:Fourier_chi} is valid with $\theta^2$
substituted for $\theta$, $V'$ for $V$, $Q(\theta)$ for $Q$, and for
every sufficiently large $N$.
Furthermore, it follows from these definitions that for $0<\theta'\leq\theta$, we
have $Q(\theta')\geq Q(\theta)$,  and thus
\begin{equation}
\label{eq:Q-multliple} \text{for }\ 0<\theta'\leq\theta,\ \text{ the
integer }\  Q(\theta')\ \text{ is a multiple of }\ Q(\theta).
\end{equation}
Moreover, it can be checked that
\begin{equation}
\label{eq:c-devreases} V(\theta)\   \text{ increases as } \ \theta \ \text{ decreases}.
\end{equation}

Next, we use the constants just defined  to build the
kernels $\phi_{N,\theta}$ of
Theorem~\ref{th:Decomposition-weakU2-intro}. We recall that a kernel
on  $\Z_{\wt{N}}$ is a non-negative function $\phi$ on $\Z_{\wt{N}}$
with $\E_{n\in\Z_{\wt{N}}}\phi(n)=1$. We define the \emph{spectrum} of a function $\phi$
to be  the set
$$
\spec(\phi):=\bigl\{\xi\in\ZN\colon \widehat\phi(\xi)\neq 0\bigr\}.
$$

For every $m\geq 1$ and $\tN>2m$ the ``Fejer kernel'' $f_{N,m}$ on
$\Z_\tN$ is defined by
$$
f_{N,m}(x)=\sum_{-m\leq\xi\leq
m}\bigl(1-\frac{|\xi|}m\bigr)\,\e\bigl(x\,\frac{\xi}{\tN}\bigr).
$$
The spectrum  of $f_{N,m}$ is the subset $(-m,m)$ of $\ZN$. Let
$Q_N(\theta)^*$ be the inverse of $Q(\theta)$ in $\Z_\tN$, that is,
the unique integer in $\{1,\dots,\tN-1\}$ such that
$Q(\theta)Q_N(\theta)^*=1\bmod \tN$.  For $N>
2Q(\theta)V(\theta)\lceil \theta^{-2}\rceil$ we define
\begin{equation}
\label{eq:def-phi} \phi_{N,\theta}(x)=f_{N,
Q(\theta)V(\theta)\lceil\theta^{-4}\rceil}(Q_N(\theta)^*x).
\end{equation}
An equivalent formulation is that $f_{N,
Q(\theta)V(\theta)\lceil\theta^{-4}\rceil}(x)=\phi_{N,\theta}(Q(\theta)x)$.
The
spectrum  of the kernel $\phi_{N,\theta}$ is the set
\begin{equation}
\label{eq:def-Xi} \Xi_{N,\theta}:=\Big\{\xi\in \ZN\colon
\Bigl\Vert\frac{Q(\theta) \xi}\tN\Bigr\Vert<
\frac{Q(\theta)V(\theta)\lceil\theta^{-4}\rceil}\tN\Big\},
\end{equation}
and we have
\begin{equation}
\label{eq:fourier-phi} \widehat{\phi_{N,\theta}}(\xi) =\begin{cases}
\displaystyle 1-\Bigl\Vert \frac{Q(\theta)\xi}\tN\Bigr\Vert\, \frac
\tN{ Q(\theta)V(\theta)\lceil\theta^{-4}\rceil}
&\ \  \text{if }\  \xi\in\Xi_{N,\theta} \  ;\\
0 & \ \ \text{otherwise.}
\end{cases}
\end{equation}
We remark that the cardinality of  $\Xi_{N,\theta}$ depends only on $\theta$.
\subsection{Proof of Theorem~\ref{th:Decomposition-weakU2-intro}}
First, we claim that  property~\eqref{eq:phi-increases} of
Theorem~\ref{th:Decomposition-weakU2-intro} holds. Indeed, suppose
that $\theta\geq\theta'>0$ and that $N$ is sufficiently large so
that $\phi_{N,\theta'}$ and $\phi_{N,\theta}$ are defined. We have
to show that $\widehat{\phi_{N,\theta'}}(\xi)\geq
\widehat{\phi_{N,\theta}}(\xi)$ for every $\xi$. Using
\eqref{eq:Q-multliple} and  \eqref{eq:c-devreases} we get that
$\Xi_{N,\theta'}$ contains the set $\Xi_{N,\theta}$. Thus,  we can
assume that $\xi$ belongs to the latter set as the estimate is
obvious otherwise. In this case, the claim follows
from~\eqref{eq:Q-multliple}, \eqref{eq:c-devreases}, and the
formula~\eqref{eq:fourier-phi} giving the Fourier coefficients of
$\phi_{N,\theta}$.

Next, we show the remaining assertions~\eqref{it:decomU22}
and~\eqref{it:decomU23} of
Theorem~\ref{th:Decomposition-weakU2-intro} for the decomposition
$$
\chi_{N,s}:=\phi_{N,\theta}*\chi_N, \quad
\chi_{N,u}:=\chi_N-\phi_{N,\theta}*\chi_N.
$$
Let  $\theta>0$, assume that $N$ is sufficiently large  depending
only on  $\theta$, and let $Q=Q(\theta)$, $V=V(\theta)$,
$\phi_{N,\theta}$, $\Xi=\Xi_{N,\theta}$,  be defined by
\eqref{def:Qtheta}, \eqref{def:D(theta)}, \eqref{eq:def-phi},
\eqref{eq:def-Xi} respectively.

For every $\chi\in\CM$,  if $|\widehat{\chi_N}(\xi)|\geq\theta^2$,
then by the definition of $Q$ we have $\norm{Q\xi/\tN}\leq QV/\tN$
and thus
 $\widehat {\phi_{N,\theta}}(\xi)\geq 1-\theta^4$ by~\eqref{eq:fourier-phi}.
 It follows that
 $|\widehat{\chi_N}(\xi)-\widehat{\phi_{N,\theta}*\chi_N}(\xi)|$ $\leq
 \theta^4\leq\theta^2$.
 This last bound is clearly also true when $|\widehat{\chi_N}(\xi)| <\theta^2$ and thus
using identity \eqref{E:U2formula}  we get
\begin{multline*}
\norm{\chi_N-\phi_{N,\theta}*\chi_N}_{U^2(\Z_\tN)}^4
=\sum_{\xi\in\ZN}
  |\widehat{\chi_N}(\xi)-\widehat{\phi_{N,\theta}*\chi_N}
    (\xi)|^4\leq \\
  \theta^4
\sum_{\xi\in\ZN}|\widehat{\chi_N}(\xi)-\widehat{\phi_{N,\theta}
   *\chi_N}(\xi)|^2
\leq  \theta^4 \sum_{\xi\in\ZN}|\widehat{\chi_N}(\xi)|^2\leq
   \theta^4,
\end{multline*}
where the last estimate follows from Parseval's identity. Hence,
$\norm{\chi_N-\phi_{N,\theta}*\chi_N}_{U^2(\Z_\tN)}\leq\theta$,
proving Property~\eqref{it:decomU23}.

Lastly, for
$\chi\in\CM$ and $n\in\Z_\tN$, using the Fourier inversion formula
and the estimate $|\e(x)-1|\leq \norm{x}$, we get
$$
|(\phi_{N,\theta}*\chi_N)(n+Q)-(\phi_{N,\theta}*\chi_N)(n)|\leq
2\sum_{\xi\in\ZN}|\widehat{\phi_{N,\theta}}(\xi)|\cdot\Bigl\Vert
Q\frac\xi \tN\Bigr\Vert\leq 2 \frac{|\Xi_{N,\theta}|
QV\lceil\theta^{-4}\rceil}{\tN},
$$
where the last estimate follows from \eqref{eq:def-Xi}. As  remarked
above, $|\Xi_{N,\theta}|$ depends only on $\theta$,  thus the last
term in this inequality is bounded by $R/\wt N$ for some constant
$R$ that depends only on $\theta$. This establishes
Property~\eqref{it:decomU22} of
Theorem~\ref{th:Decomposition-weakU2-intro} and finishes the proof.
\qed

\section{Modifications of the inverse and factorization theorems}
\label{S:InverseFactorization} In this section, we state and prove
some consequences of an inverse theorem  of Green and Tao
\cite{GT08}
 and a
factorization theorem by the same authors~\cite{GT12a} that  are
particularly tailored to the applications being pursued   in
subsequent sections. These results combined, prove that a function
that has
 $U^3$-norm bounded away from zero, either has  $U^2$-norm bounded away from zero, or else
 correlates in a sub-progression with a totally equidistributed $2$-step polynomial nilsequence
 of a very special form.

Essentially all  definitions and results of this section extend without important changes to
 arbitrary nilmanifolds. We restrict to the case  of $2$-step nilmanifolds
 as these are the only ones needed in this article and  the notation is somewhat lighter in this case.

\subsection{Nilmanifolds}
\label{subsec:nilmanifolds}
We introduce some notions from \cite{GT12a}. We record here only the properties that
 we  need in this section and defer supplementary material to the next section and to Appendix~\ref{ap:A}.

Let $X=G/\Gamma$ be a
$2$-step nilmanifold.  Throughout, we assume  that $G$ is a connected and simply
 connected $2$-step nilpotent Lie group and $\Gamma$ is a discrete co-compact subgroup
of $G$. We view elements of $G/\Gamma$ as ``points'' on the
nilmanifold $X$ rather than equivalence classes, and denote  them by
$x,y,$ etc. The nilmanifold $X$ is endowed with a base point $e_X$
which is the projection on $X$ of the unit element of $G$.
 The action of $G$ on $X$ is denoted by
$(g,x)\mapsto g\cdot x$. The \emph{Haar measure} $m_X$ of $X$ is the unique probability measure on $X$ that is invariant under this action.

  We denote by  $m$  the dimension of $G$ and by $r$  the dimension of $G_2:=[G,G]$, the commutator subgroup of $G$.
We implicitly assume that $G$ is endowed with a \emph{strong Mal'cev basis} $\CX$ that is adapted to the natural filtration (see Definition~2.1 in \cite{GT12a}. We record here the properties of $\CX$ that will
be needed:    $\CX$ is  a basis
$(\xi_1,\dots,\xi_m)$ of the Lie algebra $\mathfrak g$ that satisfies
\begin{enumerate}
\item
The map
$$
\psi\colon (t_1,\dots,t_m)\mapsto\exp(t_1\xi_1)\cdot\ldots\cdot\exp(t_m\xi_m)
$$
is a homeomorphism from $\R^m$ onto $G$;
\item
$G_2=\psi\bigl(\{0\}^{m-r}\times \R^r\bigr)$;
\item
\label{it:Malcev-Gamma}
$\Gamma=\psi(\Z^m)$.
\end{enumerate}
In \cite{GT12a} the authors introduce a parameter called the degree of rationality of $\CX$ and seek to obtain results where all implied constants depend polynomially on this parameter. We have no need to do this and so this rationality parameter is not going to appear in our article.

Let $\mathfrak g$ be endowed with the Euclidean structure making  $\CX$  an orthonormal basis.
 This induces a  Riemannian structure on $G$ that is invariant under right translations. The group  $G$ is endowed with the associated geodesic distance, which we denote by  $d_G$. This distance is invariant under right translations. We remark that in \cite{GT12a} the authors use a different metric, but it is equivalent with $d_G$, and the implied constant depends only on $X$ and the choice of the Mal'cev basis,  so this does not make any difference for us.

Let the space $X=G/\Gamma$ be endowed with the quotient metric
$d_X$. Writing $p\colon G\to X$ for the quotient map, the metric
$d_X$ is defined by
$$
d_X(x,y)=\inf_{g,h\in G}\{d_G(g,h)\colon p(g)=x,\ p(h)=y\}.
$$
Since  $\Gamma$ is discrete it follows that the infimum
is attained. More precisely, there exists a compact subset $F_0$ of
$G$, such that for every   $x,x'\in X$
\begin{equation}
\label{eq:F} \text{ there exist }\ h,h'\in
F_0\ \text{ with }\ d_G(h,h')=d_X(x,x') \text{ and } p(h)=x, p(h')=x'.
\end{equation}
We frequently  use
the fact that if $f$ is a smooth function on $X$, then $\norm
f_{\lip(X)}\leq\norm f_{\CC^1(X)}$. We also use  the following simple fact:
\begin{lemma}
\label{lemp:ap1} For every bounded subset $F$ of $G$ there exists
a constant $H>0$ such that
\begin{enumerate}
\item
$d_X(g\cdot x,g\cdot x')\leq Hd_X(x,x')$ for all $x,x'\in X$ and
every $g\in F$;
\item
  for   every  $f\in\CC^m(X)$ and every $g\in F$,
writing $f_g(x):=f(g\cdot x)$, we have $\norm{f_g}_{\CC^m(X)}\leq
H\norm f_{\CC^m(X)}$.
\end{enumerate}
\end{lemma}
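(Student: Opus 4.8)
\textbf{Proof proposal for Lemma~\ref{lemp:ap1}.}
The plan is to use the fact that $F$ is bounded in $G$, hence relatively compact, together with the invariance of $d_G$ under right translations and the smoothness of the Mal'cev coordinates. First I would establish part~(i). Since left translation $L_g\colon G\to G$, $h\mapsto gh$, descends to the map $x\mapsto g\cdot x$ on $X$, and since the quotient metric $d_X$ is defined as an infimum of $d_G$-distances over preimages, it suffices to bound the Lipschitz constant of $L_g$ with respect to $d_G$, uniformly for $g$ in the closure $\overline F$ of $F$. Now $d_G$ is only right-invariant, so $L_g$ is not an isometry; however, the differential of $L_g$ at any point, read in the right-invariant framing given by the orthonormal basis $\CX$, is $\mathrm{Ad}(g)$ acting on $\mathfrak g$. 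The map $g\mapsto\|\mathrm{Ad}(g)\|_{\mathrm{op}}$ is continuous on $G$ (polynomial in Mal'cev coordinates, since $G$ is nilpotent), hence bounded on the compact set $\overline F$ by some constant $H_1$. Integrating along geodesics gives $d_G(gh,gh')\leq H_1\, d_G(h,h')$ for all $h,h'\in G$ and $g\in\overline F$, and passing to the quotient (using that the infimum in the definition of $d_X$ is attained, by~\eqref{eq:F}) yields $d_X(g\cdot x,g\cdot x')\leq H_1\, d_X(x,x')$, which is part~(i) with $H\geq H_1$.

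Next I would prove part~(ii). Fix $f\in\CC^m(X)$ and $g\in\overline F$; the function $f_g(x)=f(g\cdot x)$ is smooth, and we must bound $\norm{f_g}_{\CC^m(X)}$ in terms of $\norm f_{\CC^m(X)}$ with a constant independent of $g$. The $\CC^m$-norm on $X$ is defined (via the Mal'cev coordinates $\psi$ and a fixed fundamental domain, as in \cite{GT12a}) in terms of partial derivatives up to order $m$ of the pullback of $f$ to $\R^m$ along $\psi$. Composing with left translation by $g$ corresponds, in coordinates, to composing with the map $\psi^{-1}\circ L_g\circ\psi\colon\R^m\to\R^m$, which for a simply connected nilpotent $G$ is a polynomial diffeomorphism whose coefficients depend polynomially on the coordinates of $g$. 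By the chain rule (Fa\`a di Bruno), the derivatives of $f_g$ up to order $m$ are bounded by $\norm f_{\CC^m(X)}$ times a polynomial expression in the derivatives up to order $m$ of this coordinate map; the latter derivatives are continuous functions of $g$, hence bounded on the compact set $\overline F$ by some constant $H_2$. Taking $H=\max(H_1,H_2)$ completes the proof.

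The main obstacle, and the only point requiring genuine care, is keeping the constants uniform over $g\in F$ while handling the interplay between the right-invariant metric $d_G$ and the left action of $G$ on $X$: one must check that the relevant quantities ($\|\mathrm{Ad}(g)\|$ in part~(i), and the coordinate-derivatives of $\psi^{-1}\circ L_g\circ\psi$ in part~(ii)) are continuous in $g$ and therefore bounded on the compact closure $\overline F$. This is straightforward given that $G$ is connected, simply connected, and $2$-step nilpotent, so that all the maps in sight are genuinely polynomial in the Mal'cev coordinates; no deeper input is needed.
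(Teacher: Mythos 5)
Your proof is correct and follows essentially the same strategy as the paper's: exploit the compactness of $\overline F$ together with the smoothness of the group operations (multiplication in the paper's phrasing, $\mathrm{Ad}$ and the coordinate map $\psi^{-1}\circ L_g\circ\psi$ in yours). Your identification of the pointwise Lipschitz constant of $L_g$ as $\norm{\mathrm{Ad}(g)}_{\mathrm{op}}$ in the right-invariant framing is a slightly more explicit route that even yields a global Lipschitz bound on $G$, but it leads to the same conclusion by the same underlying mechanism.
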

\begin{proof}
Let $F_0$ be as in~\eqref{eq:F}. Since the multiplication $G\times
G\to G$ is smooth, its restriction to any compact set is Lipschitz
and thus there exists  a constant $H>0$ with $d_G(gh,gh')\leq
Hd_G(h,h')$ for every $g\in F$ and  $h,h'\in F_0$. The first
statement now follows immediately from~\eqref{eq:F}. Since
 the map $(g,x)\mapsto g\cdot x$, from $
K\times X$ to $X$, is smooth, the second statement follows as well.
\end{proof}

\begin{definition}[Vertical torus]
  We keep the same notation as above.
 The \emph{vertical torus} is the
sub-nilmanifold  $G_2/(G_2\cap\Gamma)$. The Mal'cev basis induces an
isometric identification between $G_2$ and $\R^r$, and thus of the
vertical torus endowed with the quotient metric, with $\T^r$ endowed
with its usual metric. Every $\bk\in\Z^r$ induces a character
$\bu\mapsto \bk\cdot \bu$ of the vertical torus. A function $F$ on
$X$ is a    \emph{nilcharacter with frequency $\bk$} if $F(\bu\cdot
x)=\e(\bk\cdot\bu)F(x)$ for every $\bu\in \T^r=G_2/(G_2\cap \Gamma)$ and
every $x\in X$. The nilcharacter is \emph{non-trivial} if its
frequency is non-zero.
\end{definition}

\begin{definition}[Maximal torus and horizontal characters]
Let $X=G/\Gamma$ be a $2$-step nilmanifold,  let $m$ and $r$ be as
above, and let $s:=m-r$.
 The Mal'cev basis induces an
isometric identification between the \emph{maximal torus}
$G/(G_2\Gamma)$, endowed with the quotient metric, and $\T^s$,
endowed with its usual metric. A \emph{horizontal character} is a
continuous group homomorphism $\eta\colon G\to\T$  with  a trivial
restriction on $\Gamma$.  In Mal'cev coordinates, it is given by $\eta(x_1,\dots, x_m)=k_1x_1+\dots+k_sx_s\bmod 1$ for $(x_1,\dots,x_m)\in \R^m$, where $k_1,\dots,k_s$ are integers called the coefficients of $\eta$.   We write $\norm{\eta}:=|k_1|+\cdots+|k_s|$.
The horizontal character $\eta$ factors   through the maximal torus, and  induces a character of this group,  given by  $\balpha\mapsto\bk\cdot\balpha:=k_1\alpha_1+\dots+k_s\alpha_s$ for
$\balpha=(\alpha_1,\dots,\alpha_s)\in\T^s$.

\end{definition}
 Note that a trivial  nilcharacter
is any function that factors through the maximal torus.

\subsection{A corollary of the inverse theorem}
We start by stating the  inverse theorem of Green and Tao for the $U^3$-norms.
\begin{theorem}[{The inverse theorem for the $U^3(\Z_N)$-norm~\cite[Theorem 12.8]{GT08}}]
\label{th:inverse}\ For every  $\ve>0$ there exist
$\delta:=\delta(\ve)>0$ and a $2$-step nilmanifold $X:=X(\ve)$, such
that for every  sufficiently large $N$, depending only on $\ve$, and
for every $f\colon\Z_N\to\C$ with $|f|\leq 1$ and $\norm
f_{U^3(\Z_N)}\geq\ve$, there exists a function $\Phi\colon X\to\C$
with $\norm\Phi_{\lip(X)}\leq 1$ and an element $g\in G$ such that
$|\E_{n\in[N]}f(n)\Phi(g^n\cdot e_X)|\geq\delta$.
\end{theorem}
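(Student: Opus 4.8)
The plan is to run the energy-increment strategy for the Gowers norms, reducing the $U^3$ statement, through successive applications of the Cauchy--Schwarz inequality, to a question about the \emph{linear} Fourier structure of the multiplicative derivatives of $f$. One starts from the Gowers--Cauchy--Schwarz identity
$$
\norm f_{U^3(\Z_N)}^8=\E_{h\in\Z_N}\norm{\Delta_h f}_{U^2(\Z_N)}^4,\qquad\text{where }\ \Delta_h f(n):=f(n+h)\,\overline f(n).
$$
If $\norm f_{U^3(\Z_N)}\ge\ve$ then, by averaging, there is a set $H\subseteq\Z_N$ with $|H|\ge c(\ve)N$ on which $\norm{\Delta_h f}_{U^2(\Z_N)}\ge c(\ve)$, and hence by the elementary $U^2$-inverse theorem (identity~\eqref{E:U2formula} together with Parseval) a frequency $\xi(h)\in\Z_N$ with $|\widehat{\Delta_h f}(\xi(h))|\ge c(\ve)$ for every $h\in H$. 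Thus $f$ comes equipped with a ``derivative-frequency'' function $\xi\colon H\to\Z_N$ defined on a dense set, which one should think of as the derivative of the putative quadratic phase of $f$.

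The core of the argument is to show that, after passing to a dense subset, $\xi$ is essentially an affine map. The $8$-point parallelepiped underlying the $U^3$-norm carries an octahedral symmetry which couples the frequencies $\xi(h)$ for additively related $h$; performing the relevant Cauchy--Schwarz manipulations one proves that the number of quadruples $(h_1,h_2,h_3,h_4)\in H^4$ with $h_1+h_2=h_3+h_4$ and $\xi(h_1)+\xi(h_2)=\xi(h_3)+\xi(h_4)$ is at least $c(\ve)|H|^3$, that is, $\xi$ is an approximate Freiman homomorphism on a dense set. Applying the Balog--Szemer\'edi--Gowers theorem and then Freiman's theorem (working inside a suitable Bohr set or generalised arithmetic progression in $\Z_N$) one replaces $\xi$, on a further dense subset, by a genuine affine map $h\mapsto Mh+\beta$. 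A symmetrisation step --- again using the symmetry of the defining configuration --- shows that the bilinear ``second-derivative'' form $M$ may be taken symmetric, and this is precisely the obstruction that must vanish for $\xi$ to be \emph{integrable}.

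A final van der Corput / Cauchy--Schwarz step transfers this information from the derivatives back to $f$ and shows that $f$ correlates, on a subprogression of density $c(\ve)$, with a locally quadratic phase. Since the affine model is only approximate and inevitably involves fractional parts, the intrinsic language for this quadratic phase is that of $2$-step nilsequences: bracket-quadratic phases such as $n\mapsto\e(\lfloor\alpha n\rfloor\,\beta n)$ are exactly the orbit sequences $n\mapsto\Phi(g^n\cdot e_X)$ on a $2$-step nilmanifold $X=G/\Gamma$. From the bounded amount of data produced above one builds an explicit $X=X(\ve)$, with dimension and Mal'cev basis controlled by $\ve$ alone, upgrades the correlation on a subprogression to a global correlation with a $2$-step nilsequence of bounded complexity (using that the restriction of a nilsequence to an arithmetic progression is again a nilsequence on a related nilmanifold, together with a pigeonhole over the progression), and renormalises $\Phi$ so that $\norm\Phi_{\lip(X)}\le 1$; this yields the asserted $\delta=\delta(\ve)>0$. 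At every stage one must check that all constants and the nilmanifold depend on $\ve$ only, not on $N$ or $f$ --- this accounting is routine in spirit but responsible for much of the length.

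The main obstacle I expect is the middle step: passing from ``$\xi$ is an approximate homomorphism on a dense set'' to genuine polynomial structure. It is here that the heavy additive-combinatorial input (Balog--Szemer\'edi--Gowers and Freiman, with usable quantitative bounds) and the delicate symmetrisation-and-integration argument enter, and it is exactly the part that is peculiar to the $2$-step setting and does not extend painlessly to higher $U^k$-norms. A secondary difficulty is the linearisation in the variable $n$, where the lower-order terms must be controlled carefully enough that the extracted quadratic phase is genuinely honest and symmetric rather than an artefact of the Cauchy--Schwarz.
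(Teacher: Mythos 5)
This theorem is not proved in the paper at all: it is quoted verbatim from Green and Tao~\cite[Theorem~12.8]{GT08} and used as a black box in the derivation of Corollary~\ref{cor:inverse}. So there is no ``paper's own proof'' to compare against. Your sketch is, in outline, a faithful account of the actual Green--Tao argument: start from the Gowers--Cauchy--Schwarz identity $\norm f_{U^3}^8=\E_h\norm{\Delta_hf}_{U^2}^4$, pass to a dense set $H$ of $h$'s where $\Delta_hf$ has a large Fourier coefficient $\xi(h)$, show $\xi$ has large additive energy along the constraint $h_1+h_2=h_3+h_4$, invoke Balog--Szemer\'edi--Gowers and Freiman to linearise $\xi$ on a dense structured set, symmetrise the resulting bilinear form and integrate it into a bracket-quadratic phase, and package the latter as a Lipschitz function on an explicit $2$-step nilmanifold of $\ve$-bounded complexity.

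Two things are worth flagging. First, the step you correctly identify as the main obstacle --- turning ``$\xi$ is an approximate Freiman homomorphism'' into genuine polynomial structure, together with the symmetrisation and integration --- is not merely the heart of the argument but the source of all the ``bracket'' subtleties, and the reason the conclusion lives on a nilmanifold rather than being a bona fide quadratic phase $\e(\alpha n^2+\beta n)$; glossing this as ``routine accounting'' would be misleading, and to your credit you do not do so. Second, the paper's statement asserts a \emph{single} nilmanifold $X=X(\ve)$ independent of $f$ and $N$, whereas the additive-combinatorial machinery naturally produces a bounded family of generalised progressions of varying shapes; one must argue (as Green and Tao do, and as you gesture at) that these can all be realised inside one fixed nilmanifold of dimension bounded in terms of $\ve$ alone, for instance by taking a universal model that dominates all bounded-complexity bracket-quadratics. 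This uniformity is exactly what the present paper leans on when it fixes $\CH(\ve)$, $m(\ve)$, $\delta(\ve)$ in Section~\ref{subsec:beginning}. None of this indicates a gap in your sketch, only that the compressed middle and final steps carry the real weight; as an outline of the cited Green--Tao proof it is accurate.
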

A sequence $\Phi(g^n\cdot e_X)$ of  this form where $\Phi$ is only
assumed to be continuous is defined as a \emph{$2$-step basic nilsequence} in
\cite{BHK05}; if in addition we assume  that $\Phi$ is Lipschitz, then
we call it a
\emph{nilsequence of bounded complexity} a notion first used in \cite{GT08}.

 We state a
corollary of this result that is better suited for our purposes:
\begin{corollary}[Modified $U^3$-inverse theorem]
\label{cor:inverse}
For every $\ve>0$ there exist
$\delta:=\delta(\ve)>0$, $m:=m(\ve)$,
and a finite family $\CH:=\CH(\ve)$ of
 $2$-step nilmanifolds,
of dimension at most $m$  and having a vertical torus of dimension $1$
(identified with $\T$ as explained above),
such that:
 For
every   sufficiently large $N$, depending only on $\ve$, if
$f\colon\Z_N \to\C$ is a function with $|f|\leq 1$ and $\norm
f_{U^3(\Z_N)}\geq\ve$, then at least one of the following conditions
hold
\begin{enumerate}
\item
\label{it:inverse1}
 $\norm f_{U^2(\Z_N)}\geq\delta$;
\item
\label{it:inverse2} there exist a nilmanifold  $X$ belonging to the
family $\CH$, an element $g\in G$, and a nilcharacter $\Psi$ of $X$
with frequency $1$,
such that
\begin{gather*}
\norm\Psi_{\CC^{2m}(X)}\leq 1 \ \text{ and } \
|\E_{n\in[N]}f(n)\Psi(g^n\cdot e_{X})|\geq\delta.
\end{gather*}
\end{enumerate}
\end{corollary}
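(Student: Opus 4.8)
The plan is to feed Theorem~\ref{th:inverse} into a vertical Fourier decomposition and peel off the part of the resulting nilsequence that factors through the maximal torus. \emph{Step 1 (set-up).} Apply Theorem~\ref{th:inverse}: for a suitable $\delta_0=\delta_0(\ve)>0$ and a single $2$-step nilmanifold $X_0=G_0/\Gamma_0$ depending only on $\ve$, there are $g_0\in G_0$ and $\Phi_0$ with $\norm{\Phi_0}_{\lip(X_0)}\le1$ and $|\E_{n\in[N]}f(n)\Phi_0(g_0^n\cdot e_{X_0})|\ge\delta_0$. Convolving $\Phi_0$ with a smooth compactly supported approximate identity on $G_0$ replaces it by $\Phi\in\CC^\infty(X_0)$ with $\norm{\Phi_0-\Phi}_{L^\infty}\le\delta_0/10$ and $\norm\Phi_{\CC^k(X_0)}\le C_k(\ve)$ for every $k$, and the correlation drops by at most $\delta_0/10$. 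With $r_0=\dim G_{0,2}$, decompose $\Phi=\sum_{\bk\in\Z^{r_0}}\Phi_\bk$ into vertical Fourier components $\Phi_\bk(x)=\int_{\T^{r_0}}\Phi(\bu\cdot x)\e(-\bk\cdot\bu)\,d\bu$; each $\Phi_\bk$ is a nilcharacter of $X_0$ with frequency $\bk$, and Lemma~\ref{lemp:ap1} together with integration by parts in the vertical variables gives $\norm{\Phi_\bk}_{\CC^{2\dim X_0}(X_0)}\le C(\ve)$ uniformly in $\bk$ and $\norm{\Phi_\bk}_{L^\infty}\le C(\ve)\norm{\bk}^{-2\dim X_0}$ for $\bk\ne\bzero$. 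Choosing $K_0(\ve)$ so that $\sum_{\norm{\bk}>K_0}\norm{\Phi_\bk}_{L^\infty}\le\delta_0/10$ and pigeonholing over the at most $C(\ve)$ remaining frequencies yields $\bk_0$ with $\norm{\bk_0}\le K_0$ and $|\E_{n\in[N]}f(n)\Phi_{\bk_0}(g_0^n\cdot e_{X_0})|\ge\delta_1(\ve)$.

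\emph{Step 2 (the case $\bk_0=\bzero$, giving alternative~\eqref{it:inverse1}).} Here $\Phi_{\bzero}$ is invariant under the vertical torus, hence factors through the maximal torus $G_0/(G_{0,2}\Gamma_0)\cong\T^{s_0}$, so $\Phi_{\bzero}(g_0^n\cdot e_{X_0})=\psi(n\balpha)$ for some $\balpha\in\T^{s_0}$ and some $\psi\in\CC^\infty(\T^{s_0})$ with rapidly decaying Fourier coefficients. Expanding $\psi$ and pigeonholing over its finitely many relevant frequencies reduces matters to $|\E_{n\in[N]}f(n)\e(n\gamma)|\ge\delta_2(\ve)$ for some $\gamma\in\T$. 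Since the Fourier transform on $\Z_N$ of $n\mapsto\e(n\gamma)$ has $\ell^{4/3}$-norm bounded by an absolute constant (this is precisely the computation underlying the proof of Lemma~\ref{lem:U2-intervals}), Parseval, H\"older and identity~\eqref{E:U2formula} give $\norm f_{U^2(\Z_N)}\ge\delta_3(\ve)$, which is alternative~\eqref{it:inverse1}.

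\emph{Step 3 (the case $\bk_0\ne\bzero$, giving alternative~\eqref{it:inverse2}).} Put $H=\{\bu\in G_{0,2}\colon\bk_0\cdot\bu=0\}$, a rational codimension-one central subgroup of $G_0$. Then $G'=G_0/H$ is connected, simply connected and $2$-step nilpotent, $\Gamma'=\Gamma_0H/H$ is a cocompact lattice, and $\Phi_{\bk_0}$, being $H$-invariant, descends to a nilcharacter $\Psi'$ of the $2$-step nilmanifold $X'=G'/\Gamma'$, whose vertical torus $G'_2=G_{0,2}/H$ is one-dimensional; in the Mal'cev normalization $\Psi'$ has vertical frequency $\gcd(\bk_0)$. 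Enlarging $\Gamma'$ to the lattice $\Lambda=\langle\Gamma',w_0\rangle$, where $w_0\in G'_2$ satisfies $\bk_0\cdot w_0=1$ (this $\Lambda$ is again a cocompact lattice because the commutator pairing of $\Gamma'$ is $\gcd(\bk_0)\Z$-valued, hence $\Z$-valued), the function $\Psi'$ descends once more to a nilcharacter $\Psi$ of $X=G'/\Lambda$ of vertical frequency exactly $1$, with $\dim X\le\dim X_0$ and $\norm\Psi_{\CC^{2\dim X_0}(X)}\le C(\ve)$. The quotient maps $X_0\to X'\to X$ carry the orbit of $g_0$ to that of its image $g$ and base point to base point, so $|\E_{n\in[N]}f(n)\Psi(g^n\cdot e_X)|\ge\delta_1(\ve)$; replacing $\Psi$ by $\Psi/C(\ve)$ (which preserves the frequency) gives $\norm\Psi_{\CC^{2\dim X_0}(X)}\le1$ and correlation $\ge\delta_1(\ve)/C(\ve)$. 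Taking $\CH(\ve)$ to be the finite family of the nilmanifolds $X$ obtained as $\bk_0$ ranges over the nonzero vectors with $\norm{\bk_0}\le K_0(\ve)$, $m(\ve)=\dim X_0(\ve)$ and $\delta(\ve)=\min(\delta_3(\ve),\delta_1(\ve)/C(\ve))$, this yields alternative~\eqref{it:inverse2} and completes the argument.

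\emph{Main difficulty.} The mollification and vertical Fourier bookkeeping of Step~1 and the passage from a toral nilsequence to a linear phase in Step~2 (via a standard completion-of-the-sum estimate) are routine. The crux is Step~3: realizing $\Phi_{\bk_0}$, up to a bounded rescaling, as a genuine \emph{frequency-$1$} nilcharacter on a nilmanifold chosen from a \emph{finite} family of \emph{bounded} dimension. This forces one to check that $G'=G_0/H$ with its image lattice is a well-behaved $2$-step nilmanifold, that the enlarged $\Lambda$ is still a cocompact lattice to which $\Psi'$ genuinely descends with the frequency normalized to $1$, and that the correlation (equivalently, the equidistribution data) transfers along the two quotient maps; once these verifications are in place the constants $\delta(\ve)$, $m(\ve)$ and $\CH(\ve)$ are extracted mechanically.
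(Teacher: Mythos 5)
Your proof follows the same route as the paper's: apply Theorem~\ref{th:inverse}, mollify to get $\CC^{2m}$ control, take the vertical Fourier expansion, truncate and pigeonhole to a single frequency $\bk_0$, and then split into the case $\bk_0=\bzero$ (factoring through the maximal torus, hence producing a large linear Fourier coefficient and alternative~\eqref{it:inverse1}) and $\bk_0\neq\bzero$ (quotienting $G_0$ by $\ker(\bk_0\cdot)$ to produce a nilmanifold with one-dimensional vertical torus and a nilcharacter, hence alternative~\eqref{it:inverse2}). Steps~1 and~2 match the paper's argument essentially line by line.

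The one place you go beyond the paper is the end of Step~3: the lattice enlargement $\Gamma'\rightsquigarrow\Lambda=\langle\Gamma',w_0\rangle$. The paper forms $X_\bk=G_\bk/\Gamma_\bk$ with $G_\bk=\wt G/\ker\phi_\bk$ and simply asserts that $\Psi_\bk$ is a nilcharacter ``with frequency equal to~$1$''. With the Mal'cev normalization of the vertical torus the generator of $\Gamma_\bk\cap G_{\bk,2}$ corresponds to $\gcd(\bk)$ under $\phi_\bk$, so $\Psi_\bk$ actually has vertical frequency $\gcd(\bk)$, not $1$, when $\gcd(\bk)>1$. Your enlargement of the lattice by the central element $w_0$ with $\bk_0\cdot w_0=1$ is the clean way to renormalize this to frequency exactly $1$, and your verification that $\Lambda$ is a discrete cocompact subgroup and that $\Psi'$ descends to $G'/\Lambda$ is correct. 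In the paper the discrepancy is harmless because nothing downstream (Propositions~\ref{lem:equid-square}, \ref{lem:equid-square-ab}, or Section~\ref{SS:InvFact}) uses the exact value of the frequency — only that the nilcharacter is nontrivial, that $X$ comes from a finite family depending only on $\ve$, and that the $\CC^{2m}$-norm is controlled — so the paper's shortcut costs nothing, while your version is the more literally correct route to the statement as written.
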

\begin{proof}
Let $\ve>0$. In this proof  the constants
$\delta,\delta',\delta'',C,C_1,\dots,$ depend only on $\ve$.

 Let $\wt X=\wt G/\wt \Gamma$ and  $\delta$ be given by
Theorem~\ref{th:inverse}. Let $m$ be the dimension of $\wt X$, and
$r$ be the dimension of  $\wt G_2$. The maximal torus $\wt G/(\wt G_2\wt\Gamma)$ has
dimension $s:=m-r$. As mentioned above, we identify $\wt G_2$ with
$\R^r$  and  $\wt G_2\cap\wt \Gamma$ with $\Z^r$,  thus the vertical
torus $\wt G_2/(\wt G_2\cap\Gamma)$ is identified with $\T^r$.

Let   $f\colon\Z_N\to\C$ be a function with $|f|\leq 1$ and $\norm
f_{U^3}\geq\ve$. Let $\Phi$ and $\wt g$ be given by
Theorem~\ref{th:inverse}, i.e.
\begin{equation}
\label{eq:Phi_Inverse}
|\E_{n\in[N]}f(n)\Phi(\wt g^n\cdot e_{X})|\geq\delta
\end{equation}
and $\norm\Phi_{\lip(\wt X)}\leq 1$.
We can assume that $
\norm\Phi_{\CC^{2m}(\wt X)}\leq 1.$ Indeed, there exists a function
$\Phi'$ with $\norm{\Phi-\Phi'}_\infty\leq\delta/2$ and
$\norm{\Phi'}_{\CC^{2m}(\wt X)}\leq C$ for some constant $C$ depending only on $\delta$ and $\wt X$ and thus only on $\ve$. Up to a change in
the constant $\delta$, the conclusion of Theorem~\ref{th:inverse}
remains valid with $\Phi'$ substituted for $\Phi$.

 We need some preliminary definitions. For $\bk\in \Z^r$, the character $\bk$  of $\wt G_2/(\wt G_2\cap\wt \Gamma)$
  induces a character of $\wt G_2$ given by
  some linear map $\phi_\bk\colon \wt G_2=\R^r\to\R$.
Let $G_\bk$ be the quotient of $\wt G$ by the  subgroup
$\ker(\phi_\bk)$ of $\wt G_2$ and $\Gamma_\bk$ be the image of $\wt
\Gamma$ in this quotient. Then $\Gamma_\bk$ is a discrete and
co-compact subgroup of  $G_\bk$ and we let
$X_\bk:=G_\bk/\Gamma_\bk$. We write $\pi_\bk\colon \wt X\to X_\bk$
for the natural projection and let $e_{X_\bk}:=\pi_\bk(e_X)$.

 If $\bk$ is non-zero, then  $X_\bk$ is a non-Abelian
$2$-step nilmanifold. The commutator $G_{\bk,2}$ of $G_{\bk}$ is the
quotient of $\wt G_2$ with the kernel of $\phi_\bk$ and thus has
dimension $1$ and  the vertical torus $G_\bk/(G_{\bk,2}\Gamma_\bk)$
of $X_\bk$ has dimension $1$.
 If $\bk$ is the trivial character, then $X_\bk$ is
the maximal torus $G/(G_2\Gamma)$ of $X$ and thus is a compact
Abelian Lie group.

We recall the definition of the vertical Fourier transform.  The restriction to $\wt
G_2\cap\wt\Gamma$ of the action by translation of $\wt G$ on
$\wt X$ is trivial, and thus this action induces an action of the
vertical torus on  $\wt X$ by $(\bu,x)\mapsto \bu\cdot x$ for
$\bu\in\T^r$ and $x\in \wt X$. The vertical Fourier series of the function
$\Phi$ is
$$
\Phi=\sum_{\bk\in\Z^r}\Phi_\bk \quad \text{ where } \quad
\Phi_\bk(x)=\int_{\T^r}\Phi(\bu\cdot
x)\e(-\bk\cdot\bu)\,dm_{\T^r}(\bu).
$$
For every $\bk\in \Z^r$, the function $\Phi_\bk$ is a nilcharacter
with frequency $\bk$  and thus can be written as
$$
\Phi_\bk=\Psi_\bk\circ\pi_\bk
$$
for some function $\Psi_\bk$ on $X_\bk$.
 If $\bk\neq 0$, then  $\Phi_\bk$ is a nilcharacter of $X_\bk$ with frequency equal to $1$.
Moreover, for every $\bk\in \Z^r$, since $\norm\Phi_{\CC^{2m}(\wt
X)}\leq 1$, then
 $\norm{\Phi_\bk}_{\CC^{2m}(\wt X)}\leq 1$ and
$|\Phi_{\bk}(x)|\leq C_1(1+\norm\bk)^{-2m}$  for every $\bk\in\Z^r$ and every $x\in\wt X$.
 Since $m>r$, there exists a constant
$C_2$, depending only on $\wt X$ and $\delta$, and thus only on
$\ve$, with $$ \sum_{\bk;\ \norm\bk>C_2} |\Phi_{\bk}(x)|<\delta/2 \
\ \text{ for every }\ x\in \wt X.
$$
Replacing $\Phi$ with its vertical Fourier series
in~\eqref{eq:Phi_Inverse}, this last bound implies that there exists
$\delta'$, depending only on $\wt X$ and $\delta$, and thus only on
$\ve$, such that
\begin{equation}
\label{eq:exists-k} |\E_{n\in[N]}f(n)\Psi_\bk(g_\bk^n\cdot e_{\wt
X})| =|\E_{n\in[N]}f(n)\Phi_\bk(\wt g_\bk^n\cdot e_{\wt X})|
\geq\delta'
\end{equation}
 for some  $\bk\in\Z^r \ \text{ with } \norm\bk\leq C_2$,
where $g_\bk$ is the image of $\tilde{g}$ in $G_\bk$ under the
natural projection. Since $\Phi_\bk=\Psi_\bk\circ\pi_\bk$ and $\norm{\Phi_\bk}_{\CC^{2m}(\wt X)}\leq 1$,
 we have that
$$
\norm{\Psi_k}_{\CC^{2m}(X_\bk)}\leq C_3\  \text{ for every }\
\bk\in\Z^r \ \text{ with }\ \norm\bk\leq C_2.
$$

 We define the family $\CH$ of $2$-step nilmanifolds as follows
 $$
 \CH:=\{ X_\bk\colon \bk\neq 0,\ \norm\bk\leq C_2\}.
 $$
It remains to show that  either   Property~\eqref{it:inverse1} or
 Property~\eqref{it:inverse2} is satisfied.

Suppose  first that  the element $\bk$   in~\eqref{eq:exists-k} is
non-zero. We have $\norm{C_3\inv\Psi_\bk}_{\CC^\infty(X_\bk)}\leq 1$
and $|\E_{n\in[N]} f(n)C_3\inv\Psi_\bk(g_\bk^ne_{X_\bk})|\geq
C_3\inv\delta'$, showing that  Property~\eqref{it:inverse2} is
satisfied.

Otherwise, \eqref{eq:exists-k} holds for $\bk=0$, in which case $X_\bk$ is the maximal torus $\wt G/(\wt G_2\wt\Gamma)\cong\T^s$ and $\Psi_0$ is a function on $\T^s$
with $\norm{\Psi_0}_{\CC^{2m}(\T^s)}\leq C_3$.
Let $\balpha$ be the projection of $g$ in
$\T^s$. For some constant $C_4$, we have
\begin{gather*}
\sum_{\bell\in\Z^s} |\widehat{\Psi_0}(\bell)|\leq C_4\ ;\\
\delta'\leq |\E_{n\in[N]}f(n)\Psi_0(n\alpha)|
\leq\sum_{\bell\in\Z^s}
|\widehat{\Psi_0}(\bell)| |\E_{n\in[N]} f(n)\, \e(n\, \bell\cdot\balpha)|.
\end{gather*}

 Thus, there exists $\bell\in\Z^s$ with
$|\E_{n\in[N]} f(n)\, \e(n\, \bell\cdot\balpha)|\geq \delta'/C_4$.   Since the $l^{4/3}(\Z_N)$-norm of the Fourier
coefficients of any linear phase is bounded by a universal constant,
arguing as in the proof of Lemma~\ref{lem:U2-intervals} we deduce
 that $\norm f_{U^2(\Z_N)}\geq \delta''$, for
some constant $\delta'':=\delta''(\ve)$, showing that
Property~\eqref{it:inverse1} is satisfied. This completes the proof.
\end{proof}

\subsection{A modification of the factorization theorem}
 We plan to use a  decomposition result for
polynomial sequences on nilmanifolds from \cite{GT12a}. We keep the
notation and the conventions of  Section~\ref{subsec:nilmanifolds}
and introduce some additional ones next.

\begin{definition}[Total equidistribution]
Let $X$ be a nilmanifold. We say that the finite sequence $g\colon
[N]\to G$   is \emph{totally $\ve$-equidistributed on $X$} if
$$ | \E_{n\in [N]}
\one_P(n) F(g(n)\cdot e_X)  |\leq \ve,
$$
for all $F\in \lip{(X)}$ with $\norm{F}_{\text{Lip}(X)}\leq 1$ and
$\int F\, dm_X=0$, and all arithmetic progressions $P$ in $[N]$.
\end{definition}
Modulo a change in the constants, our definition of total
equidistribution  is equivalent to the one given in \cite{GT12a},
where  the claimed estimate is $ | \E_{n\in P} F(g(n)\cdot e_X)|\leq
\ve $ for   all  $F\in \lip{(X)}$ with $\norm{F}_{\text{Lip}(X)}\leq
1$ and $\int F\, dm_X=0$, and arithmetic progressions $P$ in $[N]$
with $|P|\geq \ve N$.

\begin{notation} If $(g(n))$ is a sequence in $G$ and $h\in \Z$,
we denote by $(\partial_h(g))$ the sequence defined by
$\partial_hg(n):=g(n+h)g(n)\inv$ for $n\in \N$.
\end{notation}

\begin{definition}[Polynomial sequences]
A \emph{polynomial sequence in} a nilpotent group $G$ is  a sequence
$g\colon \N\to G$ that has the form $g(n)=a_1^{p_1(n)}\cdots
a_k^{p_k(n)}$, where $a_1,\ldots, a_k\in G$ and $p_1,\ldots, p_k\in
\Z[t]$.

An equivalent definition~\cite[Lemma~6.7]{GT12a} is that there
exists an integer $d$ such that
$\partial_{h_{d+1}}\partial_{h_d}\cdots
\partial_{h_1}g(n)=\one_G$ for all $n\in \N$ and all $h_1,\ldots,
h_{d+1}\in \Z$. The smallest integer $d$ with this property is the
\emph{degree} of the sequence.

We say that a sequence $(g(n))$ in $G$ is a degree $2$ polynomial
sequence with \emph{coefficients in the natural filtration} if it
can be written as
$$
g(n)=g_0g_1^ng_2^{\binom n2}\ \text{ where }\ g_0,g_1\in G\ \text{ and }\ g_2\in G_2.
$$
\end{definition}

\begin{definition}[Rational elements]
We say that an element $g\in G$ is \emph{$Q$-rational} for some
$Q\in \N$ if there exists $m\leq Q$ with $g^m \in\Gamma$. We say
that $g$ is \emph{rational} if it is $Q$-rational for some $Q\in
\N$.
\end{definition}
Rational elements form a countable subgroup of $G$~\cite[Lemma A.12]{GT12a}.
\begin{definition}[Smooth and rational sequences]
Given a nilmanifold $G/\Gamma$ and $M,N\in \N$ with $M\leq N$,  we
say that
\begin{itemize}
\item
 the sequence   $\epsilon\colon[N]\to G$
is $(M,N)$-\emph{smooth} if for every $n\in[N]$ we have
$d_G(\epsilon(n), \one_G)\leq M$
and $d_G(\epsilon(n),\epsilon(n-1))\leq M/N$;
\item
 the sequence $\gamma\colon[N]\to G$ is $M$-\emph{rational} if
 for every $n\in[N]$, $\gamma(n)$ is $M$-rational.
\end{itemize}
 \end{definition}
In \cite{GT12a} the next result is stated only for functions of the
form $\omega(M)=M^{-A}$, but the same proof  works for arbitrary
functions $\omega\colon \N\to\R^+$.  For the notion of a rational
subgroup that is used in the next statement we refer the reader to
Appendix~\ref{ap:A}.
\begin{theorem}[Factorization of polynomial sequences~{\cite[Theorem~1.19]{GT12a}}]
\label{th:factorization}
 Suppose that   $X:=G/\Gamma$ is a $2$-step
nilmanifold. For every $M\in\N$ there exists a finite
collection $\CF(M)$ of sub-nilmanifolds of $X$, each of the form
$X':=G'/\Gamma'$, where $G'$ is a rational subgroup of $G$ and
$\Gamma':=G'\cap\Gamma$, such that the following holds:

 For every function 
 $\omega\colon \N\to\R^+$ and every $M_0\in \N$, there exists  $M_1:=M_1(M_0,X,\omega)$, such
that for every $N\in\N$, and for every
 degree $2$ polynomial sequence
$(g(n))_{n\in[N]}$ in $G$  with  coefficients in the natural
filtration of $G$, there exist   $M\in \N$ with $M_0\leq M\leq M_1$,
a nilmanifold $X'$ belonging to the family $\CF(M)$, and a
decomposition
$$
g(n) = \epsilon(n) g'(n)\gamma(n), \quad n\in [N],
$$
 where $\epsilon,g',\gamma\colon [N]\to G$ are
degree $2$ polynomial sequences with coefficients in the natural
filtration of $G$, that satisfy
\begin{enumerate}
\item
\label{it:smooth} $\epsilon$ is $(M,N)$-smooth;
\item
\label{it:equid} $(g'(n))_{n\in[N]}$ takes values in $G'$, and the
finite sequence $(g'(n)\cdot e_{X'})_{n\in[N]}$ is totally
$\omega(M)$-equidistributed in $X'$ with the metric $d_{X'}$;
\item
\label{it:periodic} $\gamma\colon [N]\to G$  is $M$-rational, and
$(\gamma(n)\cdot e_X)_{n\in [N]}$ has period at most $M$.
\end{enumerate}
\end{theorem}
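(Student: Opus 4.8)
The statement is, apart from the replacement of $\omega(M)=M^{-A}$ by an arbitrary $\omega\colon\N\to\R^+$, precisely \cite[Theorem~1.19]{GT12a} of Green and Tao, restricted to $2$-step nilmanifolds and to degree $2$ polynomial sequences $g(n)=g_0g_1^ng_2^{\binom n2}$ with $g_2\in G_2$. The plan is therefore to quote that theorem and to observe that the special form of $\omega$ is never used in its proof: $\omega$ enters there only as a threshold in the definition of total equidistribution, and only at finitely many scales, so the argument of \cite{GT12a} applies verbatim.

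In more detail, I would recall the shape of the Green--Tao induction. First, the finite family $\CF(M)$ is produced at the outset, depending only on $X$ and on $M$, not on $\omega$: it is a finite list of sub-nilmanifolds $G'/\Gamma'$ attached to rational subgroups $G'$ of $G$ of complexity at most $M$, with $\Gamma'=G'\cap\Gamma$ automatic. Given $\omega$ and $M_0$, one then runs a dimension-decreasing iteration. At a generic stage one has a degree $2$ polynomial sequence with coefficients in the natural filtration taking values in some rational subgroup; applying the quantitative equidistribution theorem for polynomial orbits on nilmanifolds (see Theorem~\ref{th:Leibman}) at the current scale $M$, either the orbit is already totally $\omega(M)$-equidistributed on the corresponding sub-nilmanifold and the recursion stops, or there is a non-trivial horizontal character $\eta$ with $\norm\eta$ bounded in terms of $M$ such that $\eta\circ g$ has small smoothness norm on $[N]$. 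In the latter case one peels off an $(M,N)$-smooth error $\epsilon$ and an $M$-rational, periodic term $\gamma$ and is left with a sequence valued in a proper rational subgroup, on which one recurses; since the dimension of the ambient group strictly drops, the process halts after at most $\dim G$ steps. Composing the smooth, equidistributed and rational pieces obtained at the successive levels gives the asserted decomposition $g=\epsilon g'\gamma$ with $X'\in\CF(M)$; that $\epsilon$, $g'$ and $\gamma$ are again of the form $h_0h_1^nh_2^{\binom n2}$ with $h_2\in G_2$ is built into the construction, the quadratic part surviving because the $\binom n2$-coefficient lies in $G_2$ and smoothing or rationalising it keeps it in $G_2$.

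The only thing to check for the generalisation --- and this is essentially the whole task --- is that throughout this induction $\omega$ is consulted solely at the scale $M$ of the current step, that this $M$ is at each level confined to an interval determined by $M_0$, $X$ and the finitely many values of $\omega$ already used, and hence that the procedure terminates with $M\in[M_0,M_1]$ for a suitable $M_1=M_1(M_0,X,\omega)$. No monotonicity or decay property of $\omega$ is invoked anywhere, so an arbitrary positive function is admissible, and the bounds in the conclusion are exactly those of \cite{GT12a} with $M^{-A}$ replaced by $\omega(M)$. The main obstacle is thus purely bookkeeping: tracking through the Green--Tao proof that nothing beyond the values $\omega(M)$, $M_0\le M\le M_1$, is ever referenced.
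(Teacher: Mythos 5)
Your proposal matches the paper exactly: the paper also treats this as a direct citation of Green--Tao \cite[Theorem~1.19]{GT12a}, remarking only that the restriction to $\omega(M)=M^{-A}$ is never used and that the same argument goes through for arbitrary $\omega\colon\N\to\R^+$. Your extra sketch of the dimension-decreasing iteration and the observation that $\omega$ is queried only at finitely many scales is a correct and slightly more detailed version of the same justification.
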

\begin{remark} We emphasize  that in the previous result and subsequent
corollary, the number  $M_1$ is independent of $N$ and the  family
$(\CF(M))_{M\in \N}$ is independent of $\omega$ and $N$. The choice
of the sub-nilmanifold $X'$ may depend on $N$, but it stabilizes for
large enough $N$.
\end{remark}
We  will use  the following corollary of the previous result  that
 gives a more precise factorization for a certain explicit class of polynomial sequences.
 \begin{corollary}[Modified factorization]
\label{cor:factorization1} Let $X:=G/\Gamma$ be a $2$-step
nilmanifold with  vertical torus of dimension $1$.
For every $M\in\N$ there
exists a finite collection
$\CF(M)$ of sub-nilmanifolds of $X$, each  of the form
$X':=G'/\Gamma'$,
   where  $\Gamma':=G'\cap\Gamma$ and
 either
\begin{enumerate}
\item
\label{it:Gprimeabelien}   $G'$ is an Abelian rational subgroup of
G; or
\item
\label{it:Gprimenonabelien} $G'$ is a non-Abelian rational subgroup
of $G$ and $G'_2/(G'_2\cap\Gamma')$ has dimension $1$,
\end{enumerate}
such that the following holds:

For every 
function $\omega\colon \N\to\R^+$  and every $M_0\in \N$, there
exists $M_1:=M_1(M_0, X,\omega)$, such that for every $N\in\N$ and
every $g\in G$, there exist  $M \in \N$ with $M_0\leq M \leq M_1$, a
nilmanifold $X'$ belonging to the family $\CF(M)$, and a
decomposition
$$
g^n = \epsilon(n) g'(n)\gamma(n), \quad n\in [N],
$$
 where $\epsilon,g',\gamma\colon [N]\to G$ are
polynomial sequences that satisfy
\begin{enumerate}
\setcounter{enumi}{2}
\item\label{it:dec3}
$\epsilon$ is $(M,N)$-smooth;
\item\label{it:dec4}
$(g'(n))_{n\in[N]}$  takes  values in $G'$, has the form
\begin{equation}
\label{eq:g-prime} g'(n)=g'_0g_1'^n g_2'^{\binom n 2}\  \text{ where
}\ g'_0,g'_1,g'_2\in G',\ \text{ and moreover } \ g'_2\in G'_2\
\text{ in case~\eqref{it:Gprimenonabelien}},
\end{equation}
and  $(g'(n)\cdot e_{X'})_{n\in[N]}$ is totally
$\omega(M)$-equidistributed in $X'$ with the metric $d_{X'}$;
\item\label{it:dec5}
$\gamma\colon [N]\to G$  is $M$-rational, and
$(\gamma(n)\cdot e_X)_{n\in[N]}$ has period at most $M$.
\end{enumerate}
\end{corollary}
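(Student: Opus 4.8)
The plan is to obtain Corollary~\ref{cor:factorization1} as a direct consequence of the Green--Tao factorization theorem (Theorem~\ref{th:factorization}) applied to the sequence $n\mapsto g^n$, and then to read off the extra structure from the hypothesis that $X$ has a one-dimensional vertical torus. First I would record the trivial but crucial observation that, for every $g\in G$, the sequence $n\mapsto g^n$ is a degree $2$ polynomial sequence in $G$ with coefficients in the natural filtration, since $g^n=\one_G\cdot g^n\cdot\one_G^{\binom n2}$; that is, in the notation of the relevant definition one takes $g_0=\one_G$, $g_1=g\in G$ and $g_2=\one_G\in G_2$. Hence Theorem~\ref{th:factorization} applies to $X=G/\Gamma$, and I would keep verbatim the finite family $\CF(M)$ it produces. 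For given $\omega$ and $M_0$ it furnishes $M_1$ and, for every $N$ and every $g\in G$, some $M\in[M_0,M_1]$, a member $X'=G'/\Gamma'$ of $\CF(M)$, and a decomposition $g^n=\epsilon(n)g'(n)\gamma(n)$ into degree $2$ polynomial sequences with coefficients in the natural filtration of $G$ such that $\epsilon$ is $(M,N)$-smooth, $(g'(n))$ takes values in $G'$ with $(g'(n)\cdot e_{X'})$ totally $\omega(M)$-equidistributed in $X'$ with the metric $d_{X'}$, and $\gamma$ is $M$-rational and periodic with period at most $M$. These statements are precisely \eqref{it:dec3}, \eqref{it:dec5}, and the ``takes values in $G'$'' and equidistribution clauses of \eqref{it:dec4}.

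It then remains to establish the dichotomy \eqref{it:Gprimeabelien}--\eqref{it:Gprimenonabelien} for the members of $\CF(M)$, and to upgrade the form of $g'$ to \eqref{eq:g-prime}. Because the vertical torus of $X$ has dimension $1$, the group $G_2=[G,G]$ is connected and one-dimensional, and I identify it with $\R$. Given $X'=G'/\Gamma'\in\CF(M)$, the group $G'$ is connected, simply connected and nilpotent, so $G'_2:=[G',G']$ is connected; being a connected subgroup of $G_2\cong\R$ it equals either $\{0\}$, in which case $G'$ is Abelian (case \eqref{it:Gprimeabelien}), or all of $G_2$. In the second case $G_2=G'_2\subseteq G'$, whence $G'_2\cap\Gamma'=G_2\cap(G'\cap\Gamma)=G_2\cap\Gamma$, which is cocompact in $G_2\cong\R$; thus $G'_2/(G'_2\cap\Gamma')$ has dimension $1$, giving case \eqref{it:Gprimenonabelien}. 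Every member of $\CF(M)$ therefore satisfies \eqref{it:Gprimeabelien} or \eqref{it:Gprimenonabelien}.

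Finally, since Theorem~\ref{th:factorization} gives $g'$ as a degree $2$ polynomial sequence in $G$ with coefficients in the natural filtration, I may write $g'(n)=g'_0g_1'^ng_2'^{\binom n2}$ with $g'_0,g'_1\in G$ and $g'_2\in G_2$; the containment $g'(n)\in G'$ for all $n$ forces, by evaluating successively at $n=0,1,2$, that $g'_0\in G'$, then $g'_1\in G'$, then $g'_2\in G'$, so $g'_2\in G_2\cap G'$. In case \eqref{it:Gprimenonabelien} this intersection is $G'_2$ since $G_2=G'_2\subseteq G'$ there, which yields the additional requirement in \eqref{eq:g-prime}; in case \eqref{it:Gprimeabelien} no further condition is imposed. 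This completes conclusion \eqref{it:dec4}. I do not expect a genuine obstacle here: the whole argument is an unwinding of Theorem~\ref{th:factorization}, the only substantive point being the use of $\dim G_2=1$ to force the commutator subgroup of each $G'$ occurring in $\CF(M)$ to be either trivial or all of $G_2$, and the correspondingly automatic control of the vertical torus of $X'$.
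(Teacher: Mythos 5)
Your proposal is correct and follows essentially the same route as the paper's proof: apply Theorem~\ref{th:factorization} to the sequence $n\mapsto g^n$, keep the family $\CF(M)$ verbatim, and exploit $\dim G_2=1$ to force $G'_2\in\{\{\one_G\},G_2\}$. The only cosmetic difference is that you deduce $g'_0,g'_1,g'_2\in G'$ by evaluating $g'$ at $n=0,1,2$, whereas the paper uses the discrete derivatives $\partial_1 g'$ and $\partial_1^2 g'$; these are equivalent, and you also spell out slightly more explicitly than the paper why the dichotomy \eqref{it:Gprimeabelien}/\eqref{it:Gprimenonabelien} and the $1$-dimensionality of $G'_2/(G'_2\cap\Gamma')$ hold.
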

\begin{proof}
Let the integers $M_1, M$ and the nilmanifold  $X'=G'/\Gamma'$
belonging to the family $\CF(M)$ be given
 by Theorem~\ref{th:factorization}. Note that the sequence
$(g^n)_{n\in [N]}$ is a degree $2$ polynomial sequence in $G$ with
coefficients in the natural filtration. Let $(g'(n))_{n\in [N]}$ be
the sequence given by the decomposition
  of Theorem~\ref{th:factorization}. This sequence is a polynomial sequence in $G$ with coefficients in the natural filtration of $G$ and thus it can be written as
 $g'(n)=g'_0g_1'^ng_2'^{\binom n2}$ for some $g'_0,g'_1\in G$ and some $g'_2\in G_2$.
It remains to show   that this sequence
 has the form \eqref{eq:g-prime}, i.e. that $g'_0,g'_1,g'_2\in G'$ and  furthermore that $g'_2\in G'_2$ in case $(ii)$.

Since $g'_0=g'(0)$ we have $g'_0\in G'$. Recall that $\partial_1g(n)=g(n+1)g(n)\inv$.
 Using the fact that $G_2$ is included in the center of $G$ we obtain
 $$
\partial_1  g'(n)= g'_0 g'_1 g_0'^{-1} g_2'^n \quad \text{ and } \quad \partial_1^2 g'(n)=g'_2.
$$
It follows that $g'_1$ and $g'_2\in G'$.

If we are in   case~\eqref{it:Gprimeabelien}, then  $G'$ is Abelian
and we are done. Suppose now that we are in case $(ii)$ where  $G'$
is non-Abelian. Then $G'_2$ is a non-trivial subgroup of $G_2$.
Moreover, $G'_2$ is closed and connected, and by hypothesis $G_2$ is
isomorphic to the torus $\T$. It follows that $G'_2=G_2$. Hence,
$g_2'\in G_2'$. This shows that the sequence $(g'(n))_{n\in [N]}$
has the required properties and completes the proof.
  \end{proof}

\section{Correlation of multiplicative functions with
nilsequences}\label{S:CorrelationNil} The main goal of this section
is to establish some correlation estimates needed in the proof of
the decomposition results given in the next section. We show that
multiplicative functions do not correlate with a class of  totally
equidistributed $2$-step polynomial nilsequences. The precise
statements appear in Propositions~\ref{lem:equid-square} and
\ref{lem:equid-square-ab}.

\subsection{Quantitative equidistribution}
We start with a quantitative equidistribution result for polynomial
sequences on nilmanifolds by Green and Tao \cite{GT12a} that
strengthens an earlier qualitative equidistribution result of
Leibman \cite{L05}.
\begin{definition}
If $g\colon [N]\to \T$ is a finite polynomial sequence in
$\T$, of the form
$$
g(n)=\alpha_0+\alpha_1n+\alpha_2{\binom
n2}+\dots+\alpha_d{\binom nd}\ \text{ where }\  d\in\N\ \text{ and
}\ \alpha_0,\dots\alpha_d\in\T,
$$
then the \emph{smoothness norm} of $g$  is defined by
$$
\norm g_{C^\infty[N]}:=\max_{1\leq j\leq d} N^j\norm{\alpha_j}.
$$
 \end{definition}

\begin{theorem}[Quantitative Leibman Theorem \cite{GT12a}]
\label{th:Leibman} Let  $X:=G/\Gamma$ be a $2$-step nilmanifold.
Then for every  $ \ve>0$ there exists
$D:=D(X, \ve)>0$ with the following property: For every
$N\in\N$,  if $g\colon [N]\to G$ is a polynomial sequence of degree
at most
 $2$ such that the sequence $(g(n)\cdot e_X)_{n\in[N]}$ is not  $\ve$-equidistributed in $X$,
  then there exists
a horizontal character $\eta:=\eta(X,\ve)$  such that
$$
0<\norm\eta\leq D\quad \text{ and }\quad \norm{\eta\circ
g}_{C^\infty[N]}\leq D.
$$
\end{theorem}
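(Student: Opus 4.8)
The plan is to follow the strategy of Green and Tao \cite{GT12a} and argue by induction on the dimension $m$ of $G$, the degree being fixed at $2$; each inductive step uses van der Corput differencing to reduce either to a lower-dimensional nilmanifold or to a classical exponential-sum estimate on a torus. (For the $2$-step nilmanifolds that arise in this article a single differencing step already reduces matters to a torus, so no genuine induction is needed in our setting.) All implied constants below may depend on $X$ and absorb a polynomial factor in $\ve$ at each stage.

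Assume $(g(n)\cdot e_X)_{n\in[N]}$ is not $\ve$-equidistributed, so there is a function $F$ on $X$ with $\norm F_{\lip(X)}\le 1$, $\int_X F\,dm_X=0$ and $|\E_{n\in[N]}F(g(n)\cdot e_X)|\ge\ve$. The first step is to reduce to a single vertical frequency. Expanding $F=\sum_{\bk\in\Z^r}F_\bk$ in its vertical Fourier series along the central torus $G_2/(G_2\cap\Gamma)$ and performing a standard Fourier-analytic truncation of this expansion, one finds $\bk\in\Z^r$ of bounded norm such that the nilcharacter $F_\bk$ with frequency $\bk$ satisfies $|\E_{n\in[N]}F_\bk(g(n)\cdot e_X)|$ bounded below.

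Suppose first $\bk=0$. Then $F_0$ is invariant under the vertical torus and hence descends to a function on the maximal torus $G/(G_2\Gamma)\cong\T^s$, so some Fourier mode $\e(\bell\cdot\bar g(n))$ with $\bell\neq 0$ of bounded norm has average bounded below, $\bar g$ being the projection of $g$ to $\T^s$, a polynomial sequence of degree at most $2$. Since a horizontal character annihilates $G_2$, controlling $\norm{\eta\circ g}_{C^\infty[N]}$ for $\eta=\bell$ amounts to controlling this exponential sum, and the quantitative form of Weyl's inequality — a large polynomial exponential sum forces its coefficients to lie within $\ve^{-O(1)}N^{-j}$ of a rational with denominator $\ve^{-O(1)}$ — does precisely that. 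Now suppose $\bk\neq 0$. Applying the van der Corput inequality in the variable $n$, a proportion bounded below of shifts $h$ with $|h|\le\ve^{O(1)}N$ satisfies $|\E_{n}F_\bk(g(n+h)\cdot e_X)\,\overline{F_\bk}(g(n)\cdot e_X)|$ bounded below. Writing $g(n+h)=(\partial_h g(n))\,g(n)$, with $\partial_h g$ a degree-one polynomial sequence, and using that $G_2$ is central together with the invariance of $(x,y)\mapsto F_\bk(x)\overline{F_\bk}(y)$ under the diagonal action of the vertical torus, the map $n\mapsto F_\bk(g(n+h)\cdot e_X)\overline{F_\bk}(g(n)\cdot e_X)$ is, for each such $h$, a mean-zero function evaluated along a polynomial orbit on a nilmanifold of strictly smaller commutator dimension (indeed, on a torus in the $2$-step case). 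The induction hypothesis then yields, for each such $h$, a horizontal character $\eta_h$ of bounded norm with $\norm{\eta_h\circ\partial_h g}_{C^\infty[N]}$ bounded; pigeonholing to render $\eta_h=\eta$ independent of $h$, and noting that the coefficients of $\eta\circ\partial_h g$ depend linearly on $h$ up to bounded corrections, a recurrence argument on $\T$ — if $\norm{m\gamma}$ is small for many $m$ in an interval then $\gamma$ lies near a rational with bounded denominator — produces a single horizontal character $\eta$ with $\norm{\eta\circ g}_{C^\infty[N]}$ bounded. This closes the induction.

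The step I expect to be the main obstacle is the case $\bk\neq 0$: one must carefully construct the lower-dimensional nilmanifold on which the differenced function becomes a genuine mean-zero function along a polynomial orbit, which uses the $2$-step nilpotent structure to control how $\partial_h g$ sits inside $G$, and then carry out the quantitative passage from ``$\eta_h\circ\partial_h g$ is smooth for many $h$'' to ``$\eta\circ g$ is smooth for a single $\eta$'', which is where the polynomial dependence of $D$ on $\ve$ is incurred. The case $\bk=0$ is, by contrast, essentially the classical Weyl equidistribution theorem made quantitative.
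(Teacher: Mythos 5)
The paper does not prove this theorem at all: it is imported verbatim from Green and Tao \cite{GT12a} (their Theorem~2.9, in the degree-two specialization), and no proof is given here. So there is no in-paper argument to compare yours against. Judged on its own terms, your sketch is a faithful high-level reconstruction of the Green--Tao strategy: vertical Fourier decomposition, the zero-frequency case handled by Weyl on the horizontal torus, and the nonzero-frequency case handled by van der Corput followed by pigeonholing over the shift $h$ and a recurrence argument that upgrades ``$\eta_h\circ\partial_h g$ smooth for many $h$'' to ``$\eta\circ g$ smooth.'' Your observation that in the genuinely $2$-step case one van der Corput step already lands you on a torus is essentially correct: since $g(n)=g_0g_1^ng_2^{\binom n2}$ with $g_2\in G_2$ central, a short computation gives $\partial_h g(n)=\epsilon_h\, g_2^{nh}$ with $\epsilon_h$ constant in $n$, and since $F_{\bk}(\epsilon_h\cdot x)\overline{F_{\bk}}(x)$ is invariant under the vertical torus it descends to the horizontal torus; the differenced average becomes an exponential sum of the form $\E_n\,\e(nh\,\bk\cdot\bar g_2)\,\Phi_h(\bar g(n))$ with $\bar g$ of degree one in $\T^s$, which is classical. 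Where the real work lies --- and where you are right to flag difficulty --- is in the quantitative bookkeeping: making the pigeonholing over $\eta_h$ explicit, controlling the $h$-dependence of $\Phi_h$ and of the constant $\epsilon_h$, and running the Vinogradov/recurrence lemma to extract a single $\eta$. None of that is wrong in your outline, but it is exactly the content that Green and Tao spend Section~8 of \cite{GT12a} on and that the present paper deliberately takes as a black box.
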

We plan to use  a partial converse of this result.

\begin{lemma}[A partial  converse to  Theorem~\ref{th:Leibman}]
\label{lem:Leibman_Inverse} Let $X:=G/\Gamma$ be a $2$-step
nilmanifold. There exists $C:=C(X)>0$ with the following property:
For every $D\in \N$, if $N$ is sufficiently large, depending only on  $D$, and
$(g(n))_{n\in[N]}$ is a degree $2$ polynomial sequence in $G$  such
that there exists a non-trivial horizontal character $\eta$ of $X$
with $\norm\eta\leq D$ and $\norm{\eta\circ g}_{C^\infty[N]}\leq D$,
then the sequence $(g(n)\cdot e_X)_{n\in[N]}$ is not totally
$CD^{-2}$-equidistributed in $X$.
\end{lemma}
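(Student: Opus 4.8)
The plan is to prove the contrapositive-flavored statement directly: assuming a non-trivial horizontal character $\eta$ with $\norm\eta\le D$ and $\norm{\eta\circ g}_{C^\infty[N]}\le D$, I will exhibit a Lipschitz function $F$ on $X$ with mean zero, $\norm F_{\lip(X)}\le 1$, and an arithmetic progression $P\subseteq[N]$ on which $\E_{n\in[N]}\one_P(n)F(g(n)\cdot e_X)$ is bounded below by $cD^{-2}$ for a constant $c=c(X)$. Since $\eta$ factors through the maximal torus $G/(G_2\Gamma)\cong\T^s$, composing with the exponential character gives the function $F_0(x):=\e(\eta(x))$ (interpreting $\eta$ as a character of $X$ via the projection). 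This $F_0$ has modulus $1$, mean zero (because $\eta$ is a non-trivial character, so $\int_X F_0\,dm_X=0$), and Lipschitz norm controlled by $\norm\eta\le D$, hence by $2\pi D+1$ say; after rescaling by a factor $\sim 1/D$ we get an admissible test function, which already explains the $D^{-1}$ loss. The remaining $D^{-1}$ loss will come from the equidistribution-on-a-progression step below.

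First I would reduce to a one-dimensional statement: write $\phi(n):=\eta(g(n))\in\T$. By hypothesis $\phi$ is a polynomial sequence of degree at most $2$ in $\T$, say $\phi(n)=\beta_0+\beta_1 n+\beta_2\binom n2$, with $\norm{\phi}_{C^\infty[N]}=\max(N\norm{\beta_1},N^2\norm{\beta_2})\le D$. I then need to show that $\big|\E_{n\in[N]}\one_P(n)\,\e(\phi(n))\big|$ is not small for some arithmetic progression $P\subseteq[N]$, because $\E_{n\in[N]}\one_P(n)F_0(g(n)\cdot e_X)=\E_{n\in[N]}\one_P(n)\e(\phi(n))$. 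This is a standard Weyl-type estimate in reverse: since the leading coefficient $\beta_2$ satisfies $\norm{\beta_2}\le D/N^2$, on a suitable sub-progression of length $\sim N/D$ (chosen so that the quadratic term $\beta_2\binom n2$ varies by $o(1)$ across the progression, which is possible precisely because $\norm{\beta_2}\le D N^{-2}$) the sequence $\e(\phi(n))$ is close to a linear phase $\e(an+b)$; and on a further sub-progression of length $\sim N/D$ the linear phase itself is close to a constant. A cleaner way: take $P$ to be an arithmetic progression of common difference $1$ and length $\lceil N/(CD)\rceil$; over such a short interval both $N\norm{\beta_1}$-scaled and $N^2\norm{\beta_2}$-scaled increments are $\le$ a small absolute constant, so $|\e(\phi(n))-\e(\phi(n_0))|\le 1/2$ for all $n\in P$, whence $|\E_{n\in[N]}\one_P(n)\e(\phi(n))|\ge \tfrac12\,|P|/N\ge c/D$. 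Combining with the $1/D$ loss from normalizing $F_0$ gives a correlation $\ge cD^{-2}$, contradicting total $CD^{-2}$-equidistribution for appropriate $C=C(X)$.

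The main technical care — and the step I expect to be the real obstacle — is the bookkeeping that converts ``$\eta$ is a character of the maximal torus with $\norm\eta\le D$'' into genuine bounds on $\norm{F_0}_{\lip(X)}$: one must use the Mal'cev basis identification of $G/(G_2\Gamma)$ with $\T^s$ and the fact that the quotient metric $d_X$ dominates (up to a constant depending only on $X$) the metric pulled back from $\T^s$, so that a character of the torus of norm $\le D$ pulls back to a function on $X$ with Lipschitz constant $\lesssim_X D$. This constant is exactly where the dependence $C=C(X)$ enters, together with the implied constant in ``length $\sim N/(CD)$''. I would also need to note that ``$N$ sufficiently large depending only on $X$'' is used only to guarantee $N/(CD)\ge 1$ given the a priori bound $\norm\eta\le D$ is vacuous unless $D$ is at least $1$; more honestly, $D=D(X,\ve)$ here is whatever constant we are handed, and largeness of $N$ ensures $\lfloor N/(CD)\rfloor\ge 1$ and that the progression $P$ is nonempty. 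Apart from this constant-chasing, every estimate is elementary: the exponential-sum bound $|\e(x)-\e(y)|\le 2\pi\norm{x-y}$ and summation over a short progression.
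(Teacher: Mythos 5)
Your proof is correct and follows essentially the same approach as the paper: pull $\eta$ back to the test function $\e(\eta(\cdot))$ (rescaled by $\sim 1/D$ to make it admissible), observe that $\eta\circ g$ has tiny coefficients so $\e(\eta(g(n)))$ is nearly constant on an interval of length $\sim N/D$, and conclude a correlation of size $\gtrsim D^{-2}$. Your version is in fact marginally cleaner in one spot — by comparing $\e(\phi(n))$ to $\e(\phi(n_0))$ rather than to $1$ you correctly account for the constant term $\alpha_0$, which the paper's displayed inequality $|1-\e(\eta(g(n)))|\le \tfrac12$ silently ignores (there one should really compare to $\e(\alpha_0)$, equivalently multiply the test function by $\overline{\e(\alpha_0)}$).
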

\begin{proof}
Since $\norm{\eta\circ g}_{C^\infty[N]}\leq D$ we have
$$
\eta(g(n))=\sum_{0\leq j\leq 2}\alpha_i\binom nj \ \text{ for some }
\ \alpha_j\in\T\ \text{ with } \ \norm{\alpha_j}\leq \frac{D}{N^j}\
\text{ for } \ j=1,2.
$$
It follows  that
$$
\bigl|1-\e\big(\eta(g(n))\big)\bigr|\leq \frac{1}{2}\ \ \text{ for }\
1\leq n\leq c_1\frac{N}{D}
$$
where $c_1$ is a universal constant and we assume that $N\geq D/c_1$. Thus,
$$
\bigl|\E_{n\leq \lfloor c_1N/D\rfloor }\e\big(\eta(g(n))\big)\bigr|\geq
\frac{1}{2},
$$
which gives
$$
\bigl|\E_{n\in [N] }\one_{[\lfloor c_1N/D\rfloor]}(n) \e\big(\eta(g(n))\big)\bigr|\geq
\frac{c_1}{2D}-\frac{1}{N}\geq \frac{c_1}{4D}
$$
assuming that $N\geq 4D/c_1$.

Furthermore, since $\norm\eta\leq D$,  the function $x\mapsto
\e(\eta(x))$, defined on $X$, is Lipschitz with constant at most
$C'D$ for some $C':=C'(X)$, and    has integral $0$ since $\eta$ is
a non-trivial horizontal character. Therefore,  the sequence
$(g(n)\cdot e_X)_{n\in[N]}$ is not totally $(CD^{-2})
$-equidistributed in $X$ where  $C:= c_1/(4C')$.
\end{proof}

\subsection{Discorrelation estimates}
Next we prove the two main results of this section that give
asymptotic orthogonality of multiplicative functions to some totally
equidistributed nilsequences. These results will be  used later in
the proof of Theorem~\ref{th:Decomposition-weakU3-intro}  to treat
each of the two distinct cases arising from an application of
Corollary~\ref{cor:factorization1}. Both proofs are based on
K\'atai's orthogonality criterion (Lemma~\ref{lem:katai}) and the
quantitative Leibman Theorem (Theorem~\ref{th:Leibman}).
\begin{proposition}[Discorrelation estimate I]
\label{lem:equid-square} Let $X:=G/\Gamma$ be a $2$-step nilmanifold
and  $\tau>0$. There exists  $\sigma:=\sigma(X,\tau)>0$ with the
following property: For every sufficiently large $N$, depending only
on $X$ and $\tau$, if
 $(g(n))_{n\in[N]}$ is a degree $2$ polynomial sequence in $G$ with
 coefficients in the natural filtration   that is  totally
$\sigma$-equidistributed in $X$, then
$$
\sup_{m,\chi, \Phi,P}|\E_{n\in[N]} \one_P(n)
\chi(n)\Phi(g(m+n)\cdot e_X)|<\tau
$$
where the sup is taken over  all integers $m$ with $|m|\leq N$, multiplicative
functions $\chi \in\CM$, functions $\Phi\in \lip(X)$ with
 $\norm\Phi_{\lip(X)}\leq 1$ and $\int \Phi\, dm_X=0$, and arithmetic
progressions $P\subset[N]$.
\end{proposition}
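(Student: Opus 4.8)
## Proof strategy for Proposition~\ref{lem:equid-square}

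The plan is to apply K\'atai's orthogonality criterion (Lemma~\ref{lem:katai}) to the function $f(n) := \one_P(n)\Phi(g(m+n)\cdot e_X)$, which is bounded by $1$ in modulus. To conclude $\sup_{\chi\in\CM}|\E_{n\in[N]}\chi(n)f(n)|<\tau$, it suffices to show that for every pair of primes $p<p'$ lying in a suitable range $K_0<p<p'<K$ (where $K_0,K$ are furnished by the lemma once $\tau$ and $K_0$ are fixed), the correlation $\E_{n\in[\lfloor N/p'\rfloor]}f(pn)\overline{f}(p'n)$ is small. Writing this out, the relevant average is
$$
\E_{n\leq N/p'}\one_P(pn)\one_P(p'n)\,\Phi(g(m+pn)\cdot e_X)\,\overline{\Phi}(g(m+p'n)\cdot e_X).
$$
The indicator constraints $\one_P(pn)\one_P(p'n)$ cut out an arithmetic progression in $n$, which is harmless since total equidistribution is tested against progressions. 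So the heart of the matter is to show that the sequence $n\mapsto \bigl(g(m+pn)\cdot e_X,\,g(m+p'n)\cdot e_X\bigr)$ is equidistributed in $X\times X$, or at least that the above average is small; then the integral of $\Phi\otimes\overline\Phi$ against $m_X\times m_X$ vanishes (since $\int\Phi\,dm_X=0$) and we win.

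The key step is therefore an equidistribution statement on the product nilmanifold $X\times X$: I would define $G^{(2)}$ to be an appropriate subgroup of $G\times G$ (the ``diagonal-type'' subgroup generated by the image of the reparametrized sequence), set $\tilde g(n) := (g(m+pn),\,g(m+p'n))$, and argue that if the conclusion fails for some $m$, $\Phi$, $P$ with the bound $\tau$, then $(\tilde g(n)\cdot e_{X\times X})$ is \emph{not} $\ve$-equidistributed in the relevant sub-nilmanifold for $\ve$ depending on $\tau$ and $X$. By the quantitative Leibman Theorem (Theorem~\ref{th:Leibman}) applied to $X\times X$ (note $\tilde g$ is still a degree $2$ polynomial sequence, since $n\mapsto m+pn$ and $n\mapsto m+p'n$ are affine), there is then a nontrivial horizontal character $\eta$ of $X\times X$, of norm $O_{X,\tau}(1)$, with $\norm{\eta\circ\tilde g}_{C^\infty[N/p']}=O_{X,\tau}(1)$. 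Splitting $\eta=(\eta_1,\eta_2)$ into its two coordinate horizontal characters of $X$, one gets that the polynomial $n\mapsto \eta_1(g(m+pn))+\eta_2(g(m+p'n))$ is slowly varying. One now has to extract from this the statement that $g$ itself fails total equidistribution: using Lemma~\ref{lem:Leibman_Inverse} (the partial converse to Leibman) run in reverse, or more precisely a van der Corput / difference argument in $m$, the smoothness of the combined character forces a nontrivial horizontal character of $X$ to have small smoothness norm on $g$ over an interval of length $\sim N$, contradicting the hypothesis that $(g(n))$ is totally $\sigma$-equidistributed once $\sigma=\sigma(X,\tau)$ is chosen small enough (taking into account the $O_{X,\tau}(1)$ loss and the shrinking of the interval by the bounded factor $p'$). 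The dependence runs: given $\tau$, Lemma~\ref{lem:katai} gives $K_0,K$; the finitely many pairs $p<p'$ in $[K_0,K]$ all have bounded ratio, so a single $\sigma$ works for all of them.

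The main obstacle I anticipate is this last extraction: passing from ``$\eta_1(g(m+pn))+\eta_2(g(m+p'n))$ is smooth in $n$'' back to ``some nontrivial horizontal character of $X$ is smooth on $g$ over a full-length interval.'' The subtlety is that the two shifted-and-dilated copies could conspire so that neither $\eta_1$ nor $\eta_2$ individually is smooth on $g$ even though their combination is. This is handled by the standard trick of differencing in the parameter $m$ (or by noting that one may further shift $n$), reducing to a statement about a single affine copy $n\mapsto g(m+pn)$, and then absorbing the bounded dilation $p$ and translation $m$ into the polynomial coefficients --- a dilation by $p\leq K$ changes the $C^\infty[N]$-norm by a factor $\leq p^2\leq K^2$, which is bounded. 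One must be careful that all these reductions stay within the class of degree $2$ polynomial sequences with coefficients in the natural filtration, but this is preserved under affine reparametrization of $n$. Once this bookkeeping is done, choosing $\sigma$ small enough relative to $D(X\times X,\ve)$, $K$, and $\tau$ closes the argument.
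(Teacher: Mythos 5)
Your plan agrees with the paper's up to the last step: apply K\'atai to $f(n)=\one_P(n)\Phi(g(m+n)\cdot e_X)$, note that $\one_P(pn)\one_P(p'n)$ cuts out a progression, observe $\int\Phi\otimes\overline\Phi\,dm_{X\times X}=0$, and invoke Theorem~\ref{th:Leibman} on $X\times X$ to obtain a nontrivial horizontal character $\eta=(\bk,\bk')$ with $\norm\eta\le D$ and $\norm{\eta\circ\tilde g}_{C^\infty[\lfloor N/p'\rfloor]}\le D$. The gap is in the extraction you flag as the ``main obstacle.'' Your proposed fixes (differencing in $m$, or shifting $n$, and reducing to a single affine copy $n\mapsto g(m+pn)$) are not what is needed and will not close the argument: there is only one value of $m$ available, and the whole point of the worry you raise is precisely that you cannot isolate $\eta_1(g(m+pn))$ from $\eta_2(g(m+p'n))$; the cancellation between the two terms is real and cannot in general be removed by reparametrizing $n$.

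What actually resolves it is the structural hypothesis you under-use: $g$ is a degree-$2$ polynomial sequence with \emph{coefficients in the natural filtration}, $g(n)=g_0g_1^ng_2^{\binom n2}$ with $g_2\in G_2$. Hence the projection to the horizontal torus is affine-linear, $\pi(g(n))=\balpha_0+n\balpha_1$, and so $\eta\bigl(g(m+pn),g(m+p'n)\bigr)=n\,(p\bk+p'\bk')\cdot\balpha_1+\text{const}$. The smoothness bound therefore controls exactly one quantity, $\norm{(p\bk+p'\bk')\cdot\balpha_1}\le 2DK/N$, and the right move is not to separate $\bk$ from $\bk'$ but to declare $\theta(x):=(p\bk+p'\bk')\cdot\pi(x)$ a new horizontal character of $X$; then $\norm{\theta\circ g}_{C^\infty[N]}\le 2DK$ directly, and Lemma~\ref{lem:Leibman_Inverse} finishes. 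The nontriviality of $\theta$ is where the ``conspiracy'' is killed, and it requires a design choice you do not mention: one must feed $K_0:=D$ into Lemma~\ref{lem:katai}, so that the primes satisfy $p,p'>D\ge\norm\bk,\norm{\bk'}$; then $p\bk+p'\bk'=0$ would force $\bk=\bk'=0$ (e.g.\ by coprimality of $p,p'$ and the bound on the coordinates), contradicting $\eta\ne 0$. Without fixing $K_0=D$ before extracting $p,p'$, and without using the natural-filtration linearity to produce a single combined character $\theta$ of $X$, the step you correctly identify as delicate remains open.
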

  Our proof of  Proposition~\ref{lem:equid-square}
depends on the fact
that if $p, p'$ are distinct primes, then  equidistribution properties of  $(g(n)e_X)_{n\in [N]}$ on $X$
imply equidistribution  of the sequence
$
((g(pn),g(p'n)) \cdot e_{X\times X})_{n\in [N]}
$
on a sub-nilmanifold of $X\times X$ on which the function $\Phi\otimes\overline \Phi$
integrates to zero (assuming that $\Phi$ is a nilcharacter with non-zero frequency).
As the proof
of this fact is rather involved  we defer it to Appendix~B and proceed to prove an easier discorrelation estimate that will also be needed later.

\begin{proposition}[Discorrelation estimate II]
\label{lem:equid-square-ab}
 Let $s\in \N$ and $\tau>0$. There exists
$\sigma:=\sigma(s,\tau)>0$ with the following property: For every
sufficiently large $N$, depending only on  $s$ and $\tau$, if
$(\bg(n))_{n\in[N]}$ is a polynomial sequence in $\T^s$ of the form
$$
\bg(n)=\balpha_0+\balpha_1n+\balpha_2\binom n2, \quad \balpha_i\in
\T^s,\ \  i=0,1,2,
$$
that is  totally $\sigma$-equidistributed in $\T^s$, then
$$
\sup_{m,\chi,\Phi,P}|\E_{n\in[N]}\one_P(n) \chi(n)\Phi(\bg(m+n))|<\tau
$$
where the sup is taken over  all  integers $m$ with $|m|\leq N$,
multiplicative functions $\chi \in\CM$, $\Phi\in \lip(\T^s)$ with
 $\norm\Phi_{\lip(\T^s)}\leq 1$ and $\int
\Phi \, dm_{\T^s}=0$,  and  arithmetic progressions
$P\subset[N]$.
\end{proposition}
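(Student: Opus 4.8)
The plan is to mimic the proof of Proposition~\ref{lem:equid-square}, applying K\'atai's criterion (Lemma~\ref{lem:katai}) and the quantitative Leibman theorem (Theorem~\ref{th:Leibman}) to $\T^s\times\T^s$; the one genuinely new feature is that $\T^s$ is its own maximal torus, so $\bg$ carries a true quadratic part $\balpha_2\binom n2$ that must be tracked — which is exactly why the statement is not a special case of Proposition~\ref{lem:equid-square}. First I would argue by contradiction: assume the supremum is $\geq\tau$, witnessed by some $m$ with $|m|\leq N$, some $\chi\in\CM$, some $\Phi\in\lip(\T^s)$ with $\norm\Phi_{\lip(\T^s)}\leq 1$ and $\int\Phi\,dm_{\T^s}=0$, and an arithmetic progression $P\subseteq[N]$. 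Let $D$ be the constant furnished by Theorem~\ref{th:Leibman} for $\T^s\times\T^s$ and $\tau$, and let $K,\delta$ be furnished by Lemma~\ref{lem:katai} with $K_0:=D$ and $\ve:=\tau$. Applying Lemma~\ref{lem:katai} to $n\mapsto\one_P(n)\Phi(\bg(m+n))$ produces primes $D<p<p'<K$ with, writing $N':=\lfloor N/p'\rfloor$ and $\one_P(pn)\one_P(p'n)=\one_{P_1}(n)$ for a suitable progression $P_1\subseteq[N']$,
$$
\bigl|\E_{n\in[N']}\one_{P_1}(n)\,\Phi(\bg(m+pn))\,\overline\Phi(\bg(m+p'n))\bigr|\geq\delta .
$$
Since $\Phi\otimes\overline\Phi$ has Lipschitz norm $\leq1$ and vanishing integral on $\T^s\times\T^s$, the sequence $n\mapsto(\bg(m+pn),\bg(m+p'n))$ is not totally $\delta$-equidistributed there, so Theorem~\ref{th:Leibman} gives a non-trivial horizontal character $\eta=(\bk,\bk')\in\Z^s\times\Z^s$ with $\norm\bk+\norm{\bk'}\leq D$ and $\norm{\bk\cdot\bg(m+pn)+\bk'\cdot\bg(m+p'n)}_{C^\infty[N']}\leq D$.

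Next I would extract coefficient information. Expanding in the binomial basis via $\binom{m+pn}{2}=\binom m2+(mp+\binom p2)\binom n1+p^2\binom n2$, the degree-two and degree-one coefficients of the sequence inside that norm are $(p^2\bk+p'^2\bk')\cdot\balpha_2$ and $(p\bk+p'\bk')\cdot\balpha_1+\mathbf w\cdot\balpha_2$, where $\mathbf w:=m(p\bk+p'\bk')+\binom p2\bk+\binom{p'}2\bk'$; since $N'\geq N/(2K)$ this yields
$$
N^2\,\bigl\Vert(p^2\bk+p'^2\bk')\cdot\balpha_2\bigr\Vert\leq 4K^2D
\qquad\text{and}\qquad
N\,\bigl\Vert(p\bk+p'\bk')\cdot\balpha_1+\mathbf w\cdot\balpha_2\bigr\Vert\leq 2KD .
$$
Then, using the integer identity $2\binom p2\bk+2\binom{p'}2\bk'=(p^2\bk+p'^2\bk')-(p\bk+p'\bk')$, the first bound above, and $|m|\leq N$, I would eliminate $\mathbf w$ and the quadratic cross-term to get a bound of the shape $N\,\norm{(p\bk+p'\bk')\cdot(2\balpha_1+(2m-1)\balpha_2)}\leq C_0(s,\tau)$. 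Since $p,p'$ are primes exceeding $\norm\bk$ and $\norm{\bk'}$, both $p\bk+p'\bk'$ and $p^2\bk+p'^2\bk'$ are non-zero integer vectors of norm bounded in terms of $s$ and $\tau$. Combining the control of the quadratic data (through $p^2\bk+p'^2\bk'$) and of the linear data (through $p\bk+p'\bk'$), and patching the two via the Vandermonde-type relations $p(p'-p)\bk=p'(p\bk+p'\bk')-(p^2\bk+p'^2\bk')$ and $p'(p'-p)\bk'=(p^2\bk+p'^2\bk')-p(p\bk+p'\bk')$, one produces a non-trivial horizontal character $\theta$ of $\T^s$ with $\norm\theta$ and $\norm{\theta\circ\bg}_{C^\infty[N]}$ both bounded by a constant $D''=D''(s,\tau)$. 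By Lemma~\ref{lem:Leibman_Inverse}, $\bg$ is then not totally $CD''^{-2}$-equidistributed in $\T^s$; taking $\sigma:=CD''^{-2}$ contradicts the hypothesis.

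The hard part will be this last extraction of a single horizontal character of $\T^s$. In Proposition~\ref{lem:equid-square} the projection of the relevant sequence to the maximal torus is linear, so the character $p\bk+p'\bk'$ coming straight out of K\'atai's criterion works; here the quadratic coefficient $\balpha_2$ forces a combination of the degree-one and degree-two data produced by the pair sequence, and the delicate point is to keep the output character of norm bounded \emph{independently of $N$} — one must avoid dividing out by the small primes $p,p'$, since $\norm{px}$ need not be comparable to $\norm x$ on $\T$ — while still pinning down both $\norm{\theta\cdot\balpha_1}$ and $\norm{\theta\cdot\balpha_2}$ to the required precision; the saving grace is that the leading coefficient is controlled to precision $N^{-2}$ rather than $N^{-1}$, which is what renders the extra factors of $p,p'$ harmless. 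Finally, a short and standard argument disposes of the shift by $m$: a shift by $|m|\leq N$ of a totally $\sigma$-equidistributed degree-two polynomial sequence on $\T^s$ is again totally equidistributed with a parameter comparable to $\sigma$ (as one checks directly on the smoothness norms, using $|m|\leq N$), so one may assume $m=0$ in the extraction above if convenient.
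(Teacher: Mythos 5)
Your proposal takes a genuinely different route from the paper, and that route runs into a gap precisely at the step you yourself flag as ``the hard part.'' The paper does \emph{not} mimic Proposition~\ref{lem:equid-square}: before invoking K\'atai it first Fourier-expands $\Phi$ on $\T^s$ (after smoothing so that $\norm\Phi_{\CC^{2s}}\leq 1$), uses the rapid decay of $\widehat\Phi$ to find a single bounded frequency $\bk\in\Z^s$ with $|\E_{n\in[N]}\one_P(n)\chi(n)\e(\bk\cdot\bg(m+n))|\geq\theta$, and only then applies K\'atai to this single exponential. Because $\e(\bk\cdot\bg(m+pn))\overline{\e(\bk\cdot\bg(m+p'n))}=\e(\bk\cdot(\bg(m+pn)-\bg(m+p'n)))$, the K\'atai output becomes a \emph{scalar} exponential sum $\E_n\one_{P_1}(n)\e(u(n))$ with $u$ a degree-$2$ polynomial in the circle $\T$, with coefficients $\beta_i:=\bk\cdot\balpha_i\in\T$. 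The Abelian Leibman theorem then produces a single integer $l$, and since everything is one-dimensional, one can conclude $\beta_2$ and then $\beta_1$ are within $O(N^{-2})$ and $O(N^{-1})$ of rationals with bounded denominators, clear denominators by one integer $l'$, and take $\theta=l'\bk$.

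The gap in your argument: after you apply K\'atai directly to $\one_P\cdot\Phi\circ\bg(m+\cdot)$ and then Leibman on $\T^s\times\T^s$, you obtain a non-trivial pair $(\bk,\bk')$ and the two conditions $N^2\norm{\lambda_2\cdot\balpha_2}\lesssim 1$ and $N\norm{\lambda_1\cdot\balpha_1+\mathbf w\cdot\balpha_2}\lesssim 1$, where $\lambda_1:=p\bk+p'\bk'$ and $\lambda_2:=p^2\bk+p'^2\bk'$. Your ``Vandermonde'' cleanup correctly rewrites this as $N\norm{\lambda_1\cdot(2\balpha_1+(2m-1)\balpha_2)}\lesssim 1$ and $N^2\norm{\lambda_2\cdot\balpha_2}\lesssim 1$. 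But for $s\geq 2$ the vectors $\lambda_1$ and $\lambda_2$ are in general linearly independent, and nothing here controls $\lambda_1\cdot\balpha_2$: the only quadratic-coefficient information is through $\lambda_2$. Consequently the cross-term $(2m-1)\lambda_1\cdot\balpha_2$, with $|m|$ as large as $N$, is genuinely of size $\gg 1/N$, and one cannot isolate $\lambda_1\cdot\balpha_1$. Concretely, taking e.g.\ $\bk=(1,0)$, $\bk'=(0,1)$, $p=2$, $p'=3$, so $\lambda_1=(2,3)$, $\lambda_2=(4,9)$, one can choose $\balpha_2$ with $\lambda_2\cdot\balpha_2\in\Z$ and $\lambda_1\cdot\balpha_2\notin\Z$, and then $\balpha_1$ generic subject to the one scalar relation above: both of your conditions hold, yet no single character $\theta\in\Z^s$ of bounded norm has $N\norm{\theta\cdot\balpha_1}$ and $N^2\norm{\theta\cdot\balpha_2}$ both bounded, so Lemma~\ref{lem:Leibman_Inverse} gives no contradiction. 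Your own ``saving grace'' remark --- that the leading coefficient is controlled to precision $N^{-2}$ --- is a scalar observation; it works verbatim in the paper's one-dimensional reduction but does not resolve the multi-dimensional mismatch between $\lambda_1$ and $\lambda_2$. The Fourier-expansion step is thus not cosmetic: it is the device that collapses the two characters into scalar multiples of a single $\bk$, which is exactly what Lemma~\ref{lem:Leibman_Inverse} requires.
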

\begin{proof}
In this proof  $C_1,C_2,\ldots$ are constants that  depend only on
$s$ and $\tau$.

Without loss of generality, we can assume  that
$\norm\Phi_{\CC^{2s}(\T^s)}\leq 1$. Suppose that
$$
|\E_{n\in[N]}\one_P(n) \chi(n)\Phi(\bg(m+n))|\geq\tau
$$  for some
$\tau>0$, integer $m$ with $|m|\leq N$, $\chi \in\CM$, $\Phi\in
\lip(\T^s)$ with   $\norm\Phi_{\lip(\T^s)}\leq
1$ and $\int \Phi \, dm_{\T^s}=0$, and arithmetic progression $P\subset[N]$. We have
\begin{multline*}
\tau\leq\bigl|\E_{n\in[N]}\one_P(n)\chi(n)\Phi(\bg(m+n))\bigr|
=\bigl|
\sum_{\bk\in\Z^s}\widehat\Phi(\bk)\,\E_{n\in[N]}\one_P(n)\chi(n)\e(\bk\cdot\bg(m+n))
\bigr|\\
\leq \sum_{\bk\in\Z^s}\,\frac
{C_0}{1+\norm\bk^{2s}}\,\bigl|\E_{n\in[N]}\one_P(n)\chi(n)\e(\bk\cdot\bg(m+n))\bigr|
\end{multline*}
for some constant $C_0:=C_0(s)$. It follows that there exist
constants  $C_1$, $\theta:=\theta(s,\tau)>0$, and  $\bk\in\Z^s$,
such that
$$
\norm\bk\leq C_1\quad \text{ and }\quad
|\E_{n\in[N]}\one_P(n)\chi(n)\e(\bk\cdot \bg(m+n))\bigr|\geq\theta.
$$
Let $\delta$ and $K$ be defined by Lemma~\ref{lem:katai}, with
 $\theta$ substituted for $\ve$. Note that
$\delta$ and $K$ depend on $s$ and $\tau$ only.  There exist primes
$p,p'$ with $p<p'\leq K$ such that
$$
\bigl|\E_{n\in[\lfloor
N/p'\rfloor]}\one_P(pn)\one_P(p'n)\,\e\bigl(\bk\cdot
(\bg(m+pn)-\bg(m+p'n))\bigr)\bigr|\geq \delta.
$$
Writing $\beta_1=\bk\cdot\balpha_1$, $\beta_2=\bk\cdot \balpha_2$,
and $\one_P(pn)\one_P(p'n)=\one_{P_1}(n)$ where $P_1\subset [\lfloor
N/p'\rfloor]$ is an arithmetic progression,  we can rewrite the previous
estimate  as
$$|\E_{n\in[\lfloor N/p'\rfloor] } \one_{P_1}(n) \; \e(u(n))|\geq\delta$$
 where
$$
u(n)= \binom n2  \beta_2(p^2-p'^2)+ n\Big( \beta_2\bigl(\binom
p2-\binom{p'}2\bigr)+(m\beta_2+\beta_1)(p-p')\Big).
$$
Since $\lfloor N/p'\rfloor \geq  N/2K$,  the sequence
$(u(n))_{n\in[N]}$ is not totally $\delta/2K$-equidistributed in the
circle. By the Abelian version of Theorem~\ref{th:Leibman}, it
follows that there exists an integer $l$ with $0< l\leq
D:=D(\delta/2K)$ such that
$$
\norm{ l\beta_2(p^2-p'^2)}\leq \frac D{N^2} \quad \text{ and } \quad
\Bigl\Vert l\beta_2\bigl(\binom
p2-\binom{p'}2\bigr)+l(m\beta_2+\beta_1)(p-p')\bigr)\Bigr\Vert\leq\frac
DN.
$$
We deduce first that $\beta_2$ is at a distance $\leq C_2/N^2$ of a
rational with  denominator $\leq C_3$, and then that $\beta_1$ is at
a distance $\leq C_4/N$ of a rational with  denominator $\leq C_5$
(here we used that $|m|\leq N$). Hence,  there exists a non-zero
integer $l'$, bounded by some constant $C_6$, such that
$\norm{l'\beta_2}\leq C_7/N^2$ and $\norm{l'\beta_1}\leq C_8/N$.
Taking $\bk'=l'\bk$, we have
$$
0<\norm{\bk'}\leq C_1C_6\, ; \quad  \norm{\bk'\cdot\balpha_2}\leq
\frac{C_7}{N^2}\, ; \quad \norm{\bk'\cdot\balpha_1}\leq
\frac{C_8}{N}.
$$
 Using this and
Lemma~\ref{lem:Leibman_Inverse}, we deduce that the sequence
$(\bg(n))_{n\in[N]}$ is not totally $\sigma$-equidistributed for
some $\sigma>0$ that depends only on $s$ and  $\tau$, completing the
proof.
\end{proof}

\subsection{Equidistribution on shifted nilmanifolds}
We give one more application of Theorem~\ref{th:Leibman} that
will be needed in the next section:

\begin{lemma}[Shifting the nilmanifold]
\label{lem:equid_conjug} Let $X:=G/\Gamma$ be a $2$-step
nilmanifold, $G'$ be a rational subgroup of $G$, $h\in G$ be a
rational element, $X':=G'\cdot e_X$, $e_Y:=h\cdot e_X$, and
$Y:=G'\cdot e_Y$.
 For every
 $\ve>0$ there exists
$\delta:=\delta(G',X, h,\ve)>0$ with the following property: If
$(g'(n))_{n\in[N]}$ is a polynomial sequence in $G'$ of degree at
most $2$,  such that the sequence $(g'(n)\cdot e_X)_{ n\in [N]}$ is
totally $\delta$-equidistributed in $X'$, then the sequence
$(g'(n)\cdot e_Y)_{n\in[N]}$ is totally $\ve$-equidistributed in
$Y$.
\end{lemma}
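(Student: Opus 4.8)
The plan is to prove the contrapositive: assuming $(g'(n)\cdot e_Y)_{n\in[N]}$ is \emph{not} totally $\ve$-equidistributed in $Y$, I would exhibit a failure of total $\delta$-equidistribution of $(g'(n)\cdot e_X)_{n\in[N]}$ in $X'$, with $\delta$ depending only on $G'$, $X$, $h$, $\ve$. The structural observation driving everything is that $X'$ and $Y$ are built from the \emph{same} group $G'$ but with \emph{different} lattices: $G'$ acts transitively on each, the stabiliser of $e_X$ in $G'$ is $\Gamma':=G'\cap\Gamma$ and the stabiliser of $e_Y=h\cdot e_X$ is $\Gamma'':=G'\cap h\Gamma h\inv$, so $X'\cong G'/\Gamma'$ and $Y\cong G'/\Gamma''$ as $G'$-nilmanifolds (the quotient metrics being bi-Lipschitz equivalent to the metrics induced from $d_X$, with constants depending only on $G'$, $X$, $h$, which only affects the final $\delta$ by a bounded factor). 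Under these identifications both sequences are $n\mapsto g'(n)\cdot(\text{base point})$ for the same degree $\le2$ polynomial sequence $g'$ in $G'$; the horizontal characters of $X'$ are the continuous homomorphisms $G'\to\T$ trivial on $\Gamma'$, and those of $Y$ are the homomorphisms trivial on $\Gamma''$.

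First I would record a commensurability estimate. Since $h$ is rational, say $h^{Q}\in\Gamma$, a short computation in the $2$-step group $G$ shows that $\Gamma$ and $h\Gamma h\inv$ are commensurable with mutual index at most $Q^{\dim G_2}$: for $\gamma\in\Gamma$ we have $h\gamma h\inv=\gamma\,[h,\gamma]$ with $[h,\gamma]\in G_2$ central, and $[h,\gamma]^{Q}=[h^{Q},\gamma]\in[\Gamma,\Gamma]\subseteq\Gamma\cap G_2$, so $h\gamma h\inv$ lies in the overgroup $\Gamma\cdot\tfrac1Q(\Gamma\cap G_2)$ of $\Gamma$, which contains $\Gamma$ with index $Q^{\dim G_2}$; the reverse inclusion follows by running the argument with $h\inv$. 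Intersecting with $G'$, the lattices $\Gamma'$ and $\Gamma''$ are commensurable, and I would fix an integer $M:=M(G',X,h)$ with $g^{M}\in\Gamma'\cap\Gamma''$ for every $g$ in $\Gamma'$ or $\Gamma''$ (e.g.\ a suitable factorial of the indices involved).

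Next, given a witness to non-equidistribution in $Y$ --- say $|\E_{n\in[N]}\one_{P}(n)\,G(g'(n)\cdot e_Y)|>\ve$ for some arithmetic progression $P=\{a+qj:1\le j\le|P|\}\subseteq[N]$ (whence $|P|>\ve N$) and some mean-zero $G\in\lip(Y)$ with $\norm G_{\lip(Y)}\le1$ --- the reparametrised sequence $j\mapsto g'(a+qj)$, $j\in[|P|]$, is a degree $\le2$ polynomial sequence in $G'$ whose orbit in $Y$ is not $\ve$-equidistributed. (We may assume $\ve$ is small enough for Theorem~\ref{th:Leibman} to apply to $Y$, since proving the conclusion for a smaller $\ve$ gives it for the original.) By Theorem~\ref{th:Leibman} there is a non-trivial horizontal character $\eta$ of $Y$ with $\norm\eta\le D$ and $\norm{\eta\circ g'(a+q\,\cdot\,)}_{C^\infty[|P|]}\le D$, $D:=D(Y,\ve)$. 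Since $\eta$ is a homomorphism $G'\to\T$ trivial on $\Gamma''$ and $g^{M}\in\Gamma''$ for every $g\in\Gamma'$, the homomorphism $M\eta$ is trivial on $\Gamma'$, hence a horizontal character of $X'$; it is non-trivial because $G'$ is connected, so $\eta(G')=\T$ and therefore $(M\eta)(G')=\T$; and $\norm{M\eta}\le MD$, $\norm{(M\eta)\circ g'(a+q\,\cdot\,)}_{C^\infty[|P|]}\le MD$. Finally, for $N$ (hence $|P|\ge\ve N$) large enough in terms of $G'$, $X$, $\ve$, I would invoke Lemma~\ref{lem:Leibman_Inverse} for the sequence $j\mapsto g'(a+qj)$ on $[|P|]$, the nilmanifold $X'$, and the character $M\eta$, obtaining an arithmetic progression $P_1'\subseteq[|P|]$ and a mean-zero $F\in\lip(X')$ with $\norm F_{\lip(X')}\le1$ and $|\E_{j\in[|P|]}\one_{P_1'}(j)\,F(g'(a+qj)\cdot e_X)|>C(X')(MD)^{-2}$. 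Pulling back along $j\mapsto a+qj$, the set $P_1:=\{a+qj:j\in P_1'\}$ is an arithmetic progression contained in $P\subseteq[N]$, and $\E_{n\in[N]}\one_{P_1}(n)F(g'(n)\cdot e_X)=\tfrac{|P|}{N}\,\E_{j\in[|P|]}\one_{P_1'}(j)F(g'(a+qj)\cdot e_X)$ has modulus exceeding $\ve\,C(X')(MD)^{-2}$. Hence $(g'(n)\cdot e_X)_{n\in[N]}$ is not totally $\delta$-equidistributed in $X'$ with $\delta:=\ve\,C(X')(MD)^{-2}$, which depends only on $G'$, $X$, $h$, $\ve$, as required.

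The only genuinely delicate point is the transfer of the horizontal character: relating the two lattices $\Gamma'=G'\cap\Gamma$ and $\Gamma''=G'\cap h\Gamma h\inv$ quantitatively enough that a horizontal character of $Y$ becomes a bounded multiple of a horizontal character of $X'$ with controlled norm and controlled smoothness along $g'$. Once rationality of $h$ is used to bound the relevant index, the remainder --- reparametrising to a sub-progression, scaling smoothness norms, and unwinding Lemma~\ref{lem:Leibman_Inverse} --- is routine bookkeeping.
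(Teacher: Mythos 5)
Your proof is correct and follows essentially the same route as the paper's: argue the contrapositive, observe $X'\cong G'/(\Gamma\cap G')$ and $Y\cong G'/(h\Gamma h\inv\cap G')$ with commensurable lattices, extract a horizontal character of $Y$ via the quantitative Leibman theorem, multiply it by the commensurability exponent to produce a horizontal character of $X'$, and close with Lemma~\ref{lem:Leibman_Inverse}. Two of your choices differ cosmetically from the paper and are both fine: you get the commensurability bound directly from a bilinear-commutator computation in the $2$-step group (where the paper cites its abstract Lemma~\ref{lem:finite-index}), and you first pass to the witnessing sub-progression before invoking Theorem~\ref{th:Leibman}, which lets you apply the stated (non-``total'') version of that theorem cleanly. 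One small imprecision: the inequality $\norm{M\eta}\le MD$ treats the norm of a character as if it were computed in the same coordinates on both $Y$ and $X'$, but the Mal'cev bases attached to $\Gamma''$ and $\Gamma'$ differ, so one should write $\norm{M\eta}\le C\cdot MD$ with $C=C(G',X,h)$ coming from the (bounded) change of basis; the paper absorbs this into its constants $C_1,C_2,\dots$. This only rescales the final $\delta$ by a permissible factor and does not affect the correctness of the argument.
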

By Lemma~\ref{lem:Gprimey} in the Appendix, $\Gamma\cap G'$ is
co-compact in  $G'$  and thus $X'$ is a closed
 sub-nilmanifold of $X$.
In a similar fashion,  $(h\Gamma h\inv)\cap G'$ is co-compact in
$G'$ and $Y$ is a closed sub-nilmanifold of $X$.
\begin{proof}
In this proof, $C_1,C_2,\ldots$  are constants that depend only on
$G',X,$ and $h$.

By Lemma~\ref{lem:finite-index} in the Appendix, the group
$\Gamma\cap h \Gamma h\inv\cap G'$ has  finite index in the two
groups $\Gamma\cap   G'$   and $h \Gamma h\inv\cap G'$. We write
$$
Z':= G'/G'_2(\Gamma\cap G'),\quad Z_1:=G'/G'_2(\Gamma\cap h \Gamma h\inv\cap G'), \  \text{ and } \
Z_2:=G'/G'_2(h \Gamma h\inv\cap G').
$$
Then $Z'$ is the horizontal torus
of $X'$, the nilmanifold $Y$ can be  identified with $G'/(h\Gamma
h\inv\cap G')$, and thus  $Z_2$ is the horizontal torus of $Y$.
Let  $p\colon Z_1\to Z'$  and $q\colon Z_1\to
Z_2$ be the natural projections. These group homomorphisms are finite to one.

Let $\ve>0$, and suppose  that the polynomial sequence $(g'(n)\cdot
e_Y)_{ n\in[N]}$ has degree at most  $2$ and is not totally
$\ve$-equidistributed in $Y$. We denote by $D$ the integer that
Theorem~\ref{th:Leibman} associates to $\ve$ and $Y$. Then there
exists a non-trivial horizontal character $\eta$ of $Y$, with
\begin{equation}
\label{eq:equid1} 0\neq \norm\eta\leq D\ \  \text{  and }\ \
\norm{\eta(g'(n))}_{C^\infty[N]}\leq D.
\end{equation}
Recall that $\eta$ factors to a character of the horizontal torus
$Z_2$ of $Y$; we slightly abuse notation and denote it also by
$\eta$. We have that $\eta\circ q$ is a character of $Z_1$ and since
$q\colon Z_1\to Z_2$ is finite to one, $\norm{\eta\circ q}\leq
C_1\norm\eta$ for some constant $C_1$.

  Since  $\Gamma\cap h \Gamma h\inv\cap G'$ has finite index in $\Gamma \cap G'$, there exists $\ell\in\N$ such that
  $\gamma^\ell\in  \Gamma\cap h \Gamma h\inv\cap G'$ for every $\gamma\in \Gamma\cap G'$.
  Therefore, since the restriction of $\eta\circ q$ to $\Gamma\cap h \Gamma h\inv\cap G'$
  is trivial, for every $\gamma\in\Gamma\cap G'$ we have
  $\ell\eta\circ q(\gamma)=\eta\circ q(\gamma^\ell)=1$.
Hence, $\ell\eta\circ q$ has a trivial restriction to
$G'_2(\Gamma\cap G')$and so  there exists a character $\zeta$ of
$Z'$ with $\ell\eta\circ q=\zeta\circ p$. We have $0\neq
\norm\zeta\leq C_2\norm{\ell\eta\circ q}\leq C_3\norm \eta\leq C_3D$
for some constants $C_2,C_3$. We consider $\zeta$ as a horizontal
character of $X'=G'/(G'\cap\Gamma)$ and thus as a character of $G'$.

For every $n\in \N$ we have  $\zeta(g'(n))= \ell\eta(g'(n))$ and thus, by
hypothesis~\eqref{eq:equid1}, $\norm{\zeta\circ
g'}_{C^\infty[N]}\leq C_4D$ for some constant $C_4$. By
Lemma~\ref{lem:Leibman_Inverse}, the sequence $(g'(n)\cdot e_X)_{
n\in[N]}$ is not totally $C_5D^{-1}$-equidistributed in $X'$ for
some constant $C_5$. Letting $\delta=C_5D^{-1}$ completes the proof.
\end{proof}

\subsection{A model discorrelation result}
Lastly, we give some uniform discorrelation estimates that serve as
a model for the more complicated estimates obtained in the sequel.
The argument is based on
 K\'atai's criterion (Lemma~\ref{lem:katai}) and the Abelian version of Theorem~\ref{th:Leibman} which
  is nothing more than a suitable use of  Weyl's estimates.
\begin{proposition}[Model discorrelation estimates for $\chi_{N,u}$]
Let $\ve>0$. Then there exists $\theta:=\theta(\ve)$ such that, for every
 sufficiently large $N$, depending only on  $\ve$, the following holds:
If
$$
\chi_{N,s}:=\chi_N*\phi_{N,\theta}, \quad  \chi_{N,u}:=\chi_N-\chi_{N,s},
$$
where $\chi_N=\chi\cdot  \one_{[N]}$ and   $\phi_{N,\theta}$ is
the kernel defined by \eqref{eq:fourier-phi},   then
\begin{equation}\label{E:lkj''}
\sup_{\chi\in \CM, \alpha \in \R}|\E_{n\in [\tN]}\chi_{N,u}(n) \e(n^2\alpha)|\leq \ve.
\end{equation}
\end{proposition}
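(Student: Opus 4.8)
The plan is to reduce matters to a correlation bound for the multiplicative function $\chi$ itself, apply K\'atai's orthogonality criterion, and then run a minor-arc/major-arc dichotomy via Weyl's inequality, with the major-arc case absorbed by the Fourier support of the kernel $\phi_{N,\theta}$. First I would rewrite the average. Put $g_\alpha(n):=\e(n^2\alpha)$ for $n\in[\tN]$, viewed as a function on $\ZN$, and $F:=g_\alpha-\phi_{N,\theta}*g_\alpha$ (convolution on $\ZN$). Since $\phi_{N,\theta}$ is an even kernel and $\chi_N$ vanishes off $[N]$, a short computation gives
\[
\E_{n\in[\tN]}\chi_{N,u}(n)\,\e(n^2\alpha)=\frac{N}{\tN}\,\E_{n\in[N]}\chi(n)\,F(n)=\sum_{\xi\in\ZN}\widehat{\chi_N}(\xi)\,\bigl(1-\widehat{\phi_{N,\theta}}(\xi)\bigr)\,G(\xi),
\]
where $G(\xi):=\E_{n\in[\tN]}\e(n^2\alpha+n\xi/\tN)$; note $\|F\|_\infty\le2$, $\widehat F(\xi)=(1-\widehat{\phi_{N,\theta}}(\xi))\widehat{g_\alpha}(\xi)$, and $\sum_\xi|G(\xi)|^2=1$ by Parseval. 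It is essential here to keep $g_\alpha$ and its honest Fourier transform on $\ZN$, and not to pretend that $\e(n^2\alpha)$ is a $\ZN$-periodic phase polynomial, since $\alpha$ is an arbitrary real.

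Next I would apply Lemma~\ref{lem:katai} to $\tfrac12F|_{[N]}$: it reduces the statement to a bound $|\E_{n\in[\lfloor N/p'\rfloor]}F(pn)\overline{F(p'n)}|<\delta$ for all primes $K_0<p<p'<K$, where $K_0,K,\delta$ depend only on $\ve$. Expanding $F(pn)\overline{F(p'n)}$, using $(pn)^2-l^2=((pn)-l)((pn)+l)$ together with the fact that the spectrum $\Xi_{N,\theta}$ of $\phi_{N,\theta}$ has cardinality depending only on $\theta$, each term of this bilinear correlation is, up to a bounded factor built from $\widehat{\phi_{N,\theta}}$, an exponential sum $\E_{n\in[\lfloor N/p'\rfloor]}\e(c_2n^2+c_1n)$ with $c_2\in\{(p^2-p'^2)\alpha,\ p^2\alpha,\ p'^2\alpha\}$ and $c_1\in\R$. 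By the Abelian case of Theorem~\ref{th:Leibman} (Weyl's inequality) there are $\delta_0=\delta_0(\ve)$ and $Q_0=Q_0(\ve)$ so that either \emph{(minor-arc case)} all these exponential sums, together with $\sup_\beta|\E_{n\in[\tN]}\e(n^2\alpha+\beta n)|$, are $<\delta_0$ --- whence K\'atai's hypothesis holds and the desired bound follows directly --- or \emph{(major-arc case)} $\alpha$ is within $C_0/N^2$ of some rational $a/q$ with $q\le Q_0$.

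In the major-arc case I would go back to the Fourier expression above. Take $\theta=\theta(\ve)$ small enough that $Q(\theta)\ge Q_0$; then $q\mid Q(\theta)$ by \eqref{eq:Q-multliple}. Recall $\sum_\xi|\widehat{\chi_N}(\xi)|^2\le1$, that $|\widehat{\chi_N}(\xi)|<\theta^2$ for $\xi\notin\Xi_{N,\theta}$ by Corollary~\ref{cor:katai}, and the explicit formula \eqref{eq:fourier-phi} for $\widehat{\phi_{N,\theta}}$. Grouping $n$ by residue mod $q$ and using Weyl estimates shows that $G(\xi)$ is concentrated near $\tfrac\tN q\Z$, with decay $|G(\xi)|\le C_\ve/d$ where $d$ is the distance from $\xi$ to the nearest such point; since $q\mid Q(\theta)$, this forces $\sum_{\xi\notin\Xi_{N,\theta}}|G(\xi)|^2\le C_\ve\theta^4$ and $(1-\widehat{\phi_{N,\theta}}(\xi))\,|G(\xi)|\le C_\ve\theta^4$ for $\xi\in\Xi_{N,\theta}$. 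Splitting the sum $\sum_\xi\widehat{\chi_N}(\xi)(1-\widehat{\phi_{N,\theta}}(\xi))G(\xi)$ at $\Xi_{N,\theta}$ and applying Cauchy--Schwarz --- on $\Xi_{N,\theta}$ with the weights $(1-\widehat{\phi_{N,\theta}})^2|G|^2$, whose total mass is $\le C_\ve\theta^4$, and on its complement using $|\widehat{\chi_N}|<\theta^2$ and $\sum_{\xi\notin\Xi_{N,\theta}}|G(\xi)|^2\le C_\ve\theta^4$ --- gives $|\E_{n\in[\tN]}\chi_{N,u}(n)\e(n^2\alpha)|\le C_\ve\theta^2$, which is $\le\ve$ once $\theta$ is chosen small enough. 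Choosing $\theta=\theta(\ve)$ small enough for both this and $Q(\theta)\ge Q_0$ completes the argument.

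The main difficulty I anticipate is the bookkeeping of constants tying the three steps together without circularity: the bilinear-correlation expansion in Step~2 loses factors depending on $\theta$ (through $\Xi_{N,\theta}$ and $\widehat{\phi_{N,\theta}}$), so a naive minor-arc threshold --- and hence $Q_0$ --- would depend on $\theta$, while $\theta$ must in turn be large enough to guarantee $Q(\theta)\ge Q_0$ in the major-arc step; one has to order the choices carefully, and perhaps sharpen the expansion in Step~2 (exploiting the oscillation in $\xi$ rather than summing $|\widehat{\phi_{N,\theta}}(\xi)|$ trivially), so that every constant ultimately depends only on $\ve$. The remaining delicate-but-routine point is the $\ZN$-versus-$\Z$ periodicity of the quadratic phase, handled throughout via the cutoff $\one_{[N]}$ and by working with genuine Fourier transforms on $\ZN$.
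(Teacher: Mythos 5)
Your overall plan (K\'atai's criterion plus a Weyl major/minor arc dichotomy, with the major arcs absorbed by the kernel's Fourier support) is in the right spirit, but the circularity you flag at the end is a genuine gap rather than a bookkeeping nuisance, and the argument as written does not close. The difficulty is that you apply K\'atai's criterion to $F=g_\alpha-\phi_{N,\theta}*g_\alpha$, so the bilinear correlation $\E_n F(pn)\overline{F(p'n)}$ expands into exponential sums weighted by products of Fourier coefficients of $\phi_{N,\theta}$; the natural loss is $\sum_\xi|\widehat{\phi_{N,\theta}}(\xi)|\le|\Xi_{N,\theta}|$, which depends on $\theta$. Thus the $\delta$ needed in K\'atai, and through Leibman/Weyl the dichotomy threshold $Q_0$, depend on $\theta$; but in the major-arc step you need $Q(\theta)$ to dominate $Q_0$, and there is no a priori reason that the implicitly defined $Q(\theta)$ wins this race as $\theta\to 0$. (A smaller point in the same step: to guarantee $q\mid Q(\theta)$ for every $q\le Q_0$ you would need $Q(\theta)\ge Q_0!$, not merely $Q(\theta)\ge Q_0$; $Q(\theta)$ is a factorial, but $Q_0$ need not divide it.)

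The paper breaks the circularity by applying the K\'atai-based discorrelation results (Propositions~\ref{lem:equid-square} and~\ref{lem:equid-square-ab}) to the multiplicative function $\chi$ itself, not to the $\theta$-dependent object $F$. Those propositions bound $|\E_n\one_P(n)\chi(n)\e((m+n)^2\alpha)|$ uniformly over $\chi$, shifts $m$, and progressions $P$, with a threshold $\sigma$ depending only on $\ve$; the constants $Q,R$ extracted from the major-arc alternative therefore depend only on $\ve$, and $\theta$ is chosen last, small in terms of $Q,R,\ve$. The transfer from $\chi$ to $\chi_{N,u}$ is done not through your Fourier manipulation but by writing $\chi_{N,u}=\chi_N-\chi_N*\phi$ and noting that, since $\E_k\phi(k)=1$, the average for $\chi_{N,u}$ is a difference of convex combinations of averages of $\chi(n)\e((m+n)^2\alpha)$ --- precisely why the shift $m$ appears in the discorrelation propositions. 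In the major-arc case the paper also sidesteps the delicate decay estimate for $G(\xi)$: it factors $n^2\alpha=\epsilon(n)+\gamma(n)$ with $\gamma$ periodic and $\epsilon$ slowly varying, pigeonholes onto a progression $P$ where $\e(n^2\alpha)$ is nearly constant to obtain a large $|\E_n\one_P(n)\chi_{N,u}(n)|$, and invokes Lemma~\ref{lem:U2-intervals} to contradict the smallness of $\norm{\chi_{N,u}}_{U^2(\ZN)}$ guaranteed by Theorem~\ref{th:Decomposition-weakU2-intro}. Reorganizing your proof along those lines --- discorrelation for $\chi$ first, then the convolution transfer, then a $U^2$-norm contradiction for the major arcs --- removes the circularity.
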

\begin{proof}[Proof (Sketch)] Let $\ve>0$ and $N$ be sufficiently large depending only on $\ve$ (how large will be determined below).

Let  $\theta:=\theta(\ve)>0$   be  given by   \eqref{E:lkjh}
below and for this value of $\theta$ let $\phi_{N,\theta}$ be given
by \eqref{eq:fourier-phi}.
 Theorem~\ref{th:Decomposition-weakU2-intro} implies
that for sufficiently large $N$, depending only on $\ve$, we have
\begin{equation}\label{E:lkjh'}
\norm{\chi_{N,u}(n)}_{U^2(\Z_\tN)}\leq \theta(\ve).
\end{equation}
 We claim that the asserted estimate
\eqref{E:lkj''} holds. Arguing by contradiction,  suppose
that
\begin{equation}\label{E:wanted1}
|\E_{n\in [\tN]}\chi_{N,u}(n) \e(n^2\alpha)|> \ve
\end{equation}
for some $\chi\in \CM$ and  $\alpha\in \R$.
We consider two cases depending on the total equidistribution
properties of the  sequence $(n^2\alpha)_{n\in [\tN]}$.

\medskip

\noindent {\bf Minor arcs.}  We use
    Proposition~\ref{lem:equid-square-ab} with $s=1$ and   $\ve/3$ in place of $\tau$.  We get that there exists
    $\sigma:=\sigma(\ve)$,  such that for all sufficiently large $N$, depending only on $\ve$,
if the sequence $(n^2\alpha)_{n\in [\tN]}$ is totally
$\sigma$-equidistributed, then
$$
\max_{m\in [-\tN,\tN]}|\E_{n\in [N]}\chi(n) \e((m+n)^2\alpha)|\leq
\frac{1}{2}\ve.
$$
Using this and  the  fact that $\chi_{N,u}=\chi_N*(1-\phi)$ where  $\phi$ is a kernel on $\ZN$,
we deduce   (see Section~\ref{SS:ProofFinally} for details) that for
all sufficiently large $N$, depending only on $\ve$, we have
$$
|\E_{n\in [\tN]}\chi_{N,u}(n) \e(n^2\alpha)|\leq \ve
$$
which contradicts \eqref{E:wanted1}.

\medskip

\noindent {\bf Major arcs.}  Suppose now that the sequence
$(n^2\alpha)_{n\in [\tN]}$ is not totally $\sigma$-equidistributed
where  $\sigma$ was defined in the minor arc step. Then, as is well
known (and also follows by Lemma~\ref{lem:Leibman_Inverse}),
$\alpha$ has to be close to a rational with a small denominator,
more precisely, there exist positive integers $Q,R$ that depend only
on $\sigma$, and consequently only on $\ve$, and positive integers
$p,q\leq Q$ such that
$$
\Big|\alpha-\frac{p}{q}\Big|\leq \frac{R}{\tN^2}.
$$
We factor the sequence  $(n^2\alpha)_{n\in [\tN]}$ as follows
$$
n^2\alpha=\epsilon(n)+\gamma(n), \quad \text{ where }  \quad \epsilon(n):=n^2\Big(\alpha-\frac{p}{q}\Big), \ \ \gamma(n):=n^2\frac{p}{q}.
$$
Note that $|\epsilon(n+1)-\epsilon(n)|\leq 2R/\tN$ for $n\in [\tN]$.
Furthermore,  the sequence $\gamma(n)$ is periodic with period $q$. After
partitioning the interval $[\tN]$ into sub-progressions where
 $\epsilon(n)$ is almost constant and $\gamma(n)$ is constant, and using the pigeonhole principle,
  it is not hard to deduce
from \eqref{E:wanted1} (see
Section~\ref{SS:subprogression} for details) that
\begin{equation}\label{E:wanted2}
|\E_{n\in [\tN]}\one_P(n)\cdot \chi_{N,u}(n)|>
\frac{1}{10}\frac{\ve^2}{QR}
\end{equation}
for some arithmetic progression $P\subset[\tN]$ provided that $N$ is
sufficiently large depending only on $\ve$. Using \eqref{E:wanted2}
and Lemma~\ref{lem:U2-intervals} we deduce that
\begin{equation}\label{E:lkjh}
\norm{\chi_{N,u}(n)}_{U^2(\Z_\tN)}> \frac{1}{c_1} \frac{\ve^2}{QR}
=:\theta(\ve)
\end{equation}
where $c_1$ is a universal constant. This contradicts \eqref{E:lkjh'}
and completes the
proof.
\end{proof}
In the next section we prove a strengthening of the previous result where
the place of
 $(\e(n^2\alpha))$ takes
any two step nilsequence $(\Phi(a^n\cdot e_X))$ where $\Phi$ is a
function
 on a $2$-step nilmanifold with Lipschitz norm at most $1$. Our proof is much more
complicated in this case
but  the basic   strategy remains the same as in the previous
argument.

\section{Higher order Fourier analysis of multiplicative functions}\label{S:U^3}
  The goal of
this  section is to  prove the main  decomposition result stated in
Theorem~\ref{th:strong-average-intro}. The key ingredient that
enters its proof is the following ``weaker'' decomposition.
\begin{theorem}[Weak uniform decomposition for the $U^3$-norm]
\label{th:Decomposition-weakU3-intro} For every $\theta_0>0$ and
$\ve>0$, there exist  a positive real $\theta<\theta_0$, and
positive integers  $Q:=Q(\ve,\theta_0)$  and
$R:=R(\ve,\theta_0)$, with the following properties: For
 every  sufficiently large $N$, depending only on  $\theta_0$ and  $\ve$,  and for  every $\chi\in\CM$, the function $\chi_N$ admits
the decomposition
$$
 \chi_N(n)=\chi_{N,s}(n)+\chi_{N,u}(n) \quad \text{ for every }\  n\in
 \ZN,
$$
 where  the functions $\chi_{N,s}$ and $\chi_{N,u}$  satisfy:
\begin{enumerate}
\item
\label{it:weakU3-1} \vide $\chi_{N,s}=\chi_N*\phi_{N,\theta}$, where
$\phi_{N,\theta}$ is the  kernel on $\Z_\tN$ defined  by
\eqref{eq:def-phi} and is independent of $\chi$, and
  the convolution product is  defined in $\ZN$;
\item\label{it:weakU3-2}
 $\displaystyle|\chi_{N,s}(n+Q)-\chi_{N,s}(n)|\leq \frac R\tN$ for every $n\in
\Z_\tN$, where  $n+Q$ is taken $\!\!\! \mod \tN$;
\item
\label{it:weakU3_3}
\vide $\norm{\chi_{N,u}}_{U^3(\ZN)}\leq\ve$.
\end{enumerate}
\end{theorem}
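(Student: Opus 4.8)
The plan is to take $\chi_{N,s}$ to be precisely the structured component produced by the $U^2$-decomposition of Theorem~\ref{th:Decomposition-weakU2-intro}, and to prove that, as long as the parameter $\theta$ is chosen small enough in terms of $\ve$, this same component already forces the $U^3$-norm of $\chi_{N,u}$ to be small. So, given $\theta_0>0$ and $\ve>0$, fix (at the very end) a real $\theta\in(0,\theta_0)$ depending only on $\ve$, and set $\chi_{N,s}:=\chi_N*\phi_{N,\theta}$ and $\chi_{N,u}:=\chi_N-\chi_{N,s}$, where $\phi_{N,\theta}$ is the kernel of \eqref{eq:def-phi}. Theorem~\ref{th:Decomposition-weakU2-intro} then yields Property~\eqref{it:weakU3-1}, Property~\eqref{it:weakU3-2} with $Q:=Q(\theta)$ and $R:=R(\theta)$ (which depend only on $\ve,\theta_0$ once $\theta$ is fixed), and in addition the estimate $\norm{\chi_{N,u}}_{U^2(\ZN)}\le\theta$. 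The whole task therefore reduces to establishing Property~\eqref{it:weakU3_3}, i.e. $\norm{\chi_{N,u}}_{U^3(\ZN)}\le\ve$.

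I argue by contradiction, assuming $\norm{\chi_{N,u}}_{U^3(\ZN)}>\ve$; the argument parallels that of the model discorrelation estimate sketched at the end of Section~\ref{S:CorrelationNil}, now with a general $2$-step nilsequence in place of $(\e(n^2\alpha))$. Since $\norm{\chi_{N,u}}_\infty\le 2$, apply the modified inverse theorem (Corollary~\ref{cor:inverse}) to $\tfrac12\chi_{N,u}$ with parameter $\ve/2$. If its alternative~\eqref{it:inverse1} occurs then $\norm{\chi_{N,u}}_{U^2(\ZN)}\ge 2\delta(\ve/2)$, contradicting the $U^2$-bound above as soon as $\theta<2\delta(\ve/2)$. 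Hence alternative~\eqref{it:inverse2} holds: there are a nilmanifold $X=G/\Gamma$ in the finite family $\CH(\ve)$ (of dimension at most $m:=m(\ve)$, with $1$-dimensional vertical torus), an element $g\in G$, and a nilcharacter $\Psi$ of $X$ of frequency $1$ with $\norm\Psi_{\CC^{2m}(X)}\le 1$, such that, writing $\delta:=\delta(\ve/2)$, one has $\bigl|\E_{n\in[\tN]}\chi_{N,u}(n)\,\Psi(g^n\cdot e_X)\bigr|\ge 2\delta$.

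Next I strip off the smooth and rational parts of the orbit. Apply the modified factorization theorem (Corollary~\ref{cor:factorization1}) to the degree-$2$ sequence $(g^n)_{n\in[\tN]}$, with $M_0=1$ and a function $\omega$ to be chosen; one obtains $M\le M_1(\omega)$, a nilmanifold $X'=G'/\Gamma'$ in the finite family $\CF(M)$ — either Abelian (case~\eqref{it:Gprimeabelien}) or non-Abelian with $1$-dimensional vertical torus (case~\eqref{it:Gprimenonabelien}) — and a factorization $g^n=\epsilon(n)g'(n)\gamma(n)$ with $\epsilon$ being $(M,\tN)$-smooth, $\gamma$ being $M$-rational of period $\le M$, and $(g'(n)\cdot e_{X'})_{n\in[\tN]}$ totally $\omega(M)$-equidistributed in $X'$ and of the special form~\eqref{eq:g-prime}. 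Partition $[\tN]$ into arithmetic progressions on each of which $\gamma$ is constant, equal to some $M$-rational element $\gamma_0$, and $\epsilon$ varies by at most a small absolute constant $\eta=\eta(\ve)$, chosen so that the total approximation error below is at most $\delta$; the number of such progressions is then at most $C(\ve)M^2$. On each such progression $P$ one has, up to a per-point error $\le\eta$,
$$
\Psi(g^n\cdot e_X)=\widetilde\Psi_P\bigl(g'(n)\cdot e_{X'_P}\bigr),
$$
where $X'_P:=G'\cdot(\gamma_0\cdot e_X)$ is one of finitely many sub-nilmanifolds, the sequence $(g'(n)\cdot e_{X'_P})_n$ remains totally equidistributed in $X'_P$ by Lemma~\ref{lem:equid_conjug}, and $\widetilde\Psi_P(z):=\Psi(\epsilon(n_0)\cdot z)$, for a fixed $n_0\in P$, is Lipschitz with norm bounded in terms of $\ve$ — here one uses Lemma~\ref{lemp:ap1} and the bound $d_G(\epsilon(n_0),\one_G)\le M\le M_1$. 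One may choose $\omega(M)$ small enough \emph{as a function of $M$} — with no circularity, since by the factorization theorem the families $\CF(M)$ do not depend on $\omega$, so that $M_1$ and hence every constant depends only on $\ve$ — to guarantee that Propositions~\ref{lem:equid-square} and~\ref{lem:equid-square-ab} apply on $X'_P$ with room to spare below the level $\delta/(2C(\ve)M^2)$. A pigeonhole over $P$ then produces one progression with $\bigl|\E_{n\in[\tN]}\one_P(n)\,\chi_{N,u}(n)\,\overline{\widetilde\Psi_P(g'(n)\cdot e_{X'_P})}\bigr|$ bounded below by a positive quantity depending only on $\ve$.

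Finally I contradict this last inequality. Decompose $\widetilde\Psi_P$ into its mean $c_0$ over $X'_P$ and a remainder: in case~\eqref{it:Gprimenonabelien} the remainder consists of the non-zero vertical Fourier modes of $\widetilde\Psi_P$ (each of zero integral over $X'_P$; the tail beyond a bound depending only on $\ve$ is negligible since $\norm{\widetilde\Psi_P}_{\CC^{2m}(X'_P)}$ is bounded) together with the mean-zero part of the zero mode (a function on the Abelian horizontal torus of $X'_P$); in case~\eqref{it:Gprimeabelien}, $X'_P$ is a torus and the remainder is just the mean-zero part of $\widetilde\Psi_P$. The $c_0$-contribution to the correlation is $c_0\,\E_{n\in[\tN]}\one_P(n)\chi_{N,u}(n)$, at most $|c_0|\,c_1\norm{\chi_{N,u}}_{U^2(\ZN)}\le c_1\theta$ by Lemma~\ref{lem:U2-intervals}. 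For the remainder write $\chi_{N,u}=\chi_N-\chi_N*\phi_{N,\theta}$: its correlation against $\chi_N=\chi\cdot\one_{[N]}$ is of the form bounded by Propositions~\ref{lem:equid-square} and~\ref{lem:equid-square-ab} (with shift $0$), while its correlation against $\chi_N*\phi_{N,\theta}$, since $\phi_{N,\theta}$ is a kernel, rewrites as a convex combination over $k$ of terms $\E_{n}\one_{P-k}(n)\,\chi(n)\,\overline{\Phi(g'(k+n)\cdot e_{X'_P})}$ with $\Phi$ a mean-zero Lipschitz function or non-trivial nilcharacter on $X'_P$, each covered by the supremum over the shift in Proposition~\ref{lem:equid-square}. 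All of these are smaller than $\delta/(2C(\ve)M^2)$ by the choice of $\omega$, so the correlation from the previous paragraph is at most $c_1\theta+\delta/(2C(\ve)M^2)$, which is strictly below its lower bound once $\theta$ is taken small enough in terms of $\ve$. Collecting all the constraints on $\theta$, fixing $\theta:=\min\{\theta_0,\text{constants depending only on }\ve\}$, and setting $Q:=Q(\theta)$, $R:=R(\theta)$, finishes the proof. I expect the main obstacle to be the reduction of the third paragraph: keeping every nilmanifold within an a-priori finite list so that all constants stay uniform in $\chi$, $g$, and $N$; controlling $\widetilde\Psi_P$ under the bounded translation $\epsilon(n_0)\cdot$ and the rational translation $\gamma_0\cdot$; transporting total equidistribution through the change of base point via Lemma~\ref{lem:equid_conjug}; and fixing $\omega$ — and thereby $M_1$ — before the value of $M$ is known, without circularity. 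The vertical Fourier truncation and the two discorrelation estimates that follow are comparatively routine.
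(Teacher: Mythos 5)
Your proposal is correct and follows essentially the same route as the paper: reduce to the $U^2$-decomposition of Theorem~\ref{th:Decomposition-weakU2-intro} with a tiny $\theta$, contradict a large $U^3$-norm of $\chi_{N,u}$ via Corollary~\ref{cor:inverse} and the factorization Corollary~\ref{cor:factorization1}, pass to a sub-progression and shift the base point, split off the mean (controlled by the $U^2$-bound via Lemma~\ref{lem:U2-intervals}), and defeat the mean-zero part by Propositions~\ref{lem:equid-square}/\ref{lem:equid-square-ab} after transferring from $\chi_{N,u}$ to $\chi$ through the convolution structure. The only deviations are cosmetic: the extra vertical Fourier decomposition of $\widetilde\Psi_P$ is superfluous (the discorrelation propositions apply directly to any mean-zero Lipschitz function), and your explicit rescaling $\tfrac12\chi_{N,u}$ before applying the inverse theorem is a small tidying-up that the paper leaves implicit.
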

The proof of Theorem~\ref{th:Decomposition-weakU3-intro}  takes the
largest part of this section. The main disadvantage of this result
is that  the bound on the
  uniform component  is not strong enough for our applications. In
  Section~\ref{subsec:proof_strong} we
  combine  Theorem~\ref{th:Decomposition-weakU3-intro} with  an energy increment
   argument to prove Theorem~\ref{th:strong-average-intro} that gives very strong bounds on the uniform component.

\subsection{Some preliminary remarks and proof strategy}
\label{subsec:prelim}   A substantial  part of our proof is consumed
in handling  correlation estimates of  arbitrary multiplicative
functions with  $2$-step nilsequences of bounded complexity. Our
proof strategy  follows the general ideas of an argument of Green
and Tao from \cite{GT08a,GT12b} where uniformity
properties of  the M\"{o}bius function were studied. In our case, we
are faced with a few important additional difficulties   stemming
from the fact that we are forced to work with all multiplicative
functions some of which are not $U^2$-uniform (see the example in
Section~\ref{subsec:decomposition}). Furthermore, we have to
 establish estimates with implied constants independent of
the elements of $\CM$.
We give a brief summary of our strategy next.

 To
compensate for the lack of $U^2$-uniformity of a multiplicative function $\chi$  we  subtract from
 it   a suitable ``structured component''  $\chi_s$ given by
 Theorem~\ref{th:Decomposition-weakU2-intro},
 so
that $\chi_u=\chi-\chi_s$ has extremely small  $U^2$-norm. Our goal
is then to show that  $\chi_u$ has small $U^3$-norm.
 In view of the $U^3$-inverse
theorem of Green and Tao (Theorem~\ref{th:inverse}), this would
follow if we show that $\chi_{u}$  has very small correlation with
all $2$-step nilsequences of bounded complexity. This then  becomes
our new goal.

The factorization theorem for nilsequences
(Theorem~\ref{cor:factorization1}) practically allows us    to treat
correlation with major arc and minor arc $2$-step nilsequences
separately.
 Orthogonality to  major arc (approximately periodic) nilsequences  is easily implied by the  $U^2$-uniformity of $\chi_u$. So our efforts concentrate on the  minor arc (totally equidistributed) nilsequences.
Combining the orthogonality criterion of K\'atai (Lemma~\ref{lem:katai}) with  some
quantitative equidistribution results on nilmanifolds (Theorem~\ref{th:Leibman}), we deduce
 that the arbitrary multiplicative function $\chi$ is asymptotically orthogonal to such sequences (Propositions~\ref{lem:equid-square}
 and \ref{lem:equid-square-ab}).
  The function $\chi_u$ is not multiplicative though, but this is easily taken care by the fact that $\chi_u=\chi-\chi_s$ and
  the fact that $\chi_s$  can be recovered from $\chi$ by taking a convolution product with a positive kernel.
  Using these properties it is an easy matter to transfer estimates from $\chi$ to $\chi_u$.
   Combining the above, we get
  the needed orthogonality of $\chi_u$ to all $2$-step nilsequences of bounded
  complexity. Furthermore, a close inspection of the argument shows that all implied constants are independent of $\chi$.
This suffices to complete the proof of the decomposition result.

 Although the previous sketch communicates the basic ideas behind the proof of Theorem~\ref{th:Decomposition-weakU3-intro},
the various results needed to implement this plan come with a significant number
 of parameters that one has to juggle with,
  making the bookkeeping  rather cumbersome.
We use the next section to organize some of these data.
\subsection{Setting up the stage}
\label{subsec:beginning} In this subsection we  define and organize
some data  that will be used in the proof of
Theorem~\ref{th:Decomposition-weakU3-intro}. We take some extra care
to do this before the main body of the proof in order to make sure
that there is no circularity in the admittedly complicated
collection of
 choices involved.
 The
reader is advised to skip this  subsection on a first reading and
refer back to it only when necessary.

Essentially all objects defined below will depend on a positive
number $\ve$ and   on the  integer $\ell$ defined in
Section~\ref{subsec:decomposition}. We consider  these parameters as
fixed, and to ease notation we leave the dependence on $\ve$ and
$\ell$  implicit most of the time.

Furthermore,  most  objects defined below also depend  on a positive
 integer parameter $M$ that we consider for the moment as a free variable.
 The explicit choice of $M$ takes place in Section~\ref{SS:InvFact} and depends on various other choices
  that will be made subsequently;
what is important though is that it is
 bounded above and below by positive constants that depend only
on $\ve$.
 The parameter $M$ plays a central role in our argument, and to avoid confusion
 we keep the dependence on $M$ explicit
 most of the time.

Most  objects we define use the families of  nilmanifolds $\CF(M)$
introduced in  Corollary~\ref{cor:factorization1}. We would like to
stress that these families do not depend on the function $\omega$
that occurs  in this statement. This allows us to postpone the
definition of $\omega$ until Section~\ref{SS:def-parameters}.

\subsubsection{Objects defined by the inverse theorem}
Throughout the argument we let $\ve$ be a fixed positive number.
 Corollary~\ref{cor:inverse} defines the objects
  $$
  \CH:=\CH(\ve), \quad \delta:=\delta(\ve), \quad m:=m(\ve)
  $$
where $\CH$ is a finite family   of nilmanifolds with vertical torus
of dimension $1$, $\delta$ is a positive number, and $m$ is a
positive integer.
   In the sequel we implicitly assume
  that $N$ is sufficiently large, depending only on $\ve$, so that the conclusion of Corollary~\ref{cor:inverse} holds.

\subsubsection{Objects associated to every fixed nilmanifold in $\CH$}
\label{subsec:new_objects} Let $M\in \N$ be fixed and
$X:=G/\Gamma$ be a nilmanifold in $\CH$. We define below various
objects that depend on $M$ and $X$.

By Corollary~\ref{cor:M-rat}, for every $M\in \N$ there exists  a
finite subset $\Sigma:=\Sigma(M,X)$ of $G$, consisting of
$M$-rational elements, such that for every $M$-rational  element
$g\in G$ there exists $h\in \Sigma$ with  $h\inv g\in\Gamma$, that
is, $g\cdot e_X=h\cdot e_X$. We  assume  that $\one_G\in\Sigma$.

Let $\CF:=\CF(M,X)$ be the family of sub-nilmanifolds of $X$ defined
by Corollary~\ref{cor:factorization1}. We define a larger family of
nilmanifolds $\CF':=\CF'(M,X)$ that have the form
$$
Y:=G'\cdot e_Y\cong G'/(h\Gamma h\inv\cap G')
$$
 where
$$
X':=G'/\Gamma'\in\CF,\ \   h\in \Sigma,\ \ \text{ and } \ \
e_Y=h\cdot e_X.
$$

By Lemma~\ref{lemp:ap1}, there exists a positive constant
$H:=H(M,X)$ with the following properties:
\begin{enumerate}
\item
\label{eq:kappa1}  for every $h\in \Sigma$,  every $g\in G$  with
$d_G(g,\one_G)\leq M$,  and every  $x,y\in X$, we have  $d_X(gh\cdot
x,gh\cdot y)\leq H d_X(x,y)$\ ;
\item
\label{eq:transCm} \vide for every $h\in \Sigma$,   every $g\in G$
with $d_G(g,\one_G)\leq M$,  and every function $\phi\in
\CC^{2m}(X)$, we have $\norm{\phi_{gh}}_{\CC^{2m}(X)}\leq
H\norm\phi_{\CC^{2m}(X)}$ where $\phi_{gh}(x):=\phi(gh\cdot x)$.
\end{enumerate}

The distance on a nilmanifold $Y\in \CF'$ is not the distance
induced by the inclusion in $X$.
 However, the inclusion $Y\subset X$ is smooth and thus we can assume that
\begin{enumerate}
\setcounter{enumi}{2}
\item
\label{eq:LipXXprime} for every nilmanifold $Y\in\CF'$ and every
$x,y\in Y,$ we have $d_X(x,y)\leq Hd_{Y}(x,y)$;
\item
\label{eq:CmY} \vide for every $Y\in\CF'$ and every function $\phi$
on $X$, we have $\norm{\phi|\raise-2mm\hbox{$\scriptstyle
Y$}}_{\CC^{2m}(Y)}\leq H \norm\phi_{\CC^{2m}(X)}$.
\end{enumerate}

By Lemma~\ref{lem:equid_conjug}, for every $X'\in\CF$, every
$\zeta>0$, and every $h\in\Sigma$, there exists
$\rho:=\rho(M,
X,X',h,\zeta)$ with the following property:
\begin{enumerate}
\setcounter{enumi}{4}
\item
\label{eq:equidY} Let $X'=G'/\Gamma'\in\CF$, $h\in\Sigma$,
$e_Y:=h\cdot e_X$, and $(g'(n))_{n\in[N]}$  polynomial sequence in
$G'$ with degree at most $2$; if $\displaystyle
(g'(n)e_X)_{n\in[N]}$ is totally $\rho$-equidistributed in $X'$,
then $\displaystyle (g'(n) e_Y)_{n\in[N]}$ is totally
$\zeta$-equidistributed in $Y:=G'\cdot e_Y$.
\end{enumerate}

\subsubsection{Objects associated to $\CH$}
\label{subsec:all_nils}
We consider now all the
 nilmanifolds belonging to the family $\CH$ and define the finite
 family of nilmanifolds
$$
\CF'(M)=\bigcup_{X\in\CH}\CF'(M,X)
$$
 and the positive numbers
\begin{gather}
\label{E:def-K-rho}
H(M):=\max_{X\in\CH}H(M,X)\ ; \\
\label{E:def-K-rho'} \rho(M, \zeta) :=\min_{\substack{ X\in\CH\\
X'\in \CF(M,X),\ h\in \Sigma(M,X)}}\rho(M,X,X',h, \zeta),
\end{gather}
where $\rho(M,X,X',h, \zeta)$ was defined by item~\eqref{eq:equidY} above.

We let
\begin{gather}
\label{eq:chose_detla1}
\delta_1(M):=\frac{\delta^2}{5\,M^2}\ ;\\
\label{eq:chose_theta} \doublevide\theta(M):=\min\Big(\frac \delta
2\ , \frac{\delta_1(M)} {2\ c_1}\Big),
\end{gather}
where $\delta$ was defined by Section~\ref{subsec:beginning} and
$c_1$ is the universal  constant  defined by
Lemma~\ref{lem:U2-intervals}.

%

To every $\tau>0$ and every nilmanifold $Y$ in the finite collection
$\CF'(M)$, either Proposition~\ref{lem:equid-square} or
Proposition~\ref{lem:equid-square-ab}  (applied for $Y$ in place of
$X$) associates a positive number $\sigma(Y,\tau)$ (depending on
whether $Y$ in non-Abelian or Abelian). We define
\begin{equation}
\label{E:def-sigma} \wt{\sigma}(M)  := \min_{Y\in\CF'(M)}
\sigma\Big(Y\,,\,\frac{\delta_1(M)}{10\,H(M)^2}\Big).
\end{equation}
\subsubsection{The  function $\omega$ and bounds for $M$}
\label{SS:def-parameters} We   let  $\omega\colon \N\to \R^+$ be the
function defined by
\begin{equation}
\label{E:def-omega} \omega(M):=
\rho(M,\tilde{\sigma}(M))
\end{equation}
where $\rho$ is defined in \eqref{E:def-K-rho'}  and
$\tilde{\sigma}$ is defined in \eqref{E:def-sigma}. We also let
\begin{equation}
\label{eq:def_M0} M_0:= \lceil 2/\ve\rceil.
\end{equation}
For this choice of  $\omega$  and  $M_0$,
Corollary~\ref{cor:factorization1} associates to every nilmanifold
$X\in\CH$ a  positive real number $M_1(M_0,X,\omega)$ and we define
\begin{equation}
\label{eq:def-M1}
M_1:=\max_{X\in \CH}M_1(M_0,X,\omega).
\end{equation}
We stress that   $M_1$  depends only on $\ve$.
\subsection{Using the $U^2$-decomposition}\label{SS:Strategy}
After setting up the stage we are now ready to enter the main body
of the  proof of Theorem~\ref{th:strong-average-intro}.

Let $\ve>0$. Let $M_1$ be given by~\eqref{eq:def-M1} and let
\begin{equation}\label{E:theta1}
 \theta_1:=\theta(M_1)
\end{equation}
be given by~\eqref{eq:chose_theta}. Note that  $\theta_1$ depends on
$\ve$ only.
We use Theorem~\ref{th:Decomposition-weakU2-intro} for $\theta$
substituted with this ``very small'' value of $\theta_1$. We get
that for every sufficiently large $N$, depending only on $\ve$,
every
 $\chi\in\CM$   admits the decomposition
$$
\chi_N=\chi_{N,s}+\chi_{N,u}
$$
where $\chi_{N,s}$ and $\chi_{N,u}$ satisfy the conclusions of
Theorem~\ref{th:Decomposition-weakU2-intro};  in particular
\begin{equation}
\label{eq:U2_Chi_u} \norm{\chi_{N,u}}_{U^2(\Z_\tN)}\leq\theta_1.
\end{equation}

We claim that $\chi_{N,s}$ and $\chi_{N,u}$ satisfy the conclusion
of Theorem~\ref{th:Decomposition-weakU3-intro}.
Theorem~\ref{th:Decomposition-weakU2-intro} gives at once that
Properties~\eqref{it:weakU3-1} and \eqref{it:weakU3-2} of
Theorem~\ref{th:Decomposition-weakU3-intro} are satisfied with
$Q:=Q(M_1)$ and $R:=R(M_1)$. It remains to verify
Property~\eqref{it:weakU3_3}, namely that
$$
\norm{\chi_{N,u}}_{U^3(\Z_\tN)}\leq\ve.
$$
We argue by contradiction. We assume  that
\begin{equation}\label{eq:U3_large}
\norm{\chi_{N,u}}_{U^3(\Z_\tN)}>\ve,
\end{equation}
and in the next subsections we are going  to derive a contradiction.
To facilitate reading we split the proof into several parts, and  to
ease notation,
we continue to leave  the dependence on $\ve, \ell$ implicit.

\subsection{Using the inverse and the factorization
theorem}\label{SS:InvFact}
 Suppose that~\eqref{eq:U3_large} holds. We recall that in Section~\ref{subsec:beginning}
we used Corollary~\ref{cor:inverse}  to define the positive integer
$m$, the positive real number   $\delta$, and a finite family $\CH$
of nilmanifolds with vertical torus of dimension $1$. We recall also
that these objects depend only on $\ve$ and  we assume that $N$ is
sufficiently large so that the conclusions of
Corollary~\ref{cor:inverse} hold.

If the alternative~\eqref{it:inverse1} of
Corollary~\ref{cor:inverse} holds, then
$$
\norm{\chi_{N,u}}_{U^2(\Z_\tN)}\geq \delta,
$$
and since by  \eqref{eq:chose_theta} and \eqref{E:theta1}  we have
$\theta_1=\theta(M_1)\leq \delta/2$,  this
contradicts~\eqref{eq:U2_Chi_u}.

 As a consequence, alternative~\eqref{it:inverse2}
of Corollary~\ref{cor:inverse} holds. Our goal is  to show that for
the particular choices made in the previous subsections we get again
a contradiction.

 By our assumption, there exists a
$2$-step nilmanifold $X=G/\Gamma$ belonging to the family $\CH$, a
  nilcharacter $\Psi$ on $X$ with frequency $1$, and an element $g$
of $G$, such that
\begin{align}
\label{eq:PsiCm}
 & \norm\Psi_{\CC^{2m}(X)}\leq 1,\ \ \text{and}   \\
\label{eq:bound1} \quad &
 \big|\E_{n\in[\tN]}\chi_{N,u}(n)\Psi(g^n\cdot e_X) \big|\geq\delta
\end{align}
where $m$ is the dimension of $X$. Note that $X$, $\Psi$, and  $g$,
will depend on $\chi$, but this is not going to create problems for
us.

Recall that for every $M\in\N$ the finite family $\CF(M,X)$ of
sub-nilmanifolds of $X$ was defined   in
Section~\ref{subsec:new_objects}  using
Corollary~\ref{cor:factorization1} of the factorization theorem.
Next we apply this corollary  for the  sequence
$(g^n)_{n\in [\tN]}$ in $G$, the function $\omega\colon\N\to\R^+$
defined by~\eqref{E:def-omega}, and
 the integer  $M_0$ defined by~\eqref{eq:def_M0}.
For $M_1$ given by \eqref{eq:def-M1} we get   an integer   $M$ with
\begin{equation}\label{E:def-M_1}
   M_0\leq M\leq M_1,
\end{equation}
a nilmanifold $X'=G'/\Gamma'$  belonging to the family $\CF(M,X)$
that satisfies either Property~\eqref{it:Gprimeabelien} or Property~\eqref{it:Gprimenonabelien} of Corollary~\ref{cor:factorization1},
and a factorization
\begin{equation}
\label{eq:decomp_gn}
g^n=\epsilon(n)g'(n)\gamma(n)
\end{equation}
into polynomial sequences in $G$ that satisfy
Properties~\eqref{it:dec3}, \eqref{it:dec4}, \eqref{it:dec5} of
Corollary~\ref{cor:factorization1} for  this value of $M$. The
number  $M$  depends on $\chi$, but it belongs on the interval
$[M_0,M_1]$, and since the integers $M_0, M_1$ depend only on $\ve$,
this suffices for our purposes.

From this point on, we work with this choice of $M$.

\subsection{Eliminating $\epsilon$ and $\gamma$ by passing to a
sub-progression}\label{SS:subprogression} Our goal on this and the
next subsection is to get an estimate of the form \eqref{eq:bound1}
with the additional property that the
 sequence $(g^n\cdot e_X)_{n\in [\tN]}$ is sufficiently totally
equidistributed. To achieve this we  pass to an appropriate
sub-progression where the sequences $\epsilon$ and $\gamma$ defined
by \eqref{eq:decomp_gn} are practically constant and then change
the nilmanifold defining the nilsequence to eliminate some extra
terms introduced.

By Property~\eqref{it:dec5} of Corollary~\ref{cor:factorization1},
 the sequence $(\gamma(n))$ is periodic of period at most $M$; we denote its
 period by $p$. Let
\begin{equation}\label{E:L}
L:= \Big\lfloor\frac\delta{2\,M^2}\,\wt N\Big\rfloor.
\end{equation}
We partition $[\wt N]$ into arithmetic progressions of step $p$ and
length $L$ and a leftover set that we can ignore since it will
introduce error terms bounded by a constant times $\delta/M$ and
thus negligible for our purposes (upon replacing $\delta$ with
$\delta/2$ below). Using \eqref{eq:bound1} and the pigeonhole
principle,  we get that there exists a progression $P$ with step $p$
and length $L$
 such that
\begin{equation}
\label{eq:bound1prime} \big|\E_{n\in [\tN]}
\one_{P}(n)\chi_{N,u}(n)\Psi(g^n\cdot e_X) \big|\geq\delta \,\frac
L{\wt N}= \frac{\delta^2}{2\,M^2}-\frac{\delta}{\tN}.
\end{equation}

Let $n_0,n\in P$.  We have $\gamma(n)=\gamma(n_0)$ and   by
Property~\eqref{it:dec3} of Corollary~\ref{cor:factorization1} the
sequence $(\epsilon(n))$ is $(M,\wt N)$-smooth.  Using these
properties and the right invariance of the metric $d_G$, we get
(recall that $g^n=\epsilon(n)g'(n)\gamma(n)$)
$$
d_G(g^n,\epsilon(n_0)g'(n)\gamma(n_0))\leq
d_G(\epsilon(n),\epsilon(n_0)) \leq |n-n_0|\,\frac M{\wt N}\leq pL\,\frac M{\wt N}
\leq \frac{M^2L}{\wt N}.
$$
Since by \eqref{eq:PsiCm} the function  $\Psi$ has Lipschitz
constant at most $1$, it follows that
$$
\bigl|\Psi(g^n\cdot e_X) - \Psi(\epsilon(n_0)g'(n)\gamma(n_0)\cdot e_X)\bigr|
\leq \frac{M^2L}{\wt N}.
$$
From this and $\eqref{E:L}$ we deduce that
$$
\E_{n\in [\tN]} \one_{P}(n)|\,\chi_{N,u}(n)|\,\bigl|\Psi(g^n\cdot
e_X)-\Psi(\epsilon(n_0)g'(n)\gamma(n_0)\cdot e_X) \bigr|\leq \frac
L{\wt N}\, \frac{M^2L}{\wt N}\leq\frac{\delta^2}{4\,M^2}.
$$

Combining this with  \eqref{eq:bound1prime} and the definition of $\delta_1$ given by \eqref{eq:chose_detla1} we get
\begin{equation}
\label{eq:bound2} \bigl|\E_{n\in [\tN]}
\one_{P}(n)\chi_{N,u}(n)\Psi\bigl(\epsilon(n_0)
g'(n)\gamma(n_0)\cdot e_X\bigr)\bigr|\geq \frac{\delta^2}{5\,
M^2}=\delta_1(M)
\end{equation}
 provided that
$
\tN\geq 20M_1^2/\delta$ (then $\delta/\tN\leq \delta^2/(20M^2)$).

\subsection{Changing the nilmanifold}
By Property~\eqref{it:dec5} of Corollary~\ref{cor:factorization1},
$\gamma(n_0)$ is $M$-rational. By the definition of   $\Sigma(M,X)$
in Section~\ref{subsec:new_objects}, we can choose $h_0\in
\Sigma(M,X)$ such that $\gamma(n_0)\cdot e_X=h_0\cdot e_X$. We
define
\begin{gather*}
e_Y:=h_0\cdot e_X\  ;\quad
Y:=G'\cdot e_Y\cong G'/(h_0\Gamma h_0\inv\cap G')\ ;\\
\Psi'(x)= \Psi(\epsilon(n_0)\cdot x).
\end{gather*}
Note that the nilmanifold $Y$ belongs to the family $\CF'(M)$
defined in Section~\ref{subsec:all_nils}. For every $n\in \N$ we
have
\begin{equation}
\label{eq:Psisecond} \Psi\bigl(\epsilon(n_0)g'(n)\gamma(n_0)\cdot
e_X\bigr)=\Psi'(g'(n)\cdot e_Y).
\end{equation}

Since the sequence $(\epsilon(n))_{n\in [\tN]}$ is $(M,\tN)$-smooth,
we have $d_G(\epsilon(n_0),\one_G)\leq M$. Furthermore, since
$\norm{\Psi}_{\CC^{2m}(X)}\leq 1$, by Property \eqref{eq:transCm} of
Section~\ref{subsec:new_objects} we have
$\norm{\Psi'}_{\CC^{2m}(X)}\leq H(M)$ where $H(M)$ was defined by
\eqref{E:def-K-rho}. Since the nilmanifold $Y$
belongs to the family $\CF'(M)$,
 by Property~\eqref{eq:CmY} of Section~\ref{subsec:new_objects}
 we have
\begin{equation}
\label{eq:CmPsiPrime'} \norm{\Psi'|\raise-2mm\hbox{$\scriptstyle
Y$}}_{\CC^{2m}(Y)}\leq H(M)^2.
\end{equation}
Recall that the sequence $(g'(n)\cdot e_X)_{n\in[\tN]}$ arises from the
decomposition~\eqref{eq:decomp_gn} provided by Corollary~\ref{cor:factorization1}; by Property~\eqref{it:dec4} of this corollary,
 the sequence $(g'(n)\cdot e_X)_{n\in[\tN]}$ is
$\omega(M)$-totally equidistributed in $X'$.

Since by the definition  of $\omega$ (see \eqref{E:def-omega}) we
have  $\omega(M)= \rho(M,\tilde{\sigma}(M))$,
Property~\eqref{eq:equidY} of Section~\ref{subsec:new_objects} and
\eqref{E:def-K-rho'}  give that
\begin{equation}
\label{eq:equid_Y} \text{the \  sequence }
\  (g'(n)\cdot e_Y)_{n\in
[\tN]} \ \text{ is totally }\  \tilde{\sigma}(M)\text{-equidistributed in }\ Y.
\end{equation}
Summarizing, we have so far established that
\begin{equation}
\label{eq:bound2'} \bigl|\E_{n\in [\tN]}\one_{P}(n)
\chi_{N,u}(n)\Psi'\bigl(g'(n)\cdot e_Y\bigr)\bigr|\geq \delta_1(M)
\end{equation}
for some arithmetic progression $P\subset [\tN]$ and properties
\eqref{eq:CmPsiPrime'} and \eqref{eq:equid_Y} are satisfied. In the
next subsection we further reduce matters to the case where the function
$\Psi'$ has integral zero.
\subsection{Reducing to the zero integral case}\label{SS:zero}
Our goal is to show that upon replacing   $\Psi'$ with $\Psi'-z$,
where $z=\int_Y \Psi' dm_Y$, we get a bound similar to
\eqref{eq:bound2'}. To do this we make crucial use of the fact that
the $U^2$-norm of $\chi_{N,u}$ is suitably small, in fact, this is
the step that determined our choice of the degree of
$U^2$-uniformity $\theta_1$ of $\chi_{N,u}$.
 We write
$$
z=\int_{Y}\Psi'\,dm_{Y}\ \ \text{ and }\ \ \Psi''=\Psi'-z.
$$
Combining Lemma~\ref{lem:U2-intervals},
equations~\eqref{eq:chose_theta}, \eqref{E:theta1}, and
estimate~\eqref{eq:U2_Chi_u} we get
$$
 \big|\E_{n\in [\tN]}
\one_{P}(n) z \chi_{N,u}(n) \big|\leq
 c_1\norm{\chi_{N,u}}_{U^2(\Z_\tN)}\leq c_1\theta_1=c_1\theta(M_1)\leq
\frac 12\delta_1(M_1)\leq\frac 12\delta_1(M)
$$
where the last estimate follows from \eqref{eq:chose_detla1}  and the fact that
$M\leq M_1$.
From this estimate and  \eqref{eq:bound2'} we deduce that
\begin{equation}
\label{eq:bound3} \bigl| \E_{n\in [\tN]}\one_{P}(n)\chi_{N,u}(n)
\Psi''(g'(n)\cdot e_{Y})\bigr|\geq \frac 12\delta_1(M)
\end{equation}
where
\begin{equation} \label{eq:CmPsiPrime}
\norm{\Psi''|\raise-2mm\hbox{$\scriptstyle Y$}}_{\CC^{2m}(Y)}\leq
H(M)^2\ \text{ and } \ \int_{Y}\Psi''\,dm_{Y}=0.
\end{equation}
We recall that Property~\eqref{eq:equid_Y} is also satisfied.
\subsection{Proof of the weak $U^3$-decomposition
result}\label{SS:ProofFinally} We are now very close to completing
the proof of Theorem~\ref{th:Decomposition-weakU3-intro}. To this
end, we are going to   combine \eqref{eq:bound3},
 the total equidistribution of the sequence $(g'(n)\cdot e_{Y})_{n\in [\tN]}$, and
 Propositions~\ref{lem:equid-square} and  \ref{lem:equid-square-ab}, to deduce a contradiction.

 Recall that  $\chi_{N,s}=\chi_N*\phi$ (the convolution is taken in
$\Z_{\tN}$) where $\phi$ is a kernel in $\Z_\tN$, meaning a
non-negative function with $\E_{n\in\ZN}\phi(n)=1$. Since
$\chi_u=\chi_N-\chi_s$, we can write $\chi_{N,u}=\chi_N*\psi$, where
$\psi$ is a function on $\ZN$ with $\E_{n\in\ZN}|\psi(n)|\leq 2$.
 We deduce from \eqref{eq:bound3}   that there
exists an integer $q$ with $0\leq q < \tN$ such that
\begin{equation}
\label{eq:bound5}   \bigl| \E_{n\in [\wt N]} \one_{P}(n+q\bmod \wt N)\chi_N(n)
\Psi''(g'(n+q\bmod{\tN})\cdot e_{Y})\bigr|\geq \frac 14\, \delta_1(M)
\end{equation}
where the residue class $n+q\bmod{\tN}$ is taken in $[\wt N]$
instead of the more usual interval $[0,\wt N)$. It follows that
$$
 \bigl| \E_{n\in [\wt N]} \one_{P}(n+m)\,\one_J(n)\,
\one_{[N]}(n)\, \chi(n) \Psi''(g'(n+m)\cdot
e_{Y})\bigr|\geq \frac 18\, \delta_1(M)
$$
where either $J$ is the interval $[\wt N-q]$ and $m=q$, or  $J$ is
the interval $(\wt N-q,\wt N]$ and $m=q-\wt N$.   We remark that in
both cases  $\one_{P}(n+m)\,\one_J(n)\,\one_{[N]}(n)=\one_{P_1}(n)$
for some arithmetic progression $P_1\subset[N]$ and thus we have
\begin{equation}
\label{eq:bound5'} \bigl| \E_{n\in [\wt N]} \one_{P_1}(n)\, \chi(n)
\Psi''(g'(n+m)\cdot e_{Y})\bigr|\geq \frac 18\, \delta_1(M).
\end{equation}

Recall that $\norm{\Psi''|\raise-2mm\hbox{$\scriptstyle
Y$}}_{\CC^{2m}(Y)}\leq H(M)^2$. By~\eqref{eq:equid_Y}, the sequence
$(g'(n)\cdot e_Y)_{n\in [\tN]}$ is totally
$\tilde{\sigma}(M)$-equidistributed in $Y$. We remark that,
depending on the case~\eqref{it:Gprimenonabelien}
or~\eqref{it:Gprimeabelien} of Corollary~\ref{cor:factorization1},
Proposition~\ref{lem:equid-square} or
Proposition~\ref{lem:equid-square-ab} correspondingly can be applied
to the nilmanifold $Y$ and the sequence $(g'(n))_{n\in [\tN]}$.
Since by the definition of $\tilde{\sigma}(M)$ (see
\eqref{E:def-sigma}) we have
$$
\tilde{\sigma}(M)\leq\sigma\Bigl(Y,\frac{\delta_1(M)}{10\, H(M)^2}\Bigr),
$$ by using one of the
two propositions we deduce that
\begin{multline*}
 \big|\E_{n\in [\wt N]}\one_{P_1}(n)\chi(n)\Psi'(g'(n+m)\cdot e_Y) \big|\leq \\
 \big|\E_{n\in [N]}\one_{P_1}(n)\chi(n)\Psi'(g'(n+m)\cdot e_Y) \big| \leq
\frac{\delta_1(M)}{10\, H(M)^2}\, \norm{\Psi''|\raise-2mm\hbox{$\scriptstyle
Y$}}_{\CC^{2m}(Y)} \leq \frac 1{10}\delta_1(M)
\end{multline*}
which contradicts~\eqref{eq:bound5}. Hence, \eqref{eq:U3_large}
fails, giving us that $\norm{\chi_{N,u}}_{U_3(\Z_\tN)}\leq \ve$.
This completes the proof of
Theorem~\ref{th:Decomposition-weakU3-intro}. \qed

\subsection{Proof of the strong $U^3$-decomposition on the average}
\label{subsec:proof_strong}
 In this
  subsection we prove  Theorem~\ref{th:strong-average-intro}.
Our basic ingredient is the weak decomposition result of
Theorem~\ref{th:Decomposition-weakU3-intro}. We use it in an
iterative way in an argument of energy increment; this is made
possible because of the simple structure of $\chi_{N,s}$ and   the
particular form of the kernels $\phi_{N,\theta}$ that occur in
Theorem~\ref{th:Decomposition-weakU3-intro}. We recall that these
kernels were defined by~\eqref{eq:def-phi} in
Section~\ref{subsec:kernels}; the most important property  used in
the subsequent argument is the monotonicity of their Fourier
coefficients:
$$
\text{if }\ 0<\theta'<\theta, \ \text{ then }\
\widehat{\phi_{N,\theta'}}(\xi)\geq\widehat{\phi_{N,\theta}}(\xi)\geq
0\ \text{ for every }\ \xi\in \Z_{\tN}.
$$

We fix a function $F\colon \N \times \N \times \R^+\to \R^+$, an
  $\ve>0$, and a finite positive measure $\nu$ on the compact group
$\CM$ of multiplicative functions. We can assume without loss of
generality that $\nu$ is a probability measure.

We define inductively  a sequence  $(\theta_j)$ of positive reals
and sequences $(N_j)$,  $(Q_j)$, $(R_j)$ of positive integers as
follows.
 We let $\theta_1=N_1=Q_1=R_1=1$. Suppose
 that $j\geq 1$ and that the first $j$ terms of the sequences are defined.
 We apply
Theorem~\ref{th:Decomposition-weakU3-intro} with
$$
\theta_{j} \ \text{ substituted for } \ \theta_0\quad \text{ and } \quad  \frac
1{F(Q_j,R_{j},\ve)} \ \text{ substituted for } \ \ve.
$$
Let $N_{j+1}$ be such that the conclusions of
Theorem~\ref{th:Decomposition-weakU3-intro} hold with these data as
input for every $N\geq N_{j+1}$. We get  positive integers $Q$ and
$R$ that we write respectively as  $Q_{j+1}$ and $R_{j+1}$, and a
real $\theta_{j+1}$ with $0<\theta_{j+1}<\theta_{j}$, such that for
every $N\geq N_{j+1}$ and every $\chi\in\CM$, the functions
$$
\chi_{j+1,N,s}:=\chi_N*\phi_{N,\theta_{j+1}}\quad \text{ and } \quad
\chi_{j+1,N,u}:=\chi_N-\chi_{j+1,N,s}
$$
 satisfy the Properties~\eqref{it:weakU3-2} and~\eqref{it:weakU3_3}
 of Theorem~\ref{th:Decomposition-weakU3-intro}, i.e.,
\begin{gather}
\label{eq:decompU32}
 |\chi_{j+1,N,s}(n+Q_{j+1})-\chi_{j+1,N,s}(n)|\leq \frac {R_{j+1}} \tN\quad \text{ for every }\ n\in
\Z_\tN\ ;\\
\label{eq:decompU33} \norm{\chi_{j+1,N,u}}_{U^3(\ZN)}\leq \frac
1{F(Q_j,R_{j},\ve)}.
\end{gather}
Replacing $N_{j+1}$ with $\max_{i\leq j}N_i$ we can assume that the
sequence $(N_j)$ is increasing. By construction, the sequence
$(\theta_j)$ is decreasing.

Let $J=1+\lceil 2\ve^{-2}\rceil$ and let $N_0=N_{J+1}$. For every
$N\geq N_0$ we have
\begin{multline*}
\sum_{j=2}^J\int_\CM
\norm{\chi_{j+1,N,s}-\chi_{j,N,s}}_{L^2(\ZN)}^2\,d\nu(\chi)=\\
\int_\CM \sum_{\xi\in\ZN} |\widehat{\chi_N}(\xi)|^2 \,\sum_{j=2}^J
|\widehat{\phi_{N,\theta_{j+1}}}(\xi)-\widehat{\phi_{N,\theta_{j}}}(\xi)|^2\,d\nu(\chi)
\leq\\
 2 \int_\CM \sum_{\xi\in\ZN}|\widehat{\chi_N}(\xi)|^2
\,\sum_{j=2}^J
\bigl(\widehat{\phi_{N,\theta_{j+1}}}(\xi)-\widehat{\phi_{N,\theta_j}}(\xi)\bigr)\,d\nu(\chi),
\end{multline*}
where to get the last estimate we used that  $\theta_{N,j+1} \leq
\theta_{N,j}$ and thus
$\widehat{\phi_{N,\theta_{j+1}}}(\xi)\geq\widehat{\phi_{N,\theta_j}}(\xi)\geq
0$ for every $\xi$ by~\eqref{eq:phi-increases}. Since
$|\widehat{\phi_{N,\theta}}(\xi)|\leq 1$, the last quantity in the
estimate is  at most
$$
 2\int_\CM \sum_{\xi\in \ZN} |\widehat{\chi_N}(\xi)|^2 \, d\nu(\chi)\leq 2.
$$
Therefore,  for every $N\geq N_0$ there exists $j_0:=j_0(F,N,\ve,
\nu)$ with
\begin{equation}\label{E:j0isbounded}
2\leq j_0\leq J
\end{equation} such that
\begin{equation}
\label{eq:decompU34} \int_\CM
\norm{\chi_{j_0+1,N,s}-\chi_{j_0,N,s}}_{L^2(\ZN)}^2\,d\nu(\chi)\leq
\frac 2{J-1}\leq \ve^2.
\end{equation}
 For   $N\geq N_0$, we  define
\begin{gather*}
\psi_{N,1}:= \phi_{N,\theta_{j_0}}\ ;\quad \psi_{N,2}:=\phi_{N,\theta_{j_0+1}}\ ;\\
 \chi_{N,s}:=\chi_N*\psi_{N,1}=\chi_{j_0,N,s}\ ;\quad
 \chi_{N,u}:=\chi_N-\chi_N*\psi_{N,2}= \chi_{j_0+1,N,u}\ ;\\
\chi_{N,e}:=\chi_N*(\psi_{N,2}-\psi_{N,1})= \chi_{j_0+1,N,s}-\chi_{j_0,N,s}\ ;\\
Q:=Q_{j_0}\ \text{ and }\ R:=R_{j_0}.
\end{gather*}
Then we have the decomposition
$$
\chi_N=\chi_{N,s}+\chi_{N,u} +\chi_{N,e},
$$
and furthermore, Property~\eqref{it:decompU32} of Theorem~\ref{th:strong-average-intro} follows from~\eqref{eq:decompU32} (applied for $j=j_0-1$),
 Property~\eqref{it:decomU33} follows from~\eqref{eq:decompU33} (applied for $j=j_0$),  and Property~\eqref{it:decomU34}
 follows from~\eqref{eq:decompU34} and the Cauchy-Schwarz
 estimate. Furthermore, it follows from \eqref{E:j0isbounded} that the integers $N_0,Q,R$  are bounded by a  constant that depends on $F$ and $\ve$ only.
 Thus, all the announced properties are satisfied,
completing the proof of Theorem~\ref{th:strong-average-intro}.
\qed

 \appendix
\section{Rational elements in a  nilmanifold}
\label{ap:A}
In this section, working with  $2$-step nilmanifolds does
not provide any simplification and so the results are stated for
general nilmanifolds.

Let $X=G/\Gamma$ be an $s$-step nilmanifold of dimension $m$. As
everywhere in this article we assume that $G$ is connected and
simply connected, and endowed with a Mal'cev basis. Recall that we
write $e_X$ for the image in $X$ of the unit element  $\one_G$ of
$G$.

From  Property~\eqref{it:Malcev-Gamma} of Mal'cev bases stated in
Section~\ref{subsec:nilmanifolds} we immediately deduce:
\begin{lemma}
\label{lem:finite-Gamma}
 $\Gamma$ is a finitely generated group.
\end{lemma}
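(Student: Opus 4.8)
The plan is to write down an explicit finite generating set for $\Gamma$ read off directly from the Mal'cev basis. Let $(\xi_1,\dots,\xi_m)$ be the Mal'cev basis of $\mathfrak g$ fixed on $G$, and let $\psi\colon\R^m\to G$ be the associated coordinate homeomorphism from Section~\ref{subsec:nilmanifolds}. For $i=1,\dots,m$ set $e_i:=\exp(\xi_i)\in G$. If $\mathbf{e}_i$ denotes the $i$-th vector of the standard basis of $\R^m$, then $e_i=\psi(\mathbf{e}_i)$, so $e_i\in\psi(\Z^m)=\Gamma$ by Property~\eqref{it:Malcev-Gamma} of the Mal'cev basis.

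I claim that $\Gamma$ is generated by $e_1,\dots,e_m$. The key observation is that for each $i$ the map $t\mapsto\exp(t\xi_i)$ is a one-parameter subgroup of $G$, so $\exp(t\xi_i)=e_i^t$ for every integer $t$. Hence, for any $(t_1,\dots,t_m)\in\Z^m$,
$$
\psi(t_1,\dots,t_m)=\exp(t_1\xi_1)\cdots\exp(t_m\xi_m)=e_1^{t_1}e_2^{t_2}\cdots e_m^{t_m},
$$
which is a word in $e_1,\dots,e_m$. Since $\Gamma=\psi(\Z^m)$ by Property~\eqref{it:Malcev-Gamma}, every element of $\Gamma$ is such a word, which proves the claim; in particular $\Gamma$ is finitely generated.

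There is essentially no obstacle here: the lemma is an immediate unpacking of the defining Property~\eqref{it:Malcev-Gamma} of a Mal'cev basis, together with the elementary identity $\exp(t\xi)=(\exp\xi)^t$ for $t\in\Z$. One could alternatively argue by induction on the nilpotency step of $G$ using the terms of the lower central series, but this is unnecessary for the statement at hand.
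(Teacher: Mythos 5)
Your proof is correct and fleshes out exactly the one-line deduction the paper makes from Property~\eqref{it:Malcev-Gamma} of the Mal'cev basis: since $\Gamma=\psi(\Z^m)$ and $\psi(t_1,\dots,t_m)=e_1^{t_1}\cdots e_m^{t_m}$ with $e_i:=\exp(\xi_i)\in\Gamma$, the group $\Gamma$ is generated by the $e_i$. The paper treats this as immediate and gives no argument, so you are supplying the same reasoning the authors left implicit.
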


\subsection{Rational elements}
We recall that an element $g\in G$ is $Q$-rational if
$g^m\in\Gamma$ for some  $m\in \N$ with $ m\leq Q$.
 We collect here some properties of rational elements. We note that all quantities  introduced  below depend implicitly on the nilmanifold $X$.
\begin{lemma}[{\cite[Lemma A.11]{GT12a}}]
\label{lem:rational}
\begin{enumerate}
\item
\label{it:rat1} For every $Q\in\N$ there exists $Q_1\in\N$ such that
the product of any two $Q$-rational elements is $Q_1$-rational; it
follows  that the set of rational elements is a subgroup of $G$.

\item
\label{it:rat2}
For every $Q\in \N$ there exists $q\in\N$ such that the Mal'cev coordinates of any $Q$-rational element are rational with denominators at most $q$; it follows that the set of $Q$-rational elements is a discrete subset of $G$.
\item
\label{it:rat3}
Conversely, for every $q\in\N$ there exists $Q\in\N$ such that, if the  Mal'cev coordinates of $g\in G$  are rational with denominators at most $q$, then $g$ is $Q$-rational.
\end{enumerate}
\end{lemma}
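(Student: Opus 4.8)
The plan is to derive all three assertions from the Baker--Campbell--Hausdorff formula together with the polynomial nature of the Mal'cev coordinate map $\psi$. Recall that $\CX$ is adapted to the lower central series (for the $2$-step nilmanifolds of interest this is built into the standing hypotheses, $G_2=\psi(\{0\}^{m-r}\times\R^r)$), and that the structure constants of $\CX$ are rational with denominators depending only on $X$, since $\Gamma=\psi(\Z^m)$ is a subgroup. Using BCH, I would first record that, expressed in the Mal'cev coordinates $\psi$, multiplication, inversion, and each integer power map $\psi(t)\mapsto\psi(t)^n$ are polynomial maps of $\R^m$ whose coefficients are rational with denominators bounded in terms of $X$ (and of $n$ for the power map). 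Moreover the $n$-th power map has a triangular shape: its $j$-th coordinate equals $n\,t_j$ plus a polynomial in $t_1,\dots,t_{j-1}$, with its further dependence on $n$ being a $\Z$-linear combination of $\binom n2,\binom n3,\dots$, the coefficients again being rational with denominators depending only on $X$. Establishing this normal form and tracking all the denominators through the changes of coordinates is the step I expect to be the real work; it is exactly what is carried out in the relevant part of \cite{GT12a}.

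Granting this, \eqref{it:rat2} comes out by induction on the coordinate index. If $g=\psi(t)$ is $Q$-rational, then $g^p\in\Gamma=\psi(\Z^m)$ for some $p\le Q$, i.e. $\psi(t)^p$ has integer Mal'cev coordinates. The first coordinate gives $p\,t_1\in\Z$, so $t_1$ is rational with denominator $\le Q$; assuming $t_1,\dots,t_{j-1}$ are rational with denominators $\le q_{j-1}$, the triangular form shows the $j$-th coordinate of $\psi(t)^p$ is $p\,t_j$ plus a fixed polynomial evaluated at $t_1,\dots,t_{j-1}$, which is rational with denominator $\le D_j$ for some $D_j$ depending only on $Q$ and $X$; hence $p\,t_j\in D_j^{-1}\Z$ and $t_j$ is rational with denominator $\le q_j:=Q\,D_j$. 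Taking $q:=q_m$ settles the first half of \eqref{it:rat2}. Discreteness of the set of $Q$-rational elements is then immediate, since the set of points of $\R^m$ all of whose coordinates are rational with denominator $\le q$ is discrete and $\psi$ is a homeomorphism.

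For \eqref{it:rat3} I would run the power map forwards: if $g=\psi(t)$ has all $t_j$ rational with denominator $\le q$, choose $n$ to be a sufficiently divisible integer --- a suitable multiple of $\operatorname{lcm}(1,\dots,N_0)$ for an $N_0$ bounded in terms of $q$ and the step $s$ of $G$, hence in terms of $q$ and $X$ --- so that, coordinate by coordinate, all the terms $n\,t_j$, $\binom n2(\cdots),\dots$ occurring in $\psi(t)^n$ become integers; then $g^n\in\Gamma$ and $g$ is $Q$-rational with $Q:=n$. Finally \eqref{it:rat1}: if $g$ and $h$ are $Q$-rational, then \eqref{it:rat2} bounds the denominators of their Mal'cev coordinates by some $q=q(Q)$, so the coordinates of $gh$, being polynomials with bounded-denominator coefficients in those of $g$ and $h$, have denominators bounded by some $q'=q'(Q)$, and \eqref{it:rat3} then makes $gh$ be $Q_1$-rational with $Q_1=Q_1(Q)$. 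The rational elements consequently form a subgroup: they are closed under products by \eqref{it:rat1}, they contain $\Gamma$, and if $g^m\in\Gamma$ with $m\le Q$ then $(g\inv)^m=(g^m)\inv\in\Gamma$, so $g\inv$ is $Q$-rational as well.
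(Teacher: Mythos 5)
The paper gives no proof of this lemma; it simply quotes it from \cite[Lemma A.11]{GT12a}, so there is no internal argument to compare against. Your sketch is correct and reconstructs the same route taken in that reference: reduce everything to the fact (the ``real work'' you rightly flag and defer) that in Mal'cev coordinates multiplication and the power maps $g\mapsto g^n$ are polynomial with bounded-denominator rational coefficients and a triangular dependence on the coordinates, then obtain \eqref{it:rat2} by induction on the coordinate index, \eqref{it:rat3} by choosing $n$ sufficiently divisible, and \eqref{it:rat1} by combining the two.
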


\begin{corollary}
\label{cor:M-rat}
For every $Q\in\N$  there exists a finite set $\Sigma:=\Sigma(Q)$ of $Q$-rational
 elements such that  all $Q$-rational elements belong to $\Sigma(Q)\Gamma$.
\end{corollary}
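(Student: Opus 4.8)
The plan is to deduce the corollary from the structural facts about rational elements collected in Lemma~\ref{lem:rational}, together with the finite generation of $\Gamma$ from Lemma~\ref{lem:finite-Gamma}. The key point is that the set of $Q$-rational elements, while infinite, becomes finite once we quotient out by $\Gamma$; what we must produce is a \emph{finite} set of representatives.

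First I would invoke part~\eqref{it:rat2} of Lemma~\ref{lem:rational} to fix $q:=q(Q)\in\N$ such that every $Q$-rational element of $G$ has Mal'cev coordinates (with respect to the fixed Mal'cev basis $\CX$) that are rationals with denominator at most $q$. Thus every $Q$-rational element lies in the set $S:=\psi\bigl((\tfrac 1{q!}\Z)^m\bigr)$, the image under the Mal'cev coordinate map of the lattice $(q!)^{-1}\Z^m$. Next, observe that $S$ is a subgroup of $G$: by part~\eqref{it:rat3} of the same lemma there is a $Q'\in\N$ such that every element with coordinates in $(q!)^{-1}\Z$ is $Q'$-rational, and then part~\eqref{it:rat1} shows the product of two such is $Q_1$-rational for some $Q_1$, hence (enlarging $q$ correspondingly via part~\eqref{it:rat2} again, or simply noting that $S$ is visibly closed under the group operations once one checks that the Mal'cev coordinate polynomials have integer coefficients and bounded denominators are preserved) $S$ is closed under multiplication and inversion. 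In any case $\Gamma=\psi(\Z^m)\subset S$, and the point is that $S/\Gamma$ is \emph{finite}: it is covered by the images of the finitely many coordinate vectors in $\{0,1/q!,2/q!,\dots,(q!-1)/q!\}^m$, since any coordinate can be reduced modulo $\Z$ by multiplying on the right by a suitable element of $\Gamma=\psi(\Z^m)$ (here one uses that right multiplication by $\psi(\bk)$, $\bk\in\Z^m$, affects the Mal'cev coordinates by a unipotent integer-coefficient transformation, so it preserves membership in $(q!)^{-1}\Z^m$ and acts transitively modulo $\Z^m$ on the first coordinates, and then one proceeds coordinate by coordinate using the lower-central-series structure). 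Choose $\Sigma:=\Sigma(Q)$ to be any set of coset representatives for $\Gamma$ in $S$; it is finite, consists of $Q$-rational elements (shrinking each representative modulo $\Gamma$ if necessary to stay $Q$-rational, or simply taking representatives with coordinates in $\{0,\dots,(q!-1)/q!\}^m$ which are $Q'$-rational for the fixed $Q'$ and then relabelling), and by construction every $Q$-rational $g$ lies in $\Sigma\Gamma$.

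The main obstacle is the coset-counting step: one must verify carefully that $S/\Gamma$ is finite, i.e.\ that the bounded-denominator condition on Mal'cev coordinates really does cut out a set that is a finite union of $\Gamma$-cosets. This requires knowing how right multiplication by $\Gamma$ acts on Mal'cev coordinates — a polynomial (indeed unipotent, integer-coefficient) action in the $2$-step case, more intricate in general — and arguing inductively along the lower central series that one can successively clear each block of coordinates into a fundamental domain. Once this combinatorial/linear-algebra fact is in hand, the rest is bookkeeping. I would present it by induction on the nilpotency step $s$, using that $G_2=\psi(\{0\}^{m-r}\times\R^r)$ is central and that the induced nilmanifold $(G/G_2)/(\Gamma G_2/G_2)$ has step $s-1$, so the inductive hypothesis handles the ``horizontal'' coordinates and a direct argument on the central torus $G_2/(G_2\cap\Gamma)\cong\T^r$ handles the remaining ones.
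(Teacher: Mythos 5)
Your route is genuinely different from the paper's, and worth comparing. The paper picks a \emph{compact} set $K$ with $G=K\Gamma$, translates every $Q$-rational $g$ by a suitable $\gamma\in\Gamma$ into $K$, observes that $g\gamma^{-1}$ is then $Q_1$-rational by Lemma~\ref{lem:rational}\eqref{it:rat1}, and concludes immediately because the $Q_1$-rational elements form a \emph{discrete} set (Lemma~\ref{lem:rational}\eqref{it:rat2}) whose intersection with the compact $K$ is finite. Compactness plus discreteness does all the work; no coordinate bookkeeping is needed.

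You instead work directly in Mal'cev coordinates: put the $Q$-rational elements inside $S:=\psi((1/q!)\Z^m)$ via Lemma~\ref{lem:rational}\eqref{it:rat2} and try to show $S$ meets only finitely many right $\Gamma$-cosets, by reducing coordinates modulo $\Z^m$ along the lower central series. This is a correct statement and your inductive sketch can be pushed through, but you have correctly flagged the coset-counting step as ``the main obstacle'' and left it unproven; making it rigorous requires tracking how the (integer-coefficient, triangular) multiplication polynomials act under right translation by $\Gamma$ and bounding the denominators and magnitudes at each stage. Notice that the cleanest way to finish that step is in fact to reintroduce the paper's compactness argument, which suggests the coordinate approach is a detour here. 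Two smaller remarks: (a) you do not need $S$ to be a subgroup of $G$ — and it typically is not for a fixed denominator bound, since by Lemma~\ref{lem:rational}\eqref{it:rat1} products of $Q$-rational elements are only $Q_1$-rational for a larger $Q_1$ — so that paragraph can be deleted; (b) when choosing $\Sigma$, the representative you pick in $\{0,\dots,(q!-1)/q!\}^m$ need not be $Q$-rational (only $Q'$-rational for some larger $Q'$), so you should instead pick, for each of the finitely many $\Gamma$-cosets that actually contains a $Q$-rational element, one such $Q$-rational element as its representative — this is exactly what the paper does.
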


\begin{proof}
Let $K$ be a compact subset of $G$ such that $G=K\Gamma$.

Let $Q\in\N$. Let $Q_1$ be associated to $Q$ by Part~\eqref{it:rat1}
of Lemma~\ref{lem:rational}, and let $\Sigma_1$ be the set of
$Q_1$-rational elements of $K$.  By Part~\eqref{it:rat2} of
Lemma~\ref{lem:rational}, $\Sigma_1$ is finite. Let $g$ be a
$Q$-rational element of $G$. There exists $\gamma\in\Gamma$ such
that $g\gamma\inv\in K$.  Since $\gamma$ is obviously $Q$-rational,
$g\gamma\inv$ is $Q_1$-rational and thus belongs to $\Sigma_1$. For
each element $h$ of $\Sigma_1$ obtained this way we choose a
$Q$-rational point $g$ such that $h\in g\Gamma$. Let
$\Sigma:=\Sigma(Q)$ be the set consisting of all elements obtained
this way. We have that $\Sigma\Gamma$ contains all $Q$-rational
elements.  Furthermore,  $|\Sigma|\leq|\Sigma_1|$ and so $\Sigma$ is
finite, completing the proof.
 \end{proof}

\subsection{Rational subgroups} We gather here some basic properties of rational
subgroups that we use in the main part of the article.
\begin{definition}
A \emph{rational subgroup} $G'$ of $G$ is a closed and  connected subgroup
of $G$ such that its Lie algebra $\mathfrak g'$ admits a base
that has  rational coordinates in the Mal'cev basis of $G$.
\end{definition}
An equivalent definition is that $\Gamma':=\Gamma\cap G'$ is
co-compact in $G$. In this case, $G'/\Gamma'$ is  called a
\emph{sub-nilmanifold} of $X$.
\begin{lemma}[\cite{GT12a}, Lemma A.13]
\label{lem:Ap0} If $G'$ is a rational subgroup of $G$ and $h$ is a
rational element, then $hG'h\inv$ is a rational subgroup of $G$.
\end{lemma}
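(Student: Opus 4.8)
The plan is to verify directly the three defining conditions of a rational subgroup for $H:=hG'h\inv$: that it is closed, that it is connected, and that its Lie algebra admits a basis with rational coordinates in the Mal'cev basis $\CX=(\xi_1,\dots,\xi_m)$ of $\mathfrak g$. The first two are immediate, since conjugation by $h$ is a diffeomorphism of $G$ and a group automorphism, hence it carries the closed connected subgroup $G'$ to a closed connected subgroup. Its differential at the identity is $\mathrm{Ad}(h)$, so $\mathrm{Lie}(H)=\mathrm{Ad}(h)\mathfrak g'$ where $\mathfrak g'=\mathrm{Lie}(G')$. Thus everything reduces to showing that the linear map $\mathrm{Ad}(h)$ is represented, in the basis $\CX$, by a matrix with rational entries: for then, picking a basis $v_1,\dots,v_d$ of $\mathfrak g'$ with rational $\CX$-coordinates (which exists by the hypothesis that $G'$ is rational), the vectors $\mathrm{Ad}(h)v_1,\dots,\mathrm{Ad}(h)v_d$ form a basis of $\mathrm{Lie}(H)$ with rational $\CX$-coordinates, and we are done.

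To prove that $\mathrm{Ad}(h)$ is a rational matrix, I would first invoke the standard facts about Mal'cev bases adapted to $\Gamma=\psi(\Z^m)$: the structure constants of $\mathfrak g$ with respect to $\CX$ are rational, and the change of coordinates between the Mal'cev parametrization $\psi(t_1,\dots,t_m)=\exp(t_1\xi_1)\cdots\exp(t_m\xi_m)$ and the exponential parametrization is a polynomial map with rational coefficients. Since $h$ is rational, Lemma~\ref{lem:rational}\eqref{it:rat2} gives that its Mal'cev coordinates are rational, hence $h=\exp(Y)$ for some $Y\in\mathfrak g$ whose $\CX$-coordinates are rational. Because $G$ is $s$-step nilpotent, $\mathrm{ad}_Y$ is a nilpotent endomorphism of $\mathfrak g$ with $(\mathrm{ad}_Y)^s=0$, and its matrix in the basis $\CX$ has rational entries, each entry being a $\Q$-linear combination of the coordinates of $Y$ with coefficients among the structure constants. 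Consequently
$$
\mathrm{Ad}(h)=\exp(\mathrm{ad}_Y)=\sum_{k=0}^{s-1}\frac1{k!}\,(\mathrm{ad}_Y)^k
$$
is a finite sum of rational matrices, hence a rational matrix in the basis $\CX$, which is what was needed. Combining this with the reduction of the first paragraph completes the proof.

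I do not expect a genuine obstacle here. The only care required is bookkeeping about which facts from the theory of Mal'cev bases are being used — rationality of the structure constants and of the $\psi\leftrightarrow\exp$ change of coordinates — all of which are established in the setup of \cite{GT12a} and can simply be cited; and the one computational ingredient, the identity $\mathrm{Ad}(\exp Y)=\exp(\mathrm{ad}_Y)$ with a terminating series by nilpotency, is elementary.
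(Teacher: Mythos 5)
Your proof is correct and follows essentially the same route as the paper's: both reduce the claim to showing that $\mathrm{Ad}(h)$ is represented by a rational matrix in the Mal'cev basis, and then apply it to a rational basis of $\mathfrak g'$. The paper gets the rationality of $\mathrm{Ad}(h)$ by invoking that conjugation by $h$ is a polynomial map with rational coefficients in Mal'cev coordinates, whereas you derive it more explicitly via $h=\exp(Y)$ with $Y$ rational and $\mathrm{Ad}(h)=\exp(\mathrm{ad}_Y)$ terminating by nilpotency; this is the same fact reached by a slightly more hands-on computation.
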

\begin{proof}
The conjugacy map $h\mapsto g\inv hg$ is a polynomial map with
rational coefficients and thus the linear map $\mathrm{Ad}_h$ from
$\mathfrak g$ to itself has rational coefficients. Since $\mathfrak
g'$ has a base consisting of vectors with rational coefficients, the
same property holds for $\mathrm{Ad}_h\mathfrak g$, that is, for the
Lie algebra of $hG'h\inv$. This proves the claim.
\end{proof}

 The argument used to deduce  Lemma~\ref{lem:finite-Gamma} shows that
  the group $\Gamma\cap
hG'h\inv$ is finitely generated.

\begin{lemma}
\label{lem:finite-index} Let $X=G/\Gamma$ be an $s$-step
nilmanifold, $G'\subset G$ be a rational subgroup, and $g\in G$ be a
rational element. Then
\begin{enumerate}
\item
\label{it:rat10bis}
$\Gamma\cap g\inv \Gamma g\cap G'$ is a subgroup of finite index  of
$\Gamma\cap G'$;
\item
\label{it:rat11bis}
 $\Gamma\cap g\inv \Gamma g\cap
G'$ is a subgroup of finite index  of $g\inv \Gamma g\cap G'$.
\end{enumerate}
\end{lemma}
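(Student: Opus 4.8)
The plan is to prove~\eqref{it:rat10bis} by a direct coset--counting argument, and then to deduce~\eqref{it:rat11bis} from it by conjugating everything by $g\inv$.

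For~\eqref{it:rat10bis}, write $\Lambda:=\Gamma\cap G'$ and $\Lambda_0:=\Gamma\cap g\inv\Gamma g\cap G'$, so that $\Lambda_0$ is a subgroup of $\Lambda$ and, for $\lambda\in\Lambda$, one has $\lambda\in\Lambda_0$ if and only if $g\lambda g\inv\in\Gamma$. First I would observe that there is an integer $Q_2$, depending only on $g$, such that $g\lambda g\inv$ is $Q_2$-rational for \emph{every} $\lambda\in\Lambda$: indeed $g$ is $Q$-rational for some $Q$, hence so is $g\inv$ (if $g^m\in\Gamma$ then $(g\inv)^m=(g^m)\inv\in\Gamma$), every element of $\Gamma$ is $1$-rational, and by iterating Lemma~\ref{lem:rational}\eqref{it:rat1} a product of finitely many $Q$-rational elements is $Q_2$-rational for some $Q_2$ depending only on $Q$. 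By Corollary~\ref{cor:M-rat} there is then a \emph{finite} set $\Sigma:=\Sigma(Q_2)$ of $Q_2$-rational elements such that every $Q_2$-rational element of $G$ lies in $\Sigma\Gamma$; consequently the coset $g\lambda g\inv\Gamma\in G/\Gamma$ belongs to the finite set $\{\sigma\Gamma:\sigma\in\Sigma\}$ for every $\lambda\in\Lambda$.

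The remaining step is purely group-theoretic. For $\lambda_1,\lambda_2\in\Lambda$ we have $g\lambda_1 g\inv\Gamma=g\lambda_2 g\inv\Gamma$ if and only if $g\lambda_2\inv\lambda_1 g\inv\in\Gamma$, and since $\lambda_2\inv\lambda_1\in\Lambda$ this is equivalent to $\lambda_2\inv\lambda_1\in\Lambda_0$. Thus the assignment $\lambda\mapsto g\lambda g\inv\Gamma$ is constant on each left coset of $\Lambda_0$ in $\Lambda$ and takes distinct values on distinct cosets, so it induces an injection of $\Lambda/\Lambda_0$ into $\{\sigma\Gamma:\sigma\in\Sigma\}$. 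Hence $[\Lambda:\Lambda_0]\le|\Sigma|<\infty$, which is~\eqref{it:rat10bis}.

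For~\eqref{it:rat11bis}, I would set $G'':=gG'g\inv$, which is again a rational subgroup of $G$ by Lemma~\ref{lem:Ap0} because $g$ is rational. Conjugation by $g\inv$ is an automorphism of $G$ carrying $G''$ onto $G'$; under it $\Gamma\cap G''$ maps onto $g\inv\Gamma g\cap G'$, while $\Gamma\cap g\Gamma g\inv\cap G''$ maps onto $g\inv\Gamma g\cap\Gamma\cap G'$ (using $g\inv(g\Gamma g\inv)g=\Gamma$ and $g\inv G''g=G'$). Since an automorphism preserves indices of subgroups, it follows that $[\,g\inv\Gamma g\cap G':\Gamma\cap g\inv\Gamma g\cap G'\,]=[\,\Gamma\cap G'':\Gamma\cap g\Gamma g\inv\cap G''\,]$, and the right-hand side is finite by part~\eqref{it:rat10bis}, already proved, applied with $G''$ in place of $G'$ and $g\inv$ in place of $g$. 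I do not expect a serious obstacle: the two points that need care are that the rationality bound $Q_2$ in the first step is genuinely uniform in $\lambda$ (which is exactly the uniform statement in Lemma~\ref{lem:rational}\eqref{it:rat1}), and that conjugating the rational subgroup $G'$ by the rational element $g$ again yields a rational subgroup, so that Corollary~\ref{cor:M-rat} remains available when~\eqref{it:rat10bis} is re-applied to $G''$.
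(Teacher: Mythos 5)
Your proof is correct, and it takes a genuinely different route from the paper's for part~\eqref{it:rat10bis}. The paper argues as follows: since $g\Gamma g\inv$ consists of rational elements, every $\gamma\in\Gamma\cap G'$ has some power in $\Lambda_0:=\Gamma\cap g\inv\Gamma g\cap G'$; it then invokes finite generation of $\Gamma\cap G'$ (Lemma~\ref{lem:finite-Gamma}) and asserts that, by induction on the nilpotency class $s$, a subgroup of a finitely generated nilpotent group containing a power of each element must have finite index. You avoid that nilpotency induction entirely. Instead you first upgrade ``$g\lambda g\inv$ is rational'' to a \emph{uniform} bound: by iterating Lemma~\ref{lem:rational}\eqref{it:rat1} there is a single $Q_2$, depending only on the rationality bound of $g$, such that $g\lambda g\inv$ is $Q_2$-rational for every $\lambda\in\Gamma\cap G'$. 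Corollary~\ref{cor:M-rat} then gives a finite set $\Sigma$ with $g\lambda g\inv\in\Sigma\Gamma$, and your coset computation shows that $\lambda\mapsto g\lambda g\inv\Gamma$ descends to an injection of $(\Gamma\cap G')/\Lambda_0$ into $\{\sigma\Gamma:\sigma\in\Sigma\}$, giving $[\Gamma\cap G':\Lambda_0]\leq|\Sigma|$. This is more self-contained (it uses only lemmas already proved in the appendix and no auxiliary fact about nilpotent groups) and even yields an explicit bound on the index in terms of $|\Sigma(Q_2)|$; the trade-off is that it leans on the uniformity built into Lemma~\ref{lem:rational}\eqref{it:rat1} and on Corollary~\ref{cor:M-rat}, whereas the paper's route only needs the qualitative fact that conjugates of $\Gamma$-elements are rational. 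For part~\eqref{it:rat11bis} both you and the paper deduce it from~\eqref{it:rat10bis} by conjugating by $g\inv$, though you conjugate the subgroup $G'$ (using Lemma~\ref{lem:Ap0} to keep rationality) while the paper conjugates the lattice $\Gamma$; either substitution works.
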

\begin{proof}
By Part~\eqref{it:rat1} of Lemma~\ref{lem:rational},  all  elements
of $g\Gamma g\inv$  are rational.
 Hence, if $\gamma\in \Gamma\cap G'$ there exists $k\in\N$ with
$(g\gamma g\inv)^k\in \Gamma$ and so we have $\gamma^k\in g\inv\Gamma g\cap \Gamma\cap G'$.
Applying Lemma~\ref{lem:finite-Gamma} to $G'$ and $\Gamma\cap G'$ we get that this last group is finitely generated. By induction on $s$ it is easy to deduce that $g\inv\Gamma g\cap \Gamma\cap G'$ has finite index in $\Gamma\cap G'$. This proves \eqref{it:rat10bis}.
Since $g\Gamma g\inv$  is co-compact in $G$,
substituting this group for $G$ and $g\inv$ for $g$ in the preceding
statement, we get  \eqref{it:rat11bis}.
\end{proof}

\begin{lemma}
\label{lem:Gprimey} Let $g\in G$ be a rational element and  $G'$ a
rational subgroup of $G$. Then $G'g\cdot e_X:=\{hg\cdot e_X\colon
h\in G'\}$ is a closed sub-nilmanifold of $X$.
\end{lemma}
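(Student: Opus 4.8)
The plan is to realize $G'g\cdot e_X$ as a $G'$-orbit in $X$ and to identify it with a compact quotient nilmanifold. First I would note that $G'g\cdot e_X=G'\cdot y_0$ with $y_0:=g\cdot e_X$, and that the stabilizer of $y_0$ in $G$ equals $g\Gamma g\inv$: indeed the stabilizer of $e_X$ is $\Gamma$, and $a\cdot(g\cdot e_X)=g\cdot e_X$ precisely when $g\inv ag\in\Gamma$. Hence the orbit map $\phi\colon G'\to X$, $\phi(h)=hg\cdot e_X$, has fibres equal to the left cosets of $\Lambda:=G'\cap g\Gamma g\inv$, so it descends to a continuous injection $\bar\phi\colon G'/\Lambda\to X$ whose image is exactly $G'g\cdot e_X$.

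The crux is to show that $\Lambda$ is a discrete and cocompact subgroup of $G'$. Discreteness is immediate since $g\Gamma g\inv$ is discrete in $G$. For cocompactness I would conjugate by $g\inv$. Because $g$ is rational, so is $g\inv$ (if $g^m\in\Gamma$ with $m\le Q$, then $(g\inv)^m=(g^m)\inv\in\Gamma$), and therefore $G'':=g\inv G'g$ is a rational subgroup of $G$ by Lemma~\ref{lem:Ap0}. By the equivalent description of rational subgroups recalled above, $\Gamma\cap G''$ is cocompact in $G''$; since $g\inv\Lambda g=(g\inv G'g)\cap\Gamma=\Gamma\cap G''$ and conjugation by $g$ is a homeomorphism of $G$, it follows that $\Lambda$ is cocompact in $G'$. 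As a closed connected subgroup of the connected simply connected nilpotent group $G$, the group $G'$ is itself connected, simply connected and nilpotent, so $G'/\Lambda$ is a compact nilmanifold.

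I would then conclude by a soft topological argument: $\bar\phi$ is a continuous injection from the compact space $G'/\Lambda$ into the Hausdorff space $X$, hence a homeomorphism onto its image. Therefore $G'g\cdot e_X=\bar\phi(G'/\Lambda)$ is closed in $X$ and carries the nilmanifold structure of $G'/\Lambda$, the inclusion into $X$ being smooth since $\phi$ is. (Equivalently, from the identity $h=g(g\inv hg)g\inv$ one gets $G'g\cdot e_X=g\cdot(G''\cdot e_X)$, which exhibits $G'g\cdot e_X$ as the image of the sub-nilmanifold $G''/(G''\cap\Gamma)$ under the diffeomorphism $x\mapsto g\cdot x$ of $X$.) I do not expect a serious obstacle here; the only step needing care is the cocompactness of $\Lambda$ in $G'$, which the conjugation trick together with Lemma~\ref{lem:Ap0} settles cleanly, the auxiliary facts used — stability of rationality under inversion, and simple connectedness of closed connected subgroups of a simply connected nilpotent Lie group — being elementary and standard.
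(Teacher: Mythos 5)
Your proof is correct and follows essentially the same route as the paper's: conjugate by $g^{-1}$, invoke Lemma~\ref{lem:Ap0} to see $g^{-1}G'g$ is a rational subgroup, deduce cocompactness of the stabilizer $G'\cap g\Gamma g^{-1}$ in $G'$, and identify the orbit with the compact nilmanifold $G'/(G'\cap g\Gamma g^{-1})$. You merely spell out a couple of points the paper leaves implicit, namely that $g^{-1}$ is rational and that the continuous bijection from the compact quotient onto the orbit is a homeomorphism, so the orbit is closed.
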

\begin{proof}
By Lemma~\ref{lem:Ap0}, $g\inv G'g$ is a rational subgroup of $G$.
Therefore, $\Gamma\cap g\inv G'g$ is co-compact in $g\inv G'g$ and
thus $g\Gamma g\inv\cap G'$ is co-compact in $G'$. But $g\Gamma
g\inv\cap G'$ is the stabilizer $\{h\in G'\colon h\cdot g\cdot
e_X=g\cdot e_X\}$ of $g\cdot e_X$ in $G'$ and thus the orbit
$G'\cdot(g\cdot e_X)$ is compact and can be identified with the
nilmanifold $G'/(g\Gamma g\inv\cap G')$.
\end{proof}

\section{Proof of Proposition~\ref{lem:equid-square}}

\subsection{The problem}
We remind the reader of our setup.
We are given a $2$-step nilmanifold $X=G/\Gamma$   with $G$ connected and simply connected, where for the context of this proof, the groups $G$ and $\Gamma$ are determined uniquely given $X$.
We are also given a polynomial sequence $(g(n))$ of the form
$$
g(n)=g_0g_1^ng_2^{\binom n2}\ \text{ for some }\ g_0,g_1\in G\ \text{ and }\ g_2\in G_2,
$$
and a function $\Phi\colon X\to \C$   with
$$
\norm{\Phi}_{\text{Lip}(X)}\leq 1
\quad \text{and}  \quad \int \Phi\, dm_X=0.
$$
 We assume that
 $\chi\in \CM$ is a multiplicative function such that
\begin{equation}\label{E:lower1}
|\E_{n\in[N]} \one_P(n)
\chi(n)\Phi(g(m+n)\cdot e_X)|>\tau
\end{equation}
for some $m\in \N$ with $|m|\leq N$, $\tau>0$, $N\in \N$,
and  arithmetic
progression $P\subset[N]$.
Our goal is to show  that there exists $\sigma:=\sigma(X, \tau)$ and $N_0:=N_0(X,\tau)$ such that
 if \eqref{E:lower1} holds for some $N\geq N_0$, then
\begin{equation}
\label{eq:conclusion}
\text{the sequence} \  \ (g(n)e_X)_{n\in [N]} \
\text{ is not } \   \sigma\text{-equidistributed in } \  X.
\end{equation}
\begin{convention}
In this proof, by a constant we mean a quantity depending only on $X$ and $\tau$.
A quantity is said to be bounded if it is bounded by a constant of this type.
We write $\tau_1,\tau_2,\dots$, $\sigma_1,\sigma_2,\dots$, and $C_1,C_2,\dots$ for constants of this type.
\end{convention}

\subsection{Step 1: Reduction to some particular nilmanifold}
We first proceed to the same reduction as in the proof of Corollary~\ref{cor:inverse}. We write $r:=\dim(G_2)$, $r+s:=\dim(G)$,  and identify the vertical torus $G_2/(G_2\cap\Gamma)$ with $\T^r$. Using \eqref{E:lower1}  we find $\bk\in\Z^r=\widehat{\T^r}$ and a nilcharacter $\Phi_\bk$, defined as in Corollary~\eqref{cor:inverse}, with
$$
\norm\bk\leq C_1,\
\norm{\Phi_\bk}\leq 1,\ \text{ and }\
|\E_{n\in[N]} \one_P(m+n)
\chi(n)\Phi_\bk(g(n+m)\cdot e_X)|\geq\tau_1
$$
for some constants $C_1>0$ and $\tau_1>0$.  If $\bk=0$, then we can conclude by using Proposition~\ref{lem:equid-square-ab}. We assume now that $\bk\neq 0$, and  continue as in the proof of  Corollary~\ref{cor:inverse} to reduce to the case where
\begin{gather}
\notag
r=\dim(G_2)=1;\\
\notag
\Phi\text{ is a nilcharacter of frequency $1$; }\\
\notag
\norm\Phi_{\lip(X)}\leq 1;\\
\label{E:lower2}
|\E_{n\in[N]} \one_P(n) \chi(n)\Phi(g(n+m)\cdot e_X)|\geq\tau_2
\end{gather}
for some constant $\tau_2>0$.

\subsection{Step 2: Reduction to $g_0=\text{id}_G$  and $m=0$.}
We proceed to  some further reductions.
 Suppose that the conclusion~\eqref{eq:conclusion}
 holds  for some $N_0$ and $\sigma$,
  under the stronger assumption that the hypothesis~\eqref{E:lower1}
 holds for $m=0$ and a sequence $(g(n))$ given by
$$
g(n)=g_1^ng_2^{\binom n2}\quad \text{ where }g_1\in G\ \text{ and }\ g_2\in G_2.
$$

 Let $\tau>0$ and $N\geq N_0$. Let $F_1\subset G$ be a bounded fundamental domain of the projection $G\to X$ (we assume that $F_1$ is fixed given $X$).  By the first statement of Lemma~\ref{lemp:ap1} there exists a constant $C_2>0$ such that
\begin{equation}
\label{E:C2}
d_X(g\cdot x,g\cdot x')\leq C_2 d_X(x,x')\  \text{ for every }g\in F_1 \text{ and for all }x,x'\in X.
\end{equation}

Given $g_0, g_1\in G$, $g_2\in G_2$ and   $m\in \N$,  we write
\begin{gather*}
g_0g_1^mg_2^{\binom m2}=a_m\gamma_m\ \text{ where }\ a_m \in F\ \text{ and }\ \gamma_m\in\Gamma;\\
g_{m,1}:=\gamma_m g_1g_2^m\gamma_m\inv.
\end{gather*}
Then for $n\in \N$ we have
$$
g(n+m)\cdot e_X=a_mg_{m,1}^{n}g_2^{\binom {n}2}\cdot e_X.
$$
We let
$$
\Phi_m(x):=\Phi(a_m\cdot x).
$$
For every $m\in \N$ with $|m|\leq N$, $\Phi_m$ is a nilcharacter of frequency $1$.  Since $a_m$ belongs to  $F_1$ for every $m\in \N$ and $\norm{\Phi}_{\lip}\leq 1$,  we get by~\eqref{E:C2} that  $\norm{\Phi_m}_{\lip(X)}\leq C_2$. Estimate \eqref{E:lower2} can be rewritten as
$$
|\E_{n\in[N]} \one_P(n)
\chi(n)\Phi_m\bigl(g_{m,1}^ng_2^{\binom n2}\cdot e_X\bigr)|\geq\tau_2.
$$
We are now in a situation where the additional hypothesis are satisfied. We deduce that
the sequence $\bigl(g_{m,1}^ng_2^{\binom n2}\cdot e_X\bigr)_{n\in[N]}$ is not totally $\sigma_1$-equidistributed in $X$ for some $\sigma_1>0$.
Applying Theorem~\ref{th:Leibman} and then Lemma~\ref{lem:Leibman_Inverse} we deduce
that there exist an integer constant
$N_0'$ and a constant $\sigma_2>0$  such that, if $N\geq N'_0$,
the sequence $(g(n)\cdot e_X)_{n\in[N]}$ is not totally $\sigma_2$-equidistributed in $X$. Hence, in establishing  Proposition~\ref{lem:equid-square} we can take $m=0$ and   $g_0=\text{id}_G$.

In the rest of this proof, we constantly assume that these hypotheses are satisfied.
\subsection{Using Mal'cev coordinates}
Using the conventions of Section~\ref{subsec:nilmanifolds}
we use a Mal'cev basis and identify
$$
G=\R^{s}\times\R \ \text{ and }\ \Gamma=\Z^{s}\times\Z.
$$
 Points of $G$ are written as $(\bx,y)$, where $\bx=(x_1,\dots,x_s)\in\R^s$ and $y\in\R$.  The multiplication in $G$ is given by
$$
(\bx,y)\cdot(\bx',z)=\bigl(\bx+\bx', y+z+ \sum_{1\leq j<i\leq s}B_{i,j}x_ix_j'\bigr)
$$
where the $B_{i,j}$'s are integer constants,  not all equal to $0$ since $G_2$ is non-trivial.
 We have $G_2=\{\bzero\}\times\R$
where $\bzero= (0,\dots,0)\in\R^s$.
  We write
 \begin{equation}
\label{eq:defg1}
g_1=(\balpha,\beta)\ \text{ where }\ \balpha=(\alpha_1,\dots,\alpha_s)\in\R^s\ \text{ and }\ \beta\in\R.
\end{equation}

\subsection{Step 3: Using K\'atai's Lemma}
Combining \eqref{E:lower2} and  Lemma~\ref{lem:katai} we get that there exists a positive integer constant $K$,  primes $p,p'$ with $p<p'<K$, and a positive constant $\tau_3$ such that
$$
|\E_{n\in[N]} \one_{P}(pn) \one_{P}(p'n)
\Phi(g(pn)\cdot e_X)\cdot \overline{\Phi}(g(p'n)\cdot e_X)|>\tau_3.
$$
Since $p$ and $p'$ belong to a finite set  of primes that depends only on $\tau$, we can consider these numbers as fixed.
Let $P_1\subset[N]$ be the arithmetic progression such that $\one_{P}(pn)\one_P(p'n)=\one_{P_1}(n)$. Then the last inequality can be rewritten as
\begin{equation}
\label{eq:PhiKatai}
|\E_{n\in[N]} \one_{P_1}(n)
\Phi(g(pn)\cdot e_X)\cdot \overline\Phi(g(p'n)\cdot e_X)|\geq\tau_3.
\end{equation}

Our goal is to  show that this lower bound  implies  lack of total equidistribution of    the sequence
$
(g(pn)e_X,g(p'n)e_X)_{n\in [N]}
$
 on certain sub-nilmanifolds of $X\times X$  (denoted by $Y$ and $\wt Y$ below) on which the function $\Phi\otimes\overline \Phi$
integrates to zero. This will then imply lack of total equidistribution for  the sequence $(g(n)e_X)_{n\in [N]}$  on $X$ and will  complete the proof of Proposition~\ref{lem:equid-square}.

\subsection{The nilmanifold $Y$}
We define
$$
H:=\bigl\{ (a,a')\in G\times G\colon a^{p'}=  a'^p\bmod G_2\bigr\}
=\bigl\{ (b^p,b^{p'}u)\colon b\in G,\ u\in G_2\bigr\}.
$$
Then $H$ is a rational  subgroup of $G\times G$,  connected and simply connected,   and
$$
\Lambda:=(\Gamma\times\Gamma)\cap H
$$
is co-compact in $H$. We have that
$$
Y:=H/\Lambda
$$
 is a closed sub-nilmanifold of $X\times X$ and
$$
Y=\bigl\{(x,x')\in X\times X\colon \pi(x)^{p'}=\pi(x')^p\bigr\}
$$
where $\pi$ is the natural projection of $X$ onto the maximal torus $G/(G_2\Gamma)$.
As usual, we write $e_Y$ for the image of the unit element of $H$ in $Y$, and we have
$e_Y=(e_X,e_X)$.
We remark that
$$
H_2=\{(p^2z,p'^2z)\colon z\in G_2\}.
$$

We  now define some more convenient coordinates in $H$.
In the Mal'cev coordinates of $G\times G$, the subgroup $H$ of $G\times G$ is given by
$$
H= \bigl\{\bigl( (p\bx,y)\,,\,(p'\bx,y')\bigr)\colon \bx\in\R^s,\ y,y'\in\R\bigr\}.
$$
We choose two integers $q,q'$ with
\begin{equation}\label{E:qq'}
qp^2+q'p'^2=1
\end{equation}
 and identify
$$
H=\R^{s+2}=\R^s\times\R\times\R
$$
by mapping
$$
\bigl((p\bx,y)\,,\,(p'\bx,y')\bigr)\  \longmapsto\ (\bx\,,\, p'^2y-p^2y'\,,\,qy+q'y')
\ \text{ for }\bx\in\R^s\text{ and }y,y'\in\R.
$$
That this mapping is a bijection follows from \eqref{E:qq'}. A direct computation shows that in this new system of coordinates, the multiplication is given by
\begin{equation}
\label{eq:multH}
(\bx,w,z)\cdot(\bx',w',z')=\Bigl(\bx+\bx',w+w',z+z'+\sum_{1\leq j<i\leq s}B_{i,j}x_ix'_j \Bigr).
\end{equation}
Furthermore,
\begin{equation}
\label{eq:Lambda-H2}
\Lambda=\Z^s\times\Z\times\Z \quad  \text{and} \quad  \ H_2=\{\bzero\}\times\{0\}\times\R.
\end{equation}
Following our conventions, $H$ is endowed with some Mal'cev basis and $H$ and $Y$ are endowed with the associated  distances.

In our system of coordinates, for $w\in \R$,  the element $(\bzero,0,w)$ belongs to $H_2$ and, when viewed as an element of $G\times G$, it is equal to $\bigl((\bzero, p^2w),(\bzero,p'^2w)\bigr)$. Therefore, for $y=(x,x')\in Y$, we have
$$
(\Phi\otimes\overline\Phi)\bigl((\bzero,0,w)\cdot y\bigr)=\Phi(p^2w\cdot x)\overline\Phi(p'^2w\cdot x')=\e\bigl((p^2-p'^2)w) (\Phi\otimes\overline\Phi)(y)
$$
where we used that $\Phi$ is a nilcharacter of $X$ with frequency $1$.
Therefore,  the restriction to $Y$ of the function $\Phi\otimes\overline\Phi$ is a nilcharacter of frequency $p^2-p'^2$ and therefore has  zero integral with respect to the Haar measure on $Y$. Moreover, this restriction satisfies
$$
\norm{\Phi\otimes\overline\Phi\vert_Y}_{\lip(Y)}\leq C_3
$$
for some positive constant $C_3$.

Lastly, for future use, we choose a bounded fundamental domain  $F_2\subset H$  of the projection $H\to Y$ (we assume that $F_2$ is fixed given $Y$).
Then by Lemma~\ref{lemp:ap1}  there exists a constant $C_4$ such that
\begin{equation}
\label{eq:C6}
d_Y(h\cdot y,h\cdot y')\leq C_4d_Y(y,y')
\end{equation}
for every $h\in F_2$ and $y,y'\in X$.

\subsection{Step 4: Non-equidistribution on $Y$}
 We let
$$
h(n):=\bigl(g(pn)\,,\,g(p'n)\bigr)= h_1^n h_2^{\binom n2}
$$
where
$$
h_1:=\bigl( g_1^pg_2^{\binom p2}\,,\, g_1^{p'}g_2^{\binom {p'}2}\bigr)\in H\ \text{ and }
\ h_2:=\bigl(g_2^{p^2}\,,\, g_2^{p'^2}\bigr)\in H_2.
$$
 We have   $h(n)\cdot e_Y=\bigl(g(pn)\cdot e_X\;,\;g(p'n)\cdot e_X\bigr)\in Y$ for every $n\in \N$. The bound~\eqref{eq:PhiKatai} can be rewritten as
\begin{equation}
\label{eq:PhiKatai2}
|\E_{n\in[N]} \one_{P_1}(n)
(\Phi\otimes\overline\Phi)(h(n)\cdot e_Y)|\geq\tau_3.
\end{equation}
As remarked before,   the function $\Phi\otimes\overline\Phi\vert_Y$
has a bounded Lipschitz constant and integrates to $0$  with respect to the Haar measure of $Y$.
This and~\eqref{eq:PhiKatai2} imply that
\begin{fact}
\label{fa:non-equi-Y}
The sequence $(h(n)\cdot e_Y)_{n\in[N]}$  is not totally $\tau_4$-equidistributed in  $Y$
for some constant $\tau_4>0$.
\end{fact}
By Theorem~\ref{th:Leibman}, there exists a non-trivial horizontal character $\eta$ of $Y$, such that $\norm\eta\leq C_5$ and
\begin{equation}
\label{E:C_5}
 \norm{\eta\circ h}_{C^\infty[N]}\leq C_5
\end{equation}
for some positive constant $C_5$.

As explained in Section~\ref{subsec:nilmanifolds}, $\eta\colon H=\R^{s+2}\to\T$ is a group homomorphism of the form
\begin{equation}
\label{eq:defphixyy}
\eta(\bx,w,z)=\ba\cdot\bx+ bw\bmod 1
\end{equation}
for some $\ba\in\Z^s$ and $b\in\Z$, not both equal to zero as $\eta$ is non-trivial. Recall that $\norm\eta$ is the sum of the absolute value of the coefficients of $\eta$ in the Mal'cev basis of $H$ and that these coefficients are integers. Since $\norm\eta$ is bounded, it follows that $\eta$ belongs to some finite family that depends only on $X$ and $\tau$, and thus there exists a constant $C_6>0$ such that
\begin{equation}
\label{E:C_5'}
\norm\ba+|b|\leq C_6.
\end{equation}
 In the sequel, we consider  $\ba$ and $b$ as fixed.

An immediate computation gives that, in our system of coordinates,
\begin{equation}\label{E:h_1}
h_1=(\balpha,\beta,\kappa)\ \text{ where }\balpha\text{ was defined in~\eqref{eq:defg1} and }
 \beta,\kappa\in\R.
\end{equation}
The values of $\beta$ and $\kappa$ are not important.
Since $h_2\in H_2$,  we have $\eta(h(n))=n\eta(h_1)\bmod 1$ for every $n$, and thus  $\norm{\eta\circ h}_{C^\infty[N]}=N\norm{\eta(h_1)}_\T$.  By \eqref{E:C_5} and  the definition of the smoothness norm we deduce that
\begin{equation}\label{E:C_5''}
\norm{\ba\cdot\alpha+b\beta}_\T=\norm{\eta(h_1)}_\T\leq \frac{C_5}{N}.
\end{equation}

If $b=0$, then \eqref{E:C_5'} and \eqref{E:C_5''} give that
 for the horizontal character $\rho$  of $G$ defined by $\rho(\bx,y)=\e(\ba\cdot\bx)$ we have that
$\norm{\rho}$ and $\norm{\rho\circ g}_{C^\infty[N]}$ are bounded.
Using Lemma~\ref{lem:Leibman_Inverse} we deduce
 that the sequence $(g(n)\cdot e_X)_{n\in[N]}$ is not totally $\sigma$-equidistributed on $X$ for some positive constant $\sigma:=\sigma(X,\tau)$.

 It remains to deal with the case where $b\neq 0$. In this case,
  condition \eqref{E:C_5''} on its own
  does not  imply that the sequence $(g(n)\cdot e_X)_{n\in [N]}$ is not sufficiently equidistributed on the nilmanifold $X$.  To prove this we have to obtain an additional non-equidistribution property on
a sub-nilmanifold $\wt Y$ of $Y$.

\subsection{Step 5: Reduction to  a  primitive horizontal character}
For technical reasons  it is convenient to work  with a horizontal character that has relatively prime coefficients.
To this end, let $\ba=(a_1,\ldots,a_s)$ and define
$$
k:=\mathrm{gcd}(a_1,\dots,a_s,b).
$$
Note that $k\leq C_6$ by \eqref{E:C_5'}.
Let also $a'_i=k\inv a_i$ for $1\leq i\leq s$,
$\ba'=(a'_1,\dots,a'_s)=k\inv\ba$, $b'=k\inv b$, and let the maps $\phi'\colon H\to\R$ and $\eta'\colon H\to\T$ be defined by
\begin{align*}
\phi'(\underline{x},w,z) &:=\underline{a}'\cdot\underline{x}+ b'w;\\
\eta'(\underline{x},w,z)&:=\phi'(\underline{x},w,z)\bmod 1.
\end{align*}
Then $a'_1,\dots,a'_s,b'$ are relatively prime and $\eta'$ is a horizontal character of $H$ with $k\eta'=\eta$.
Furthermore,
 \eqref{E:C_5''} gives that
\begin{equation}\label{E:nonequiY'}
 \norm{\phi'(h_1^k)\bmod 1}_\T=\norm{\eta(h_1)}_\T\leq \frac{C_5}{N}.
\end{equation}

\subsection{Step 6: Non equidistribution on $\wt Y$}
Let
\begin{equation}
\label{eq:deftildeH}
\wt H:=\ker(\phi')=\bigl\{(\bx,w,z)\in H=\R^s\times\R\times\R\colon\ba'\cdot\bx+ b'w= 0\bigr\}.
\end{equation}
Then  $\wt H$  is a  connected, simply connected, rational subgroup of $H$. Furthermore,
 using our working assumption $b\neq 0$ and a  direct computation we get
$$
\wt H_2=\{ \bzero\}\times\{0\}\times\R=H_2.
$$
The discrete subgroup
$$
\wt\Lambda:=\Lambda\cap\wt H
$$
is co-compact in $\wt H$. We define the nilmanifold
$$
\wt Y:=\wt H/\wt\Lambda.
$$
As usual, we write $e_{\wt Y}$ for the image of $1_{\wt H}$ in $\wt Y$. Following our conventions, $\wt H$ is endowed with a Mal'cev basis, that we keep unspecified.

By \eqref{E:nonequiY'}  we have that  $\phi'(h_1^k)$ is at a distance not greater than $C_5/N$ of an integer.
Therefore, there exists  $\omega\in H$ such that $\phi'(\omega\inv h_1^k)\in\Z$ and
\begin{equation}
\label{E:C_7}
d_H(\omega,\text{id}_H)\leq\frac{C_7}{N}
\end{equation}
for some positive constant $C_7$.
Since  the coefficients of $\phi'$   are relatively prime integers, we get that
$\phi'$ maps $\Lambda$ onto $\Z$. Hence,   there exists $\gamma\in\Lambda$ such that $\phi'(\omega\inv h_1^k\gamma\inv)=0$, that is,
\begin{equation}
\label{eq:defovh1}
\ov h_1:=\omega\inv h_1^k\gamma\inv\in \wt H.
\end{equation}

We define
\begin{equation}
\label{eq:defL}
L:=\Big\lfloor N\min\big(\frac{\tau_3}{4C_3C_7}, \frac 1{2C_6}\big)\Big\rfloor
\end{equation}
 and assume that $N$ is sufficiently large so that $L\geq 1$.
Since $N\geq 2kL$, we  can make a  partition of the interval $[N]$ into arithmetic progressions of step $k$ and length between $L$ and $2L$. Since $N$ is bounded by a constant multiple of $L$, the number of these progressions is bounded and from~\eqref{eq:PhiKatai2}   we deduce that there exists a progression  $P_2$ of this type such that
$$
|\E_{n\in[N]} \one_{P_1}(n)\one_{P_2}(n)
(\Phi\otimes\overline\Phi)(h(n)\cdot e_Y)|\geq\tau_4
$$
for some constant $\tau_4>0$. Let $k_0$ be the smallest element of $P_2$ and let $P_3$ be the arithmetic progression defined by $\one_{P_3}(n)=\one_{P_1}(kn+k_0)\one_{P_2}(kn+k_0)$.
We have that $P_3\subset[2L]$  and
\begin{equation}
\label{eq:PhiKatai3}
\bigl|\E_{n\in [N]}\one_{P_3}(n)(\Phi\otimes\overline\Phi)(h(kn+k_0)\cdot e_Y)\bigr|\geq\tau_4.
\end{equation}

From the definition of the sequence $(h(n))$ and a direct computation that uses~\eqref{eq:defovh1}, we get that there exist $h_0\in H$,  $v\in H_2$, and $\wt h_2\in \wt H_2=H_2$ such that
\begin{equation}
\label{eq:hknk0}
h(kn+k_0)=\omega^nh_0(\ov h_1v)^n\wt h_2^{\binom n2}\gamma^n
\end{equation}
for every $n$
 (the precise values of $h_0,v$ and $\wt h_2$ are not important).  We choose $h'_0\in F_2$ and $\lambda$ in $\Lambda$ such that
$h_0=h'_0\lambda$ and define
\begin{gather}
\label{eq:deftildeh1}
\wt h_1=\lambda \ov h_1v\lambda\inv;\\
\label{eq:deftileh}
\wt h(n)=\wt h_1^n\wt h_2^{\binom n2} \text{ for every }n;\\
\label{eq:defPsi}
\Psi(y)=(\Phi\otimes\overline\Phi)(h'_0\cdot y)\text{ for }y\in Y.
\end{gather}

We remark that $\wt h_1$ belongs to $\wt H$ because $\wt h_1= [\lambda, \ov h_1]v\ov h_1$  and $[\lambda, \ov h_1]v\in H_2=\wt H_2$. Therefore, the sequence $(\wt h(n))$ is a polynomial sequence  in $\wt H$, of degree $2$ with respect to the usual filtration.   For every $n$, by~\eqref{eq:hknk0}, \eqref{eq:deftildeh1}, and~\eqref{eq:deftileh} we have
$$
h(kn+k_0)\cdot e_Y= \omega^n h'_0\wt h(n)\cdot e_Y.
$$

For $n\in P_3$ we have $n\leq 2L$, and using the triangle inequality and the right invariance of the metric $d_H$ we get $d_H(\omega^n,1_H)\leq 2LC_7N\inv\leq\tau_4/2C_3$ by~\eqref{E:C_7} and~\eqref{eq:defL}. Hence,
$$
d_Y\bigl(h(kn+k_0)\cdot e_Y,h'_0\wt h(n)\cdot e_Y \bigr)\leq \frac{\tau_4}{2C_3}.
$$
Since $\norm{\Phi\otimes\overline\Phi}_{\lip(Y)}\leq C_3$, it follows  that
$\bigl|(\Phi\otimes\overline\Phi)(h(kn+k_0)\cdot e_Y)-(\Phi\otimes\overline\Phi)
(h'_0\wt h(n)\cdot e_Y)\bigr| \leq\tau_4/2$.
From~\eqref{eq:PhiKatai3} and the definition~\eqref{eq:defPsi} of the function $\Psi$ we deduce
$$
\bigl|\E_{n\in [ N]}\one_{P_3}(n)
\Psi(\wt h(n)\cdot e_Y)\bigr|\geq\frac{\tau_4}{2}.
$$

Since $h'_0\in F_2$, we get from \eqref{eq:C6} that the restriction of $\Psi$ to $\wt Y$ has a bounded Lipschitz constant.
Moreover, since  $\Phi\otimes\overline\Phi$ is a nilcharacter of $Y$ of  non zero frequency  and $\wt H_2=H_2$,  it follows that $\Psi$ is a nilcharacter of $\wt Y$ of the same frequency.
Therefore, $\Psi$  has a zero integral with respect to the Haar measure on $\wt Y$.
From this and the  last inequality we deduce that
\begin{fact}
The sequence $(\wt h(n)\cdot e_Y)_{n\in[N]}$ is not totally
$\tau_5$-equidistributed in $\wt Y$ for some constant $\tau_5>0$.
\end{fact}

\subsection{Last Step.}
Fact~$2$ and  Theorem~\ref{th:Leibman} combined imply that
 there exists a non-trivial horizontal character $\theta$ of $\wt Y$ such that  $\norm\theta$ and $\norm{\theta\circ\wt h}_{C^\infty[N]}$ are bounded.

 First we obtain a more explicit formula for $\theta$.   As explained in Section~\ref{subsec:nilmanifolds}, $\theta$ has the form $\theta=f\circ\pi$, where
$\pi\colon \wt H\to\wt H/\wt H_2$ is the natural projection and $f\colon \wt H/\wt H_2\to\T$ is  a group homomorphism with a trivial restriction to  $\wt\Lambda\wt H_2/\wt H_2$.
 We assume that $H$  is endowed with the group structure arising from its identification  with $\R^{s+2}$. We remark that the natural projection $H\to H/H_2$ is still a group homomorphism.
 It follows from~\eqref{eq:deftildeH} that  $\wt H$ is a subgroup of $H$ and we assume that
  $\wt H$ is endowed with the induced group structure.
Since $\wt H_2=H_2$, the projection $\pi$ is the restriction to $\wt H$ of the projection  $H\to H/H_2$ and thus is a group homomorphism.  Therefore,
 $\theta\colon\wt H\to\T$ is a group homomorphism.

  On the other hand,
$\wt H$ was defined in~\eqref{eq:deftildeH}
as the kernel of a group homomorphism from $H$ to $\R$  with  integer coefficients. It follows that
there  exists a linear projection of $H$ onto $\wt H$ that has  integer coefficients and thus maps $\Lambda$ to $\wt\Lambda$. Composing the  homomorphism $\theta\colon\wt H\to\T$  with this projection, we see that $\theta$ can be extended to a group homomorphism
form $H$ to $\T$ vanishing on $\Lambda$, that we denote also by $\theta$.
 Since the restriction of $\theta$ to $H_2=\{ \bzero\}\times\{0\}\times\R$ is trivial, this map is given by
$$
\theta(\bx,w,z)=\bc\cdot\bx+ dw\bmod 1\quad \text{ for } \ (\bx,w,z)\in  H
$$
where $\bc\in\Z^s$ and $d\in\Z$.

 Recall that $\norm\theta$ is as usual computed in the Mal'cev basis of $\wt H$.  Since $\norm\theta$ is bounded,   it follows as in Step~4 that $\theta$ belongs to some finite family depending only on $X$ and $\tau$ and thus
\begin{equation}
\label{eq:C9}
\norm{\bc}+|d|\leq C_8
\end{equation}
 for some constant $C_8$.

Next we use the fact that $\norm{\theta\circ\wt h}_{C^\infty[ N]}$ is bounded.
Since  $\wt h_2
  \in\wt H_2$ we have  $\theta(\wt h(n))=n\theta(\wt h_1)\bmod 1$ for every $n\in \N$ and   $\norm{\theta\circ\wt h}_{C^\infty[ N]}= N\norm{\theta(\wt h_1)}_{\T}$. Hence,
\begin{equation}
\label{eq:thetah1}
\norm{\theta(\wt h_1)}_{\T}\leq \frac{C_9}{N}.
\end{equation}

Furthermore, by~\eqref{eq:defovh1} and~\eqref{eq:deftildeh1} we have
$\wt h_1= \lambda \omega\inv h_1^k\gamma\inv v\lambda\inv$. Note that  $\theta(\lambda)=\theta(\gamma)=0\bmod 1$ since $\theta$ and $\lambda$ belong to $\wt\Lambda$ and $\theta(v)=0\bmod 1$ since $v\in\wt H_2$.  Therefore,
$\theta(\wt h_1)= -\theta(\omega)+k\theta(h_1)\bmod 1$.
By~\eqref{E:C_7} and~\eqref{eq:C9}, we have  $\norm{\theta(\omega)}_\T\leq C_7C_8/N$ and thus
$\norm{k\theta(h_1)}_\T\leq (C_9+C_7C_8)/N $. Taking into account \eqref{E:h_1} we deduce
\begin{equation}
\label{E:C_11}
\norm{k\bc\cdot\balpha+kd\beta}_\T\leq \frac{C_9+C_7C_8}N.
\end{equation}
Recall by  \eqref{E:C_5''} that
\begin{equation}\label{E:C_11'}
\norm{\ba\cdot\balpha+b\beta}_\T \leq \frac{C_5}N.
\end{equation}
Since the restriction of  $\theta$ to $\wt H$ is non-trivial,
the vectors
$(\ba,b)$ and $(\bc,d)$ of $\R^{s+1}$ are not collinear.
Keeping this in mind, and combining the estimates \eqref{E:C_11} and \eqref{E:C_11'}, we get that there exists
 a  non zero vector $\bt\in\Z^s$ with
$$
\norm\bt\leq C_{10}\ \text{ and }\ \norm{\bt\cdot \balpha}_{\T}\leq \frac{C_{10}}{N}
$$
for some constant $C_{10}$.
We proceed as at the end of Step~4 to deduce
 that the sequence $(g(n)\cdot e_X)_{n\in[N]}$ is not totally $\sigma$-equidistributed on $X$ for some positive constant $\sigma:=\sigma(X,\tau)$. This completes the proof of Proposition~\ref{lem:equid-square}.

\section{Solution sets related to some homogeneous quadratic forms} \label{SS:AppNumberTheory}
We give a proof  of    Proposition~\ref{prop:linearfactors} from the
 introductory section.
 We recall the statement for reader's convenience.
\begin{proposition*}
Let the quadratic form $p$ satisfy the hypothesis of
Theorem~\ref{th:partition-regular1}. Then there exist
$\ell_0,\ell_1$ positive and $\ell_2,\ell_3$ non-negative integers
with $\ell_2\neq \ell_3$,  such that for every $k,m,n\in \N$, the
integers $x=k\ell_0 m(m+\ell_1n)$ and $y=
k\ell_0(m+\ell_2n)(m+\ell_3n)$ satisfy the equation $p(x,y,z)=0$
for some $z\in \N$.
\end{proposition*}

\begin{proof}
Let
\begin{equation}\label{E:Q2'}
ax^2+by^2+cz^2+dxy+exz+fyz=0
\end{equation}
be the equation we are interested in solving. Recall  that by
assumption $a,b,c$ are non-zero integers, and that  all three
integers
$$
\Delta_1:=e^2-4ac,  \quad \Delta_2:=f^2-4bc,  \quad
\Delta_3:=(e+f)^2-4c(a+b+d)
$$
are non-zero squares.

A direct computation shows that if $(x_0,y_0,z_0)$  is a solution of
\eqref{E:Q2'}, then also the following is  a solution
\begin{align*}
x=& k(-(ax_0+dy_0+ez_0)m^2-(2by_0+fz_0)mn+bx_0n^2) \\
y=& k(ay_0m^2-(2ax_0+ez_0)mn-(by_0+dx_0+fz_0)n^2)\\
z=& kz_0(am^2+dmn+bn^2)
\end{align*}
where $k,m,n,\in \Z$.\footnote{One finds these values by looking for
solutions of the form $x=tx_0+m$, $y=ty_0+n$, $z=tz_0$, which leads
to the choice
$t=-(am^2+dmn+bn^2)/((2ax_0+dy_0+ez_0)m+(2by_0+dx_0+fz_0)n)$. }

The discriminant of the quadratic form
$$
Q_1(m,n):=-(ax_0+dy_0+ez_0)m^2-(2by_0+fz_0)mn+bx_0n^2
$$ turns out to be $z_0^2\Delta_2$
which is a square since by assumption $\Delta_2$ is a square. Hence,
$Q_1(m,n)$ factors into a product of linear forms with rational
coefficients. Similarly, the discriminant of the quadratic form
$$
Q_2(m,n):=ay_0m^2-(2ax_0+ez_0)mn-(by_0+dx_0+fz_0)n^2
$$
turns out to be $z_0^2\Delta_1$ which is a square since by
assumption $\Delta_1$ is a square. Hence, $Q_2(m,n)$ factors into a
product of linear forms with rational coefficients.

The assumption that $\Delta_3$ is a  square is used to guarantee
that a choice of  $x_0,y_0,z_0$ with $z_0\neq 0$ can be made so that
the coefficients of $m^2$ in the quadratic forms $Q_1(m,n)$ and
$Q_2(m,n)$ are equal, i.e. $-(ax_0+dy_0+ez_0)=ay_0$ is satisfied.
Indeed, if we multiply equation \eqref{E:Q2'} by $e^2$ and insert
$-(ax_0+(d+a)y_0)$ in place of $ez_0$, we are lead  to the  equation
$$
a^2cx_0^2+a(2cd+2ac-e^2-ef)x_0y_0+(be^2+cd^2+a^2c+2acd-def-aef)y_0^2=0.
$$
A direct computation shows that its discriminant is
$a^2e^2\Delta_3$, which is a square since by assumption $\Delta_3$
is a square. This leads to  the following solution of \eqref{E:Q2'}
\begin{align*}
x_0&=2ac+2cd-e^2-ef\pm e\sqrt{\Delta_3}\\
y_0&=-2ac\\
z_0&=a\big(e+f\pm \sqrt{\Delta_3}\big).
\end{align*}
Note that since $\Delta_3\neq 0$ we can choose the sign so that
$z_0\neq 0$. We work with
 such a  choice of $x_0, y_0,z_0$ next.

Combining the above, we deduce that under the stated assumptions on
$a,b,c$,  there exist    $l_1,\ldots, l_8\in
\Z$, with $l_1 l_3=l_5l_7\neq 0$, such that for every $k,m,n\in\Z$ the integers
\begin{align*}
x&=k(l_1m+l_2n)(l_3m+l_4n),\\
 y&=k(l_5m+l_6n)(l_7m+l_8n)
 \end{align*}
satisfy  equation \eqref{E:Q2'} for some $z:=z_{m,n}\in \N$.
Inserting $l_1l_3l_5l_7n$ in place of $n$, we get that there exist
$l_1',\ldots, l_4'\in \Z$, such that for every $k,m,n\in\Z$ the
integers
\begin{align*}
x&=k\ell_0(m+l_1'n)(m+l_2'n),\\
y&=k\ell_0(m+l_3'n)(m+l_4'n),
\end{align*}
where $\ell_0=|l_1l_3|=|l_5l_7|\neq 0$, satisfy  equation
\eqref{E:Q2'} for some $z':=z'_{m,n}\in \N$.

Since $z_0,\Delta_1, \Delta_2$ are non-zero, the quadratic forms
$Q_1$ and $Q_2$ have non-zero discriminant, we have $l_1'\neq l_2'$
and $l_3'\neq l_4'$. Without loss of generality we can assume that
$l_1'\leq l_i'$ for $i=2,3,4$. Inserting $m-l_1'n$ in place of $m$,
we get that for every $k,m,n\in\Z$ the integers
\begin{align*}
x&=k\ell_0m(m+(l_2'-l_1')n),\\
y&=k\ell_0(m+(l_3'-l_1')n)(m+(l_4'-l_1')n)
\end{align*}
 satisfy  equation \eqref{E:Q2'} for some $z'':=z''_{m,n}\in \N$.
 Letting
$\ell_1=l_2'-l_1'$, $\ell_2=l_3'-l_1'$, $\ell_3=l_4'-l_1'$, we get
the asserted conclusion with  $z''_{m,n}$ in place of $z$.
\end{proof}
Alternatively, a   proof that is free of computations can be
given using the fact that the discriminant of the forms $p(x,0,z)$,
$p(0,y,z)$, $p(x,x,z)$ is a non-zero    square. We chose a more
hands on argument  since it determines the integers $\ell_1$,
$\ell_2$, $\ell_3$ explicitly.

\end{document}